\def\thesection{\arabic{section}}
\def\theequation{\thesection.\arabic{equation}}
\newcommand{\ds} {\displaystyle}
\newcommand{\e}{\epsilon}
\newcommand{\pa} {\partial}
\newcommand{\al} {\alpha}
\newcommand{\ba} {\beta}
\newcommand{\de} {\delta}
\newcommand{\Om} {\Omega}
\newcommand{\ra} {\rightarrow}
\newcommand{\rp} {\rightharpoonup}
\newcommand{\De} {\Delta}
\newcommand{\la} {\lambda}
\newcommand{\noi} {\noindent}
\newcommand{\na} {\nabla}
\newcommand{\mc} {\mathcal}
\newcommand{\ld} {\langle}
\newcommand{\rd} {\rangle}
\def\theequation{\@arabic{\c@section}.\@arabic{\c@equation}}
\def\QED{\hfill {$\square$}\goodbreak \medskip}
\newtheorem{Theorem}{Theorem}[section]
\newtheorem{Lemma}[Theorem]{Lemma}
\newtheorem{Proposition}[Theorem]{Proposition}
\newtheorem{Remark}[Theorem]{Remark}
\newtheorem{Definition}[Theorem]{Definition}
\begin{document}
\vspace{0.01in}

\title
{Critical growth  elliptic problems  involving
	 Hardy-Littlewood-Sobolev critical exponent  in non-contractible domains }

\author{ {\bf Divya Goel\footnote{e-mail: divyagoel2511@gmail.com} \; 
	and \;  K. Sreenadh\footnote{
		e-mail: sreenadh@maths.iitd.ac.in}} \\ Department of Mathematics,\\ Indian Institute of Technology Delhi,\\
	Hauz Khaz, New Delhi-110016, India. }

\date{}

\maketitle

\begin{abstract}
	\noi The paper is concerned with the existence and multiplicity of positive solutions of the nonhomogeneous Choquard equation over an annular type bounded domain. Precisely, we consider the following equation
	\[
	-\De u = \left(\int_{\Om}\frac{|u(y)|^{2^*_{\mu}}}{|x-y|^{\mu}}dy\right)|u|^{2^*_{\mu}-2}u+f  \; \text{in}\;
	\Om,\quad 
	u = 0 \;    \text{ on } \pa \Om ,
	\]
	where $\Om$ is a smooth  bounded annular domain in $\mathbb{R}^N( N\geq 3)$,  $2^*_{\mu}=\frac{2N-\mu}{N-2}$, $f \in L^{\infty}(\Om)$ and $f \geq 0$.  We prove the  existence of  four positive solutions of the above problem  using the  Lusternik-Schnirelmann theory and varitaional methods,  when the inner hole  of the annulus is sufficiently small.
	
	\medskip
	
	\noi \textbf{Key words:} Hardy-Littlewood-Sobolev inequality, critical problems, non-contractible domains.
	
	\medskip
	
	\noi \textit{2010 Mathematics Subject Classification: 35A15, 35J60, 35J20.} 
	
\end{abstract}
\newpage
\section{Introduction}
In the pioneering work,  Tarantello \cite{taren} studied the nonhomogeneous elliptic equation
\begin{equation}\label{nh1}
-\De u  = |u|^{2^*-2}u +f  \; \text{ in }\;
\Om,\;\;
 u  = 0 \;    \text{ on } \pa \Om ,
\end{equation} 
   where $2^*=\frac{2N}{N-2}$ is the critical Sobolev exponent and $\Om$ is a bounded domain in $\mathbb{R}^N$ with smooth boundary.  If $f \in H^{-1}$ then it is shown that there exists at least two solutions of \eqref{nh1} by using  variational methods.  Min and song \cite{daohuan} proved the existence of two positive solutions of the following nonhomogeneous elliptic equation
\begin{equation}\label{nh23}
\begin{aligned}
-\De u  = f(x,u(x))+h  \; \text{ in }\;
\mathbb{R}^N
\end{aligned}
\end{equation} 
where $f(x,u)$ is a Carath\'{e}odory function with subcritical grotwh at $\infty.$
Further, many researchers investigated \eqref{nh1} and \eqref{nh23} for the existence and multiplicity of solutions. For  details, we refer  \cite{clapp2, clapp3,he, hirano2,wu1} and references therein.  %In case of $p$-Laplcain the nonhomogeneous quasilinear  equation has been studied in \cite{Medeiros,lin1} and references therein.
Recently, Gao and Yang  \cite{zampyang} proved the existence of two positive solutions of the  nonhomogeneous Choquard  equation involving Hardy-Littlewood-Sobolev critical exponent using the splitting Nehari manifold method of Tarantello \cite{taren}.
%\begin{equation}
%\begin{aligned}
%-\De u & = \left(\int_{\Om}\frac{|u(y)|^{2^*_{\mu}}}{|x-y|^{\mu}}dy\right)|u|^{2^*_{\mu}-2}u+\la u+ f  \; \text{ in }\;
%\Om,\\
% u & = 0 \;    \text{ on } \pa \Om .
% \end{aligned}
%\end{equation}
%Here $\Om \subset \mathbb{R}^N( N\geq 3)$ is a smooth bounded domain,   $2^*_{\mu}=\frac{2N-\mu}{N-2}$, $0<\mu<N$ , $N\geq 7$,   $f \in H^{-1}$, $f \geq 0$, $f\not \equiv 0$ and $0<\la<\la_1$ where  $\la_1$ is the first eigenvalue of $-\De$ in $H_0^1(\Om)$.\\

%In this paper, we  consider the following problem
%\[
%(P_f)\;
%\left\{
%-\De u
%=\left(\ds \int_{\Om}\frac{|u^+(y)|^{2^*_{\mu}}}{|x-y|^{\mu}}dy\right)|u^+|^{2^*_{\mu}-2}u^++f,\;  \text{in } \Omega,\quad
%u=0 \; \text{ on } \pa \Om,\right.
%\]
%\noi where $2^*_{\mu}=\frac{2N-\mu}{N-2}$ is critical exponent in the sense of Hardy-Littlewood-Sobolev inequality \eqref{co9}  and $ f \in \hat{F}$  with $\hat{F}:= \{f\; : \; f \in  L^{\infty}(\Om),\; f \geq 0,\;f\not \equiv 0 \}$.  
%The domain $\Om \subset \mathbb{R}^N( N\geq 3)$ satisfies the condition
%
%{\bf (A)} There exists   $\rho \in (0,\frac{1}{2})$ such that  $\Om\supset B_{\frac{1}{\rho}}\setminus \overline{B_{\rho}},\; \overline{\Om}\nsupseteq B_\rho,$ where $B_r=\{ x \in \mathbb{R}^N: |x|<r \}.$

The existence, uniqueness, and multiplicity of positive solutions of the nonlocal elliptic equation, precisely the Choquard equation both for mathematical analysis and in perspective of physical models has recently  gained significant  attention amongst researchers. As an instance, in 1954  Pekar \cite{pekar} proposed the equation
\begin{equation}\label{nh14}
-\De u +u = \left(\frac{1}{|x|}* |u|^2\right) u \text{ in } \mathbb{R}^3
\end{equation}
to study the quantum theory of polaron. Later in 1976, Ph. Choquard \cite{ehleib} examined the steady state of one component plasma approximation in Hartee-Fock theory using \eqref{nh14}. In \cite{ehleib}, Leib proved the existence and uniqueness of the ground state of \eqref{nh14}.  The work of Moroz and Schaftingen enriches the literature of Choquard equations. In  \cite{moroz2}  authors  studied  the following Choquard equation 
\begin{equation}\label{nh15}
-\De u + V u
=\left(\ds I_{\al}* F(u)\right)F^\prime(u),\;\;  \text{in } \mathbb{R}^N, 
\end{equation}
where $\al \in (0,N),\;  N\geq 3,\; I_\al$ is the Riesz Potential  and  $F(u) \in C^1(\mathbb{R}, \; \mathbb{R})$ with sub critical growth. In this work authors established the existence of ground state soloutions of \eqref{nh15} and 
assuming some suitable growth conditions on  $F$ and $V$, they  studied the properties like constant sign solutions and radial symmetry of the solution. Moreover, authors  proved the Poho\v zaev identity and nonlocal Brezis-Kato type estimate. Interested readers are referred to \cite{nodal,moroz1,moroz4,moroz3}  and references therein for the study of Choquard equation on the unbounded domain.

%\begin{equation}\label{nh15}
%-\De u +V u
%=\left(\ds I_{\al}* |u(y)|^{p}\right)|u|^{p-2}u,\;\;  \text{in } \mathbb{R}^N
%\end{equation}
%in the optimal range $\frac{N+\al}{N}< p<\frac{N+\al}{N-2}$ where $I_\al$ is the Riesz Potential. Later on many authors studied the existence 
%
% \begin{equation}
% -\De u +V u
% =\left(\ds I_{\al}* |u|^{\frac{\al}{N}+1}\right)|u|^{\frac{\al}{N}-1}u,\;\;  \text{in } \mathbb{R}^N, 
% \end{equation}
% where $\al \in (0,N),\;  N\geq 3$
\noi  Concerning the boundary value problems of Choquard equation,  Gao and  Yang \cite{yang}   studied  the  Brezis-Nirenberg type existence results for the following  critical equation
 \begin{equation}\label{nh16}
 -\De u = \la {h(u)}+ \left(\int_{\Om}\frac{|u(y)|^{2^*_{\mu}}}{|x-y|^{\mu}}dy\right)|u|^{2^*_{\mu}-2}u \text{ in } \Om, \quad
 u=0 \text{ on } \pa \Om, 
 \end{equation}
\noi where $\la>0$, $0<\mu<N$, $h(u)=u$, $\Om$ is a smooth bounded domain in $\mathbb{R}^N$. Later  in \cite{yangjmaa}  authors  proved the existence and multiplicity  of positive solutions for convex and convex-concave type nonlinearities ($h(u)=u^q, 0<q<1$) using variational methods.

 %\noi where $\la>0$, $0<\mu<N$,   $\Om$ is a  bounded domain in $\mathbb{R}^N$ with smooth boundary.
 % Later  in \cite{yangjmaa}  authors proved the existence and multiplicity of solution of \eqref{nh16} using variational methods under some assumptions on the function $f$ . \\
 The geometry of the domain $\Om$  plays an essential and significant role on the existence and multiplicity of the elliptic boundary value problems. Indeed, in \cite{coron}, Coron proved the existence of a high energy positive solution of the problem
 \begin{align}\label{nh26}
 -\De u=|u|^{2^*-2}u   \; \text{in}\;
 \Om,\quad 
 u = 0 \;    \text{ on } \pa \Om ,
 \end{align}
 where $\Om$ is a bounded domain in $\mathbb{R}^N(N \geq 3),   $
 precisely an annulus  with a small hole.
Later in \cite{bahri},  Bahri and Coron,  proved that  a positive solution always exists as long as the domain has non-trivial homology with $\mathbb{Z}_2$-coefficients.  In \cite{benci2}, Benci and Cerami studied the following equation
\begin{equation}\label{nh18}
 -\varepsilon \De u +u =f(u)   \; \text{in}\;
\Om,\quad 
u = 0 \;    \text{ on } \pa \Om ,
\end{equation}
where $\varepsilon \in \mathbb{R}^+ ,\;  \Om$ is a bounded domain in $\mathbb{R}^N(N \geq 3)$ and $f:\mathbb{R}_+ \ra \mathbb{R} $ is a $C^{1,1}$ function. Here authors proved that there exists  $\e^*>0$ such that for all $\varepsilon \in (0, \e^*)$, \eqref{nh18} has   $\text{cat}(\Om)+1$ solutions under some growth conditions on the function $f$. Since then, the study of existence and multiplicity of solutions of  elliptic equations over non-contractible domain has  been substantially studied, for instance, \cite{benci1,benci,dancer,he, rey, sign} and references therein. The existence of high energy solution of \eqref{nh26} is a much  more delicate issue. In this spirit, recently in \cite{choqcoron} Goel, R\u adulescu and Sreenadh studied the Coron problem for Choquard equations. Here authors proved the existence of a positive  high energy solution for the problem $(P_f)$ when $f(x)\equiv 0$ and  
% Till now it is just known the existence of sign- changing solutions of \eqref{nh17} only for domains which have few symmetries or a small hole. In \cite{clapp4}, Clapp and Weth  found sign-changing solutions of \eqref{nh17} in a symmetric domain with a small hole. 
   % \begin{equation*}
% -\De u = \left(\int_{\Om}\frac{|u(y)|^{2^*_{\mu}}}{|x-y|^{\mu}}dy\right)|u|^{2^*_{\mu}-2}u   \; \text{in}\;
 %\Om,\quad 
 %u = 0 \;    \text{ on } \pa \Om,
 %\end{equation*}
  $\Om$ is a smooth  bounded domain in $\mathbb{R}^N( N\geq 3)$ 
  satisfying the following condition
\begin{itemize}
	\item [(A)] There exists constants $0<R_1<R_2<\infty$ such that 
	\begin{align*}
	\{ x \in \mathbb{R}^N \;:\; R_1<|x|<R_2 \} \subset\Om, \qquad
	\{ x \in \mathbb{R}^N \;:\; |x|<R_1 \} \nsubseteq \overline{\Om}.
	\end{align*}
\end{itemize} 
%\noi For $\mu=0$,  He and Yang \cite{he} studied the problem $(P_f)$. Precisely, authors studied the following  critical inhomogeneous elliptic problem
%\[-\De u=|u|^{2^{*}-2}u+f \; \text{in}\; \Om,\quad u = 0 \; \text{on} \pa \Om,\]
%  where $ 2^{*}=\frac{2N}{N-2}$ and  $f \in L^{\infty}(\Om)$, $f \geq 0$, $f\not \equiv 0$  and  $\Om$ is a bounded domain in $R^N(N \geq 3)$ with a small hole. Here authors showed the existence of four positive solutions by the splitting Nehari manifold method and adopting the technique of Adachi and Tanaka \cite{adachifour}.\\
 
 %An interesting question arises here is what if $0<\mu<N$, then the existence and multiplicity of solutions under the assumption $(A)$  is still open. In this article, we make an attempt to answer this question.
  In the light of above works, in this article, we study  following problem
 \begin{equation*}
 (P_f)\;
 \left\{
 %\begin{array}{rllll}
 -\De u
 =\left(\ds \int_{\Om}\frac{|u^+(y)|^{2^*_{\mu}}}{|x-y|^{\mu}}dy\right)|u^+|^{2^*_{\mu}-2}u^++f,\;\;  \text{in } \Omega,\;\;
 u=0 \; \text{ on } \pa \Om,
 %\end{array}
 \right.
 \end{equation*}
 \noi where $2^*_{\mu}=\frac{2N-\mu}{N-2}$, is the critical exponent in the sense of Hardy-Littlewood-Sobolev inequality \eqref{co9} and $ f \in \hat{F}$  with $\hat{F}:= \{f\; : \; f \in  L^{\infty}(\Om),\; f \geq 0,\;f\not \equiv 0 \}$.  
 The domain $\Om \subset \mathbb{R}^N( N\geq 3)$ satisfies the condition $(A)$. Here we prove the  existence of four solutions of the problem $(P_f)$.  To achieve this,  we first seek the help of Nehari manifold associated with $(P_f)$ to prove the existence of the first solution  (say $u_1$). To proceed further, we  prove many new estimates on the convolution terms involving the minimizers of  best constant  $S_{H,L}$ (see Lemma \ref{nhlem8}, \ref{nhlem9} and  \ref{nhlem33} ). With the help of these estimates we prove that the minima of the functional over $\mc{N}_f$ is  below the first critical level where the first critical level is 
  \begin{align*}
  \mathcal{J}_f(u_1)+	\frac{N-\mu+2}{2(2N-\mu)}S_{H,L}^{\frac{2N-\mu}{N-\mu+2}}.
  \end{align*}
  %where $u_1$ denote the first solution of $(P_f)$ and
   Here $\mathcal{J}_f$ is the energy functional associated to $(P_f)$ (defined in \eqref{nh25}). Moreover, $\mathcal{J}_f$ satisfies the Palais-Smale  condition below the first critical level. Subsequently, we show the existence of the second and the third solution of  $(P_f)$, in $\mathcal{N}_f^-$ (a closed subset of the Nehari manifold)  by using a well-known result of Ambrosetti \cite{ambrosetti}(see Lemma \ref{nhlem31}) and assumption $(A)$.  To study the existence of the fourth solution which is a high energy solution, we  prove that the functional $\mathcal{J}_f$ satisfies  the Palais-Smale  condition between the   first and  the second critical levels, where the second critical level is
  \begin{align*}
\inf_{u \in \mathcal{N}_f^-} \mathcal{J}_f(u)+	\frac{N-\mu+2}{2(2N-\mu)}S_{H,L}^{\frac{2N-\mu}{N-\mu+2}}.
  \end{align*}
  To prove the existence of fourth solution, we use the minmax Lemma (See Lemma \ref{nhlem25}).
To the best of our knowledge, there is no work on the existence of four solutions of Choquard equations $(P_f)$ in  non-contractible domains. With this introduction, we  state our main result.
\begin{Theorem}\label{nhthm1}
	Assume  $\mu <\min\{4,N\}$, $f \in L^{\infty}(\Om)$ and $f \geq 0$ and   $\Om$  be a bounded domain satisfying the conditon (A).
Then  there exists $e^*>0$ such that   $(P_f)$ has at least three positive solutions whenever  $0<\|f\|_{H^{-1}}<e^*$. Moreover, if $R_1$ is small enough then there exists  $e^{**}>0$ such that   $(P_f)$ has at least four positive solutions whenever $0<\|f\|_{H^{-1}}<e^{**}$.
\end{Theorem}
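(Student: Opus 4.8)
The plan is to prove Theorem \ref{nhthm1} by constructing four distinct positive solutions at increasing energy levels, following the variational scheme outlined in the introduction.

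\medskip

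\noi \textbf{Step 1: The first solution via the Nehari manifold.} I would first set up the energy functional $\mathcal{J}_f$ associated to $(P_f)$ on $H_0^1(\Om)$ and introduce the Nehari manifold $\mathcal{N}_f = \{ u \in H_0^1(\Om)\setminus\{0\} : \ld \mathcal{J}_f'(u), u \rd = 0 \}$. Because $(P_f)$ is nonhomogeneous, $\mathcal{N}_f$ naturally splits as $\mathcal{N}_f = \mathcal{N}_f^+ \cup \mathcal{N}_f^0 \cup \mathcal{N}_f^-$ according to the sign of the second derivative of the fibering map $t \mapsto \mathcal{J}_f(tu)$; the smallness condition $\|f\|_{H^{-1}} < e^*$ guarantees $\mathcal{N}_f^0 = \{0\}$ so that $\mathcal{N}_f$ is a manifold and the two pieces are separated. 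Minimizing $\mathcal{J}_f$ over $\mathcal{N}_f$ (equivalently over $\mathcal{N}_f^+$, where the infimum is negative since $f \not\equiv 0$) yields the first solution $u_1 \in \mathcal{N}_f^+$, which is positive by the maximum principle applied to the equation with the $u^+$ truncation. The sub-level information needed here is that $\inf_{\mathcal{N}_f^+} \mathcal{J}_f < 0 <$ the threshold, which is standard once the HLS inequality \eqref{co9} and the definition of $S_{H,L}$ are in hand.

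\medskip

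\noi \textbf{Step 2: Compactness thresholds.} I would establish that $\mathcal{J}_f$ satisfies the Palais–Smale condition for energy levels $c < \mathcal{J}_f(u_1) + \frac{N-\mu+2}{2(2N-\mu)} S_{H,L}^{\frac{2N-\mu}{N-\mu+2}}$ (the "first critical level") and, more delicately, for levels $c$ strictly between this first level and the "second critical level" $\inf_{\mathcal{N}_f^-}\mathcal{J}_f + \frac{N-\mu+2}{2(2N-\mu)} S_{H,L}^{\frac{2N-\mu}{N-\mu+2}}$, excluding the value of the first critical level itself. This is the usual concentration-compactness analysis for Choquard-type critical problems: a PS sequence at level $c$ decomposes as $u_1$ (or another already-found solution) plus a finite sum of rescaled/translated copies of the Choquard bubble $U_\varepsilon$ carrying energy $\frac{N-\mu+2}{2(2N-\mu)} S_{H,L}^{\frac{2N-\mu}{N-\mu+2}}$ each, so if $c$ lies strictly below one extra bubble's worth of energy above the relevant base level, no bubble can form and PS holds.

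\medskip

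\noi \textbf{Step 3: Second and third solutions in $\mathcal{N}_f^-$.} On the closed set $\mathcal{N}_f^-$, which is homotopically nontrivial, I would use the barycenter (center of mass) map together with the condition $(A)$ to show that $\mathcal{N}_f^-$ restricted to a suitable energy sub-level carries the topology of the annular region $\{R_1 < |x| < R_2\}$; since this region retracts onto a sphere $S^{N-1}$, the Lusternik–Schnirelmann category argument (via the Ambrosetti-type abstract result, Lemma \ref{nhlem31}) forces at least two critical points of $\mathcal{J}_f$ on $\mathcal{N}_f^-$ below the first critical level. The crucial analytic input — and one of the main obstacles — is showing $\inf_{\mathcal{N}_f^-}\mathcal{J}_f < \mathcal{J}_f(u_1) + \frac{N-\mu+2}{2(2N-\mu)} S_{H,L}^{\frac{2N-\mu}{N-\mu+2}}$; this is exactly where the new convolution estimates (Lemmas \ref{nhlem8}, \ref{nhlem9}, \ref{nhlem33}) on the interaction between $u_1$ and a concentrating Choquard bubble are needed, to beat the threshold. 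Combined with Step 2 (PS below the first level), this yields two more positive solutions $u_2, u_3 \in \mathcal{N}_f^-$; positivity again follows from the $u^+$ structure and the strong maximum principle.

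\medskip

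\noi \textbf{Step 4: The fourth, high-energy solution.} For the fourth solution I would run a min–max (linking/mountain-pass over a sphere-type set) argument, as in Coron's and Bahri–Coron's treatment of \eqref{nh26}, adapted via Lemma \ref{nhlem25}. Using the topology of the domain encoded in $(A)$, one builds a family of maps from $S^{N-1}$ into $H_0^1(\Om)$ — essentially truncated, translated Choquard bubbles centered on the sphere $\{|x| = r\}$ for $R_1 < r < R_2$ — whose min–max value lies strictly between the first and second critical levels, provided $R_1$ is small enough. Here the smallness of the hole is what pins the min–max value below the second threshold: shrinking $R_1$ lets the bubble concentrate near the (nearly full) ball with energy approaching the single-bubble value over the base level $\mathcal{J}_f(u_1)$, while the nontrivial topology prevents the min–max value from dropping to or below the first critical level. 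Because the min–max value avoids the first critical level (using the further smallness $\|f\|_{H^{-1}} < e^{**}$ to keep $\mathcal{J}_f(u_1)$ small), the refined PS condition from Step 2 applies and produces a critical point $u_4$ with $\mathcal{J}_f(u_4)$ strictly above $\mathcal{J}_f(u_1) + \frac{N-\mu+2}{2(2N-\mu)} S_{H,L}^{\frac{2N-\mu}{N-\mu+2}}$, hence distinct from $u_1, u_2, u_3$; positivity follows as before.

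\medskip

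\noi \textbf{Main obstacle.} I expect the hardest parts to be (i) the delicate energy estimate in Step 3 showing that the $\mathcal{N}_f^-$-infimum stays below the first critical level — this requires precise control of the cross terms $\int_\Om \int_\Om \frac{|u_1|^{2^*_\mu}\, |U_\varepsilon|^{2^*_\mu}}{|x-y|^\mu}$ and their lower-order analogues as $\varepsilon \to 0$, which is the content of the new Lemmas \ref{nhlem8}–\ref{nhlem33}; and (ii) in Step 4, simultaneously keeping the min–max value below the second critical level (forcing $R_1$ small) and strictly above the first one (forcing $\|f\|$ small and exploiting the topology), since these two constraints pull in opposite directions and must be reconciled by a careful choice of the bubble family and its concentration rate. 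The restriction $\mu < \min\{4, N\}$ enters precisely in making these convolution estimates have the right sign and order in $\varepsilon$.
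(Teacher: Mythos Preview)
Your proposal is correct and follows essentially the same four-step scheme as the paper: Nehari minimization on $\mathcal{N}_f^+$ for $u_1$ (Section 3), global compactness giving the two Palais--Smale ranges (Lemmas \ref{global}, \ref{nhlem18}), a category argument on a sublevel of $\mathcal{N}_f^-$ via maps to and from $\mathbb{S}^{N-1}$ (Lemmas \ref{nhlem17}, \ref{nhlem31}, \ref{nhlem32}, Proposition \ref{nhprop2}), and a Brezis--Nirenberg min--max for the high-energy solution (Lemma \ref{nhlem25}, Proposition \ref{nhprop3}). The only small deviations are technical: in Step 3 the paper uses the gradient-weighted center $G(u)=\int\frac{x}{|x|}|\nabla u|^2\big/\big|\int\frac{x}{|x|}|\nabla u|^2\big|$ rather than the nonlocal barycenter, reserving the barycenter $\beta$ for Step 4, where the min--max class consists of extensions $q:\overline{B}_{r_0}\to\mathcal{V}$ of the fixed boundary data $h_\rho^{\e_0,\sigma}$ and the linking through $\{\beta=0\}$ is established by Brouwer degree.
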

The paper is organized as follows: In Section 2, we give the variational framework and preliminary results. In section 3, using the Nehari manifold technique, we prove the existence of the first solution. In section 4, we prove some crucial estimates of the minimizer of $S_{H, L}$(defined in \eqref{nh5}) and analyze the  Palais-Smale sequences. In section 5, we prove the existence of the second and third solution. In section 6, we prove the existence of the fourth solution.
\section{Variational framework and preliminary results}
 We start with the familiar Hardy-Littlewood-Sobolev Inequality which leads to the study of nonlocal Choquard equation using  variational methods.
\begin{Proposition}\cite{leib}(\textbf{Hardy-Littlewood-Sobolev Inequality})
	Let $t,r>1$ and $0<\mu <N$ with $1/t+\mu/N+1/r=2$, $f\in L^t(\mathbb{R}^N)$ and $h\in L^r(\mathbb{R}^N)$. There exists a sharp constant $C(t,r,\mu,N)$ independent of $f$ and $h$ such that 
	\begin{equation}\label{co9}
	\int_{\mathbb{R}^N}\int_{\mathbb{R}^N}\frac{f(x)h(y)}{|x-y|^{\mu}}~dxdy \leq C(t,r,\mu,N) \|f\|_{L^t(\mathbb{R}^N)}\|h\|_{L^r(\mathbb{R}^N)}.
	\end{equation}
	If $t=r=2N/(2N-\mu)$, then 
	\begin{align*}
	C(t,r,\mu,N)=C(N,\mu)= \pi^{\frac{\mu}{2}}\frac{\Gamma(\frac{N}{2}-\frac{\mu}{2})}{\Gamma(N-\frac{\mu}{2})}\left\lbrace \frac{\Gamma(\frac{N}{2})}{\Gamma(\frac{\mu}{2})}\right\rbrace^{-1+\frac{\mu}{N}}.
	\end{align*}
	Equality holds in  \eqref{co9} if and only if $f\equiv (constant)h$ and 
	\begin{align*}
	h(x)= A(\gamma^2+|x-a|^2)^{(2N-\mu)/2}
	\end{align*} 
	for some $A\in \mathbb{C}, 0\neq \gamma \in \mathbb{R}$ and $a \in \mathbb{R}^N$. \QED
\end{Proposition}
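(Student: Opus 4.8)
Since this is Lieb's theorem, I only outline the structure of a proof; it proceeds in three stages — the bare inequality with some finite constant, the existence of an optimizing pair together with a reduction to the conformally invariant exponents, and finally the identification of the sharp constant and of the equality cases. For the inequality itself, write the left-hand side of \eqref{co9} as $\langle f,I_\mu h\rangle$ with $I_\mu h(x)=\int_{\mathbb{R}^N}|x-y|^{-\mu}h(y)\,dy$. Since the kernel $|x|^{-\mu}$ lies in the weak Lebesgue space $L^{N/\mu,\infty}(\mathbb{R}^N)$, the weak Young inequality — a consequence of the Marcinkiewicz interpolation theorem applied to the sublinear map $h\mapsto I_\mu h$ — gives boundedness of $I_\mu$ from $L^r(\mathbb{R}^N)$ to $L^s(\mathbb{R}^N)$ with $\frac1s=\frac1r+\frac\mu N-1$; the balance condition $\frac1t+\frac\mu N+\frac1r=2$ is exactly $\frac1t+\frac1s=1$, so H\"older's inequality closes the estimate and produces a finite $C(t,r,\mu,N)$. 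One can equally well argue from the layer-cake representation and the Riesz rearrangement inequality, which is the route that also prepares the ground for the sharp constant.

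To locate the best constant, normalize $\|f\|_{L^t}=\|h\|_{L^r}=1$ and maximize $\langle f,I_\mu h\rangle$. By the Riesz rearrangement inequality, replacing $f$ and $h$ by their symmetric-decreasing rearrangements $f^\ast,h^\ast$ (the kernel already being radially nonincreasing) does not decrease the integral and preserves both norms, so one may restrict to radial nonincreasing $f,h$; along a maximizing sequence of such functions, after using the dilation and translation invariances to fix a length scale, a Helly-type compactness argument for radial monotone functions (or a concentration-compactness argument ruling out vanishing and dichotomy) yields a maximizing pair, satisfying the Euler--Lagrange system $I_\mu h=\lambda f^{t-1}$, $I_\mu f=\lambda h^{r-1}$. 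In the conformally invariant case $t=r=\frac{2N}{2N-\mu}$ the two exponents coincide, $f$ and $h$ may be taken equal (the form $\langle f,I_\mu h\rangle$ being symmetric and positive definite, with $\widehat{|x|^{-\mu}}>0$), and the quotient $\langle f,I_\mu f\rangle/\|f\|_{L^t}^2$ is invariant under the full conformal group of $\mathbb{R}^N\cup\{\infty\}$ — translations, dilations, and the inversion $x\mapsto x/|x|^2$, with the weight $|x|^{-(2N-\mu)}$ absorbed into the conformal weight of $f$. Transferring to $S^N$ by stereographic projection turns the inequality into $\int_{S^N}\int_{S^N}\frac{F(\xi)F(\eta)}{|\xi-\eta|^\mu}\,d\xi\,d\eta\le C\|F\|_{L^t(S^N)}^2$, now with $O(N+1,1)$ acting, and Lieb's competing-symmetries argument — alternately applying the symmetric-decreasing rearrangement on $\mathbb{R}^N$ and a fixed conformal map of $S^N$ interchanging the poles — forces any optimizer to converge to the unique common fixed point of the two operations, the constant function on $S^N$. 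Pulling this back to $\mathbb{R}^N$ gives exactly the family of functions proportional to $(\gamma^2+|x-a|^2)^{-(2N-\mu)/2}$, and evaluating the quotient on this family is a Beta-function computation that produces the stated value of $C(N,\mu)$.

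For the equality cases, the Riesz rearrangement inequality is an equality only when $f$ and $h$ are, up to a common translation, already symmetric-decreasing about a single center, and the competing-symmetries analysis — or, alternatively, a moving-planes/reflection argument applied directly to the Euler--Lagrange equation — shows that the only such optimizers are the members of the conformal family just described, whence $f\equiv(\text{const})h$ with $h$ of the displayed form. The genuinely delicate point is this last stage, namely the simultaneous determination of the sharp constant and the rigidity of the optimizers; by contrast the existence of an optimizer via rearrangement and compactness is routine, and the qualitative inequality \eqref{co9} is entirely standard harmonic analysis.
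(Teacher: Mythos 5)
The paper offers no proof of this Proposition at all: it is quoted verbatim from Lieb--Loss \cite{leib} and used as a black box, so there is nothing in the text to compare your argument against line by line. That said, your outline is a faithful sketch of how the result is actually established: the qualitative bound via the weak Young inequality for the kernel $|x|^{-\mu}\in L^{N/\mu,\infty}$, existence of optimizers via the Riesz rearrangement inequality plus a Helly/compactness argument for radial decreasing functions, the reduction to $f=h$ in the diagonal case using positive definiteness of the form ($\widehat{|x|^{-\mu}}>0$), and the identification of the extremals via conformal invariance and stereographic projection. Two remarks. First, the competing-symmetries mechanism you invoke is due to Carlen--Loss; Lieb's original 1983 argument identifies the optimizers differently (via the Euler--Lagrange equation and a rearrangement/rigidity analysis), though both routes land on the same conformal family, and either is acceptable for this citation. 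Second, you correctly write the extremal as a multiple of $(\gamma^2+|x-a|^2)^{-(2N-\mu)/2}$; note that the Proposition as printed in the paper carries a sign typo in the exponent (it shows $+(2N-\mu)/2$, which would not even lie in $L^{2N/(2N-\mu)}(\mathbb{R}^N)$), so your version is the correct one. As an outline of a deep cited theorem rather than a self-contained proof, what you have written is appropriate and contains no wrong steps; the only places where real work is being elided are the compactness argument for the maximizing sequence and the rigidity statement in the equality case, exactly as you acknowledge.
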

\noi The best constant for the embedding $D^{1,2}(\mathbb{R}^N)$ into  $L^{2^*}(\mathbb{R}^N)$ (where  $2^*= \frac{2N}{N-2}$ )is defined as 
\[
S=\inf_{ u \in D^{1,2}(\mathbb{R}^N) \setminus \{0\}} \left\{ \int_{\mathbb{R}^N} |\na u|^2 dx:\; \int_{\mathbb{R}^N}|u|^{2^{*}}dx=1\right\}.\]
%It is proved in \cite{talenti} that the  best constant $S$ is achieved if and only if $u$ is of the form 
%\begin{align*}
%\left(
%\frac{\e}{\e^2+|x-(1-\e)\sigma|^2} \right)^{\frac{N-2}{2}} \text{ for } \sigma \in \mathbb{S}^{N-1}:=\{x\in \mathbb{R}^N: |x|=1\} \text{ and } \e \in (0,1] .
%\end{align*}
Consequently, we define 
\begin{equation}\label{nh5}
S_{H,L}= \inf_{ u \in D^{1,2}(\mathbb{R}^N)\setminus \{0\}} \left\{  \int_{\mathbb{R}^N}|\nabla u|^2 dx:\;  \int_{\mathbb{R}^{N}} \int_{\mathbb{R}^N } \frac{|u(x)|^{2^{*}_{\mu}}|u(y)|^{2^{*}_{\mu}}}{|x-y|^{\mu}}~dxdy=1 \right\}
\end{equation}

\begin{Lemma}\label{nhlem7}
	\cite{yang}
	The constant $S_{H,L}$ defined in \eqref{nh5} is achieved if and only if 
	\begin{align*}
	u=C\left(\frac{b}{b^2+|x-a|^2}\right)^{\frac{N-2}{2}}
	\end{align*} 
	where $C>0$ is a fixed constant , $a\in \mathbb{R}^N$ and $b\in (0,\infty)$ are parameters. Moreover,
	\[
	S=S_{H,L} ~(C(N,\mu))^{\frac{N-2}{2N -\mu}}.\]
\end{Lemma}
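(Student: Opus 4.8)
The lemma says that the Choquard--Sobolev constant $S_{H,L}$ differs from the Sobolev constant $S$ by one explicit factor and that its extremals are exactly the Aubin--Talenti instantons. My plan is to squeeze the Rayleigh quotient defining $S_{H,L}$ between the sharp Sobolev inequality $\|\nabla u\|_{2}^{2}\ge S\,\|u\|_{2^{*}}^{2}$ and the Hardy--Littlewood--Sobolev inequality \eqref{co9}, and then to record precisely when both are saturated simultaneously. The only inputs that are not bookkeeping are the classical classification of extremals for $S$ (Aubin, Talenti; Lieb) and the equality case of \eqref{co9} recalled in the Proposition above; the sole standing hypothesis needed is $0<\mu<N$ (so that $\tfrac{2N}{2N-\mu}>1$ and $2^{*}_{\mu}\in(1,2^{*})$), the restriction $\mu<\min\{4,N\}$ of Theorem \ref{nhthm1} being irrelevant here.

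\emph{Lower bound.} Since $2^{*}_{\mu}\cdot\tfrac{2N}{2N-\mu}=2^{*}$ and $2^{*}\cdot\tfrac{2N-\mu}{N}=2\cdot 2^{*}_{\mu}$, applying \eqref{co9} with $t=r=\tfrac{2N}{2N-\mu}$ and $f=h=|u|^{2^{*}_{\mu}}$ gives, for every $u\in D^{1,2}(\mathbb{R}^{N})$,
\[
\int_{\mathbb{R}^{N}}\!\int_{\mathbb{R}^{N}}\frac{|u(x)|^{2^{*}_{\mu}}|u(y)|^{2^{*}_{\mu}}}{|x-y|^{\mu}}\,dx\,dy\ \le\ C(N,\mu)\,\Big(\int_{\mathbb{R}^{N}}|u|^{2^{*}}\Big)^{\frac{2N-\mu}{N}}=C(N,\mu)\,\|u\|_{2^{*}}^{2\cdot 2^{*}_{\mu}} .
\]
Hence, for any $u$ normalised so that the double integral equals $1$, one has $\|u\|_{2^{*}}^{2}\ge C(N,\mu)^{-1/2^{*}_{\mu}}$, and therefore $\|\nabla u\|_{2}^{2}\ge S\,\|u\|_{2^{*}}^{2}\ge S\,C(N,\mu)^{-(N-2)/(2N-\mu)}$, using $1/2^{*}_{\mu}=(N-2)/(2N-\mu)$. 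Taking the infimum yields $S_{H,L}\ge S\,C(N,\mu)^{-(N-2)/(2N-\mu)}$.

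\emph{The instanton saturates both inequalities.} Test the quotient on $U(x)=C\big(\tfrac{b}{b^{2}+|x-a|^{2}}\big)^{(N-2)/2}$. A direct computation gives $U^{2^{*}_{\mu}}=C^{2^{*}_{\mu}}b^{(2N-\mu)/2}\big(b^{2}+|x-a|^{2}\big)^{-(2N-\mu)/2}$, which is exactly of the form $A\big(\gamma^{2}+|x-a|^{2}\big)^{-(2N-\mu)/2}$ characterising equality in \eqref{co9}; hence \eqref{co9} is an equality for $f=h=U^{2^{*}_{\mu}}$. Since $U$ is an extremal for $S$, i.e. $\|\nabla U\|_{2}^{2}=S\,\|U\|_{2^{*}}^{2}$, both inequalities of the preceding paragraph become equalities along $U$, so (after normalising $U$) $\|\nabla U\|_{2}^{2}=S\,C(N,\mu)^{-(N-2)/(2N-\mu)}$. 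Therefore $S_{H,L}\le S\,C(N,\mu)^{-(N-2)/(2N-\mu)}$, which together with the lower bound gives the identity $S=S_{H,L}\,\big(C(N,\mu)\big)^{(N-2)/(2N-\mu)}$, and it already proves the ``if'' direction: each such $U$, with $C$ fixed by the normalisation, attains $S_{H,L}$.

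\emph{``Only if'' and the main obstacle.} Let $u$ attain $S_{H,L}$ and normalise it by the constraint in \eqref{nh5}; then $\|\nabla u\|_{2}^{2}=S_{H,L}=S\,C(N,\mu)^{-(N-2)/(2N-\mu)}$, which forces equality at every step above, in particular $\|\nabla u\|_{2}^{2}=S\,\|u\|_{2^{*}}^{2}$, so $u$ is an extremal for the Sobolev quotient. Since $\|\nabla|u|\|_{2}=\|\nabla u\|_{2}$, the nonnegative function $|u|$ is also such an extremal, hence by the classical classification of Sobolev extremals $|u|=C\big(\tfrac{b}{b^{2}+|x-a|^{2}}\big)^{(N-2)/2}$ for some $C>0$, $b>0$, $a\in\mathbb{R}^{N}$; by elliptic regularity $u$ is continuous and $|u|$ is strictly positive, so $u=\pm|u|$, of the asserted form. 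The only substantial ingredient is this sharp Sobolev inequality with its extremal classification, which I would invoke rather than reprove; everything else is exponent arithmetic plus the observation that Sobolev instantons automatically saturate the nonlocal term. One point to state with care is the meaning of ``attained'': \eqref{nh5} is invariant under translations and dilations but not under $u\mapsto\lambda u$ on the constraint set, so the constant $C$ in the conclusion is pinned down by the normalisation, the family being written with a free $C>0$ only because it describes the extremals of the scale-invariant quotient $\|\nabla u\|_{2}^{2}\big/\big(\int\!\!\int|u|^{2^{*}_{\mu}}|u|^{2^{*}_{\mu}}|x-y|^{-\mu}\big)^{1/2^{*}_{\mu}}$.
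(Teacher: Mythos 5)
Your argument is correct and is essentially the standard proof of this lemma: the paper gives no proof here but cites Gao--Yang \cite{yang}, whose Lemma 1.2 establishes exactly this identity by the same squeeze between the sharp Sobolev inequality and the Hardy--Littlewood--Sobolev inequality, checking that the Aubin--Talenti instantons saturate both. One incidental merit of your write-up is that you use the correct (negative) exponent $-(2N-\mu)/2$ in the equality case of \eqref{co9}, whereas the paper's statement of that Proposition carries a sign typo.
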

\begin{Lemma}\cite{yang}
	For $N\geq 3$ and $0<\mu<N$. Then 
	\begin{align*}
	\|.\|_{NL}:= \left(\ds \int_{\mathbb{R}^N}\int_{\mathbb{R}^N}\frac{|.|^{2^{*}_{\mu}}|.|^{2^{*}_{\mu}}}{|x-y|^{\mu}}~dxdy\right)^{\frac{1}{2.2^{*}_{\mu}}}
	\end{align*}
	defines a norm on $L^{2^*}(\mathbb{R}^N)$.
\end{Lemma}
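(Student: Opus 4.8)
The plan is to verify the three defining properties of a norm for the functional $\|\cdot\|_{NL}$ on $L^{2^*}(\mathbb{R}^N)$. First I would observe that finiteness of $\|u\|_{NL}$ for every $u\in L^{2^*}(\mathbb{R}^N)$ is an immediate consequence of the Hardy-Littlewood-Sobolev inequality \eqref{co9}: taking $t=r=\frac{2N}{2N-\mu}$ and $f=h=|u|^{2^*_\mu}$, one has $f\in L^t(\mathbb{R}^N)$ precisely because $t\cdot 2^*_\mu = \frac{2N}{2N-\mu}\cdot\frac{2N-\mu}{N-2}=\frac{2N}{N-2}=2^*$, so $\|f\|_{L^t}=\|u\|_{L^{2^*}}^{2^*_\mu}<\infty$, and \eqref{co9} gives
\[
\int_{\mathbb{R}^N}\int_{\mathbb{R}^N}\frac{|u(x)|^{2^*_\mu}|u(y)|^{2^*_\mu}}{|x-y|^\mu}\,dx\,dy \le C(N,\mu)\,\|u\|_{L^{2^*}}^{2\cdot 2^*_\mu}<\infty .
\]
Positive homogeneity is trivial: replacing $u$ by $\lambda u$ scales the double integral by $|\lambda|^{2\cdot 2^*_\mu}$, hence $\|\lambda u\|_{NL}=|\lambda|\,\|u\|_{NL}$. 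For the definiteness property, note the integrand is nonnegative, so $\|u\|_{NL}=0$ forces the double integral to vanish; since the Riesz kernel $|x-y|^{-\mu}$ is strictly positive, this forces $|u(x)|^{2^*_\mu}|u(y)|^{2^*_\mu}=0$ for a.e.\ $(x,y)$, whence $u=0$ a.e.

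The substantive point is the triangle inequality, and here the key step is to recognize the bilinear form
\[
\langle v,w\rangle := \int_{\mathbb{R}^N}\int_{\mathbb{R}^N}\frac{v(x)\,w(y)}{|x-y|^\mu}\,dx\,dy
\]
as an inner product on the space $L^{\frac{2N}{2N-\mu}}(\mathbb{R}^N)$ of nonnegative functions (up to a subtlety). The Riesz potential operator $I_\mu: g\mapsto \int |x-y|^{-\mu}g(y)\,dy$ is, after rescaling, convolution with a positive-definite kernel: the Fourier transform of $|x|^{-\mu}$ is a positive constant times $|\xi|^{\mu-N}$, which is positive. Therefore $\langle v,w\rangle = c_{N,\mu}\int_{\mathbb{R}^N}|\xi|^{\mu-N}\widehat v(\xi)\overline{\widehat w(\xi)}\,d\xi$, a genuine (complex) inner product on the weighted space $\{g:\ |\xi|^{(\mu-N)/2}\widehat g\in L^2\}$, into which $L^{\frac{2N}{2N-\mu}}(\mathbb{R}^N)$ embeds by HLS. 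Applying Cauchy-Schwarz for this inner product with $v=|u|^{2^*_\mu}$, $w=|u'|^{2^*_\mu}$ yields
\[
\int\!\!\int\frac{|u(x)|^{2^*_\mu}|u'(y)|^{2^*_\mu}}{|x-y|^\mu}\,dx\,dy \le \|u\|_{NL}^{2^*_\mu}\,\|u'\|_{NL}^{2^*_\mu}.
\]
Now expand $\|u+u'\|_{NL}^{2\cdot 2^*_\mu}$: writing $P:=2^*_\mu$ and using that $|u+u'|^{P}\le$ is not directly linear, I would instead first bound $|u+u'|^{P}(x)\le (|u|(x)+|u'|(x))^{P}$ and exploit that the double integral is monotone in the integrand. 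So it suffices to treat nonnegative $u,u'$ and bound $\|u+u'\|_{NL}$. Then
\[
\|u+u'\|_{NL}^{2P}=\int\!\!\int\frac{(u+u')^P(x)\,(u+u')^P(y)}{|x-y|^\mu} = \langle (u+u')^P,(u+u')^P\rangle,
\]
and since $t\mapsto t^{1/2}$ of the inner-product norm is subadditive (this is exactly the triangle inequality in the Hilbert space, which follows from Cauchy-Schwarz), one gets $\langle (u+u')^P,(u+u')^P\rangle^{1/2}\le \langle u^P,u^P\rangle^{1/2}+\langle (u')^P,(u')^P\rangle^{1/2}$, i.e.\ $\|u+u'\|_{NL}^{P}\le \|u\|_{NL}^P+\|u'\|_{NL}^P$. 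Raising to the power $1/P$ and using subadditivity of $t\mapsto t^{1/P}$ for $P\ge 1$ (note $2^*_\mu=\frac{2N-\mu}{N-2}\ge 1$ since $\mu<N$) completes the argument: $\|u+u'\|_{NL}\le \|u\|_{NL}+\|u'\|_{NL}$.

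The main obstacle I anticipate is the careful handling of the positive-definiteness of the Riesz kernel and the resulting Cauchy-Schwarz/triangle inequality — one must make sure the formal Fourier computation is justified on the relevant function class, e.g.\ by working first on nonnegative Schwartz functions where $\langle v,v\rangle\ge 0$ is clear and $\langle v,w\rangle^2\le\langle v,v\rangle\langle w,w\rangle$ holds, then passing to the limit via HLS and density. An alternative route that sidesteps Fourier analysis entirely is to cite that the bilinear form $\langle v,w\rangle$ is nonnegative definite (a classical fact, e.g.\ from Lieb-Loss), derive Cauchy-Schwarz abstractly, and then run the two-step subadditivity argument above; this is the cleanest path and is what I would present, reducing the whole proof to: (i) HLS for finiteness and positivity, (ii) trivial homogeneity, (iii) Cauchy-Schwarz from nonnegative-definiteness plus the two elementary power inequalities for the triangle inequality.
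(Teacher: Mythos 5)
The paper does not prove this lemma; it is quoted from \cite{yang}, so your argument has to stand on its own. The finiteness, homogeneity and definiteness parts are fine, and your identification of the bilinear form $\langle v,w\rangle=\int\!\!\int |x-y|^{-\mu}v(x)w(y)\,dx\,dy$ as positive definite, hence satisfying Cauchy--Schwarz, is exactly the content of Proposition \ref{Propnh1} in the paper. The problem is the step where you deduce the triangle inequality from it.

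Your key claim is $\langle (u+u')^P,(u+u')^P\rangle^{1/2}\le \langle u^P,u^P\rangle^{1/2}+\langle (u')^P,(u')^P\rangle^{1/2}$, i.e.\ $\|u+u'\|_{NL}^{P}\le\|u\|_{NL}^{P}+\|u'\|_{NL}^{P}$. This is \emph{false}: take $u'=u$, so the left side is $\|2u\|_{NL}^{P}=2^{P}\|u\|_{NL}^{P}$ while the right side is $2\|u\|_{NL}^{P}$, and $2^{P}>2$ since $P=2^*_\mu>1$. The purported justification --- the triangle inequality in the Hilbert (semi)norm $\|g\|_*:=\langle g,g\rangle^{1/2}$ --- would apply to $\|u^P+(u')^P\|_*$, but $(u+u')^{P}\neq u^{P}+(u')^{P}$; in fact $(u+u')^{P}\ge u^{P}+(u')^{P}$ pointwise for $P\ge 1$, so monotonicity of the form pushes the inequality in the wrong direction. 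The correct way to combine Cauchy--Schwarz with the power $P$ is a Minkowski-type argument: with $w=|u|+|u'|$, split $w^{P}=w^{P-1}|u|+w^{P-1}|u'|$ so that $\|w\|_{NL}^{2P}=\langle w^{P-1}|u|,w^{P}\rangle+\langle w^{P-1}|u'|,w^{P}\rangle$, bound each term by $\langle w^{P-1}|u|,w^{P-1}|u|\rangle^{1/2}\langle w^{P},w^{P}\rangle^{1/2}$ via Cauchy--Schwarz, then apply the ordinary H\"older inequality with exponents $P/(P-1)$ and $P$ to the double integral $\int\!\!\int |x-y|^{-\mu}\,w(x)^{P-1}|u(x)|\,w(y)^{P-1}|u(y)|\,dx\,dy$ (splitting the kernel as $|x-y|^{-\mu(P-1)/P}\cdot|x-y|^{-\mu/P}$) to get $\langle w^{P-1}|u|,w^{P-1}|u|\rangle\le \|w\|_{NL}^{2(P-1)}\|u\|_{NL}^{2}$. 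Summing and dividing by $\|w\|_{NL}^{2P-1}$ yields $\|u+u'\|_{NL}\le\|w\|_{NL}\le\|u\|_{NL}+\|u'\|_{NL}$. As written, your proof of the triangle inequality does not go through.
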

The energy functional $\mc{J}_f: H^{1}_{0}(\Om)\rightarrow \mathbb{R}$ associated with the problem $(P_f)$ is  
\begin{equation}\label{nh25}
\mathcal{J}_f(u) = \frac12 \int_{\Om} |\nabla u|^2~dx - \frac{1}{2.2^*_{\mu}} \int_{\Om}\int_{\Om} \frac{|u^+(x)|^{2^{*}_{\mu}}|u^+(y)|^{2^{*}_{\mu}}}{|x-y|^{\mu}}~dxdy-\int_{\Om}fu ~dx,
\end{equation}
where $u^+= \text{max}(u,0)$. By using Hardy-Littlewood-Sobolev inequality \eqref{co9},  we have 
\begin{align*}
\left(\ds \int_{\Om}\int_{\Om}\frac{|u^+(x)|^{2^{*}_{\mu}}|u^+(y)|^{2^{*}_{\mu}}}{|x-y|^{\mu}}~dxdy\right)^{\frac{1}{2^{*}_{\mu}}}\leq C(N,\mu)^{\frac{2N-\mu}{N-2}}|u|_{2^*}^2. 
\end{align*}
 It is not difficult to show that  the functional $\mathcal{J}_f \in C^1(H_0^{1}(\Om),\mathbb{R})$ and moreover, if $\mu < \min \{4,N\}$ then  $\mathcal{J}_f \in C^2(H_0^{1}(\Om),\mathbb{R})$.
\begin{Definition}
	A function $u \in H_0^1(\Om)$ is called a weak solution of the problem $(P_f)$ if for all $v \in H_0^1(\Om)$ the following holds
	\begin{align*}
 \int_{\Om} \nabla u\cdot \na v ~dx -  \int_{\Om}\int_{\Om} \frac{|u^+(x)|^{2^{*}_{\mu}}|u^+(y)|^{2^{*}_{\mu}-1}v(y)}{|x-y|^{\mu}}~dxdy-\int_{\Om}fv ~dx=0
	\end{align*}
\end{Definition}
\begin{Definition}
	 For $c\in \mathbb{R}$, $\{u_n\}$ is a $(PS)_{c}$ sequence in $H_0^1(\Om)$ for $\mathcal{J}_f$ if $\mathcal{J}_f= c+o(1)$ and $\mathcal{J}^{\prime}_f(u_n)=o(1)$ strongly in $H^{-1}$ as $n\ra \infty$. We say $\mathcal{J}_f$ satisfies the $(PS)_{c}$ condition in $H_0^1(\Om)$ if every $(PS)_{c}$ sequence in  $H_0^1(\Om)$ has a convergent subsequence.
	\end{Definition}

\noi Since $\mathcal{J}_f$ is not bounded below on $H_0^1(\Om)$, it is worth to consider the Nehari manifold
\begin{align*} 
\mathcal{N}_f:=\{u \in H_0^1(\Om)\setminus\{0\}\; |\; u^+\not \equiv 0 \text{ and } \ld \mathcal{J}_{f}^{\prime}(u),u\rd =0\},
\end{align*}
where   $\ld\;, \;\rd $ denotes  the usual duality. We define $$
 \Upsilon_f= \ds \inf_{u \in \mathcal{N}_f}\mathcal{J}_f(u).$$ 
 Note that when $f(x)\equiv 0$, $\Upsilon_0(\Omega)$ is independednt of $\Om$ and $\Upsilon_0(\Omega):=\Upsilon_0 = \frac{N-\mu+2}{2(2N-\mu)}S_{H,L}^{\frac{2N-\mu}{N-\mu+2}}.$
%In case of critical Sobolev exponent problem(See \eqref{nh26}), $\Upsilon_0= \frac{1}{N}S^{\frac{N}{2}}$. 
  
\noi {\bf Notations:} Throughout the paper we will use the notation $\mathcal{J}_0=\mathcal{J},\; \mathcal{N}_0=\mathcal{N}, \|.\|=\|.\|_{H^{1}_{0}(\Om)}$ 

 $$a(u)= \ds \int_{\Om}|\na u |^2~dx \; \text{and} \;b(u)=\ds \int_{\Om}\int_{\Om}\frac{(u^+(x))^{2^{*}_{\mu}}(u^+(y))^{2^{*}_{\mu}}}{|x-y|^{\mu}}~dxdy$$.\\
An easy consequence of equation \eqref{co9} gives 
 $\mathcal{J}_f$ is coercive and bounded below on $\mathcal{N}_f$.
 \begin{Proposition}\label{Propnh1}
 	For any $u,v  \in H_0^1(\Om)$, we have 
 {\small
 	\begin{align*}
 	\int_{\Om}\int_{\Om} \frac{|u(x)|^{2^{*}_{\mu}}|v(y)|^{2^{*}_{\mu}}}{|x-y|^{\mu}}~dxdy \leq \left(\ds \int_{\Om}\int_{\Om}\frac{|u(x)|^{2^{*}_{\mu}}|u(y)|^{2^{*}_{\mu}}}{|x-y|^{\mu}}~dxdy\right)^{\frac{1}{2}} \left(\ds \int_{\Om}\int_{\Om}\frac{|v(x)|^{2^{*}_{\mu}}|v(y)|^{2^{*}_{\mu}}}{|x-y|^{\mu}}~dxdy\right)^{\frac{1}{2}}.
 	\end{align*}}
 \end{Proposition}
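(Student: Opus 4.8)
\emph{Proof proposal.} The inequality to be proved is a Cauchy--Schwarz-type estimate for the bilinear form
\[
B(w,z):=\int_{\Om}\int_{\Om}\frac{w(x)\,z(y)}{|x-y|^{\mu}}~dxdy
\]
evaluated at $w=|u|^{2^*_\mu}$ and $z=|v|^{2^*_\mu}$. The plan is to verify that $B$, restricted to nonnegative integrable functions, is a symmetric, positive semidefinite bilinear form, and then to invoke the abstract Cauchy--Schwarz inequality $|B(w,z)|\le B(w,w)^{1/2}B(z,z)^{1/2}$. Applying this with $w=|u|^{2^*_\mu}$, $z=|v|^{2^*_\mu}$ gives exactly the claimed bound, since $B(|u|^{2^*_\mu},|u|^{2^*_\mu})=b(u)$ in the notation of the paper (with $u^+$ replaced by $|u|$, which only enlarges the integrand and does not affect the argument). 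First I would record the finiteness of all the integrals involved: by the Hardy--Littlewood--Sobolev inequality \eqref{co9} applied on $\Om$ with $t=r=\tfrac{2N}{2N-\mu}$, each of $B(|u|^{2^*_\mu},|u|^{2^*_\mu})$ and $B(|v|^{2^*_\mu},|v|^{2^*_\mu})$ is controlled by $C(N,\mu)\,\||u|^{2^*_\mu}\|_{t}^{2}$ and the analogous expression for $v$, both finite since $u,v\in H^1_0(\Om)\hookrightarrow L^{2^*}(\Om)$ and $2^*_\mu\, t=2^*$; so there is no integrability obstruction and Fubini may be used freely on the nonnegative integrands.

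The core step is the positive semidefiniteness of $B$, i.e. the statement that the Riesz kernel $|x-y|^{-\mu}$ is a kernel of positive type on $\R^N$ for $0<\mu<N$. I would establish this via the Fourier representation of the Riesz potential: there is a constant $c_{N,\mu}>0$ such that
\[
\frac{1}{|x-y|^{\mu}} = c_{N,\mu}\int_{\R^N}\frac{e^{2\pi i\,\xi\cdot(x-y)}}{|\xi|^{N-\mu}}~d\xi
\]
in the sense of tempered distributions. Substituting this into $B(w,w)$ for $w$ a nonnegative $L^t$ function extended by zero outside $\Om$, and using Fubini, yields
\[
B(w,w) = c_{N,\mu}\int_{\R^N}\frac{|\widehat{w}(\xi)|^2}{|\xi|^{N-\mu}}~d\xi \ \ge\ 0,
\]
where $\widehat{w}$ denotes the Fourier transform; more generally the polarization identity gives $B(w,z)=c_{N,\mu}\int_{\R^N}|\xi|^{-(N-\mu)}\,\widehat{w}(\xi)\overline{\widehat{z}(\xi)}\,d\xi$ for real $w,z$. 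Hence $B$ is an inner product (after the usual quotient by its null space) on the relevant space of functions, and the Cauchy--Schwarz inequality applies verbatim:
\[
|B(w,z)| = \left| c_{N,\mu}\int_{\R^N}\frac{\widehat{w}\,\overline{\widehat{z}}}{|\xi|^{N-\mu}}~d\xi\right|
\le \left( c_{N,\mu}\int_{\R^N}\frac{|\widehat{w}|^2}{|\xi|^{N-\mu}}~d\xi\right)^{1/2}\!\!\left( c_{N,\mu}\int_{\R^N}\frac{|\widehat{z}|^2}{|\xi|^{N-\mu}}~d\xi\right)^{1/2}\! = B(w,w)^{1/2}B(z,z)^{1/2}.
\]
Taking $w=|u|^{2^*_\mu}$ and $z=|v|^{2^*_\mu}$ and noting $B(w,z)\ge\int_\Om\int_\Om |u(x)|^{2^*_\mu}|v(y)|^{2^*_\mu}|x-y|^{-\mu}\,dxdy$ completes the proof.

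An alternative, more elementary route that avoids Fourier analysis is the following symmetrization/splitting trick: write $a=|u(x)|^{2^*_\mu}$, $b=|v(x)|^{2^*_\mu}$ and use $a(x)b(y)\le\tfrac12\big(a(x)a(y)\tfrac{\sqrt{B(b,b)}}{\sqrt{B(a,a)}}+b(x)b(y)\tfrac{\sqrt{B(a,a)}}{\sqrt{B(b,b)}}\big)$ after a scaling $u\mapsto \la u$, $v\mapsto \la^{-1}v$ chosen to equalize $B(a,a)$ and $B(b,b)$; integrating against the (nonnegative) kernel and optimizing in $\la$ reproduces the inequality. The main obstacle, such as it is, is purely bookkeeping: one must make sure all applications of Fubini's theorem are justified, which as noted follows from the Hardy--Littlewood--Sobolev bound guaranteeing absolute integrability; once that is in place, either the positive-definiteness argument or the scaling argument gives the result immediately, and the inequality is genuinely just ``Cauchy--Schwarz for a positive-type kernel.''
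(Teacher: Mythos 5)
Your main argument is correct and is essentially the standard proof of this inequality; the paper itself only cites Lemma 2.3 of \cite{systemchoq}, which establishes the same fact via the semigroup property of Riesz potentials, writing $|x-y|^{-\mu}=C_{N,\mu}\int_{\R^N}|x-z|^{-(N+\mu)/2}|z-y|^{-(N+\mu)/2}\,dz$ so that $B(w,z)=C_{N,\mu}\int_{\R^N}(K*w)(K*z)\,dz$ with $K=|\cdot|^{-(N+\mu)/2}$, followed by Cauchy--Schwarz in $L^2(dz)$. That is exactly the physical-space counterpart of your Fourier positivity argument (the convolution-square representation \emph{is} the statement that the kernel is of positive type), so the two routes are the same idea in different clothing; the semigroup version has the minor advantage that all integrands are nonnegative, so Tonelli applies with no distributional caveats about $\widehat{w}$ for $w\in L^{2N/(2N-\mu)}$.

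One genuine error to flag: your ``alternative, more elementary route'' does not work as stated. The pointwise inequality $a(x)b(y)\le\tfrac12\bigl(\la\,a(x)a(y)+\la^{-1}b(x)b(y)\bigr)$ is false in general (take $a(x)=b(y)=1$, $a(y)=b(x)=0$), and its integrated version $2B(a,b)\le B(a,a)+B(b,b)$ is equivalent to $B(a-b,a-b)\ge 0$, where $a-b$ is a \emph{signed} function. Mere nonnegativity of the kernel does not give this; you need positive semidefiniteness on signed inputs, which is precisely the nontrivial content you were trying to avoid. So the ``elementary'' route is circular, and the Fourier (or semigroup) argument is genuinely needed. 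Since you present it only as an aside, the proposal as a whole stands on the first argument.
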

 \begin{proof}
 For details of the proof see \cite[Lemma 2.3]{systemchoq}.\QED
 	\end{proof}

 \begin{Lemma}\label{nhlem11}
   For each $u\in H_0^1(\Om)$ there exists a unique $t >0 $ such that $t u \in \mathcal{N}$. Moreover, there holds $ \Upsilon_0\leq  \ds \left(\frac{N-\mu+2}{2(2N-\mu)}\right) \left(\frac{ a(u)^{2^*_{\mu}}}{b(u)}\right)^{\frac{1}{2^*_{\mu}-1}}$.
 \end{Lemma}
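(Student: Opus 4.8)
The plan is to run the standard fibering‑map (scaling) argument attached to the Nehari manifold $\mathcal{N}=\mathcal{N}_0$, i.e. the case $f\equiv 0$. First I would fix $u\in H_0^1(\Om)$ with $u^+\not\equiv 0$, so that $b(u)>0$; if $u^+\equiv 0$ there is nothing to prove, since no positive multiple of $u$ can lie in $\mathcal{N}$. For $t>0$ introduce the fibering map $\phi_u(t):=\mathcal{J}(tu)=\frac{t^2}{2}a(u)-\frac{t^{2\cdot 2^*_\mu}}{2\cdot 2^*_\mu}b(u)$, using that $(tu)^+=tu^+$. Since $\langle\mathcal{J}'(tu),tu\rangle = t^2a(u)-t^{2\cdot 2^*_\mu}b(u)=t\,\phi_u'(t)$, the condition $tu\in\mathcal{N}$ is equivalent to $\phi_u'(t)=0$.

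Next I would analyze $\phi_u'$. Differentiating, $\phi_u'(t)=t\big(a(u)-t^{2(2^*_\mu-1)}b(u)\big)$, and because $2^*_\mu>1$ and $a(u),b(u)>0$ the bracket vanishes at exactly one point $t>0$, namely
\[
t(u):=\left(\frac{a(u)}{b(u)}\right)^{\frac{1}{2(2^*_\mu-1)}},
\]
with $\phi_u'>0$ on $(0,t(u))$ and $\phi_u'<0$ on $(t(u),\infty)$. This gives the uniqueness of $t$ in the statement, and in addition shows that $\phi_u$ attains its global maximum on $(0,\infty)$ at $t(u)$, i.e. $\mathcal{J}(t(u)u)=\max_{t>0}\mathcal{J}(tu)$ (a fact I expect to reuse later when comparing energies of rescaled functions).

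For the inequality, I would use that on $\mathcal{N}$ one has $a=b$, hence for any $v\in\mathcal{N}$, $\mathcal{J}(v)=\left(\frac12-\frac{1}{2\cdot 2^*_\mu}\right)a(v)=\frac{2^*_\mu-1}{2\cdot 2^*_\mu}a(v)=\frac{N-\mu+2}{2(2N-\mu)}a(v)$, where the last equality is the algebraic identity $2^*_\mu=\frac{2N-\mu}{N-2}$. Taking $v=t(u)u$, inserting $a(t(u)u)=t(u)^2a(u)$ and the explicit value of $t(u)$, and simplifying with $1+\frac{1}{2^*_\mu-1}=\frac{2^*_\mu}{2^*_\mu-1}$, one obtains
\[
\mathcal{J}(t(u)u)=\frac{N-\mu+2}{2(2N-\mu)}\left(\frac{a(u)^{2^*_\mu}}{b(u)}\right)^{\frac{1}{2^*_\mu-1}}.
\]
Since $t(u)u\in\mathcal{N}$, this yields $\Upsilon_0=\inf_{\mathcal{N}}\mathcal{J}\le \mathcal{J}(t(u)u)$, which is the asserted bound. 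There is no real obstacle here — it is one‑variable calculus once the fibering map is written down; the only points needing minor care are noting that $b(u)>0$ (equivalently $u^+\not\equiv 0$) is implicitly required, and bookkeeping the exponents so that the constant $\frac{N-\mu+2}{2(2N-\mu)}$ emerges correctly.
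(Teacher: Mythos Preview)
Your proposal is correct and follows essentially the same approach as the paper: both introduce the fibering map $t\mapsto \mathcal{J}(tu)$, solve $m_u'(t)=0$ to obtain the unique $t(u)=\big(a(u)/b(u)\big)^{1/(2(2^*_\mu-1))}$, and then evaluate $\mathcal{J}(t(u)u)$ to deduce the bound on $\Upsilon_0$. You are in fact slightly more careful than the paper in flagging the implicit hypothesis $u^+\not\equiv 0$ (so that $b(u)>0$) and in recording that $t(u)$ is a global maximum of the fibering map.
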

 \begin{proof} Let $m_u(t)=\frac{t^2}{2}a(u)-\frac{t^{2.2^*_{\mu}}}{2.2^*_{\mu}}b(u) $ then on solving $m_u^{\prime}(t)=0$, we get  unique $t(u)= \left(\frac{a(u)}{b(u)}\right)^{\frac{1}{2(2^*_{\mu}-1)}}$ such that $t(u)u \in \mathcal{N}$. From the definition of $\Upsilon_0$, we have 
 $$
 \Upsilon_0\leq \mathcal{J}(t(u)u)= \left(\frac{1}{2}-\frac{1}{2.2^*_{\mu}}\right)\left(\frac{a(u)}{b(u)}\right)^{\frac{1}{2^*_{\mu}-1}}a(u)= \left(\frac{N-\mu+2}{2(2N-\mu)}\right)\left(\frac{ a(u)^{2^*_{\mu}}}{b(u)}\right)^{\frac{1}{2^*_{\mu}-1}}.$$
 \end{proof} \QED

\begin{Remark}\label{nhrem1}
	We remark that by Lemma 1.3 of \cite{yang}, $S_{H,L}$ is never achieved on bounded domain then if $u$ is a solution of the following equation 
		%$\Upsilon_0= \frac{N-\mu+2}{2(2N-\mu)}S_{H,L}^{\frac{2N-\mu}{N-\mu+2}}$ and the problem
	\begin{align*}
	-\De u & = \left(\int_{\Om}\frac{|u(y)|^{2^*_{\mu}}}{|x-y|^{\mu}}dy\right)|u|^{2^*_{\mu}-2}u  \; \text{ in }\;
	\Om,\qquad 	u  = 0 \;    \text{ on } \pa \Om ,
	\end{align*}
	then $\mathcal{J}(u)> \Upsilon_0=\frac{N-\mu+2}{2(2N-\mu)}S_{H,L}^{\frac{2N-\mu}{N-\mu+2}}.$
	%		has no positive ground state solution in $\Om$.}
\end{Remark}

 \begin{Lemma} \label{nhlem14}
	A sequence $\{u_n\}$ is a $(PS)_{\Upsilon_0}$- sequence for $\mathcal{J}$ in $H_0^1(\Om)$ if and only if $\mathcal{J}(u_n)= \Upsilon_0+o_n(1)$ and $a(u_n)=b(u_n)+o_n(1)$.
\end{Lemma}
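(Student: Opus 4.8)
The plan is to prove the two implications separately, using only the two scalar identities
$\mathcal{J}(u)=\tfrac12 a(u)-\tfrac{1}{2\cdot 2^*_\mu}b(u)$ and $\ld\mathcal{J}'(u),u\rd=a(u)-b(u)$
(recall $\mathcal{J}=\mathcal{J}_0$ carries no $f$-term), which combine into
$\mathcal{J}(u)-\tfrac{1}{2\cdot 2^*_\mu}\ld\mathcal{J}'(u),u\rd=\tfrac{2^*_\mu-1}{2\cdot 2^*_\mu}a(u)$.
For the forward implication, if $\{u_n\}$ is a $(PS)_{\Upsilon_0}$-sequence then $\ld\mathcal{J}'(u_n),u_n\rd=o_n(1)\|u_n\|$, so the displayed identity gives $\tfrac{2^*_\mu-1}{2\cdot 2^*_\mu}\|u_n\|^2=\Upsilon_0+o_n(1)+o_n(1)\|u_n\|$, which forces $\{u_n\}$ to be bounded (else $\|u_n\|^2\le C(1+\|u_n\|)$ along a subsequence). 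Boundedness then upgrades $\ld\mathcal{J}'(u_n),u_n\rd=o_n(1)\|u_n\|$ to $\ld\mathcal{J}'(u_n),u_n\rd=o_n(1)$, i.e. $a(u_n)=b(u_n)+o_n(1)$, while $\mathcal{J}(u_n)=\Upsilon_0+o_n(1)$ holds by definition. This direction is routine.

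For the converse, assume $\mathcal{J}(u_n)=\Upsilon_0+o_n(1)$ and $a(u_n)-b(u_n)=o_n(1)$. The identity above gives $a(u_n)\to \tfrac{2\cdot 2^*_\mu}{2^*_\mu-1}\Upsilon_0=S_{H,L}^{\frac{2N-\mu}{N-\mu+2}}=:\ell>0$, hence also $b(u_n)\to\ell$; in particular $\{u_n\}$ is bounded in $H_0^1(\Om)$ and bounded away from $0$. Put $t_n=(a(u_n)/b(u_n))^{\frac{1}{2(2^*_\mu-1)}}$, so that $t_n u_n\in\mathcal{N}$ by Lemma \ref{nhlem11}; since $a(u_n)/b(u_n)\to1$ we get $t_n\to1$, whence $\|t_nu_n-u_n\|=|t_n-1|\,\|u_n\|\to0$, and uniform continuity of $\mathcal{J}$ on bounded sets yields $\mathcal{J}(t_nu_n)\to\Upsilon_0=\inf_{\mathcal{N}}\mathcal{J}$. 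Thus $v_n:=t_nu_n$ is a minimizing sequence for $\mathcal{J}$ on the $C^1$ constraint $\mathcal{N}$.

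Now apply Ekeland's variational principle on $\mathcal{N}$ (complete, being closed in $H_0^1(\Om)$, and lying at positive distance from $0$) to obtain a second minimizing sequence $\{w_n\}\subset\mathcal{N}$ with $\|w_n-v_n\|\to0$ and $(\mathcal{J}|_{\mathcal{N}})'(w_n)\to0$. Since $\mathcal{N}$ is a natural constraint, write $\mathcal{J}'(w_n)=\la_n\psi'(w_n)+\e_n$ with $\psi(u)=a(u)-b(u)$ and $\e_n\to0$ in $H^{-1}$; pairing with $w_n$ and using $\ld\mathcal{J}'(w_n),w_n\rd=0$ together with the uniform bound $\ld\psi'(w_n),w_n\rd=-2(2^*_\mu-1)a(w_n)\le -c_0<0$ on $\mathcal{N}$ (the bound $a(w_n)\ge c_0>0$ coming from the Hardy--Littlewood--Sobolev inequality as in the discussion after Lemma \ref{nhlem11}) forces $\la_n\to0$, so $\mathcal{J}'(w_n)\to0$ in $H^{-1}$. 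Finally $\|u_n-w_n\|\le\|u_n-v_n\|+\|v_n-w_n\|\to0$, and $\mathcal{J}'$ is uniformly continuous on bounded subsets of $H_0^1(\Om)$ (a direct consequence of \eqref{co9} and H\"older's inequality, bounding $\|\mathcal{J}'(u)-\mathcal{J}'(v)\|_{H^{-1}}$ by $C(\|u\|,\|v\|)\|u-v\|$), so $\mathcal{J}'(u_n)=\mathcal{J}'(w_n)+o_n(1)\to0$; together with $\mathcal{J}(u_n)\to\Upsilon_0$ this makes $\{u_n\}$ a $(PS)_{\Upsilon_0}$-sequence.

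The main obstacle is this converse direction: a sequence satisfying only the two scalar conditions need not be Palais--Smale at a generic level, and the argument genuinely uses that $\Upsilon_0=\inf_{\mathcal{N}}\mathcal{J}$. Concretely, the delicate points are the passage, via Ekeland on the Nehari manifold and the natural-constraint identity, from the scalar information to the full conclusion $\mathcal{J}'(u_n)\to0$ in $H^{-1}$, and then the transfer of this back to the original sequence $u_n$ through uniform continuity of $\mathcal{J}'$ on bounded sets; the remaining estimates (boundedness, $t_n\to1$, uniform lower bound on $\mathcal{N}$) are standard.
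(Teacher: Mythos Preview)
Your argument is correct, but it takes a genuinely different route from the paper's own proof. After reaching $a(u_n)\to D\Upsilon_0$ and $b(u_n)\to D\Upsilon_0$, the paper does not project onto $\mathcal{N}$ or invoke Ekeland. Instead it introduces the linear functionals $T_n(\psi)=\int_\Om\int_\Om\frac{(u_n^+(x))^{2^*_\mu}(u_n^+(y))^{2^*_\mu-1}\psi(y)}{|x-y|^\mu}\,dxdy$, proves the \emph{sharp} bound $\|T_n\|_{H^{-1}}=(D\Upsilon_0)^{1/2}+o_n(1)$ (the upper bound uses Lemma~\ref{nhlem11} applied to an arbitrary unit vector $\psi$, which yields $\|\psi\|_{NL}\le (D\Upsilon_0)^{-(2^*_\mu-1)/(2\cdot 2^*_\mu)}$; the lower bound comes from testing on $u_n/\|u_n\|$), and lets $v_n\in H_0^1(\Om)$ be the Riesz representative of $T_n$. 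A direct Hilbert-space polarization then gives
\[
\|u_n-v_n\|^2=\|u_n\|^2-2\ld u_n,v_n\rd+\|v_n\|^2=D\Upsilon_0-2D\Upsilon_0+D\Upsilon_0+o_n(1)=o_n(1),
\]
and since $\ld\mathcal{J}'(u_n),\psi\rd=\ld u_n-v_n,\psi\rd$ one concludes $\mathcal{J}'(u_n)\to 0$ immediately.

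Both proofs hinge on the fact that the level is exactly $\Upsilon_0=\inf_{\mathcal N}\mathcal{J}$. Your Ekeland/Lagrange-multiplier route is the standard machinery for turning minimizing sequences on a natural constraint into free Palais--Smale sequences, and has the advantage of being portable to other functionals; its cost is the extra moving parts (closedness of $\mathcal{N}$, the Lagrange-multiplier computation, and the uniform continuity of $\mathcal{J}'$ on bounded sets needed to transfer from $w_n$ back to $u_n$). The paper's argument is shorter and entirely self-contained: it exploits the precise value $\Upsilon_0$ through the saturation of the inequality $\|\psi\|_{NL}\le(D\Upsilon_0)^{-(2^*_\mu-1)/(2\cdot 2^*_\mu)}$, so that the Riesz representative $v_n$ is forced to align with $u_n$ by a parallelogram identity, and no auxiliary sequence or continuity of $\mathcal{J}'$ is needed.
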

\begin{proof}
	Clearly, any $(PS)_{\Upsilon_0}$- sequence satisfies $a(u_n)=b(u_n)+o_n(1)$ and $\mathcal{J}(u_n)= \Upsilon_0+o_n(1)$. Conversely, let  $\mathcal{J}(u_n)= \Upsilon_0+o_n(1)$ and $a(u_n)=b(u_n)+o_n(1)$ then $\Upsilon_0=\mathcal{J}(u_n)= \frac{N-\mu+2}{2(2N-\mu)}b(u_n)+o_n(1)$ and hence we have 
	\begin{equation}\label{nh3}
	b(u_n)= D\Upsilon_0+o_n(1) \text{ where }  D=\frac{2(2N-\mu)}{N-\mu+2}.
\end{equation}		
 Define  $T_n(\psi)=\ds \int_{\Om}\int_{\Om}\frac{(u^+_n(x))^{2^{*}_{\mu}}(u^+_n(y))^{2^{*}_{\mu}-1}\psi(y)}{|x-y|^{\mu}}~dxdy$ for $\psi \in H_0^1(\Om)$ and  $n=1,2,\cdots$. 
	\textbf{Claim:} $\|T_n\|_{H^{-1}}=(D\Upsilon_0)^{\frac{1}{2}}+o_n(1)$.\\ For this let $\psi \in H_0^1(\Om)$ such that $\|\psi\|=1$ then by Lemma \ref{nhlem11}, we know that there exists a $t>0$ such that $a(t\psi)=b(t\psi)$. Therefore, $t= \|\psi\|_{NL}^{-\frac{2^*_{\mu}}{2^*_{\mu}-1}}$ and $\Upsilon_0\leq \frac{1}{D}\|\psi\|_{NL}^{-\frac{2.2^*_{\mu}}{2^*_{\mu}-1}}.$ This implies,
	\begin{equation}\label{nh2}
	\|\psi\|_{NL}\leq \left(\frac{1}{D\Upsilon_0}\right)^{\frac{2^*_{\mu}-1}{2.2^*_{\mu}}}.
\end{equation}	 
	Taking into account   equations \eqref{nh3}, \eqref{nh2}, Proposition \ref{Propnh1}    and employing H\"older's inequality, for each $n$, we have 
	\begin{equation*}
	\begin{aligned}
|T_n(\psi)|
& \leq \left( \ds \int_{\Om}\int_{\Om}\frac{(u^+_n(x))^{2^{*}_{\mu}}(u^+_n(y))^{2^{*}_{\mu}}}{|x-y|^{\mu}}~dxdy\right)^{\frac{2.2^*_{\mu}-1}{2.2^*_{\mu}}}\left( \ds \int_{\Om}\int_{\Om}\frac{|\psi(x)|^{2^{*}_{\mu}}|\psi(y)|^{2^{*}_{\mu}}}{|x-y|^{\mu}}~dxdy\right)^{\frac{1}{2.2^*_{\mu}}}\\
& = b(u_n)^{\frac{2.2^*_{\mu}-1}{2.2^*_{\mu}}}\|\psi\|_{NL}\\
& \leq  \left(\frac{1}{D\Upsilon_0}\right)^{\frac{2^*_{\mu}-1}{2.2^*_{\mu}}}(D\Upsilon_0+o_n(1))^{\frac{2.2^*_{\mu}-1}{2.2^*_{\mu}}}
 = (D\Upsilon_0)^{\frac{1}{2}}+o_n(1) \text{ as } n \ra \infty.
\end{aligned}
\end{equation*}
So, we get $\|T_n\|_{H^{-1}}\leq (D\Upsilon_0)^{\frac{1}{2}}+o_n(1)$. Moreover, $  T_n\left(\frac{u_n}{\|u_n\|}\right)= (b(u_n))^{\frac{1}{2}}=(D\Upsilon_0)^{\frac{1}{2}}+o_n(1)$. This implies $ \|T_n\|_{H^{-1}} =(D\Upsilon_0)^{\frac{1}{2}}+o_n(1)$. Hence the proof of  claim follows.

 Now, by Riesz representation theorem, for each $n$ there exists $v_n\in H_0^1(\Om)$ such that 
\begin{align*}
T_n(\psi)=\ld v_n,\psi\rd = \int_{\Om} \na v_n\cdot \na \psi ~dx \text{ and } \|v_n\|= \|T_n\|_{H^{-1}}=(D\Upsilon_0)^{\frac{1}{2}}+o_n(1). 
\end{align*}
Thus, $\ld v_n,u_n\rd= T_n(u_n)= b(u_n)=D\Upsilon_0+o_n(1) $. Hence,
\begin{align*}
\|u_n-v_n\|^2&= \|u_n\|^2-2\ld u_n,v_n\rd +\|v_n\|^2\\
& = D\Upsilon_0-2D\Upsilon_0+D\Upsilon_0+o_n
(1)
 = o_n(1) \text{ as } n\ra \infty.
\end{align*}	 
	For any  $\psi   \in H_0^1(\Om)$ with $\|\psi\|=1$, we have 
	\begin{align*}
	\ld \mathcal{J}^{\prime}(u_n),\psi \rd = \ds \int_{\Om}\na u_n \cdot \na \psi ~dx - T_n(\psi)= \ld u_n, \psi \rd - \ld v_n, \psi \rd = \ld u_n-v_n, \psi \rd.
	\end{align*}
Therefore, $\|\mathcal{J}^{\prime}(u_n)\|_{H^{-1}}\leq  \|u_n-v_n\|= o_n(1)$. It implies $\mathcal{J}^{\prime}(u_n) \ra 0 $ in $H^{-1}$.\QED
\end{proof}
\noi Clearly, $ \mathcal{N}_{f}$ contains every non zero solution of $(P_f)$ and we know that the Nehari manifold is closely
related to the behavior of the fibering maps $\phi_u: \mathbb R^+\rightarrow \mathbb R$
defined as $\phi_{u}(t)=\mathcal{J}_{f}(tu)$. %Such maps are called fiber
%maps and were introduced by Drabek and Pohozaev in \cite{dpp}. 
%For
%$u\in H_0^1(\Om)$, we have
%\begin{align}\label{phde}
%\phi_{u}(t) &=\frac{t^2}{2}a(u)-\frac{t^{2.2^*_{\mu}}}{2.2^*_{\mu}}b(u)-t\int_\Omega fu~dx,\\
%\phi_{u}^{\prime}(t) &=ta(u)- t^{2.2^*_{\mu}-1}b(u)-\int_\Omega fu~dx,\\
%\phi_{u}^{\prime\prime}(t) &= a(u)-  (2.2^*_{\mu}-1)t^{2.2^*_{\mu}-2}b(u).
%\end{align}
It is easy to see that $tu\in \mathcal{N}_f$ if and only if $\phi_{u}^{\prime}(t)=0$ and elements of $\mathcal{N}_f$ correspond to stationary points of the fibering maps. It is natural to divide  $\mathcal{N}_f$ into the following sets
\begin{align*}
\mathcal{N}_f^+=:\{u \in \mathcal{N}_f |\phi_{u}^{\prime\prime}(1)>0 \},\; 
\mathcal{N}_f^-=:\{u \in \mathcal{N}_f |\phi_{u}^{\prime\prime}(1)<0 \},\text{and}\;
\mathcal{N}_f^0=:\{u \in \mathcal{N}_f |\phi_{u}^{\prime\prime}(1)=0 \}.
\end{align*}
We also denote the infimum over $\mc{N}_{f}^{+}$ and $\mc{N}_{f}^{-}$ as 
\begin{align}\label{nh19}
\Upsilon_f^+= \ds \inf_{u \in \mathcal{N}_f^+}\mathcal{J}_f(u) \qquad \Upsilon_f^-= \ds \inf_{u \in \mathcal{N}_f^-}\mathcal{J}_f(u).
\end{align}
\section{Existence of First Solution }
 In this section we prove the existence of first solution by showing the existence of minimizer for $\mc{J}_f$ over the Nehari manifold $\mc{N}_{f}$. First we state some Lemmas whose proof can be found in \cite{zampyang}.   We also prove some properties of the manifold $\mathcal{N}_f^+$.
%Let $e_{00}:= C_{N,\mu} S_{H,L}^{\frac{2^*_{\mu}}{2.2^*_{\mu}-2}}$ where 
%$ C_{N,\mu}= \left(\frac{1}{2.2^*_{\mu}-1}\right)^{\frac{2.2^*_{\mu}-1}{2.2^*_{\mu}-2}}(2.2^*_{\mu}-2)$ and let 
%\begin{align*}
%E:= \bigg\{ u \in H_0^1(\Om): \ds \int_{\Om}\int_{\Om} \frac{|u(x)|^{2^{*}_{\mu}}|u(y)|^{2^{*}_{\mu}}}{|x-y|^{\mu}}~dxdy=1 \bigg\}.
%\end{align*}
% Then we have 
\begin{Lemma}
If $f\in \hat{F}$ and  $	\|f\|_{H^{-1}} < e_{00}:= C_{N,\mu} S_{H,L}^{\frac{2^*_{\mu}}{2.2^*_{\mu}-2}}$ where 
	$ C_{N,\mu}= \left(\frac{1}{2.2^*_{\mu}-1}\right)^{\frac{2.2^*_{\mu}-1}{2.2^*_{\mu}-2}}(2.2^*_{\mu}-2)$ 	then 
	$\ds
\al_0:=\inf_{u \in E}\bigg\{C_{N,\mu}\|u\|^{\frac{2.2^*_{\mu}-1}{2^*_{\mu}-1}} -\int_{\Om} fu~dx \bigg\}$
	is acheived, where  
	\begin{align*}
	E:= \bigg\{ u \in H_0^1(\Om): \ds \int_{\Om}\int_{\Om} \frac{|u(x)|^{2^{*}_{\mu}}|u(y)|^{2^{*}_{\mu}}}{|x-y|^{\mu}}~dxdy=1 \bigg\}.
	\end{align*}
	\end{Lemma}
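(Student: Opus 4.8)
The plan is to adapt Tarantello's original argument to the Hardy--Littlewood--Sobolev setting. Write $p:=\frac{2\cdot 2^*_\mu-1}{2^*_\mu-1}>2$, $\Phi(u):=C_{N,\mu}\|u\|^{p}-\int_{\Om}fu\,dx$ (so $\al_0=\inf_E\Phi$), and $\beta(w):=\int_{\Om}\int_{\Om}\frac{|w(x)|^{2^*_\mu}|w(y)|^{2^*_\mu}}{|x-y|^{\mu}}\,dx\,dy$, so that $E=\{w:\beta(w)=1\}$ and $\beta$ agrees with $b$ on nonnegative functions. Since $\Phi|_E$ depends on $u$ only through $\|u\|$ and $\int_{\Om}fu$, and replacing $u$ by $|u|$ keeps $E$, preserves $\|u\|$, and does not increase $\int_{\Om}fu$ (as $f\ge0$), I may assume $u\ge0$; and extending a member of $E$ by zero, \eqref{nh5} gives $\|u\|^{2}\ge S_{H,L}$ on $E$. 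First I would observe that $\al_0$ is a finite positive number: for $u\in E$, $\Phi(u)\ge g(\|u\|)$ with $g(s):=C_{N,\mu}s^{p}-\|f\|_{H^{-1}}s$, and since $p>1$ and $\|f\|_{H^{-1}}<e_{00}<p\,e_{00}=C_{N,\mu}\,p\,S_{H,L}^{(p-1)/2}$, the function $g$ is convex and increasing on $[S_{H,L}^{1/2},\infty)$; hence $\al_0\ge g(S_{H,L}^{1/2})=S_{H,L}^{1/2}(e_{00}-\|f\|_{H^{-1}})>0$. The same estimate bounds $\|u_n\|$ along any minimizing sequence, with $\limsup_n\|u_n\|\to S_{H,L}^{1/2}$ as $\|f\|_{H^{-1}}\to0$.

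The key auxiliary step is the strict estimate $\al_0<C_{N,\mu}S_{H,L}^{p/2}\,(=e_{00}S_{H,L}^{1/2})$. I would fix $x_0\in\Om$ and $\rho>0$ with $\overline{B(x_0,\rho)}\subset\Om$ and $|\{f>0\}\cap(B(x_0,\rho)\setminus B(x_0,\rho/2))|>0$ (possible since $f\ge0$, $f\not\equiv0$), take the extremal of $S_{H,L}$ of Lemma~\ref{nhlem7} centered at $x_0$, dilated to scale $\e$ and truncated by a fixed bump supported in $B(x_0,\rho)$, and normalize it to lie in $E$; call the result $U_\e$. The Brezis--Nirenberg-type expansions for the Choquard nonlinearity (see \cite{yang}) give $\|U_\e\|^{2}=S_{H,L}+O(\e^{N-2})$, while the choice of $x_0,\rho$ yields $\int_{\Om} fU_\e\ge c\,\e^{(N-2)/2}$ for some $c>0$ independent of $\e$. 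As $\tfrac{N-2}{2}<N-2$, for small $\e$ the linear gain dominates and $\Phi(U_\e)=C_{N,\mu}S_{H,L}^{p/2}+O(\e^{N-2})-c\,\e^{(N-2)/2}<C_{N,\mu}S_{H,L}^{p/2}$.

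For the attainment, take a minimizing sequence $\{u_n\}\subset E$ with $u_n\ge0$; it is bounded in $H_0^1(\Om)$, so up to a subsequence $u_n\rp u\ge0$, $u_n\to u$ in $L^2(\Om)$ and a.e., and $\|u_n\|\to\ell$. Then $\int_{\Om} fu_n\to\int_{\Om} fu$, and weak lower semicontinuity of the norm gives $\Phi(u)\le\al_0$; thus it suffices to prove $u\in E$, i.e.\ $\theta:=\beta(u)=1$, since then $\Phi(u)\ge\al_0$, so $u$ attains $\al_0$ and $\ell=\|u\|$ (hence $u_n\to u$ in $H_0^1(\Om)$). If $u=0$ then $\int_{\Om} fu_n\to0$, so $\al_0=C_{N,\mu}\ell^{p}$ with $\ell^{2}\ge S_{H,L}$, giving $\al_0\ge C_{N,\mu}S_{H,L}^{p/2}$ and contradicting the previous paragraph; hence $\theta>0$. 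Now suppose $0<\theta<1$ and set $v_n:=u_n-u\rp0$. The Brezis--Lieb splitting for the nonlocal term (see \cite{yang}) gives $\beta(v_n)\to1-\theta$, and $\|v_n\|^{2}=\|u_n\|^{2}-\|u\|^{2}+o(1)$; with $\|v_n\|^{2}\ge S_{H,L}\,\beta(v_n)^{1/2^*_\mu}$ this yields $\ell^{2}\ge\|u\|^{2}+S_{H,L}(1-\theta)^{1/2^*_\mu}$. On the other hand $\bar u:=\theta^{-1/(2\cdot 2^*_\mu)}u\in E$, so $\al_0\le\Phi(\bar u)=C_{N,\mu}\theta^{-p/(2\cdot 2^*_\mu)}\|u\|^{p}-\theta^{-1/(2\cdot 2^*_\mu)}\int_{\Om} fu$, which together with $\al_0=C_{N,\mu}\ell^{p}-\int_{\Om} fu$ and $\int_{\Om} fu\ge0$ forces $\ell^{2}\le\theta^{-1/2^*_\mu}\|u\|^{2}$. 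Eliminating $\|u\|$ from the last two displays,
\[
\ell^{2}\ \ge\ S_{H,L}\,\frac{(1-\theta)^{1/2^*_\mu}}{1-\theta^{1/2^*_\mu}}\ >\ S_{H,L}
\]
(the strict inequality because $s^{1/2^*_\mu}+(1-s)^{1/2^*_\mu}>1$ for $s\in(0,1)$), and this, fed back into $\al_0=C_{N,\mu}\ell^{p}-\int_{\Om} fu$ together with $\int_{\Om} fu\le\|f\|_{H^{-1}}\|u\|$, the inequality $\|u\|^{2}\le\ell^{2}-S_{H,L}(1-\theta)^{1/2^*_\mu}$, and the first-paragraph bound on $\ell$ (close to $S_{H,L}^{1/2}$ for small $\|f\|_{H^{-1}}$, hence forcing $\|u\|\to0$ as $\theta\to0$), contradicts $\al_0<C_{N,\mu}S_{H,L}^{p/2}$ once $\|f\|_{H^{-1}}<e_{00}$. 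Hence $\theta=1$, and $\al_0$ is achieved.

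The one genuinely delicate point is this last step: because the Hardy--Littlewood--Sobolev-critical nonlinearity is not weakly continuous, a minimizing sequence could a priori leak a concentrating bubble and leave the weak limit with $\beta(u)\le1$ only. Ruling this out is the classical Brezis--Nirenberg/Tarantello no-loss-of-compactness mechanism; it works here precisely because of the interplay between the strict energy level of the second paragraph (which uses $f\not\equiv0$) and the threshold $e_{00}$ (which keeps $\al_0>0$ and the scalar comparison function $g$ increasing), matched against the Brezis--Lieb decomposition --- and carrying out that energy bookkeeping carefully is the technical heart of the argument.
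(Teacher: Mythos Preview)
Your overall architecture is exactly the Tarantello scheme that the paper invokes (the paper itself gives no argument here and simply refers to Lemma~4.1 of \cite{zampyang}, which is the Choquard transcription of Tarantello's Lemma~1.4). Two small slips in the first paragraph: replacing $u$ by $|u|$ does not ``preserve $\|u\|$'' but only gives $\||u|\|\le\|u\|$, and it does not ``not increase $\int_{\Om}fu$'' but rather does not \emph{decrease} it (since $f\ge0$); both inequalities go the right way for your purpose, so the conclusion that one may take $u\ge0$ is unaffected.

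The genuine gap is in the last step, where you rule out $0<\theta<1$. Up to the displayed inequality
\[
\ell^{2}\ \ge\ S_{H,L}\,\frac{(1-\theta)^{1/2^*_\mu}}{1-\theta^{1/2^*_\mu}}\ >\ S_{H,L}
\]
everything is fine, but the final sentence does not constitute a proof. Two concrete problems: (i) the ``first-paragraph bound on $\ell$'' is only the asymptotic statement $\ell\to S_{H,L}^{1/2}$ as $\|f\|_{H^{-1}}\to0$, not a usable bound for a fixed $f$ with $\|f\|_{H^{-1}}<e_{00}$; (ii) $\theta=\beta(u)$ is a fixed number here, so ``$\|u\|\to0$ as $\theta\to0$'' is meaningless. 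And, more to the point, the strict inequality $\ell^{2}>S_{H,L}$ alone cannot contradict $\al_0<C_{N,\mu}S_{H,L}^{p/2}$, because $\al_0=C_{N,\mu}\ell^{p}-\int_{\Om}fu$ and a positive $\int_{\Om}fu$ allows $\ell^{p}>S_{H,L}^{p/2}$ while keeping $\al_0$ below threshold. Your own lower bound $\ell^{2}\ge S_{H,L}(1-\theta)^{1/2^*_\mu}/(1-\theta^{1/2^*_\mu})$ degenerates to $S_{H,L}$ as $\theta\downarrow0$, so nothing you have written excludes a small-$\theta$ dichotomy.

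The way this is closed in \cite{zampyang} (and in Tarantello's original) is not the route you sketch. One combines the superadditivity $(a+b)^{p/2}\ge a^{p/2}+b^{p/2}$ (valid since $p>2$) applied to $\ell^{2}=\|u\|^{2}+\lim\|v_n\|^{2}$ with the constraint inequality $\al_0\le\Phi(\bar u)$ to obtain a self-referential inequality of the form
\[
\al_0\bigl(1-\theta^{p/(2\cdot 2^*_\mu)}\bigr)\ \ge\ C_{N,\mu}S_{H,L}^{p/2}(1-\theta)^{p/(2\cdot 2^*_\mu)}-\bigl(1-\theta^{(p-1)/(2\cdot 2^*_\mu)}\bigr)\int_{\Om}fu,
\]
and then uses the \emph{precise} value $e_{00}=C_{N,\mu}S_{H,L}^{(p-1)/2}$ together with $\int_{\Om}fu\le\|f\|_{H^{-1}}\|u\|$ and the bound $\|u\|\le\ell$ to absorb the last term; the exponent bookkeeping here is what makes the threshold $e_{00}$ sharp. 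You correctly flag this ``energy bookkeeping'' as the technical heart, but you have not carried it out, and your heuristic replacement does not work.
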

\begin{proof}
	Proof follows from Lemma 4.1 of \cite{zampyang}. Since we consider  $\la=0$ in equation 4.1 of \cite{zampyang}, our result holds for all $N\geq 3$.  \QED
	\end{proof}
\begin{Lemma}
	For every $u\in \mathcal{N}_f, u \not \equiv 0$ we have 
	$
	a(u)-  (2.2^*_{\mu}-1)b(u)\not = 0.$
	In particular, $\mathcal{N}_f^0= \{0\}$.
\end{Lemma}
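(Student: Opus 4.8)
The plan is to argue by contradiction, using the explicit shape of the fibering map together with the smallness of $\|f\|_{H^{-1}}$ that is in force in this section (namely $\|f\|_{H^{-1}}<e_{00}$, the threshold of the preceding lemma). For $t>0$ one has $(tu)^+=tu^+$, so $\phi_{u}(t)=\mc{J}_f(tu)=\frac{t^2}{2}a(u)-\frac{t^{2.2^*_{\mu}}}{2.2^*_{\mu}}b(u)-t\int_{\Om}fu~dx$, whence $\phi_{u}^{\prime\prime}(1)=a(u)-(2.2^*_{\mu}-1)b(u)$. Therefore the assertion $a(u)-(2.2^*_{\mu}-1)b(u)\neq0$ for all $u\in\mc{N}_f\setminus\{0\}$ is precisely the statement that no nonzero element of $\mc{N}_f$ lies in $\mc{N}_f^0$; since $0\notin\mc{N}_f$ this gives $\mc{N}_f^0=\{0\}$.

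Suppose then that some $u\in\mc{N}_f$ satisfies $a(u)=(2.2^*_{\mu}-1)b(u)$. Since $u^+\not\equiv0$ we have $b(u)>0$, hence $a(u)>0$. Feeding this identity into the Nehari relation $\ld\mc{J}_f^{\prime}(u),u\rd=a(u)-b(u)-\int_{\Om}fu~dx=0$ gives $\int_{\Om}fu~dx=(2.2^*_{\mu}-2)b(u)=\tfrac{2.2^*_{\mu}-2}{2.2^*_{\mu}-1}a(u)$. On one hand, applying the definition of $S_{H,L}$ to the admissible function $u^+$ and using $\int_{\Om}|\na u^+|^2~dx\le a(u)$ gives $b(u)\le S_{H,L}^{-2^*_{\mu}}a(u)^{2^*_{\mu}}$; combined with $a(u)=(2.2^*_{\mu}-1)b(u)$ this produces the lower bound $a(u)\ge\big(S_{H,L}^{2^*_{\mu}}/(2.2^*_{\mu}-1)\big)^{1/(2^*_{\mu}-1)}$. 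On the other hand, $\int_{\Om}fu~dx\le\|f\|_{H^{-1}}\|u\|=\|f\|_{H^{-1}}a(u)^{1/2}$, so the identity for $\int_{\Om}fu~dx$ yields $a(u)^{1/2}\le\tfrac{2.2^*_{\mu}-1}{2.2^*_{\mu}-2}\|f\|_{H^{-1}}$, that is $a(u)\le\big(\tfrac{2.2^*_{\mu}-1}{2.2^*_{\mu}-2}\big)^2\|f\|_{H^{-1}}^2$.

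Comparing the two bounds on $a(u)$ forces $\|f\|_{H^{-1}}^2\ge\big(\tfrac{2.2^*_{\mu}-2}{2.2^*_{\mu}-1}\big)^2\big(S_{H,L}^{2^*_{\mu}}/(2.2^*_{\mu}-1)\big)^{1/(2^*_{\mu}-1)}$, and an elementary manipulation of the exponents shows the right-hand side equals $e_{00}^2$, with $e_{00}=C_{N,\mu}S_{H,L}^{2^*_{\mu}/(2.2^*_{\mu}-2)}$ and $C_{N,\mu}=(2.2^*_{\mu}-2)(2.2^*_{\mu}-1)^{-(2.2^*_{\mu}-1)/(2.2^*_{\mu}-2)}$ exactly as in the previous lemma. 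This contradicts $\|f\|_{H^{-1}}<e_{00}$, so no such $u$ exists and the lemma follows. The argument is a routine adaptation of Tarantello's splitting of the Nehari manifold; the only step requiring some care is this last one, where one must verify that the threshold produced by combining the lower and upper estimates for $a(u)$ is precisely the constant $e_{00}$ already imposed in this section, so that the hypotheses are consistent.
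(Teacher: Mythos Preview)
Your argument is correct and is precisely the standard Tarantello-type contradiction argument that the paper invokes by referring to \cite{zampyang}: one combines the Nehari relation with the hypothetical identity $a(u)=(2.2^*_{\mu}-1)b(u)$, uses the Sobolev-type bound $b(u)\le S_{H,L}^{-2^*_{\mu}}a(u)^{2^*_{\mu}}$ for a lower bound on $a(u)$, and the duality estimate $\int_{\Om}fu\le\|f\|_{H^{-1}}\|u\|$ for an upper bound, arriving at $\|f\|_{H^{-1}}\ge e_{00}$. Your explicit verification that the resulting threshold matches the constant $e_{00}$ of the preceding lemma is exactly the consistency check required, so there is nothing to add.
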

%\begin{proof}
%Proof follows from Lemma 2.2 of \cite{zampyang}. \QED
%\end{proof}
\begin{Lemma}\label{nhlem3}
For each $u\in H_0^1(\Om)$ with $u^+\not \equiv 0$ the following holds:
\begin{enumerate}
\item [(a)]   There exists a unique $t^- =t^-(u) >0  $ such that $t^- u \in \mathcal{N}_f^-(\Om)$. In particular, 
\begin{align*}
t^- > \left(\frac{a(u)}{(2.2^*_{\mu}-1)b(u)}\right)^{\frac{1}{2.2^*_{\mu}-2}}:= t_{max}
\end{align*}
and $\mathcal{J}_f(t^-u)=\ds \max _{t\geq t_{max}}\mathcal{J}_f(tu)$.
\item [(b)]If $\ds \int_\Om fu>0$, then there exists unique $ t^+ \in(0,  t_{max})$ such that $t^+u \in \mathcal{N}_f^+(\Om)$ and $$\mathcal{J}_f(t^+u)=\ds \min _{0<t\leq t^-}\mathcal{J}_f(tu).$$
\item [(c)] $t^-(u)$ is a continuous function.
\item [(d)] $\mathcal{N}_f^-=\{u \in H_0^1(\Om)\setminus\{0\}\; |\; u^+\not \equiv 0 \text{ and } \frac{1}{\|u\|}t^-(\frac{u}{\|u\|})=1\}$.
\end{enumerate}
\end{Lemma}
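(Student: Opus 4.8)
The plan is to run the standard fibering-map analysis, now for the nonlocal nonhomogeneous energy, under the standing assumptions of this section ($f\in\hat{F}$ with $\|f\|_{H^{-1}}<e_{00}$). Fix $u\in H_0^1(\Om)$ with $u^+\not\equiv 0$, so $a(u)=\|u\|^2>0$ and $b(u)>0$, and set $\phi_u(t)=\mathcal{J}_f(tu)=\frac{t^2}{2}a(u)-\frac{t^{2\cdot 2^*_{\mu}}}{2\cdot 2^*_{\mu}}b(u)-t\int_\Om fu\,dx$. Then $\phi_u'(t)=\psi_u(t)-\int_\Om fu\,dx$, where $\psi_u(t):=t\,a(u)-t^{2\cdot 2^*_{\mu}-1}b(u)$, and $\phi_u''(t)=\psi_u'(t)=a(u)-(2\cdot 2^*_{\mu}-1)t^{2\cdot 2^*_{\mu}-2}b(u)$. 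Since $tu\in\mathcal{N}_f$ iff $\phi_u'(t)=0$, and since $\phi_{tu}''(1)=t^2\phi_u''(t)$ shows $tu\in\mathcal{N}_f^+$ (resp. $\mathcal{N}_f^-$) exactly when $\psi_u'(t)>0$ (resp. $<0$), the whole statement reduces to locating the positive roots of $\phi_u'$ and determining the sign of $\psi_u'$ there.

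First I would analyse $\psi_u$ on $(0,\infty)$: it vanishes at $0$, tends to $-\infty$, and $\psi_u'$ has the single positive zero $t_{max}=\big(a(u)/((2\cdot 2^*_{\mu}-1)b(u))\big)^{1/(2\cdot 2^*_{\mu}-2)}$, so $\psi_u$ strictly increases on $(0,t_{max})$ and strictly decreases on $(t_{max},\infty)$ with peak value $\psi_u(t_{max})=C_{N,\mu}\,a(u)^{(2\cdot 2^*_{\mu}-1)/(2\cdot 2^*_{\mu}-2)}\,b(u)^{-1/(2\cdot 2^*_{\mu}-2)}>0$. Inserting $b(u)\le(a(u)/S_{H,L})^{2^*_{\mu}}$ (which is just the definition of $S_{H,L}$) and simplifying the exponents collapses this to $\psi_u(t_{max})\ge C_{N,\mu}\,S_{H,L}^{2^*_{\mu}/(2\cdot 2^*_{\mu}-2)}\,\|u\|=e_{00}\|u\|$. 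Since $\int_\Om fu\,dx\le\|f\|_{H^{-1}}\|u\|<e_{00}\|u\|$, we conclude $\int_\Om fu\,dx<\psi_u(t_{max})$ for every admissible $u$; this is the one genuinely quantitative ingredient of the proof.

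Parts (a) and (b) then drop out by reading the graph of $\phi_u'=\psi_u-\int_\Om fu\,dx$. On $(t_{max},\infty)$ this function is strictly decreasing from $\phi_u'(t_{max})=\psi_u(t_{max})-\int_\Om fu\,dx>0$ down to $-\infty$, so it has a unique zero $t^->t_{max}$; there $\phi_u''(t^-)=\psi_u'(t^-)<0$, hence $t^-u\in\mathcal{N}_f^-$, and since $\phi_u$ increases on $[t_{max},t^-]$ and decreases beyond $t^-$, we get $\mathcal{J}_f(t^-u)=\max_{t\ge t_{max}}\mathcal{J}_f(tu)$. On $(0,t_{max})$, where $\phi_u'$ is strictly increasing: if $\int_\Om fu\,dx\le 0$ then $\phi_u'\ge\psi_u>0$ there, so $t^-$ is the only positive critical point, in particular the only one in $\mathcal{N}_f^-$; if $\int_\Om fu\,dx>0$ then $\phi_u'$ climbs from $\phi_u'(0^+)=-\int_\Om fu\,dx<0$ to $\phi_u'(t_{max})>0$, producing a unique zero $t^+\in(0,t_{max})$ with $\psi_u'(t^+)>0$, i.e. $t^+u\in\mathcal{N}_f^+$, and as $\phi_u$ decreases on $(0,t^+]$ and increases on $[t^+,t^-]$ we obtain $\mathcal{J}_f(t^+u)=\min_{0<t\le t^-}\mathcal{J}_f(tu)$. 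In every case $(0,t_{max})$ contributes at most one critical point and none of those lie in $\mathcal{N}_f^-$, which yields the uniqueness in (a).

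For (c), set $F(u,t)=\psi_u(t)-\int_\Om fu\,dx$; it is continuous in $(u,t)$ (through $a(u)$, $b(u)$ and the bounded linear term), satisfies $F(u,t^-(u))=0$, and $\partial_t F(u,t^-(u))=\psi_u'(t^-(u))<0$, so the implicit function theorem makes $t^-$ continuous near every admissible $u$, and the uniqueness just proved propagates this to all of $\{u\in H_0^1(\Om)\setminus\{0\}:u^+\not\equiv 0\}$; equivalently, $t^-$ is a continuous function of the triple $(a(u),b(u),\int_\Om fu\,dx)$. For (d), note the homogeneity $t^-(\la u)=t^-(u)/\la$ for $\la>0$ (immediate, since $(s\la)u\in\mathcal{N}_f^-$ iff $s\la=t^-(u)$), hence $\frac{1}{\|u\|}t^-(u/\|u\|)=t^-(u)$; therefore $\frac{1}{\|u\|}t^-(u/\|u\|)=1$ iff $t^-(u)=1$ iff $u=1\cdot u\in\mathcal{N}_f^-$, which is the asserted description of $\mathcal{N}_f^-$. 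I expect the only real obstacle to be the sharp bookkeeping behind $\psi_u(t_{max})\ge e_{00}\|u\|$ (tracking the constant $C_{N,\mu}$ and the exponent simplifications); once that inequality is in hand, everything else is elementary one-variable calculus on the fibering maps plus a routine implicit-function argument for (c).
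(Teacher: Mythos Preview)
Your argument is correct and follows exactly the standard fibering-map analysis that the paper defers to \cite{zampyang} (the paper states Lemma~\ref{nhlem3} without proof, referring the reader there). The key quantitative step you isolate, namely $\psi_u(t_{max})=C_{N,\mu}\,a(u)^{(2\cdot 2^*_{\mu}-1)/(2\cdot 2^*_{\mu}-2)}b(u)^{-1/(2\cdot 2^*_{\mu}-2)}\ge e_{00}\|u\|>\|f\|_{H^{-1}}\|u\|\ge\int_\Om fu$, is precisely how the smallness condition $\|f\|_{H^{-1}}<e_{00}$ enters; the remaining parts (a)--(d) are then elementary calculus, the implicit function theorem, and the homogeneity $t^-(\la u)=\la^{-1}t^-(u)$, all of which you handle correctly.
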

%\begin{proof} See \cite{zampyang} for details. \QED
%	\end{proof}
\begin{Lemma}\label{nhlem28}
For each $u \in \mathcal{N}_f^+(\Om)$, we have $\int_{\Om}fu~dx >0 $ and $\mathcal{J}_f(u)<0.$ In particular, $\Upsilon_f(\Om)\leq \Upsilon_f^+(\Om)< 0$.
\end{Lemma}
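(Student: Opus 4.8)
The plan is to work entirely with the fibering map $\phi_u(t)=\mathcal{J}_f(tu)$ and translate the membership conditions $\phi_u'(1)=0$ and $\phi_u''(1)>0$ that define $\mathcal{N}_f^+$ into inequalities among $a(u)$, $b(u)$ and $\int_\Om fu\,dx$. Since
$\phi_u(t)=\tfrac{t^2}{2}a(u)-\tfrac{t^{2\cdot 2^*_{\mu}}}{2\cdot 2^*_{\mu}}b(u)-t\int_\Om fu\,dx$, the condition $\phi_u'(1)=0$ gives the identity $\int_\Om fu\,dx = a(u)-b(u)$, while $\phi_u''(1)>0$ reads $a(u) > (2\cdot 2^*_{\mu}-1)\,b(u)$. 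Because $u\in\mathcal{N}_f$ forces $u^+\not\equiv 0$ we have $b(u)>0$, and since $2\cdot 2^*_{\mu}-1>1$ the last inequality yields $a(u)>b(u)$; hence $\int_\Om fu\,dx = a(u)-b(u)>0$, which is the first assertion. (One even gets the quantitative bound $\int_\Om fu\,dx > \tfrac{2\cdot 2^*_{\mu}-2}{2\cdot 2^*_{\mu}-1}\,a(u)$, which will be useful elsewhere.)

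For the sign of the energy, substitute the identity $\int_\Om fu\,dx = a(u)-b(u)$ into $\mathcal{J}_f(u)=\phi_u(1)=\tfrac12 a(u)-\tfrac{1}{2\cdot 2^*_{\mu}}b(u)-\int_\Om fu\,dx$ to obtain $\mathcal{J}_f(u) = -\tfrac12 a(u) + \tfrac{2\cdot 2^*_{\mu}-1}{2\cdot 2^*_{\mu}}\,b(u)$. Feeding in $b(u)<\tfrac{a(u)}{2\cdot 2^*_{\mu}-1}$ from $\phi_u''(1)>0$, the two terms combine to give $\mathcal{J}_f(u) < -\tfrac{2^*_{\mu}-1}{2\cdot 2^*_{\mu}}\,a(u) < 0$, using $2^*_{\mu}>1$ and $a(u)>0$. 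This establishes $\mathcal{J}_f(u)<0$ for every $u\in\mathcal{N}_f^+$.

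Finally, $\Upsilon_f(\Om)\le\Upsilon_f^+(\Om)$ is immediate from $\mathcal{N}_f^+\subseteq\mathcal{N}_f$. To conclude $\Upsilon_f^+(\Om)<0$ it suffices to exhibit one element of $\mathcal{N}_f^+$: since $f\in\hat{F}$ is nonnegative and not identically zero, $\int_\Om f\varphi\,dx>0$ for any $\varphi\in H_0^1(\Om)$ with $\varphi>0$ in $\Om$, so Lemma \ref{nhlem3}(b) provides $t^+\varphi\in\mathcal{N}_f^+$, whence $\Upsilon_f^+(\Om)\le\mathcal{J}_f(t^+\varphi)<0$ by the previous step. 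There is no genuine analytic difficulty here — the whole argument is the elementary calculus of $\phi_u$ — and the only points that deserve a word of care are the strict positivity $b(u)>0$ (guaranteed by $u^+\not\equiv 0$ in the definition of $\mathcal{N}_f$) and the nonemptiness of $\mathcal{N}_f^+$, both of which are handled above.
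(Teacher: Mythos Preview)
Your proof is correct and follows the standard fibering-map calculus; the paper itself does not give a proof of this lemma but refers to \cite{zampyang}, where the argument is essentially the same manipulation of $\phi_u'(1)=0$ and $\phi_u''(1)>0$ that you carry out. Your additional remark on the nonemptiness of $\mathcal{N}_f^+$ (via Lemma~\ref{nhlem3}(b) applied to a positive test function) is a nice touch that makes the conclusion $\Upsilon_f^+(\Om)<0$ fully self-contained.
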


\begin{Lemma}\label{nhlem4}
Let $u \in \mathcal{N}_f(\Om)$ be such that $\mathcal{J}_f(u)= \ds \min_{w\in  \mathcal{N}_f(\Om)}\mathcal{J}_f(w)= \Upsilon_f(\Om)$ then we have $\int_{\Om}fu~dx >0 $ and $u$ is a solution of $(P_f)$.
\end{Lemma}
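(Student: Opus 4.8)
The plan is the classical two-step argument for a minimizer on a Nehari manifold: first read the sign condition $\int_{\Om}fu\,dx>0$ off the value of the minimum, and then show that the Lagrange multiplier attached to the Nehari constraint vanishes, so that $u$ is a free critical point of $\mathcal{J}_f$ and hence a weak solution of $(P_f)$.

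\textbf{Step 1 (sign of $\int_\Om fu$).} For every $w\in\mathcal{N}_f$ the Nehari identity $a(w)=b(w)+\int_\Om fw\,dx$ allows one to eliminate $b(w)$ from the energy, so that
\[
\mathcal{J}_f(w)=\frac12\,a(w)-\frac{1}{2\cdot 2^*_{\mu}}\,b(w)-\int_\Om fw\,dx=\frac{2^*_{\mu}-1}{2\cdot 2^*_{\mu}}\,a(w)-\frac{2\cdot 2^*_{\mu}-1}{2\cdot 2^*_{\mu}}\int_\Om fw\,dx .
\]
Apply this to the minimizer $u$. By Lemma \ref{nhlem28} we have $\mathcal{J}_f(u)=\Upsilon_f(\Om)\le\Upsilon_f^+(\Om)<0$, while $a(u)>0$ because $u\not\equiv 0$; the displayed identity then forces $\int_\Om fu\,dx>0$.

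\textbf{Step 2 ($u$ solves $(P_f)$).} Put $\Psi(v):=\langle\mathcal{J}_f'(v),v\rangle=a(v)-b(v)-\int_\Om fv\,dx$, which is a $C^1$ functional on $H_0^1(\Om)$ (it is explicit and $b\in C^1$; note also $\mathcal{J}_f\in C^2$ under the standing assumption $\mu<\min\{4,N\}$). Since $\{v\in H_0^1(\Om):v\le 0\ \text{a.e.}\}$ is closed, its complement $U:=\{v:v^+\not\equiv 0\}$ is open, $0\notin U$, and $\mathcal{N}_f=\Psi^{-1}(0)\cap U$. A direct computation gives, for $v\in\mathcal{N}_f$,
\[
\langle\Psi'(v),v\rangle=2a(v)-2\cdot 2^*_{\mu}\,b(v)-\int_\Om fv\,dx=a(v)-(2\cdot 2^*_{\mu}-1)b(v)=\phi_v''(1),
\]
which is nonzero since $\mathcal{N}_f^0=\{0\}$ and $u\not\equiv 0$. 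Hence $\mathcal{N}_f$ is a $C^1$-manifold near $u$, $u$ is a constrained minimum of $\mathcal{J}_f$ on it, and the Lagrange multiplier rule yields $\theta\in\mathbb{R}$ with $\mathcal{J}_f'(u)=\theta\,\Psi'(u)$. Pairing with $u$ and using $\langle\mathcal{J}_f'(u),u\rangle=0$ (as $u\in\mathcal{N}_f$) gives $0=\theta\langle\Psi'(u),u\rangle=\theta\,\phi_u''(1)$, hence $\theta=0$ and $\mathcal{J}_f'(u)=0$, i.e.\ $u$ is a weak solution of $(P_f)$. To upgrade this to a positive solution, test the weak formulation with $u^-=\min\{u,0\}\in H_0^1(\Om)$: the nonlocal term vanishes because $u^+u^-\equiv 0$ pointwise, and what remains is $\int_\Om|\nabla u^-|^2\,dx=\int_\Om fu^-\,dx\le 0$ (since $f\ge 0$), so $u\ge 0$; then $u^+=u$, $u\not\equiv 0$, and the strong maximum principle gives $u>0$ in $\Om$.

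\textbf{Main obstacle.} Nothing here is genuinely hard; the only point that requires care is the nondegeneracy $\Psi'(u)\ne 0$, which makes $\mathcal{N}_f$ a bona fide manifold near $u$. This is exactly the identity $\langle\Psi'(u),u\rangle=\phi_u''(1)$ together with the already established fact $\mathcal{N}_f^0=\{0\}$. If one prefers to avoid manifold language, the same conclusion follows by the fibering-map method: since $\int_\Om fu\,dx>0$ by Step 1, for each $\varphi\in H_0^1(\Om)$ and $|\varepsilon|$ small the implicit function theorem (applicable because $\phi_u''(1)\ne 0$, the map $t\mapsto\mathcal{J}_f(t(u+\varepsilon\varphi))$ being automatically smooth) produces a $C^1$ function $t(\varepsilon)$ with $t(0)=1$ and $t(\varepsilon)(u+\varepsilon\varphi)\in\mathcal{N}_f$; differentiating $\mathcal{J}_f(t(\varepsilon)(u+\varepsilon\varphi))\ge\mathcal{J}_f(u)$ at $\varepsilon=0$ and using $\langle\mathcal{J}_f'(u),u\rangle=0$ then yields $\langle\mathcal{J}_f'(u),\varphi\rangle=0$ for every $\varphi$.
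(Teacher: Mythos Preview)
Your proof is correct and follows the standard Tarantello-type Nehari-manifold argument. The paper does not spell out its own proof of this lemma but refers to \cite{zampyang}, where the same two-step scheme (sign of $\int_\Om fu$ from $\Upsilon_f<0$, then the Lagrange-multiplier/fibering argument using $\mathcal{N}_f^0=\{0\}$) is carried out; so your approach coincides with the one the paper relies on. The positivity paragraph you append is not part of this lemma's statement here (it is handled separately in Lemma~\ref{nhlem34}), but it is correct and does no harm.
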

%\begin{Lemma}
%	For any $u \in \mathcal{N}_f,\;  u^+\not \equiv 0$, there exists $\epsilon> 0 $ and a differentiable function $t=t(w)>0,\; w \in H_0^1(\Om),\; \|w\|<\epsilon$ such that 
%	\begin{align*}
%&	t(0)=1, \quad t(w)(u-w)\in \mathcal{N}_f(\Om) \text{ for } \|w\|<\epsilon \text{ and }\\
%& \ld t^{\prime}(0), w\rd = \frac{2 \ds \int_{\Om}\na u \cdot \na w ~dx- 2.2^*_{\mu}\ds \int_{\Om}\int_{\Om}\frac{|u^+(x)|^{2^{*}_{\mu}}|u^+(y)|^{2^{*}_{\mu}-2}u^+(y)w(y)}{|x-y|^{\mu}}~dxdy-\int_{\Om}fw~dx}{\|u\|^2- (2.2^*_{\mu}-1)\ds \int_{\Om}\int_{\Om}\frac{|u^+(x)|^{2^{*}_{\mu}}|u^+(y)|^{2^{*}_{\mu}}}{|x-y|^{\mu}}~dxdy}.
%	\end{align*}
%\end{Lemma}

\begin{Lemma} $\mathcal{J}_f$ has Palais-Smale sequences at each of the levels $\Upsilon_f(\Om)$, $\Upsilon_f^+(\Om)$ and $\Upsilon_f^-(\Om)$. 
%	\begin{enumerate}
%		\item There exists a $(PS)_{\Upsilon_f(\Om)}$- sequence $\{u_n\}$ of $\mathcal{J}_f$ in $\mathcal{N}_f$.
%		\item There exist a $(PS)_{\Upsilon_f^+(\Om)}$- sequence $\{u_n\}$ of $\mathcal{J}_f$ in $\mathcal{N}_f$.
%		\item There exist a $(PS)_{\Upsilon_f^-(\Om)}$- sequence $\{u_n\}$ of $\mathcal{J}_f$ in $\mathcal{N}_f$.
%	\end{enumerate}
\end{Lemma}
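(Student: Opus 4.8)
The plan is to produce the three Palais--Smale sequences by Ekeland's variational principle on the relevant piece of the Nehari manifold, and then to convert the ``almost minimiser under a constraint'' property into $\mathcal{J}_f'(u_n)\to 0$ in $H^{-1}$ via a fibering argument. Write $G_f(u):=\ld\mathcal{J}_f'(u),u\rd=a(u)-b(u)-\int_\Om fu\,dx$, so $\mathcal{N}_f=\{u\neq 0:\,u^+\not\equiv 0,\ G_f(u)=0\}$ and, on $\mathcal{N}_f$, $\ld G_f'(u),u\rd=\phi_u''(1)=a(u)-(2\cdot 2^*_{\mu}-1)b(u)$, which is $>0$ on $\mathcal{N}_f^+$, $<0$ on $\mathcal{N}_f^-$, and $\neq 0$ on $\mathcal{N}_f\setminus\{0\}$ since $\mathcal{N}_f^0=\{0\}$. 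Using the Hardy--Littlewood--Sobolev inequality in the form $b(u)\leq S_{H,L}^{-2^*_{\mu}}a(u)^{2^*_{\mu}}$, I would first record that minimising sequences for the three levels live in complete metric spaces: for $\Upsilon_f$ and $\Upsilon_f^+$ because $\Upsilon_f\leq\Upsilon_f^+<0$ (Lemma \ref{nhlem28}) keeps such sequences bounded and bounded away from the degenerate set, and for $\Upsilon_f^-$ because $\phi_u''(1)<0$ together with the above inequality yields the uniform lower bound $a(u)>\bigl(S_{H,L}^{2^*_{\mu}}/(2\cdot 2^*_{\mu}-1)\bigr)^{1/(2^*_{\mu}-1)}$ on $\mathcal{N}_f^-$, while Lemma \ref{nhlem3}(c)--(d) shows $\mathcal{N}_f^-$ is closed in $H_0^1(\Om)$. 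Ekeland's principle then gives, in each case, a bounded sequence $\{u_n\}$ in the corresponding set with $\mathcal{J}_f(u_n)\to\Upsilon_f$ (resp. $\Upsilon_f^+$, $\Upsilon_f^-$) and $\mathcal{J}_f(w)\geq\mathcal{J}_f(u_n)-\tfrac1n\|w-u_n\|$ for all $w$ in that set.

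\textbf{The main obstacle} is the quantitative non-degeneracy $|\phi_{u_n}''(1)|\geq c>0$ along these sequences, and this is where the smallness threshold on $f$ enters. I would argue by contradiction: if $\phi_{u_n}''(1)=a(u_n)-(2\cdot 2^*_{\mu}-1)b(u_n)\to 0$ along a subsequence, then combining with $a(u_n)=b(u_n)+\int_\Om fu_n$ gives $\int_\Om fu_n\,dx=(2\cdot 2^*_{\mu}-2)b(u_n)+o(1)=\frac{2\cdot 2^*_{\mu}-2}{2\cdot 2^*_{\mu}-1}a(u_n)+o(1)$. Since $a(u_n)\not\to 0$ (for $\Upsilon_f,\Upsilon_f^+$ because $a(u_n)\to 0$ would force $\mathcal{J}_f(u_n)\to 0$, contradicting negativity of the level; for $\Upsilon_f^-$ by the uniform lower bound above) and $\{a(u_n)\}$ is bounded, the inequality $b(u_n)\leq S_{H,L}^{-2^*_{\mu}}a(u_n)^{2^*_{\mu}}$ forces $a(u_n)\geq\bigl(S_{H,L}^{2^*_{\mu}}/(2\cdot 2^*_{\mu}-1)\bigr)^{1/(2^*_{\mu}-1)}+o(1)$, whereas $\int_\Om fu_n\,dx\leq\|f\|_{H^{-1}}a(u_n)^{1/2}$ forces $a(u_n)\leq\bigl(\frac{2\cdot 2^*_{\mu}-1}{2\cdot 2^*_{\mu}-2}\bigr)^2\|f\|_{H^{-1}}^2+o(1)$. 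These are incompatible once $\|f\|_{H^{-1}}<e_{00}$, because $e_{00}$ is exactly calibrated so that $\bigl(\frac{2\cdot 2^*_{\mu}-1}{2\cdot 2^*_{\mu}-2}\bigr)^2 e_{00}^2=\bigl(S_{H,L}^{2^*_{\mu}}/(2\cdot 2^*_{\mu}-1)\bigr)^{1/(2^*_{\mu}-1)}$.

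With non-degeneracy in hand, fix $n$ and $w\in H_0^1(\Om)$ with $\|w\|=1$. Applying the implicit function theorem to $(t,\delta)\mapsto G_f(t(u_n+\delta w))$ at $(t,\delta)=(1,0)$ --- legitimate since the $\partial_t$-derivative there is $\phi_{u_n}''(1)\neq 0$ --- yields a $C^1$ curve $\delta\mapsto t_n(\delta)$ with $t_n(0)=1$, $t_n(\delta)(u_n+\delta w)\in\mathcal{N}_f$, and (by continuity of $\phi_{\cdot}''(1)$ along the curve) staying in the same component $\mathcal{N}_f^{\pm}$ as $u_n$ for $|\delta|$ small; moreover $t_n'(0)=-\ld G_f'(u_n),w\rd/\phi_{u_n}''(1)$ is bounded uniformly in $n$ and $w$ because $\{u_n\}$ is bounded and $|\phi_{u_n}''(1)|\geq c$. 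Inserting $w_\delta:=t_n(\delta)(u_n+\delta w)$ into Ekeland's inequality, using $w_\delta-u_n=\delta(t_n'(0)u_n+w)+o(\delta)$ together with $\ld\mathcal{J}_f'(u_n),u_n\rd=0$, dividing by $\delta>0$ and letting $\delta\to 0^+$ gives $-\ld\mathcal{J}_f'(u_n),w\rd\leq M/n$; replacing $w$ by $-w$ yields $\|\mathcal{J}_f'(u_n)\|_{H^{-1}}\leq M/n\to 0$, so $\{u_n\}$ is the desired $(PS)$ sequence. For the level $\Upsilon_f^+$ one may alternatively observe that Lemmas \ref{nhlem28}, \ref{nhlem3} and \ref{nhlem4} give $\Upsilon_f=\Upsilon_f^+$, so the $(PS)_{\Upsilon_f}$ sequence already serves.
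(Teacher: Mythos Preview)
Your argument is correct and is precisely the standard Tarantello--style proof that the paper implicitly invokes: the paper does not give its own proof of this lemma but rather lists it among the preliminary results ``whose proof can be found in \cite{zampyang}'', and the Ekeland--plus--fibering--map scheme you carry out is exactly what is done there. Two minor points are worth tightening: (i) $\mathcal{N}_f^+$ is not itself closed in $H_0^1(\Om)$, so you should apply Ekeland on its closure $\mathcal{N}_f^+\cup\{0\}$ (valid since $\mathcal{N}_f^0=\{0\}$ and $\mathcal{N}_f^-$ is bounded away from $\mathcal{N}_f^+$ by your uniform lower bound on $a(u)$) and then note that the resulting almost--minimiser cannot be $0$ because the level is strictly negative; (ii) the shortcut $\Upsilon_f=\Upsilon_f^+$ is only established later in Lemma~\ref{nhlem10}, so at this stage you should run the Ekeland argument on $\mathcal{N}_f^+$ directly rather than invoke that identity. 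Your calibration check that $\bigl(\tfrac{2\cdot 2^*_\mu-1}{2\cdot 2^*_\mu-2}\bigr)^2 e_{00}^2$ equals the lower threshold for $a(u_n)$ is exactly right and is the mechanism behind the hypothesis $\|f\|_{H^{-1}}<e_{00}$.
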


\begin{Lemma}\label{nhlem27}
	Let  $\{u_n\}\in \mathcal{N}_f$ be  a $(PS)_{\Upsilon_f(\Om)}$ sequence for  $\mathcal{J}_f$, then there exists a subsequence of $\{u_n\}$, still denoted by $\{u_n\},$ and a non-zero $u_1\in H_0^1(\Om)$ such that $u_n\ra u_1$ strongly in $H_0^1(\Om)$. Moreover,  $u_1 \in \mathcal{N}_f$ and solves of $(P_f)$. %such that $\mathcal{J}_f(u_1)= \Upsilon_f(\Om)$.
\end{Lemma}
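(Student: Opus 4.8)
The plan is a concentration--compactness analysis of the $(PS)_{\Upsilon_f(\Om)}$ sequence, in which the crucial feature is that $\Upsilon_f(\Om)<0$ lies strictly below the first bubbling level $\Upsilon_f(\Om)+\Upsilon_0$. First I would use that $\{u_n\}\subset\mathcal{N}_f$, that $\mathcal{J}_f$ is coercive on $\mathcal{N}_f$, and that $\mathcal{J}_f(u_n)\to\Upsilon_f(\Om)$ is bounded, to conclude that $\{u_n\}$ is bounded in $H^1_0(\Om)$; passing to a subsequence, $u_n\rightharpoonup u_1$ in $H^1_0(\Om)$, $u_n\to u_1$ in $L^q(\Om)$ for every $q<2^*$, and $u_n\to u_1$ a.e.\ in $\Om$.

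Next I would rule out $u_1\equiv 0$. If $u_1\equiv 0$ then $\int_\Om f u_n\to 0$, since $f\in L^\infty(\Om)\subset H^{-1}(\Om)$. Writing the Nehari identity $a(u_n)=b(u_n)+\int_\Om f u_n$ and substituting into $\mathcal{J}_f$ gives
\[
\mathcal{J}_f(u_n)=\Big(\tfrac12-\tfrac1{2\cdot 2^*_\mu}\Big)a(u_n)+\Big(\tfrac1{2\cdot 2^*_\mu}-1\Big)\int_\Om f u_n\ \ge\ o_n(1),
\]
so $\Upsilon_f(\Om)\ge 0$, contradicting $\Upsilon_f(\Om)\le\Upsilon_f^+(\Om)<0$ (Lemma~\ref{nhlem28}). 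Hence $u_1\neq 0$.

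Then I would show that $u_1$ solves $(P_f)$ by letting $n\to\infty$ in $\langle\mathcal{J}_f'(u_n),\varphi\rangle=o_n(1)$ for fixed $\varphi\in H^1_0(\Om)$: the Dirichlet term passes to the limit by weak convergence, $\int_\Om f\varphi$ is fixed, and for the convolution term one uses that $u_n^+\to u_1^+$ a.e.\ with $\{u_n^+\}$ bounded in $L^{2^*}(\Om)$, so that $(u_n^+)^{2^*_\mu}\rightharpoonup (u_1^+)^{2^*_\mu}$ in $L^{\frac{2N}{2N-\mu}}(\Om)$ and, by the Hardy--Littlewood--Sobolev inequality \eqref{co9} together with a Br\'ezis--Lieb/Vitali argument as in \cite{zampyang,yang}, the term $\int_\Om\!\int_\Om\frac{(u_n^+(x))^{2^*_\mu}(u_n^+(y))^{2^*_\mu-1}\varphi(y)}{|x-y|^\mu}\,dxdy$ converges to the corresponding quantity for $u_1$. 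Thus $\langle\mathcal{J}_f'(u_1),u_1\rangle=0$, i.e.\ $a(u_1)=b(u_1)+\int_\Om f u_1$; testing the weak formulation against $u_1^-$ and using $f\ge 0$ forces $u_1^-\equiv 0$, so $u_1\ge 0$, $u_1\not\equiv 0$, hence $u_1^+=u_1\not\equiv 0$ and $u_1\in\mathcal{N}_f$ (and $u_1>0$ by the strong maximum principle).

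Finally I would upgrade weak to strong convergence. With $w_n:=u_n-u_1\rightharpoonup 0$, the Hilbert-space identity gives $a(u_n)=a(u_1)+a(w_n)+o_n(1)$, the nonlocal Br\'ezis--Lieb lemma gives $b(u_n)=b(u_1)+b(w_n)+o_n(1)$, and $\int_\Om f u_n=\int_\Om f u_1+o_n(1)$; hence $\mathcal{J}_f(u_n)=\mathcal{J}_f(u_1)+\mathcal{J}(w_n)+o_n(1)$, while subtracting $\langle\mathcal{J}_f'(u_n),u_n\rangle=0$ and $\langle\mathcal{J}_f'(u_1),u_1\rangle=0$ yields $a(w_n)-b(w_n)=o_n(1)$. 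Consequently $\mathcal{J}(w_n)=\big(\tfrac12-\tfrac1{2\cdot 2^*_\mu}\big)b(w_n)+o_n(1)\ge o_n(1)$, so $\mathcal{J}_f(u_1)\le\lim_n\mathcal{J}_f(u_n)=\Upsilon_f(\Om)$; combined with $u_1\in\mathcal{N}_f$ (hence $\mathcal{J}_f(u_1)\ge\Upsilon_f(\Om)$) this gives $\mathcal{J}_f(u_1)=\Upsilon_f(\Om)$ and $\mathcal{J}(w_n)\to 0$, so $b(w_n)\to 0$, $a(w_n)\to 0$, i.e.\ $u_n\to u_1$ strongly in $H^1_0(\Om)$, and the remaining assertions follow at once. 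I expect the only genuinely delicate ingredients to be the weak continuity of the Choquard operator against fixed test functions and the nonlocal Br\'ezis--Lieb splitting; there is no threshold/bubbling obstruction here, since the argument uses only $\tfrac12-\tfrac1{2\cdot 2^*_\mu}>0$ and $\Upsilon_f(\Om)<0$, never the exact value of $S_{H,L}$.
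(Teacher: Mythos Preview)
Your proposal is correct and follows essentially the same route as the paper: boundedness via coercivity on $\mathcal{N}_f$, weak limit solves $(P_f)$ by weak continuity of the Choquard operator, membership in $\mathcal{N}_f$, and strong convergence via a nonlocal Br\'ezis--Lieb splitting for $w_n=u_n-u_1$. The only cosmetic differences are that you prove $u_1\neq 0$ explicitly from $\Upsilon_f(\Om)<0$ (the paper leaves this implicit, since $f\not\equiv 0$ forces $\mathcal{J}_f'(0)\neq 0$), and you obtain $\mathcal{J}_f(u_1)\le\Upsilon_f(\Om)$ from the splitting $\mathcal{J}_f(u_n)=\mathcal{J}_f(u_1)+\mathcal{J}(w_n)+o_n(1)$ together with $\mathcal{J}(w_n)\ge o_n(1)$, whereas the paper gets it directly from weak lower semicontinuity of $a(\cdot)$ after rewriting $\mathcal{J}_f$ on $\mathcal{N}_f$; both orderings are equivalent.
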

\begin{proof}
	  $\mathcal{J}_f$ is bounded below and coercive  implies $\{u_n\}$ is bounded in $ H_0^1(\Om)$. So, there exists a subsequence still denoted by $\{u_n\}$ such that $u_n\rp u_1$ weakly in $ H_0^1(\Om)$. By \cite[Lemma 4.2]{choqcoron}, we have 
 $\mathcal{J}_f^{\prime}(u_1)=0$. In particular, $u_1 \in \mathcal{N}_f$ and  $\mathcal{J}_f(u_1)=\left(\frac{1}{2}-\frac{1}{2.2*_{\mu}}\right)a(u_1)-\left(1-\frac{1}{2.2*_{\mu}}\right)\ds \int_{\Om}fu_1~dx$. Now, using the fact that $a$ is weakly lower semi continuous we have 
	\begin{align*}
	\Upsilon_f(\Om)\leq \mathcal{J}_f(u_1)\leq \liminf_{n\ra \infty}\left(\frac{1}{2}-\frac{1}{2.2*_{\mu}}\right) a(u_n)-\lim_{n \ra \infty}\left(1-\frac{1}{2.2*_{\mu}}\right)\ds \int_{\Om}fu_n~dx= \Upsilon_f(\Om).
	\end{align*}
Consequently, we have $\Upsilon_f(\Om)=\mathcal{J}_f(u_1)$. Let $w_n=u_n-u_1$ then by  Lemma 4.1 of \cite{choqcoron}, Lemma 2.2 of \cite{yang} and the fact that  $\mathcal{J}_f^{\prime}(u_1)=0$,  we obtain
%\begin{align*}
$\mathcal{J}_f(w_n)=\mathcal{J}_f(u_n)-\mathcal{J}_f(u_1)= o_n(1) \; \text{and}\;
\ld \mathcal{J}_f^{\prime}(w_n), \phi \rd = \ld \mathcal{J}_f^{\prime}(u_n), \phi \rd- \ld \mathcal{J}_f^{\prime}(u_1), \phi \rd +o_n(1)= o_n(1).$
%\end{align*}
Therefore, $\ld \mathcal{J}_f^{\prime}(w_n), w_n \rd= o_n(1)$. It implies $ \mathcal{J}_f(w_n)=\left(\frac{1}{2}-\frac{1}{2.2*_{\mu}}\right) a(w_n)- \int_{\Om}fw_n~dx = o_n(1)$ and since  $ \int_{\Om}fw_n~dx=o_n(1)$, we get $a(w_n)= o_n(1)$. Hence $u_n \ra u$ strongly in $H_0^1(\Om)$.\QED
	\end{proof}
\begin{Lemma}\label{nhlem34}
If $u$ be a solution of $(P_f)$ then  $u \in C^2(\overline{\Om})$. Moreover, $u$ is a positive solution.
\end{Lemma}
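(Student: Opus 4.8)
The plan is to bootstrap the regularity using the structure of the nonlocal term, and then deduce positivity from the strong maximum principle together with the sign assumption on $f$. First I would fix a weak solution $u\in H_0^1(\Om)$ and set $g(x):=\int_{\Om}\frac{|u^+(y)|^{2^*_\mu}}{|x-y|^\mu}\,dy$, so that $u$ solves $-\De u = g(x)|u^+|^{2^*_\mu-2}u^+ + f$ weakly. The Hardy–Littlewood–Sobolev inequality \eqref{co9}, combined with $u\in L^{2^*}(\Om)$ and $|u^+|^{2^*_\mu}\in L^{\frac{2N}{2N-\mu}}(\Om)$, shows that $g\in L^r(\Om)$ for a suitable $r$ (indeed $g\in L^\infty(\Om)$ once we know enough integrability of $u$, but even a priori $g$ lies in a good Lebesgue space); this is exactly the kind of nonlocal Brezis–Kato estimate used for Choquard problems. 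The first real step is therefore a Brezis–Kato / Moser iteration argument applied to $-\De u = V(x)u + f$ with $V(x):=g(x)|u^+|^{2^*_\mu-2}\in L^{N/2}(\Om)$ (this membership follows from $|u^+|^{2^*_\mu-2}\in L^{\frac{2N}{(N-2)(2^*_\mu-2)}}$ and boundedness of $g$, using $\mu<\min\{4,N\}$ so that the exponents are in the admissible range); the iteration gives $u\in L^p(\Om)$ for every $p<\infty$.

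Once $u\in L^p(\Om)$ for all $p<\infty$, I would reinvoke the Hardy–Littlewood–Sobolev inequality to conclude $g\in L^\infty(\Om)$, hence the right-hand side $g|u^+|^{2^*_\mu-2}u^+ + f$ lies in $L^p(\Om)$ for all $p<\infty$. Elliptic $L^p$-regularity (Calderón–Zygmund) then puts $u\in W^{2,p}(\Om)$ for all $p<\infty$, and Sobolev embedding gives $u\in C^{1,\al}(\overline{\Om})$ for every $\al\in(0,1)$. Feeding this back, the right-hand side is now Hölder continuous (here one checks that $x\mapsto g(x)$ is Hölder continuous, which follows from differentiating under the integral sign or from standard Riesz-potential regularity, again using $\mu<\min\{4,N\}$), so Schauder estimates upgrade $u$ to $C^{2,\al}(\overline{\Om})\subset C^2(\overline{\Om})$. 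This completes the regularity claim.

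For positivity, test the weak formulation with $u^-:=\max(-u,0)\in H_0^1(\Om)$. Since the nonlocal term only involves $u^+$, the term $\int_\Om\int_\Om\frac{|u^+(x)|^{2^*_\mu}|u^+(y)|^{2^*_\mu-1}u^-(y)}{|x-y|^\mu}\,dx\,dy$ vanishes, and we are left with $\int_\Om|\na u^-|^2\,dx = -\int_\Om f\,u^-\,dx \le 0$ because $f\ge 0$ and $u^-\ge 0$. Hence $u^-\equiv 0$, i.e. $u\ge 0$ in $\Om$. Then $u$ satisfies $-\De u = g(x)u^{2^*_\mu-1}+f \ge 0$ with $u\not\equiv 0$ (the latter because $f\not\equiv 0$, or because $u\in\mathcal N_f$), so by the strong maximum principle $u>0$ in $\Om$, and by the Hopf lemma $\partial u/\partial\nu<0$ on $\partial\Om$; in particular $u$ is a positive solution.

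The main obstacle is the first bootstrap step: one must be careful that the Brezis–Kato argument really applies, i.e. that $V=g|u^+|^{2^*_\mu-2}$ genuinely sits in $L^{N/2}(\Om)$ with the right constants, and this is where the hypothesis $\mu<\min\{4,N\}$ is used — it guarantees both that $2^*_\mu>2$ (so the nonlinearity is superlinear but the exponent $2^*_\mu-2$ is not too large) and that the Riesz potential $g$ has enough integrability and Hölder regularity to close the loop. Everything after the first $L^p$ gain is standard elliptic regularity and the maximum principle, so I would present that part briefly and concentrate the writing on verifying the integrability exponents in the Moser iteration.
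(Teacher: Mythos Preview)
Your proposal is correct and follows essentially the same route as the paper: the paper's proof simply cites the nonlocal Brezis--Kato estimate (Lemma~4.4 of \cite{yangjmaa}) to obtain $u\in L^\infty(\Om)$, then invokes standard elliptic regularity for $C^2(\overline{\Om})$ and the strong maximum principle (together with $f\ge 0$) for positivity --- exactly the Moser-iteration/bootstrap and $u^-$-testing that you spell out in detail. The only minor difference is that the paper outsources the $L^\infty$ step to the cited reference rather than verifying the $L^{N/2}$ membership of the potential $V$ directly, so your write-up is more self-contained; your emphasis on $\mu<\min\{4,N\}$ for the exponent bookkeeping is appropriate in the paper's setting, though the cited lemma in \cite{yangjmaa} handles the general case.
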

\begin{proof}
	Let  $u$ be a solution of $(P_f)$, and $G(x,u)=\left(\ds \int_{\Om}\frac{|u^+(y)|^{2^*_{\mu}}}{|x-y|^{\mu}}dy\right)|u^+|^{2^*_{\mu}-2}u+f$ and since  $f \in  \hat{F}$, we have $|G(x,u)| \leq  C + \left(\ds \int_{\Om}\frac{|u(y)|^{2^*_{\mu}}}{|x-y|^{\mu}}dy\right)|u|^{2^*_{\mu}-2}u$. Then by Lemma 4.4 of \cite{yangjmaa} we obtain $u\in L^\infty(\Om)$ and by the standard elliptic regularity $u \in C^2(\overline{\Om})$. Since $f\ge 0,$ we get $u\ge 0$ and by using  strong maximum principle,  $u$ is a positive solution of $(P_f)$.\QED
	\end{proof}

\begin{Lemma}\label{nhlem5}
	Let  $\mu<\min\{ 4,\; N \}$ and $k_0= \left(\frac{1}{2.2^*_{\mu}-1}\right)^{\frac{1}{2(2^*_{\mu}-1)}}S_{H,L}^{\frac{2^*_{\mu}}{2(2^*_{\mu}-1)}}$  and $f\in \hat{F}$, $	\|f\|_{H^{-1}} \leq e_{00}$  then 
	\begin{enumerate}
		\item $\mathcal{N}_f^+(\Om) \subset  B_{k_0}(0):=\{u \in H_0^1(\Om)\; |\; \|u\|<k_0\}$.
		\item $\mathcal{J}_f$ is strictly convex in $B_{k_0}(0)$.
	\end{enumerate}
\end{Lemma}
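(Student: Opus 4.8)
The plan is to treat the two assertions separately, both reducing to elementary calculus on the fibering map together with the constraint defining $\mathcal{N}_f$. For part (1), let $u\in\mathcal{N}_f^+(\Om)$. By definition $\phi_u'(1)=0$ and $\phi_u''(1)>0$, which translate into $a(u)=b(u)+\int_\Om fu\,dx$ and $a(u)-(2\cdot2^*_\mu-1)b(u)>0$. Combining these gives $b(u)<\frac{1}{2\cdot2^*_\mu-2}\bigl(a(u)-\int_\Om fu\,dx\bigr)$ — actually more directly, from $a(u)=b(u)+\int fu$ and $a(u)>(2\cdot2^*_\mu-1)b(u)$ one gets $b(u)(2\cdot2^*_\mu-2)<\int_\Om fu\,dx\le 0$ is impossible since $b(u)>0$... so instead I should argue: $(2\cdot2^*_\mu-1)b(u)<a(u)=b(u)+\int fu$, hence $(2\cdot2^*_\mu-2)b(u)<\int_\Om fu\,dx\le\|f\|_{H^{-1}}\|u\|$. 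On the other hand, the definition of $S_{H,L}$ in \eqref{nh5} together with the Hardy-Littlewood-Sobolev inequality yields $b(u)\ge S_{H,L}^{2^*_\mu}\|u\|^{-2(2^*_\mu-1)}\|u\|^{2\cdot2^*_\mu}$... more precisely $b(u)\le S_{H,L}^{-2^*_\mu}\|u\|^{2\cdot2^*_\mu}$ is the wrong direction; the right bound from \eqref{nh5} is $a(u)\ge S_{H,L}\,b(u)^{1/2^*_\mu}$, i.e. $\|u\|^2\ge S_{H,L}\,b(u)^{1/2^*_\mu}$. Using $a(u)=b(u)+\int fu$ and $a(u)>(2\cdot2^*_\mu-1)b(u)$ we get $a(u)<\frac{2\cdot2^*_\mu-1}{2\cdot2^*_\mu-2}\int_\Om fu$, and feeding $b(u)\le(S_{H,L}^{-1}a(u))^{2^*_\mu}$ back into $a(u)>(2\cdot2^*_\mu-1)b(u)$ gives $\|u\|^{2}>(2\cdot2^*_\mu-1)S_{H,L}^{-2^*_\mu}\|u\|^{2\cdot2^*_\mu}$, i.e. $\|u\|^{2(2^*_\mu-1)}<\frac{1}{2\cdot2^*_\mu-1}S_{H,L}^{2^*_\mu}$, which is exactly $\|u\|<k_0$. (Note this step uses only membership in $\mathcal{N}_f^-$'s complement, i.e. $\phi_u''(1)>0$, and not the sign of $\int fu$, so the constant $e_{00}$ is not even needed here — but keeping the hypothesis does no harm.)

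For part (2), I would compute the second Gateaux derivative of $\mathcal{J}_f$ at $u\in B_{k_0}(0)$ in a direction $v\in H_0^1(\Om)$. Since $\mu<\min\{4,N\}$, the functional is $C^2$ (as noted after \eqref{nh25}), and
$$\ld \mathcal{J}_f''(u)v,v\rd = \|v\|^2 - (2^*_\mu-1)\!\int_\Om\!\!\int_\Om\!\frac{(u^+(x))^{2^*_\mu}(u^+(y))^{2^*_\mu-2}v(y)^2}{|x-y|^\mu}dxdy - 2^*_\mu\!\int_\Om\!\!\int_\Om\!\frac{(u^+(x))^{2^*_\mu-1}(u^+(y))^{2^*_\mu-1}v(x)v(y)}{|x-y|^\mu}dxdy.$$
Each convolution term is estimated from above, via Hardy-Littlewood-Sobolev and Hölder, by a constant times $b(u)^{(2\cdot2^*_\mu-2)/(2\cdot2^*_\mu)}\|v\|^2$, and then $b(u)\le S_{H,L}^{-2^*_\mu}\|u\|^{2\cdot2^*_\mu}$ converts this into a bound of the form $C_1\|u\|^{2(2^*_\mu-1)}\|v\|^2$. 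Thus $\ld\mathcal{J}_f''(u)v,v\rd\ge\bigl(1-C_1\|u\|^{2(2^*_\mu-1)}\bigr)\|v\|^2$, and the definition of $k_0$ is precisely calibrated so that $C_1 k_0^{2(2^*_\mu-1)}<1$, giving strict positivity for $\|u\|<k_0$; hence $\mathcal{J}_f$ is strictly convex on the ball.

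The main obstacle is purely bookkeeping: getting the constants in the two convolution estimates to line up exactly with the stated $k_0=\bigl(\tfrac{1}{2\cdot2^*_\mu-1}\bigr)^{1/(2(2^*_\mu-1))}S_{H,L}^{2^*_\mu/(2(2^*_\mu-1))}$. For the mixed term one wants to apply Proposition \ref{Propnh1}-type splitting (Cauchy–Schwarz in the bilinear form $\int\int\frac{g(x)h(y)}{|x-y|^\mu}$) to bound it by $\bigl(\int\int\frac{(u^+)^{2^*_\mu}(u^+)^{2^*_\mu-2}v^2}{|x-y|^\mu}\bigr)$, reducing everything to the first type; then one further bound via Hölder in the two-variable measure $|x-y|^{-\mu}dxdy$ with exponents $\tfrac{2^*_\mu}{2^*_\mu-1}$ and $2^*_\mu$ brings in $b(u)$ raised to the correct power. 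I would carry out part (1) first (it is self-contained and needs no regularity), then part (2), citing the $C^2$ regularity already recorded in the text and the sharp-constant inequality \eqref{nh5}.
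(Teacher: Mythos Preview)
Your Part (2) is essentially the paper's argument and is fine: compute $\mathcal{J}_f''(u)(v,v)$, bound each convolution term via H\"older in the measure $|x-y|^{-\mu}\,dxdy$ with exponents $\tfrac{2^*_\mu}{2^*_\mu-1}$ and $2^*_\mu$ to get $b(u)^{(2^*_\mu-1)/2^*_\mu}\|v\|_{NL}^2\le S_{H,L}^{-2^*_\mu}\|u\|^{2(2^*_\mu-1)}\|v\|^2$, sum the coefficients $2^*_\mu$ and $2^*_\mu-1$, and observe that $(2\cdot2^*_\mu-1)S_{H,L}^{-2^*_\mu}k_0^{2(2^*_\mu-1)}=1$.

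Part (1), however, has a genuine gap. In the sentence ``feeding $b(u)\le(S_{H,L}^{-1}a(u))^{2^*_\mu}$ back into $a(u)>(2\cdot2^*_\mu-1)b(u)$ gives $\|u\|^{2}>(2\cdot2^*_\mu-1)S_{H,L}^{-2^*_\mu}\|u\|^{2\cdot2^*_\mu}$'' the implication is invalid: from $a(u)>(2\cdot2^*_\mu-1)b(u)$ and an \emph{upper} bound $b(u)\le X$ you cannot conclude $a(u)>(2\cdot2^*_\mu-1)X$. The Sobolev--HLS inequality only furnishes $b(u)\le S_{H,L}^{-2^*_\mu}a(u)^{2^*_\mu}$, never a lower bound on $b(u)$ in terms of $a(u)$, so this route is blocked. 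Consequently your parenthetical claim that ``the constant $e_{00}$ is not even needed here'' is also false: without a smallness assumption on $f$ the set $\mathcal{N}_f^+$ need not lie in any fixed ball (the bound $\|u\|<k_0$ genuinely depends on $\|f\|_{H^{-1}}\le e_{00}$).

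The correct argument for (1), which the paper uses, is the one you in fact wrote down and then abandoned: from $a(u)=b(u)+\int_\Om fu$ and $b(u)<\tfrac{1}{2\cdot2^*_\mu-1}a(u)$ one gets
\[
\Bigl(1-\tfrac{1}{2\cdot2^*_\mu-1}\Bigr)\|u\|^2<\int_\Om fu\le\|f\|_{H^{-1}}\|u\|,
\]
hence $\|u\|<\tfrac{2\cdot2^*_\mu-1}{2(2^*_\mu-1)}\|f\|_{H^{-1}}$. Now invoke $\|f\|_{H^{-1}}\le e_{00}=C_{N,\mu}S_{H,L}^{2^*_\mu/(2\cdot2^*_\mu-2)}$ with $C_{N,\mu}=\bigl(\tfrac{1}{2\cdot2^*_\mu-1}\bigr)^{(2\cdot2^*_\mu-1)/(2\cdot2^*_\mu-2)}(2\cdot2^*_\mu-2)$ and simplify; the product of constants collapses exactly to $k_0$. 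This is a one-line computation once you retain the $f$-dependence instead of trying to eliminate it.
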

\begin{proof}
	\begin{enumerate}
		\item 	Let $u \in \mathcal{N}_f^+(\Om)$ then $\phi_u^{\prime}(1)=0$ and  $\phi_u^{\prime\prime}(1)>0$. That is, $a(u)= b(u)+\int_{\Om}fu~dx$ and $a(u)>(2.2^*_{\mu}-1)b(u)$. Therefore, $a(u) = b(u)+\int_{\Om}fu~dx  < \frac{1}{(2.2^*_{\mu}-1)}a(u)+\int_{\Om}fu~dx$. It implies $
		\left(1-\frac{1}{(2.2^*_{\mu}-1)}\right)a(u)\leq \|f\|_{H^{-1}}\|u\|.$
		So, 
		\begin{align*}
		\|u\| & \leq 	\frac{(2.2^*_{\mu}-1)}{2(2^*_{\mu}-1)}\|f\|_{H^{-1}}\\
		&\leq  \frac{(2.2^*_{\mu}-1)}{2(2^*_{\mu}-1)}C_{N,\mu} S_{H,L}^{\frac{2^*_{\mu}}{2.2^*_{\mu}-2}}
		= \left(\frac{1}{2.2^*_{\mu}-1}\right)^{\frac{1}{2(2^*_{\mu}-1)}}S_{H,L}^{\frac{2^*_{\mu}}{2(2^*_{\mu}-1)}}= k_0.
		\end{align*}
		\item By using H\"olders inequality and equation 
		\eqref{nh5}, we have 
		\begin{equation}\label{nh6}
		\begin{aligned}
		\ds \int_{\Om}\int_{\Om}\frac{(u^+(x))^{2^{*}_{\mu}-1}(u^+(y))^{2^{*}_{\mu}-1}z(x)z(y)}{|x-y|^{\mu}}~dxdy &  \leq b(u)^{\frac{2^*_{\mu}-1}{2^*_{\mu}}}\|z\|_{NL}^2\\
		& \leq S_{H,L}^{-(2^*_{\mu}-1)}a(u)^{(2^*_{\mu}-1)}S_{H,L}^{-1}a(z)\\
		& = S_{H,L}^{-2^*_{\mu}}a(u)^{(2^*_{\mu}-1)}a(z).
		\end{aligned}
		\end{equation}
\noi	Again  using H\"olders inequality, Proposition \ref{Propnh1} and  
\eqref{nh5}, we have 
\begin{equation}\label{nh7}
\begin{aligned}
\ds \int_{\Om}\int_{\Om}\frac{(u^+(x))^{2^{*}_{\mu}}(u^+(y))^{2^{*}_{\mu}-2}z^2(y)}{|x-y|^{\mu}}~dxdy&  \leq b(u)^{\frac{2^*_{\mu}-1}{2^*_{\mu}}}\|z\|_{NL}^2 \leq  S_{H,L}^{-2^*_{\mu}}a(u)^{(2^*_{\mu}-1)}a(z).
\end{aligned}
\end{equation}
	From equations 	\eqref{nh6}, \eqref{nh7} and definition of $\mathcal{J}_f^{\prime\prime}(u)(z,z)$, we get
 \begin{align*}
	\mathcal{J}_f^{\prime\prime}(u)(z,z)= & a(z)- 2^*_{\mu}\ds \int_{\Om}\int_{\Om}\frac{(u^+(x))^{2^{*}_{\mu}-1}(u^+(y))^{2^{*}_{\mu}-1}z(x)z(y)}{|x-y|^{\mu}}~dxdy\\
	& \quad - (2^*_{\mu}-1)\ds \int_{\Om}\int_{\Om}\frac{(u^+(x))^{2^{*}_{\mu}}(u^+(y))^{2^{*}_{\mu}-2}z^2(y)}{|x-y|^{\mu}}~dxdy\\
	& \geq a(z)\left(1-  2^*_{\mu}S_{H,L}^{-2^*_{\mu}}a(u)^{(2^*_{\mu}-1)}
	- (2^*_{\mu}-1) S_{H,L}^{-2^*_{\mu}}a(u)^{(2^*_{\mu}-1)}\right)\\
	& =  a(z)\left(1-(2.2^*_{\mu}-1) S_{H,L}^{-2^*_{\mu}}a(u)^{(2^*_{\mu}-1)}\right) \\
	& > a(z)\left(1-\frac{(2.2^*_{\mu}-1)}{(2.2^*_{\mu}-1)} \right)=0
		\end{align*}
		for $u \in B_{k_0}(0)\setminus\{0\} $. Then $\mathcal{J}_f^{\prime\prime}(u)$ is positive definite for $u \in B_{k_0}(0) $ and $\mathcal{J}_f(u)$ is strictly positive on $B_{k_0}(0)$.\QED
	\end{enumerate}
	\end{proof}
\begin{Lemma}\label{nhlem10}
	It holds that 
 $u_1 \in \mathcal{N}_f^+$ and $\mathcal{J}_f(u_1)= \Upsilon_f^+(\Om)=\Upsilon_f(\Om)$.
		 Moreover,  $u_1$ is the unique critical point of $\mathcal{J}_f$ in $B_{k_0}(0)$ and 
		 $u_1$ is a local minimum of $\mathcal{J}_f$ in $H_0^1(\Om)$.
	
\end{Lemma}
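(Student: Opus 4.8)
The plan is to identify $u_1$ with the minimizer produced by Lemma \ref{nhlem27} and then to show it actually lies in the "good" part $\mathcal{N}_f^+$ of the Nehari manifold, after which the strict convexity of Lemma \ref{nhlem5} delivers the remaining assertions. Concretely: since $\mathcal{J}_f$ has a $(PS)_{\Upsilon_f(\Om)}$ sequence, Lemma \ref{nhlem27} furnishes a nonzero $u_1 \in \mathcal{N}_f$ solving $(P_f)$ with $\mathcal{J}_f(u_1) = \Upsilon_f(\Om)$, and Lemma \ref{nhlem4} then gives $\int_{\Om} f u_1\,dx > 0$.

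First I would rule out $u_1 \in \mathcal{N}_f^-$. Because $\mathcal{N}_f^0 = \{0\}$ and $u_1 \neq 0$, we have $u_1 \in \mathcal{N}_f^+ \cup \mathcal{N}_f^-$. If $u_1 \in \mathcal{N}_f^-$, the uniqueness in Lemma \ref{nhlem3}(a) (equivalently Lemma \ref{nhlem3}(d)) forces $t^-(u_1) = 1$; since $\int_\Om f u_1\,dx > 0$, Lemma \ref{nhlem3}(b) produces $t^+ \in (0, t_{max})$ with $t^+ u_1 \in \mathcal{N}_f^+$ and $\mathcal{J}_f(t^+ u_1) = \min_{0 < t \le t^-(u_1)} \mathcal{J}_f(t u_1)$. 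As $t_{max} < t^-(u_1) = 1$ we have $t^+ \neq 1$, and inspecting the fibering map $\phi_{u_1}$ (it decreases on $(0, t^+)$ and increases on $(t^+, t^-(u_1))$, because $\phi_{u_1}'(0) < 0$ and $\phi_{u_1}'$ changes sign exactly once before $t_{max}$ and once after) shows $t^+$ is its strict minimizer on $(0, t^-(u_1)]$, so $\mathcal{J}_f(t^+ u_1) < \mathcal{J}_f(u_1) = \Upsilon_f(\Om)$ — contradicting $t^+ u_1 \in \mathcal{N}_f$. Hence $u_1 \in \mathcal{N}_f^+$, and then $\Upsilon_f(\Om) \le \Upsilon_f^+(\Om) \le \mathcal{J}_f(u_1) = \Upsilon_f(\Om)$ yields $\mathcal{J}_f(u_1) = \Upsilon_f^+(\Om) = \Upsilon_f(\Om)$.

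For the last two assertions I would invoke Lemma \ref{nhlem5}: $\mathcal{N}_f^+ \subset B_{k_0}(0)$, so $u_1$ lies in the open convex ball $B_{k_0}(0)$ on which $\mathcal{J}_f$ is strictly convex. A strictly convex $C^1$ functional on a convex set has at most one critical point — if $v_1 \neq v_2$ were both critical, $t \mapsto \mathcal{J}_f(v_1 + t(v_2 - v_1))$ would be strictly convex on $[0,1]$ with vanishing derivative at both endpoints, impossible — so $u_1$ is the unique critical point of $\mathcal{J}_f$ in $B_{k_0}(0)$. Moreover, for any $v \in B_{k_0}(0)$ the segment $\{u_1 + t(v - u_1) : t \in [0,1]\}$ stays in $B_{k_0}(0)$, and $g(t) := \mathcal{J}_f(u_1 + t(v - u_1))$ is strictly convex with $g'(0) = \ld \mathcal{J}_f^{\prime}(u_1), v - u_1 \rd = 0$, whence $g(1) > g(0)$; thus $u_1$ strictly minimizes $\mathcal{J}_f$ over $B_{k_0}(0)$, and being an interior point, $u_1$ is a local minimum of $\mathcal{J}_f$ in $H_0^1(\Om)$.

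The step needing care is the strict inequality $\mathcal{J}_f(t^+ u_1) < \mathcal{J}_f(u_1)$: Lemma \ref{nhlem3}(b) only asserts that $t^+$ realizes the minimum of $\phi_{u_1}$ on $(0, t^-(u_1)]$, so one must use the qualitative shape of $\phi_{u_1}'$ (increasing on $(0, t_{max})$ with $\phi_{u_1}'(0) < 0$, decreasing thereafter, two sign changes) to see this minimum is attained only at $t^+$ and not shared with the endpoint $t^-(u_1) = 1$. Everything else is routine convexity bookkeeping built on Lemmas \ref{nhlem3}, \ref{nhlem4}, \ref{nhlem5}, \ref{nhlem27} and \ref{nhlem28}.
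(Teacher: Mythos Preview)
Your proof is correct and follows essentially the same route as the paper: the contradiction via Lemma \ref{nhlem3} to place $u_1$ in $\mathcal{N}_f^+$, and the strict convexity on $B_{k_0}(0)$ from Lemma \ref{nhlem5} for uniqueness. The one minor difference is that for the local-minimum claim the paper simply cites \cite[Lemma 3.2]{zampyang}, whereas you supply the direct convexity argument on the ball (which is cleaner and entirely adequate); your extra care about the strict inequality $\mathcal{J}_f(t^+u_1)<\mathcal{J}_f(u_1)$ is also more explicit than the paper's bare invocation of Lemma \ref{nhlem3}(b).
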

\begin{proof} Using the proof of Theorem 1.3 of \cite{zampyang}, we have  $\ds \int_{\Om}fu_1~ dx >0$.
		Now  if  $u_1 \in \mathcal{N}_f^-$ then there exists a unique $t^-(u_1)=1> t_{max}>t^+(u_1)>0$ such that $t^+(u_1)u_1 \in \mathcal{N}_f^+$ then by Lemma \ref{nhlem3} (2) we have 
		\begin{align*}
		\Upsilon_f(\Om) \leq \Upsilon_f^+(\Om) \leq \mathcal{J}_f(t^+(u_1)u_1)< \mathcal{J}_f(t^-(u_1)u_1)=  \mathcal{J}_f(u_1)= \Upsilon_f(\Om) .
		\end{align*}
	which is a contradiction. It implies $u_1\in \mathcal{N}_f^+$ and $\Upsilon_f^+(\Om)\leq \mathcal{J}_f(u_1)= \Upsilon_f(\Om)\leq \Upsilon_f^+(\Om)$ that is, $\mathcal{J}_f(u_1)= \Upsilon_f(\Om)= \Upsilon_f^+(\Om)$.
	  Using  Lemma \ref{nhlem4} and Lemma \ref{nhlem5}, we get  $u_1$ is the unique critical point of $\mathcal{J}_f$ in $B_{k_0}(0)$  and the proof of local minimum follows from   \cite[Lemma 3.2]{zampyang}. \QED
	
	\end{proof}

\begin{Lemma}\label{nhlem6}
	Let $\mu<\min\{ 4,\; N \}$ and  $u \in H_0^1(\Om)$ be a critical point of $\mathcal{J}_f$ then either $u \in \mathcal{N}_f^-$ or $u =u_1 $.
\end{Lemma}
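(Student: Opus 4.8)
The plan is to reduce the statement to the structural facts about $\mathcal{N}_f$ already proved in Section~3. First I would note that "$u$ is a critical point of $\mathcal{J}_f$" is exactly "$u$ is a weak solution of $(P_f)$": indeed $\mathcal{J}_f'(u)=0$ in $H^{-1}$ is precisely the weak formulation in the Definition above. Hence, by Lemma~\ref{nhlem34}, $u\in C^2(\overline{\Om})$ and $u$ is strictly positive; in particular $u\not\equiv 0$ and $u^+=u\not\equiv 0$. (If one prefers to avoid the regularity statement: since $f\ge 0$ and the convolution term is nonnegative, the right-hand side of $(P_f)$ is nonnegative, so $-\Delta u\ge 0$ with $u=0$ on $\partial\Om$, whence $u\ge 0$ by the maximum principle; and $u\equiv 0$ is impossible because $f\not\equiv 0$.) Since $\langle\mathcal{J}_f'(u),u\rangle=0$ holds automatically for a critical point, we conclude $u\in\mathcal{N}_f$.

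Next I would invoke the decomposition $\mathcal{N}_f=\mathcal{N}_f^+\cup\mathcal{N}_f^0\cup\mathcal{N}_f^-$ together with the fact, established earlier, that $\mathcal{N}_f^0=\{0\}$. As $u\neq 0$, either $u\in\mathcal{N}_f^-$, in which case there is nothing more to prove, or $u\in\mathcal{N}_f^+$. Assume the latter. Under the hypothesis $\mu<\min\{4,N\}$ (and the smallness of $\|f\|_{H^{-1}}$ in force in this section), Lemma~\ref{nhlem5}(1) gives $\mathcal{N}_f^+\subset B_{k_0}(0)$, so $u\in B_{k_0}(0)$. But $u$ is a critical point of $\mathcal{J}_f$, and by Lemma~\ref{nhlem10}—which rests on the strict convexity of $\mathcal{J}_f$ on $B_{k_0}(0)$ from Lemma~\ref{nhlem5}(2)—the function $u_1$ is the \emph{unique} critical point of $\mathcal{J}_f$ in $B_{k_0}(0)$. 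Therefore $u=u_1$, which closes the dichotomy.

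I do not expect a real obstacle here: all the substance is contained in the previously established facts (the splitting of $\mathcal{N}_f$, the inclusion $\mathcal{N}_f^+\subset B_{k_0}(0)$, strict convexity of $\mathcal{J}_f$ on $B_{k_0}(0)$, and uniqueness of its critical point there). The only step needing a line of care is checking that $u$ is genuinely nonzero with $u^+\not\equiv 0$, so that it indeed lies on $\mathcal{N}_f$; this is where $f\ge 0$, $f\not\equiv 0$ and the maximum principle (equivalently Lemma~\ref{nhlem34}) are used. The rest is bookkeeping.
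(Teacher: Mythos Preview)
Your argument is correct and follows essentially the same route as the paper: critical points lie on $\mathcal{N}_f=\mathcal{N}_f^+\cup\mathcal{N}_f^-$ (since $\mathcal{N}_f^0=\{0\}$), and if $u\in\mathcal{N}_f^+\subset B_{k_0}(0)$ then $u=u_1$ by the uniqueness of the critical point in $B_{k_0}(0)$ from Lemma~\ref{nhlem10}. Your extra care in verifying $u\not\equiv 0$ and $u^+\not\equiv 0$ via Lemma~\ref{nhlem34} (or the maximum principle) is a welcome detail that the paper leaves implicit.
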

\begin{proof}
	If $u \in H_0^1(\Om)$ be a critical point of $\mathcal{J}_f$ then $u \in \mathcal{N}_f= \mathcal{N}_f^+ \cup \mathcal{N}_f^-$. Now using the fact that $\mathcal{N}_f^+ \cap \mathcal{N}_f^- =\emptyset $ and $\mathcal{N}_f^+ \subset B_{k_0}(0)$	we have either $u \in \mathcal{N}_f^-$ or $u =u_1 $. \QED
	\end{proof}

\section{Asymptotic estimates and Palais-Smale Analysis} 
In this section we shall prove  that the functional $\mathcal{J}_f$ satisfies Palais-Smale condition strictly below the first critical level and (strictly) between the first and second critical levels. To start with  we shall prove several new estimates on the nonlinearity.

 It is known from Lemma \ref{nhlem7} that the best constant $S_{H,L}$
%\begin{align*}
%S_{H,L}= \inf  \left \lbrace \ds \int_{\mathbb{R}^N}|\nabla u|^2~dx :  u \in D^{1,2}(\mathbb{R}^N)\setminus \{0\}\;  \middle|\;   \ds \int_{\mathbb{R}^N}\int_{\mathbb{R}^N}\frac{|u(x)|^{2^{*}_{\mu}}|u(y)|^{2^{*}_{\mu}}}{|x-y|^{\mu}}~dxdy=1\right \rbrace
%\end{align*}
is achieved by the function
\begin{align*}
u(x)=S^{\frac{(N-\mu)(2-N)}{4(N-\mu+2)}}(C(N,\mu))^{\frac{2-N}{2(N-\mu+2)}} \frac{(N(N-2))^{\frac{N-2}{4}}}{\left(1+|x|^2\right)^{\frac{N-2}{2}}}, 
\end{align*} 
which is a solution of the problem $- \Delta u= (|x|^{-\mu}*|u|^{2^*_{\mu}})|u|^{2^*_{\mu}-1} \text{ in } \mathbb{R}^N$ with 
\begin{align*}
\ds \int_{\mathbb{R}^N}|\nabla u|^2~dx=\ds \int_{\mathbb{R}^N}\int_{\mathbb{R}^N}\frac{|u(x)|^{2^{*}_{\mu}}|u(y)|^{2^{*}_{\mu}}}{|x-y|^{\mu}}~dxdy=S_{H,L}^{\frac{2N-\mu}{N-\mu+2}}.
\end{align*}
We may assume $R_1= \rho,\; R_2= 1/\rho$ for $\rho \in (0,\frac{1}{2})$. Now, define  $\upsilon_\rho \in C_c^{\infty}(\mathbb{R}^N)$ such that $0 \leq \upsilon_\rho(x) \leq 1$ for all  $x \in \mathbb{R}^N$, radially symmetric and $$
\upsilon_\rho(x)= \left\{
\begin{array}{lr}
0 \;\quad 0 <|x|<\ds \frac{3\rho}{2}, \\
1\; \quad 2\rho\leq |x|\leq \ds \frac{1}{2\rho},\\
0 \; \quad |x|\geq \ds \frac{3}{4\rho}, 
\end{array}
\right.
$$
and 
\begin{align*}
u^{\e}_\sigma(x)= S^{\frac{(N-\mu)(2-N)}{4(N-\mu+2)}}C(N,\mu)^{\frac{2-N}{2(N-\mu+2)}} \frac{(N(N-2)\e^2)^{\frac{N-2}{4}}}{\left(\e^2+|x-(1-\e)\sigma|^2\right)^{\frac{N-2}{2}}},
\end{align*}
where $\sigma\in \mathbb{S}^{N-1}:= \{x\in\mathbb{R}^N: |x|=1 \}$, $0<\e\leq  1$.
Set
\begin{align}\label{nh8}
g_\rho^{\e,\sigma}(x):= \upsilon_\rho(x)u^{\e}_\sigma(x) \in H_0^1(\Om).
\end{align}

\begin{Lemma}\label{nhlem8}
	\begin{enumerate}
		\item[(i)] $a(g_\rho^{\e,\sigma})=b(g_\rho^{\e,\sigma})=S_{H,L}^{\frac{2N-\mu}{N-\mu+2}}+o_\e(1) $ uniformly in $\sigma$ as $\e \ra 0$.
		\item[(ii)] $\mathcal{J}(g_\rho^{\e,\sigma})= \frac{N-\mu+2}{2(2N-\mu)}S_{H,L}^{\frac{2N-\mu}{N-\mu+2}}+o_\e(1) $ uniformly in $\sigma$ as $\e \ra 0$.
		\item[(iii)] $g_\rho^{\e,\sigma} \rp 0 $ weakly in $H_0^1(\Om)$ uniformly in $\sigma$ as $\e \ra 0$.
	\end{enumerate}
\end{Lemma}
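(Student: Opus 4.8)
The plan is to reduce everything to the well-known Aubin--Talenti computations for $u^\e_\sigma$ on all of $\mathbb{R}^N$ and then control the error introduced by the cutoff $\upsilon_\rho$. First I would record that, by Lemma \ref{nhlem7} and the normalization chosen just before the statement, the unscaled bubble $U(x)=S^{\frac{(N-\mu)(2-N)}{4(N-\mu+2)}}C(N,\mu)^{\frac{2-N}{2(N-\mu+2)}}(N(N-2))^{\frac{N-2}{4}}(1+|x|^2)^{-\frac{N-2}{2}}$ satisfies $\int_{\mathbb{R}^N}|\nabla U|^2 = \int_{\mathbb{R}^N}\int_{\mathbb{R}^N}\frac{|U(x)|^{2^*_\mu}|U(y)|^{2^*_\mu}}{|x-y|^\mu}\,dx\,dy = S_{H,L}^{\frac{2N-\mu}{N-\mu+2}}$, and that $u^\e_\sigma$ is obtained from $U$ by the translation $x\mapsto x-(1-\e)\sigma$ together with the conformal rescaling $x\mapsto x/\e$, under which both the Dirichlet integral and the Choquard double integral are invariant; hence $a(u^\e_\sigma)=b(u^\e_\sigma)=S_{H,L}^{\frac{2N-\mu}{N-\mu+2}}$ exactly, for every $\e$ and $\sigma$.

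Next I would estimate the difference between $g_\rho^{\e,\sigma}=\upsilon_\rho u^\e_\sigma$ and $u^\e_\sigma$. The key geometric point is that $u^\e_\sigma$ concentrates, as $\e\to 0$, at the point $(1-\e)\sigma$, which lies at distance of order $1$ from both the inner sphere $\{|x|=3\rho/2\}$ and the outer sphere $\{|x|=3/(4\rho)\}$ where $\upsilon_\rho$ is not identically $1$ (recall $\rho<1/2$ is fixed, so $(1-\e)\sigma$ is uniformly bounded away from $\operatorname{supp}(1-\upsilon_\rho)$). Therefore on the region where $\upsilon_\rho\neq 1$ one has $|x-(1-\e)\sigma|\geq c>0$ uniformly in $\sigma$, and the standard decay estimates $u^\e_\sigma(x)=O(\e^{\frac{N-2}{2}})$, $|\nabla u^\e_\sigma(x)|=O(\e^{\frac{N-2}{2}})$ there give
\[
\int_{\mathbb{R}^N}|\nabla(g_\rho^{\e,\sigma}-u^\e_\sigma)|^2\,dx = O(\e^{N-2}),\qquad
\int_{\mathbb{R}^N}|g_\rho^{\e,\sigma}-u^\e_\sigma|^{2^*}\,dx = O(\e^{N}),
\]
uniformly in $\sigma$. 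From the first of these and the exact value of $a(u^\e_\sigma)$ we get $a(g_\rho^{\e,\sigma})=S_{H,L}^{\frac{2N-\mu}{N-\mu+2}}+o_\e(1)$; from the $L^{2^*}$ estimate together with the Hardy--Littlewood--Sobolev inequality \eqref{co9} (which bounds $b(\cdot)^{1/2^*_\mu}$ by $C(N,\mu)^{(2N-\mu)/(N-2)}|\cdot|_{2^*}^2$) and the elementary inequality $\big||a|^{2^*_\mu}-|b|^{2^*_\mu}\big|\lesssim |a-b|(|a|^{2^*_\mu-1}+|b|^{2^*_\mu-1})$ applied inside the double integral, we get $b(g_\rho^{\e,\sigma})=S_{H,L}^{\frac{2N-\mu}{N-\mu+2}}+o_\e(1)$. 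This proves (i). Part (ii) is then immediate: since $a(g_\rho^{\e,\sigma})=b(g_\rho^{\e,\sigma})+o_\e(1)$, expanding $\mathcal{J}(g_\rho^{\e,\sigma})=\frac12 a(g_\rho^{\e,\sigma})-\frac{1}{2\cdot2^*_\mu}b(g_\rho^{\e,\sigma})$ gives $\big(\frac12-\frac{1}{2\cdot2^*_\mu}\big)S_{H,L}^{\frac{2N-\mu}{N-\mu+2}}+o_\e(1)=\frac{N-\mu+2}{2(2N-\mu)}S_{H,L}^{\frac{2N-\mu}{N-\mu+2}}+o_\e(1)$.

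For (iii), I would argue that $g_\rho^{\e,\sigma}$ is bounded in $H_0^1(\Om)$ by (i), so it suffices to show $g_\rho^{\e,\sigma}\rightharpoonup 0$ by testing against a dense class: for $\varphi\in C_c^\infty(\Om)$, Cauchy--Schwarz gives $|\langle g_\rho^{\e,\sigma},\varphi\rangle|\leq \|g_\rho^{\e,\sigma}\|_{L^{2^*}}\|\varphi\|_{L^{(2^*)'}}$ restricted suitably, and more simply $\int_\Om g_\rho^{\e,\sigma}\varphi\,dx \to 0$ because $\|g_\rho^{\e,\sigma}\|_{L^1}=O(\e^{\frac{N-2}{2}})\to 0$ while $\int_\Om \nabla g_\rho^{\e,\sigma}\cdot\nabla\varphi\,dx$ can be handled by moving the derivative onto $\varphi$, $\int_\Om g_\rho^{\e,\sigma}(-\Delta\varphi)\,dx=O(\e^{\frac{N-2}{2}})$; uniformity in $\sigma$ is automatic since all the constants above are independent of $\sigma\in\mathbb{S}^{N-1}$. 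I expect the only genuinely delicate point to be bookkeeping the decay estimates for $u^\e_\sigma$ and its gradient on $\operatorname{supp}(1-\upsilon_\rho)$ \emph{uniformly} in the concentration point $\sigma$, which is exactly why the cutoff was designed with $\rho$ fixed and the concentration point taken at $(1-\e)\sigma$ rather than on the unit sphere — this keeps a uniform gap between the bubble's center and the support of $\nabla\upsilon_\rho$, so all the $O(\e^{\cdot})$ error terms hold with $\sigma$-independent constants.
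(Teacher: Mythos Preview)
Your proposal is correct and follows essentially the same strategy as the paper: both identify that $a(u^\e_\sigma)=b(u^\e_\sigma)=S_{H,L}^{\frac{2N-\mu}{N-\mu+2}}$ exactly, and then control the cutoff error using the key observation that the concentration point $(1-\e)\sigma$ stays a fixed positive distance from $\operatorname{supp}(1-\upsilon_\rho)$ (this is precisely the paper's inequality \eqref{nh24}).

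There are two minor executional differences worth flagging. For the nonlocal term $b$, the paper expands $b(g_\rho^{\e,\sigma})-b(u^\e_\sigma)$ directly and splits the resulting double integral into five region pairs $J_1,\ldots,J_5$, bounding each via Hardy--Littlewood--Sobolev; your route instead passes through the $L^{2^*}$ difference $\|g_\rho^{\e,\sigma}-u^\e_\sigma\|_{2^*}$ and the mean-value inequality for $t\mapsto t^{2^*_\mu}$, which is slightly slicker but needs one extra H\"older step to close. For part (iii), the paper argues by contradiction and shows $\|g_\rho^{\e,\sigma}\|_{L^2}\to 0$ directly, whereas you test against $\varphi\in C_c^\infty(\Om)$ and integrate by parts to reduce to the $L^1$ bound $\|g_\rho^{\e,\sigma}\|_{L^1}=O(\e^{(N-2)/2})$; both are standard and equally valid. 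Your remark about uniformity in $\sigma$ is exactly the point that makes the paper's constants $d_1,d_2$ in \eqref{nh24} independent of $\sigma$.
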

\begin{proof}
	\begin{enumerate}
		\item [(i)] 
 Observe the fact that there exist constants $d_1,\; d_2>0$ such that
 \begin{equation}\label{nh24}
 d_1 < |x-(1-\e)\sigma|<d_2 \text{ for all } x\in B_{2 \rho }
\text{ whenever } \e<1-2\rho.
 \end{equation}
  \begin{align*}
  \| \na g_\rho^{\e,\sigma}\|_{L^2(\mathbb{R}^N)}- \| \na u_{\e}^{\sigma}\|_{L^2(\mathbb{R}^N)}  \leq &
 \int_{(\mathbb{R}^N\setminus B_{\frac{1}{2 \rho}})\cup B_{2\rho}}| \na u_{\e}^{\sigma}|^2~dx + \rho^{-2}\int_{ B_{2\rho}}|  u_{\e}^{\sigma}|^2~dx \\ 
 &+ \rho^{2}\int_{B_{\frac{3}{4 \rho}}\setminus B_{\frac{1}{2 \rho}}}|  u_{\e}^{\sigma}|^2~dx\\
\leq C & \e^{N-2}  \int_{(\mathbb{R}^N\setminus B_{\frac{1}{2 \rho}})\cup B_{2\rho}}\frac{|x-(1-\e)\sigma|^2}{|x-(1-\e)\sigma|^{2N}} ~dx\\
&+
 C \e^{N-2} \int_{ B_{2\rho} \cup B_{\frac{3}{4 \rho}}\setminus B_{\frac{1}{2 \rho}}}\frac{dx}{|x-(1-\e)\sigma|^{2(N-2)}} = O(\e^{N-2}).
% &\leq C \e^{N-2} \left(\int_{ B_{2\rho}} ~dx  +\int_{ c_1<|z|<c_2}\frac{dz}{|z|^{2(N-2)}}  \right) =  O(\e^{N-2}).
\end{align*}
%\begin{align*}
% I_2 & = \rho^{-2}\int_{ B_{2\rho}}|  u_{\e}^{\sigma}|^2~dx + \rho^{2}\int_{B_{\frac{3}{4 \rho}}\setminus B_{\frac{1}{2 \rho}}}|  u_{\e}^{\sigma}|^2~dx\\ 
% & \leq C \e^{N-2} \left(\int_{ B_{2\rho} \cup B_{\frac{3}{4 \rho}}\setminus B_{\frac{1}{2 \rho}}}\frac{dx}{[\e^2+|x-(1-\e)\sigma|^2]^{N-2}}  \right)\\
% & \leq C \e^{N-2} \left(\int_{ B_{2\rho} \cup B_{\frac{3}{4 \rho}}\setminus B_{\frac{1}{2 \rho}}}\frac{dx}{|x-(1-\e)\sigma|^{2(N-2)}}  \right)\\
% & \leq C \e^{N-2} \left(\int_{ B_{2\rho}} ~dx  +\int_{ c_1<|z|<c_2}\frac{dz}{|z|^{2(N-2)}}  \right)\\
% & =  O(\e^{N-2}).
%\end{align*}
Thus, $\| \na g_\rho^{\e,\sigma}\|_{L^2(\mathbb{R}^N)}= \| \na u_{\e}^{\sigma}\|_{L^2(\mathbb{R}^N)} +o_\e(1)=  S_{H,L}^{\frac{2N-\mu}{N-\mu+2}}+o_\e(1).$\\ 
Next we will prove that $b(g_\rho^{\e,\sigma})=S_{H,L}^{\frac{2N-\mu}{N-\mu+2}}+o_\e(1) $ uniformly in $\sigma$ as $\e \ra 0$. For this  consider
\begin{align} \label{ksnew}
 \int_{\mathbb{R}^N}\int_{\mathbb{R}^N} &\frac{ |g_\rho^{\e,\sigma}(x)|^{2^{*}_{\mu}}|g_\rho^{\e,\sigma}(y)|^{2^{*}_{\mu}}}{|x-y|^{\mu}}~dxdy-\int_{\mathbb{R}^N}\int_{\mathbb{R}^N}\frac{ |u_{\e}^{\sigma}(x)|^{2^{*}_{\mu}}|u_{\e}^{\sigma}(y)|^{2^{*}_{\mu}}}{|x-y|^{\mu}}~dxdy\nonumber\\
 =&\int_{\mathbb{R}^N}\int_{\mathbb{R}^N}\frac{ ( |\upsilon_\rho(x)|^{2^{*}_{\mu}}|\upsilon_\rho(y)|^{2^{*}_{\mu}}-1)|u_{\e}^{\sigma}(x)|^{2^{*}_{\mu}}|u_{\e}^{\sigma}(y)|^{2^{*}_{\mu}}}{|x-y|^{\mu}}~dxdy \nonumber\\
\leq& C\left(\int_{ B_{2 \rho}}\int_{B_{2 \rho}}+\int_{B_{\frac{1}{2 \rho}}\setminus  B_{2 \rho}}\int_{ B_{2 \rho}}+\int_{ B_{\frac{1}{2 \rho}}\setminus  B_{2 \rho}}\int_{\mathbb{R}^N \setminus B_{\frac{1}{2 \rho}} }\right.\nonumber\\ & \left. \quad\quad+\int_{ \mathbb{R}^N \setminus B_{\frac{1}{2 \rho}}}\int_{B_{2 \rho}}+\int_{\mathbb{R}^N \setminus B_{\frac{1}{2 \rho}}}\int_{\mathbb{R}^N \setminus B_{\frac{1}{2 \rho}}} \right) \frac{|u_{\e}^{\sigma}(x)|^{2^*_{\mu}}|u_{\e}^{\sigma}(y)|^{2^*_{\mu}} } {|x-y|^{\mu}} ~dxdy,\nonumber\\
   =& C \ds  \sum_{i=1}^{i=5}J_i,
\end{align}

%where \begin{equation}\label{nh13}
%\begin{aligned}
%& J_1= \int_{ B_{2 \rho}}\int_{B_{2 \rho}}\frac{|u_{\e}^{\sigma}(x)|^{2^*_{\mu}}|u_{\e}^{\sigma}(y)|^{2^*_{\mu}} } {|x-y|^{\mu}} ~dxdy,\\
%& J_2= \int_{B_{\frac{1}{2 \rho}}\setminus  B_{2 \rho}}\int_{ B_{2 \rho}}\frac{|u_{\e}^{\sigma}(x)|^{2^*_{\mu}}|u_{\e}^{\sigma}(y)|^{2^*_{\mu}} } {|x-y|^{\mu}} ~dxdy,\\
%& J_3= \int_{ B_{\frac{1}{2 \rho}}\setminus  B_{2 \rho}}\int_{\mathbb{R}^N \setminus B_{\frac{1}{2 \rho}} }\frac{|u_{\e}^{\sigma}(x)|^{2^*_{\mu}}|u_{\e}^{\sigma}(y)|^{2^*_{\mu}} } {|x-y|^{\mu}} ~dxdy,\\
%& J_4= \int_{ \mathbb{R}^N \setminus B_{\frac{1}{2 \rho}}}\int_{B_{2 \rho}}\frac{|u_{\e}^{\sigma}(x)|^{2^*_{\mu}}|u_{\e}^{\sigma}(y)|^{2^*_{\mu}} } {|x-y|^{\mu}} ~dxdy,\\
%& J_5= \int_{\mathbb{R}^N \setminus B_{\frac{1}{2 \rho}}}\int_{\mathbb{R}^N \setminus B_{\frac{1}{2 \rho}}}\frac{|u_{\e}^{\sigma}(x)|^{2^*_{\mu}}|u_{\e}^{\sigma}(y)|^{2^*_{\mu}} } {|x-y|^{\mu}} ~dxdy.
%\end{aligned}
%\end{equation}
Taking into account the definition of $u_{\e}^{\sigma}$,  \eqref{nh24} and  Hardy-Littlewood-Sobolev inequality, we have the following estimates: Let $\xi_\e(x)=\frac{\e^N}{(\e^2+|x-(1-\e)\sigma|^2)^N}$,  then
\begin{align*}
J_1
& \leq C(N,\mu) \left(\int_{ B_{2 \rho}}  S^{\frac{-N(N-\mu)}{2(N-\mu+2)}}C(N,\mu)^{\frac{-N}{(N-\mu+2)}} (N(N-2))^{\frac{N}{2}} \xi_\e(x) ~dx \right)^{\frac{2N-\mu}{N}}\\
& \leq C \e^{2N-\mu}\left(\int_{B_{2 \rho}}\frac{dx}{|x-(1-\e)\sigma|^{2N}}\right)^{\frac{2N-\mu}{N}} \leq C \e^{2N-\mu} \left(\int_{B_{2 \rho}}dx\right)^{\frac{2N-\mu}{N}}= O(\e^{2N-\mu}),\\
J_2
& \leq C \left(\int_{B_{\frac{1}{2 \rho}}\setminus  B_{2 \rho}}\xi_\e(x)~dx \right)^{\frac{2N-\mu}{2N}}\left(
\int_{B_{2 \rho}}\xi_\e(x) ~dx \right)^{\frac{2N-\mu}{2N}}\\
& \leq C \e^{\frac{2N-\mu}{2}}\left(\int_{B_{2 \rho}}\frac{dx}{|x-(1-\e)\sigma|^{2N}}\right)^{\frac{2N-\mu}{2N}} = O(\e^{\frac{2N-\mu}{2}}),\\
%\end{align*}
%\begin{align*} 
J_3
& \leq C \left(\int_{B_{\frac{1}{2 \rho}}\setminus  B_{2 \rho}}\xi_\e(x) \right)^{\frac{2N-\mu}{2N}}\left(
\int_{\mathbb{R}^N \setminus B_{\frac{1}{2 \rho}}}\xi_\e(x)~dx \right)^{\frac{2N-\mu}{2N}}\\
& 
\leq C \e^{\frac{2N-\mu}{2}} \left(\int_{\mathbb{R}^N \setminus B_{\frac{1}{2 \rho}}}\frac{dx}{|x-(1-\e)\sigma|^{2N}}\right)^{\frac{2N-\mu}{2N}} = O(\e^{\frac{2N-\mu}{2}}),\\
\end{align*}
\begin{align*}
J_4& \leq C \left(\int_{\mathbb{R}^N \setminus B_{\frac{1}{2 \rho}}} \xi_\e(x)~dx \right)^{\frac{2N-\mu}{2N}}\left(
\int_{B_{2 \rho}} \xi_\e(x) ~dx\right)^{\frac{2N-\mu}{2N}}\\
& \leq C \e ^{2N-\mu} \left(\int_{\mathbb{R}^N \setminus B_{\frac{1}{2 \rho}}}\frac{dx}{|x-(1-\e)\sigma|^{2N}}%\right)^{\frac{2N-\mu}{2N}}\left(
\int_{B_{2 \rho}}\frac{dx}{|x-(1-\e)\sigma|^{2N}}\right)^{\frac{2N-\mu}{2N}}= O(\e ^{2N-\mu}),\\
J_5
& \leq C \left(\int_{\mathbb{R}^N \setminus B_{\frac{1}{2 \rho}}} \xi_\e(x) ~dx \right)^{\frac{2N-\mu}{N}}
 \leq C \e ^{2N-\mu} \left(\int_{\mathbb{R}^N \setminus B_{\frac{1}{2 \rho}}}\frac{dx}{|x-(1-\e)\sigma|^{2N}}\right)^{\frac{2N-\mu}{N}}\\
 &= O(\e ^{2N-\mu}).
\end{align*}
\noi Therefore, $b(g_\rho^{\e,\sigma})- \ds \int_{\mathbb{R}^N}\int_{\mathbb{R}^N}\frac{ |u_{\e}^{\sigma}(x)|^{2^{*}_{\mu}}|u_{\e}^{\sigma}(y)|^{2^{*}_{\mu}}}{|x-y|^{\mu}}~dxdy \ra 0 $ as $\e \ra 0$ that is, $b(g_\rho^{\e,\sigma})\ra S_{H,L}^{\frac{2N-\mu}{N-\mu+2}}$ as $\e \ra 0$ and completes the proof of 1.
\noi \item[(ii)] Result follows from the definition of $\mathcal{J}$ and by $(i)$.
\item[(iii)] Assume by contradiction, $g_\rho^{\e,\sigma} \rp g_1 \not \equiv 0 $ weakly in $H_0^1(\Om)$ then  $g_\rho^{\e,\sigma} \ra g_1 $ strongly  in $L^2(\Om)$. Then by using the inequality  $r^{2(N-2)}+s^{2(N-2)}\leq (r^2+s^2)^{N-2}$ for all $r,\; s \geq 0$, we have 
\begin{align*}
0\leq \int_{\Om}|g_\rho^{\e,\sigma}|^2~dx & \leq C\int_{\frac{3\rho}{2}\leq |x|\leq \frac{3}{4\rho}}\frac{\e^{N-2}}{(\e^2+|x-(1-\e)\sigma|^2)^{N-2}}~dx\\
%& \leq  C\int_{\frac{3\rho}{2}\leq |x|\leq \frac{3}{4\rho}}\frac{\e^{N-2}}{\e^{2(N-2)}+|x-(1-\e)\sigma|^{2(N-2)}}~dx\\
& = C\int_{\frac{3\rho}{2}\leq |y+(1-\e)\sigma|\leq \frac{3}{4\rho}}\frac{\e^{N-2}}{\e^{2(N-2)}+|y|^{2(N-2)}}~dy\\
& \leq C\int_ 0^{ \frac{3}{4\rho}+(1-\e)}\frac{\e^{N-2}r^{N-1}}{\e^{2(N-2)}+r^{2(N-2)}}~dy \ra 0
\end{align*}
It yields a contradiction. Hence results follows.\QED
\end{enumerate}
%where the last one holds using dominated convergence theorem,  which is not  true. \QED
	\end{proof}

\begin{Lemma}\label{nhlem19}
	Let $\sigma\in  \mathbb{S}^{N-1}$ and $\e \in (0,1)$, then the following holds:
	\begin{enumerate}
		\item[(i)] $\ds \lim_{\rho\ra 0}\ds \sup _{\sigma \in \mathbb{S}^{N-1}, \e\in (0,1]}\|\na (g_\rho^{\e,\sigma}-u_{\e}^{\sigma})\|^2_{L^2(\mathbb{R}^N)}=0$.
		\item [(ii)]$\ds \lim_{\rho\ra 0}\ds \sup _{\sigma \in \mathbb{S}^{N-1}, \e\in (0,1]}\|g_\rho^{\e,\sigma}\|_{NL}^{2.2^*_{\mu}}=\|u_{\e}^{\sigma}\|_{NL}^{2.2^*_{\mu}}$.
	\end{enumerate}
\end{Lemma}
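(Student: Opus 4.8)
The plan is to prove (i) by a direct estimate of $\na(g_\rho^{\e,\sigma}-u_\e^\sigma)$, splitting $\mathbb{R}^N$ into the same regions used in the proof of Lemma \ref{nhlem8}(i) but keeping explicit track of the dependence on $\e$ so that every bound is uniform in $\e\in(0,1]$ and $\sigma\in\mathbb{S}^{N-1}$; statement (ii) will then follow from (i) together with the Hardy--Littlewood--Sobolev inequality \eqref{co9}. Since $g_\rho^{\e,\sigma}=\upsilon_\rho u_\e^\sigma$ and $\upsilon_\rho\equiv 1$ on $\{2\rho\le|x|\le\frac{1}{2\rho}\}$, on all of $\mathbb{R}^N$ we have $g_\rho^{\e,\sigma}-u_\e^\sigma=(\upsilon_\rho-1)u_\e^\sigma$, so that
\begin{align*}
\|\na(g_\rho^{\e,\sigma}-u_\e^\sigma)\|_{L^2(\mathbb{R}^N)}^2\le 2\int_{\{|x|<2\rho\}\cup\{|x|>\frac{1}{2\rho}\}}|\na u_\e^\sigma|^2~dx+2\int_{\mathbb{R}^N}|\na\upsilon_\rho|^2\,|u_\e^\sigma|^2~dx .
\end{align*}
Taking $\upsilon_\rho$ (as in the construction behind Lemma \ref{nhlem8}) with $|\na\upsilon_\rho|\le C\rho^{-1}$ on the inner transition shell $\{\frac{3\rho}{2}\le|x|\le 2\rho\}$ and $|\na\upsilon_\rho|\le C\rho$ on the outer one $\{\frac{1}{2\rho}\le|x|\le\frac{3}{4\rho}\}$, it suffices to show that
\begin{align*}
\int_{\{|x|<2\rho\}\cup\{|x|>\frac{1}{2\rho}\}}|\na u_\e^\sigma|^2~dx,\qquad \rho^{-2}\int_{\{|x|\le 2\rho\}}|u_\e^\sigma|^2~dx,\qquad \rho^{2}\int_{\{\frac{1}{2\rho}\le|x|\le\frac{3}{4\rho}\}}|u_\e^\sigma|^2~dx
\end{align*}
each tend to $0$ as $\rho\ra 0$, uniformly in $(\e,\sigma)$.

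The main --- and essentially only --- obstacle is uniformity in $\e$ on the region near the inner hole: when $\e$ is close to $1$ the concentration point $(1-\e)\sigma$ lies near the origin, so a crude estimate of $u_\e^\sigma$ by a negative power of $|x-(1-\e)\sigma|$ degenerates as $\rho\ra 0$. I would resolve this by the dichotomy $\e\ge\frac12$ versus $\e<\frac12$. If $\e\ge\frac12$, then $u_\e^\sigma$ is a translate of the standard bubble rescaled by the parameter $\e$, which is now bounded away from $0$ and from $\infty$, so that $\|u_\e^\sigma\|_{L^\infty(\mathbb{R}^N)}+\|\na u_\e^\sigma\|_{L^\infty(\mathbb{R}^N)}\le C$ (a constant depending only on $N,\mu$). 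If $\e<\frac12$, then $|(1-\e)\sigma|=1-\e\ge\frac12$, hence for $\rho<\frac18$ and $|x|\le 2\rho$ one has $|x-(1-\e)\sigma|\ge\frac14$, and using the decay of $|u_\e^\sigma|$ and $|\na u_\e^\sigma|$ away from the concentration point (together with $\e\le 1$) one again obtains $|u_\e^\sigma(x)|+|\na u_\e^\sigma(x)|\le C$ on $\{|x|\le 2\rho\}$. In either case $\|u_\e^\sigma\|_{L^\infty(\{|x|\le 2\rho\})}+\|\na u_\e^\sigma\|_{L^\infty(\{|x|\le 2\rho\})}\le C$ uniformly in $(\e,\sigma)$; since $|\{|x|\le 2\rho\}|\le C\rho^N$, this makes the second displayed quantity $\le C\rho^{N-2}$ and the inner piece of the first $\le C\rho^{N}$. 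On the outer region the concentration point is always far away: for $\rho<\frac14$ and $|x|\ge\frac{1}{2\rho}$ we have $|x-(1-\e)\sigma|\ge|x|-1\ge\frac{1}{4\rho}$, whence $|u_\e^\sigma(x)|^2\le C\e^{N-2}|x-(1-\e)\sigma|^{-2(N-2)}$ and $|\na u_\e^\sigma(x)|^2\le C\e^{N-2}|x-(1-\e)\sigma|^{2-2N}$; substituting $w=x-(1-\e)\sigma$, integrating over $\{|w|>\frac{1}{4\rho}\}$, and using $|\{\frac{1}{2\rho}\le|x|\le\frac{3}{4\rho}\}|\le C\rho^{-N}$ and $\e\le 1$, the outer piece of the first quantity and the third quantity are both $\le C\rho^{N-2}$. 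Since $N\ge 3$, all three terms are $\le C\rho^{N-2}\ra 0$ uniformly in $(\e,\sigma)$, which proves (i).

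For (ii) I would argue as follows. By (i) and the Sobolev inequality $\|w\|_{L^{2^*}(\mathbb{R}^N)}\le S^{-1/2}\|\na w\|_{L^2(\mathbb{R}^N)}$ we get $\sup_{\sigma,\e}\|g_\rho^{\e,\sigma}-u_\e^\sigma\|_{L^{2^*}(\mathbb{R}^N)}\ra 0$ as $\rho\ra 0$. Since $\|\cdot\|_{NL}$ is a norm on $L^{2^*}(\mathbb{R}^N)$ which, by \eqref{co9}, is bounded by a fixed multiple of $\|\cdot\|_{L^{2^*}(\mathbb{R}^N)}$, this gives $\big|\,\|g_\rho^{\e,\sigma}\|_{NL}-\|u_\e^\sigma\|_{NL}\,\big|\le\|g_\rho^{\e,\sigma}-u_\e^\sigma\|_{NL}\le C\|g_\rho^{\e,\sigma}-u_\e^\sigma\|_{L^{2^*}(\mathbb{R}^N)}\ra 0$, uniformly in $(\sigma,\e)$. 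Because the rescaling defining $u_\e^\sigma$ leaves both the Dirichlet integral and the doubly nonlocal term invariant, $\|u_\e^\sigma\|_{NL}^{2.2^*_\mu}=S_{H,L}^{\frac{2N-\mu}{N-\mu+2}}$ is a fixed constant, and since $0\le\upsilon_\rho\le 1$ also $\|g_\rho^{\e,\sigma}\|_{NL}\le\|u_\e^\sigma\|_{NL}$ stays bounded; then the elementary inequality $|t^{2.2^*_\mu}-s^{2.2^*_\mu}|\le 2.2^*_\mu\,\big(\max\{t,s\}\big)^{2.2^*_\mu-1}\,|t-s|$ yields $\sup_{\sigma,\e}\big|\,\|g_\rho^{\e,\sigma}\|_{NL}^{2.2^*_\mu}-\|u_\e^\sigma\|_{NL}^{2.2^*_\mu}\,\big|\ra 0$, which is exactly (ii). Thus the heart of the matter is the uniform-in-$\e$ pointwise control of $u_\e^\sigma$ and $\na u_\e^\sigma$ near the inner hole carried out in the proof of (i).
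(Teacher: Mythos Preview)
Your proof is correct. For part (i) you follow the same decomposition as the paper (splitting $\na(g_\rho^{\e,\sigma}-u_\e^\sigma)$ into the contributions from $\na\upsilon_\rho$ and from $(\upsilon_\rho-1)\na u_\e^\sigma$ over the inner and outer regions), but you are more explicit about the point that actually requires care: the uniformity in $\e\in(0,1]$ near the inner hole. The paper simply writes $\rho^{-2}\int_{B_{2\rho}}|u_\e^\sigma|^2\le C\rho^{-2}\int_{B_{2\rho}}dx$ without comment; your dichotomy $\e\ge\frac12$ (bubble uniformly bounded because the scale is bounded away from $0$) versus $\e<\frac12$ (concentration point at distance $\ge\frac12$ from $B_{2\rho}$) is exactly what justifies that step.

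For part (ii) you take a genuinely different route. The paper reproves the estimate from scratch: it writes $\|g_\rho^{\e,\sigma}\|_{NL}^{2.2^*_\mu}-\|u_\e^\sigma\|_{NL}^{2.2^*_\mu}$ as a double integral with kernel $(\upsilon_\rho^{2^*_\mu}(x)\upsilon_\rho^{2^*_\mu}(y)-1)$, splits into the five regions $J_1,\dots,J_5$ of Lemma~\ref{nhlem8}, and bounds each by an explicit power of $\rho$ via Hardy--Littlewood--Sobolev. Your argument instead leverages (i): Sobolev gives $\|g_\rho^{\e,\sigma}-u_\e^\sigma\|_{L^{2^*}}\to 0$ uniformly, then the triangle inequality for the norm $\|\cdot\|_{NL}$ together with the control $\|\cdot\|_{NL}\le C\|\cdot\|_{L^{2^*}}$ from \eqref{co9} finishes. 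This is shorter and more conceptual (it shows (ii) is a formal consequence of (i) and the norm structure), while the paper's approach has the advantage of giving explicit rates in $\rho$ for the nonlocal term, which are of the same flavor as the estimates needed elsewhere (Lemmas~\ref{nhlem8} and \ref{nhlem9}).
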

\begin{proof}
	\begin{enumerate}
		\item [(i)]
	 Consider 
	\begin{equation}\label{nh12}
	\begin{split}
	\int_{\mathbb{R}^N}|\na g_\rho^{\e,\sigma}-\na u_{\e}^{\sigma} |^2 dx & \leq 2 \int_{\mathbb{R}^N}|u_{\e}^{\sigma}(x) \na \upsilon _\rho(x)|^2 ~ dx +2 \int_{\mathbb{R}^N}|\na u_{\e}^{\sigma}(x)  \upsilon _\rho(x)- \na u_{\e}^{\sigma}(x)|^2 ~ dx\\ & \leq  C \left( \rho^{-2}\int_{B_{2\rho}}|u_{\e}^{\sigma}(x)|^2 ~ dx+ \int_{B_{2\rho}}|\na u_{\e}^{\sigma}(x)|^2 ~ dx \right) \\ & \qquad +
	C\left(\rho^2 \int_{B_{\frac{3}{4\rho}} \setminus B_{\frac{1}{2\rho}}}|u_{\e}^{\sigma}(x)|^2 ~ dx+\int_{\mathbb{R}^N \setminus B_{\frac{1}{2\rho}}} |\na u_{\e}^{\sigma}(x)|^2 ~ dx \right). 
	\end{split}
	\end{equation}
	From the definition of $u_{\e}^{\sigma}$, we have the following estimates  
	 \begin{align*}
	\rho^{-2}\int_{B_{2\rho}}|u_{\e}^{\sigma}(x)|^2 ~ dx & \leq  C \rho^{-2} \int_{B_{2\rho}} ~ dx \leq C\rho^{N-2},\\
%	\end{align*}
%	\begin{align*}
	\int_{B_{2\rho}}|\na u_{\e}^{\sigma}(x)|^2 ~ dx &\leq C \int_{B_{2\rho}} |x-t\sigma|~ dx \leq C \int_{B_{2\rho}} ~ dx \leq C\rho^N,\\
%	\end{align*}
%	\begin{align*}
	\rho^2 \int_{B_{\frac{3}{4\rho}} \setminus B_{\frac{1}{2\rho}}}|u_{\e}^{\sigma}(x)|^2 ~ dx &\leq C\rho^2 \int_{B_{\frac{3}{4\rho}} \setminus B_{\frac{1}{2\rho}}}\frac{1}{|x|^{2N-4}} ~ dx \leq C\rho^{N-2},\\
%	\end{align*}
%	\begin{align*}
	\int_{\mathbb{R}^N \setminus B_{\frac{1}{2\rho}}} |\na u_{\e}^{\sigma}(x)|^2 ~ dx   &\leq C \int_{\mathbb{R}^N  \setminus B_{\frac{1}{2\rho}}} \frac{1}{|x|^{2N-2}} ~ dx \leq C \rho^{N-2}. 
	\end{align*}
	
	\noi Therefore, from  above estimates and \eqref{nh12}, we obtain desired result. 
	% Next,  we will prove  $\ds \lim_{\rho \ra 0 }\ds \sup _{\sigma \in \mathbb{S}^{N-1}, \e \in (0,1]}\|g_\rho^{\e,\sigma}\|_{NL}^{2.2^*_{\mu}}=\|u_{\e}^{\sigma}\|_{NL}^{2.2^*_{\mu}}$.
	% For this  it is enough to show
	%\begin{align*}
	%\|g_\rho^{\e,\sigma}\|_{NL}^{2.2^*_{\mu}}-\|u_{\e}^{\sigma}\|_{NL}^{2.2^*_{\mu}}\ra 0 \text{ as } R \ra \infty \text{ for each } \sigma \in \mathbb{S}^{N-1}, \e \in (0,1].
	%\end{align*}
	\item [(ii)] Consider 
	\begin{align*}
	\|g_\rho^{\e,\sigma}\|_{NL}^{2.2^*_{\mu}}-\|u_{\e}^{\sigma}\|_{NL}^{2.2^*_{\mu}}
	& = \int_{\mathbb{R}^N}\int_{\mathbb{R}^N}\frac{(\upsilon_\rho^{2^*_{\mu}}(x)\upsilon_\rho^{2^*_{\mu}}(y)-1)|u_{\e}^{\sigma}(x)|^{2^*_{\mu}}|u_{\e}^{\sigma}(y)|^{2^*_{\mu}} } {|x-y|^{\mu}} ~dxdy\\
	& \leq C \sum_{i=1}^{5} J_i, 
	\end{align*}
	where $J_i$ are defined in equation \eqref{ksnew}. Using the Hardy-Littlewood-Sobolev inequality, we have the following estimates: Recall $\xi_\e(x)=\frac{\e^N  }{(\e^2+|x-(1-\e)\sigma|^2)^N}$, then 
	\begin{align*}
	J_1
	& \leq C(N,\mu) \left(\int_{B_{2\rho}} \xi_\e(x) ~dx \right)^{\frac{2N-\mu}{N}} \leq C \left(\int_{{B_{2\rho}}}  dx \right)^{\frac{2N-\mu}{N}} \leq C \rho^{2N-\mu},  \\
%	\end{align*}
%	\begin{align*}
	J_2
	& \leq C(N,\mu) \left(\int_{B_{\frac{1}{2 \rho}}\setminus  B_{2 \rho}}\xi_\e(x) ~dx \right)^{\frac{2N-\mu}{2N}}\left(
	\int_{ B_{2 \rho}} \xi_\e(x) ~dx\right)^{\frac{2N-\mu}{2N}}\\
	& \leq C \left(\int_{{B_{2\rho}}}dx \right)^{\frac{2N-\mu}{N}} \leq C \rho^{\frac{2N-\mu}{2}},
	\end{align*}
	\begin{align*}
	J_3& 
	\leq C(N,\mu) \left(\int_{B_{\frac{1}{2 \rho}}\setminus  B_{2 \rho}} \xi_\e(x) dx\right)^{\frac{2N-\mu}{2N}}\left(
	\int_{\mathbb{R}^N \setminus B_{\frac{1}{2 \rho}}} \xi_\e(x) dx\right)^{\frac{2N-\mu}{2N}}\\
	&  \leq C    \left(\int_{\mathbb{R}^N \setminus B_{\frac{1}{2 \rho}}} \frac{ dx}{|x-(1-\e)\sigma|^{2N}}\right)^{\frac{2N-\mu}{2N}} \\
	&   = \left(\int_{|y+(1-\e)\sigma|\geq \frac{1}{2 \rho}} \frac{ dy}{|y|^{2N}}\right)^{\frac{2N-\mu}{2N}} \leq  \left(\int_{|y|\geq \frac{1}{2 \rho}-1} \frac{ dy}{|y|^{2N}}\right)^{\frac{2N-\mu}{2N}} 	\leq C \left(\frac{(2\rho)^N}{1-(2\rho)^N}\right)^{\frac{2N-\mu}{2N}},
	\end{align*}
	Now using the same estimates as above we can easily obtain
	\begin{align*}
	J_4&  \leq C \rho^{\frac{2N-\mu}{2}} \text{ and } J_5 
	\leq C \left(\frac{(2\rho)^N}{1-(2\rho)^N}\right)^{\frac{2N-\mu}{N}}.
	\end{align*}
	Hence $ \ds \sup _{\sigma \in \mathbb{S}^{N-1}, \e \in (0,1]}\left(\|g_\rho^{\e,\sigma}\|_{NL}^{2.2^*_{\mu}}-\|u_{\e}^{\sigma}\|_{NL}^{2.2^*_{\mu}}\right)\ra 0$  as  $\rho \ra 0$ and completes the proof.\QED
\end{enumerate}
	\end{proof}

\begin{Lemma}\label{nhlem9} The following asymptic estimates hold:
	\begin{itemize}
		\item[(i)] $a(g_\rho^{\e,\sigma})\leq  S_{H,L}^{\frac{2N-\mu}{N-\mu+2}}+O(\e^{N-2})$.
		\item [(ii)] $b(g_\rho^{\e,\sigma})\leq S_{H,L}^{\frac{2N-\mu}{N-\mu+2}}+O(\e^N)$.
		\item [(iii)] $b(g_\rho^{\e,\sigma})\geq S_{H,L}^{\frac{2N-\mu}{N-\mu+2}}-O(\e^{\frac{2N-\mu}{2}})$.
	\end{itemize}
\end{Lemma}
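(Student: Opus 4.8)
The plan is to compare $a(g_\rho^{\e,\sigma})$ and $b(g_\rho^{\e,\sigma})$ with the full-space quantities $a(u_\e^\sigma)$ and $b(u_\e^\sigma)$. Since $u_\e^\sigma$ is obtained from the Hardy--Littlewood--Sobolev extremal recalled at the beginning of this section by a translation and a dilation $u\mapsto \e^{-(N-2)/2}u(\cdot/\e)$, and both $a$ and $b$ are invariant under translations and under that dilation, one has $a(u_\e^\sigma)=b(u_\e^\sigma)=S_{H,L}^{\frac{2N-\mu}{N-\mu+2}}$. The three estimates then amount to reading off, from the computations already performed in the proofs of Lemmas \ref{nhlem8} and \ref{nhlem19}, the sharp power of $\e$ by which $g_\rho^{\e,\sigma}$ departs from $u_\e^\sigma$ (all constants being allowed to depend on the fixed radius $\rho$, and all estimates being uniform in $\sigma\in\mathbb{S}^{N-1}$).

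For (i), I would expand
\[
a(g_\rho^{\e,\sigma})=\int_{\R^N}\upsilon_\rho^2|\na u_\e^\sigma|^2~dx+2\int_{\R^N}\upsilon_\rho\,u_\e^\sigma\,\na\upsilon_\rho\cdot\na u_\e^\sigma~dx+\int_{\R^N}(u_\e^\sigma)^2|\na\upsilon_\rho|^2~dx .
\]
The first term is bounded by $\int_{\R^N}|\na u_\e^\sigma|^2=S_{H,L}^{\frac{2N-\mu}{N-\mu+2}}$ since $0\le\upsilon_\rho\le1$. For the remaining two, note that $\mathrm{supp}\,\na\upsilon_\rho\subset\{\tfrac{3\rho}{2}\le|x|\le 2\rho\}\cup\{\tfrac{1}{2\rho}\le|x|\le\tfrac{3}{4\rho}\}$, which by \eqref{nh24} (and the analogous bound $|x-(1-\e)\sigma|\ge\tfrac{1}{2\rho}-1$ on the outer annulus) lies at distance bounded below by a positive constant from the concentration point $(1-\e)\sigma$ as soon as $\e<1-2\rho$; hence $u_\e^\sigma=O(\e^{(N-2)/2})$ and $|\na u_\e^\sigma|=O(\e^{(N-2)/2})$ pointwise there. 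Combining this with $|\na\upsilon_\rho|\le C\rho^{-1}$ on the inner piece, $|\na\upsilon_\rho|\le C\rho$ on the outer piece, and the volumes $O(\rho^N)$, $O(\rho^{-N})$ of the two pieces, both the cross term and $\int(u_\e^\sigma)^2|\na\upsilon_\rho|^2$ are $O(\e^{N-2})$; this is precisely the estimate carried out in the proof of Lemma \ref{nhlem8}(i), and it gives (i).

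Statement (ii) is immediate: $0\le\upsilon_\rho\le1$ and $u_\e^\sigma\ge0$ give $0\le g_\rho^{\e,\sigma}\le u_\e^\sigma$ pointwise, so $b(g_\rho^{\e,\sigma})\le b(u_\e^\sigma)=S_{H,L}^{\frac{2N-\mu}{N-\mu+2}}$, which is even stronger than the claimed inequality. For (iii), I would write, exactly as in \eqref{ksnew},
\[
b(u_\e^\sigma)-b(g_\rho^{\e,\sigma})=\int_{\R^N}\int_{\R^N}\frac{\bigl(1-\upsilon_\rho(x)^{2^{*}_{\mu}}\upsilon_\rho(y)^{2^{*}_{\mu}}\bigr)\,|u_\e^\sigma(x)|^{2^{*}_{\mu}}|u_\e^\sigma(y)|^{2^{*}_{\mu}}}{|x-y|^{\mu}}~dxdy\ge0 ,
\]
whose integrand is supported in the set where $x$ or $y$ leaves $\{2\rho\le|\cdot|\le\tfrac{1}{2\rho}\}$ and where the factor $1-\upsilon_\rho^{2^{*}_{\mu}}(x)\upsilon_\rho^{2^{*}_{\mu}}(y)$ is at most $1$. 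Splitting that set into the five regions that produce $J_1,\dots,J_5$ in \eqref{ksnew} and invoking the bounds established there, namely $J_1,J_4,J_5=O(\e^{2N-\mu})$ and $J_2,J_3=O(\e^{(2N-\mu)/2})$, one obtains $b(u_\e^\sigma)-b(g_\rho^{\e,\sigma})=O(\e^{(2N-\mu)/2})$, which is (iii). I do not expect a genuine obstacle, since every ingredient has already appeared; the only point to handle with care is the uniformity in $\sigma$: because $|\sigma|=1$ is fixed, the moving center $(1-\e)\sigma$ stays on the sphere of radius $1-\e\approx1$, while $\mathrm{supp}\,\na\upsilon_\rho$ and the regions defining $J_1,\dots,J_5$ are fixed annular sets bounded away from that sphere once $\e$ is small, so all the pointwise and integral estimates hold uniformly in $\sigma$, exactly as is already built into \eqref{nh24} and the proof of Lemma \ref{nhlem8}.
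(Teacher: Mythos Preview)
Your argument is correct. For (i) and (iii) you follow essentially the same route as the paper: (i) is obtained from the expansion of $a(g_\rho^{\e,\sigma})$ carried out in the proof of Lemma~\ref{nhlem8}(i), and (iii) is obtained from the decomposition \eqref{ksnew} together with the bounds $J_1,J_4,J_5=O(\e^{2N-\mu})$, $J_2,J_3=O(\e^{(2N-\mu)/2})$ derived there.

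For (ii), however, your approach is genuinely different and in fact simpler and sharper than the paper's. The paper first estimates the $L^{2^*}$ norm of $g_\rho^{\e,\sigma}$ by splitting the support of $\upsilon_\rho$ and then applies the Hardy--Littlewood--Sobolev inequality to obtain $b(g_\rho^{\e,\sigma})\le C(N,\mu)\bigl(\int_\Om|g_\rho^{\e,\sigma}|^{2^*}\bigr)^{(2N-\mu)/N}\le S_{H,L}^{\frac{2N-\mu}{N-\mu+2}}+O(\e^N)$. You instead observe that $0\le g_\rho^{\e,\sigma}\le u_\e^\sigma$ pointwise and that the double integral defining $b$ is monotone, which gives $b(g_\rho^{\e,\sigma})\le S_{H,L}^{\frac{2N-\mu}{N-\mu+2}}$ with no error term. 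Your route avoids both the $L^{2^*}$ computation and the HLS step; the paper's route, on the other hand, would still be needed if one wanted an explicit $L^{2^*}$ estimate for other purposes.
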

\begin{proof}
	Part $(i)$ follows from  Lemma \ref{nhlem8} $(i).$ For part $(ii)$ we will first estimate the integral $\ds \int_{\Om}|g_\rho^{\e,\sigma}|^{2^*}~dx $. Since 
	\begin{align*}
\ds \int_{\Om}|g_\rho^{\e,\sigma}|^{2^*}~dx \leq C \ds \int_{B_{\frac{3}{4\rho}} \setminus B_{\frac{3\rho}{2}}}|u^\sigma_{\e}|^{2^*}~dx\leq    \ds \int_{B_{\frac{3}{4\rho}} \setminus B_{\frac{1}{2\rho}}}|u^\sigma_{\e}|^{2^*}~dx+   \ds \int_{B_{\frac{1}{2\rho}} \setminus B_{\frac{3\rho}{2}}}|u^\sigma_{\e}|^{2^*}~dx
	\end{align*}
and \begin{align*}
& \ds \int_{B_{\frac{3}{4\rho}} \setminus B_{\frac{1}{2\rho}}}|u^\sigma_{\e}|^{2^*}~dx \leq C \e ^{N}\int_{B_{\frac{3}{4\rho}} \setminus B_{\frac{1}{2\rho}}}\frac{dx}{|x-(1-\e)\sigma|^{2N}}= O(\e^N),\\
&  \ds \int_{B_{\frac{1}{2\rho}} \setminus B_{\frac{3\rho}{2}}}|u^\sigma_{\e}|^{2^*}~dx \leq  \ds \int_{\mathbb{R}^N}|u^\sigma_{\e}|^{2^*}~dx= S^{\frac{N}{N-\mu+2}}C(N,\mu)^{\frac{-N}{N-\mu+2}}.
\end{align*}
It implies $\ds \int_{\Om}|g_\rho^{\e,\sigma}|^{2^*}~dx\leq S^{\frac{N}{N-\mu+2}}C(N,\mu)^{\frac{-N}{N-\mu+2}}+ O(\e^N)$ and now using this and Hardy-Littlewood-Sobolev inequality we have
 \begin{align*}
b(g_\rho^{\e,\sigma})& = \int_{ \Om}\int_{\Om}\frac{|g_\rho^{\e,\sigma}(x)|^{2^*_{\mu}}|g_\rho^{\e,\sigma}(y)|^{2^*_{\mu}} } {|x-y|^{\mu}} ~dxdy \\
& \leq C(N,\mu) \left(\ds \int_{\Om}|g_\rho^{\e,\sigma}|^{2^*}~dx\right)^{\frac{2N-\mu}{N}} \\
& \leq  C(N,\mu) \left(S^{\frac{N}{N-\mu+2}}C(N,\mu)^{\frac{-N}{N-\mu+2}}+ O(\e^N)\right)^{\frac{2N-\mu}{N}}\leq S_{H,L}^{\frac{2N-\mu}{N-\mu+2}}+O(\e^N).
\end{align*}
This proves part $(ii).$
Now to prove part $(iii),$ consider
\begin{align*}
b(g_\rho^{\e,\sigma})& = \int_{ \Om}\int_{\Om}\frac{|g_\rho^{\e,\sigma}(x)|^{2^*_{\mu}}|g_\rho^{\e,\sigma}(y)|^{2^*_{\mu}} } {|x-y|^{\mu}} ~dxdy \\
& \geq \int_{ B_{\frac{1}{2\rho}}\setminus B_{2\rho}}\int_{B_{\frac{1}{2\rho}}\setminus B_{2\rho}}\frac{|g_\rho^{\e,\sigma}(x)|^{2^*_{\mu}}|g_\rho^{\e,\sigma}(y)|^{2^*_{\mu}} } {|x-y|^{\mu}} ~dxdy \\
& = \int_{\mathbb{R}^N}\int_{\mathbb{R}^N}\frac{ |u_{\e}^{\sigma}(x)|^{2^{*}_{\mu}}|u_{\e}^{\sigma}(y)|^{2^{*}_{\mu}}}{|x-y|^{\mu}}~dxdy - \ds \sum_{i=1}^{i=5}J_i,
\end{align*}
	where $J_i$ are defined in equation \eqref{ksnew}. Using the proof of  Lemma \ref{nhlem8}$(i)$ and the fact that $\|u_{\e}^{\sigma}\|_{NL}^{2.2^*_{\mu}}= S_{H,L}^{\frac{2N-\mu}{N-\mu+2}} +o_\e(1)$, we obtain the required result.
%\begin{align*}
%b(g_\rho^{\e,\sigma})\geq S_{H,L}^{\frac{2N-\mu}{N-\mu+2}}-O(\e^{\frac{2N-\mu}{2}}).  
%\end{align*}
\QED
	\end{proof}

\begin{Lemma}\label{nhlem33}
If  $\mu<\min\{4,N\}$ then 
\begin{align}\label{nh22}
b(u_1+t g_\rho^{\e,\sigma})	& 
\geq b(u_1)+b(t g_\rho^{\e,\sigma}) +\widehat{C} t^{2.2^*_{\mu}-1} \int_{\Om}\int_{ \Om}\frac{(g_\rho^{\e,\sigma}(x))^{2^*_{\mu}}(g_\rho^{\e,\sigma}(y))^{2^*_{\mu}-1}u_1(y)}{|x-y|^{\mu}}~dxdy\\& \quad + 2.2^*_{\mu} t \int_{\Om}\int_{ \Om}\frac{(u_1(x))^{2^*_{\mu}}(u_1(y))^{2^*_{\mu}-1}g_\rho^{\e,\sigma}(y)}{|x-y|^{\mu}}~dxdy- O(\e^{(\frac{2N-\mu}{4}) \Theta}) \text{ for all } \Theta <1, \nonumber
\end{align}
where $u_1$ is the local minimum obtained in Lemma \ref{nhlem10}.
\end{Lemma}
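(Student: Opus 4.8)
The starting point is the elementary algebraic inequality for the double convolution form. Writing $w = u_1 + t g_\rho^{\e,\sigma}$ and expanding
\[
b(w) = \int_\Om\int_\Om \frac{|w^+(x)|^{2^*_\mu}|w^+(y)|^{2^*_\mu}}{|x-y|^\mu}\,dx\,dy,
\]
I would first record the pointwise inequality: for $A,B\ge 0$ and $p=2^*_\mu>1$,
\[
(A+B)^p \ge A^p + B^p + p\,A^{p-1}B + c_p\,A\,B^{p-1},
\]
valid because $\mu<\min\{4,N\}$ guarantees $p=\frac{2N-\mu}{N-2}>2$ (this is exactly where the hypothesis $\mu<4$ is used — it makes the term $A B^{p-1}$ legitimate with a positive constant $c_p=\widehat C$). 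Plugging $A=u_1(x)$, $B=t g_\rho^{\e,\sigma}(x)$ and multiplying the two expansions at $x$ and $y$, then integrating against $|x-y|^{-\mu}$, produces $b(u_1)+b(tg_\rho^{\e,\sigma})$, the two "pure cross" terms displayed in \eqref{nh22} (the $2\cdot 2^*_\mu\, t$ term coming from $p A^{p-1}B$ at one variable times $A^p$ at the other, and the $\widehat C t^{2\cdot2^*_\mu-1}$ term from $c_p A B^{p-1}$ times $B^p$), plus a collection of further mixed terms. The crucial point is that all the remaining mixed terms are nonnegative — since every factor $u_1, g_\rho^{\e,\sigma}\ge 0$ and the kernel is positive — so they can be discarded from below. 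Hence $b(u_1+tg_\rho^{\e,\sigma})$ is bounded below by $b(u_1)+b(tg_\rho^{\e,\sigma})$ plus the two explicit cross terms plus nonnegative junk; at this stage there is in fact no error term at all.

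The subtlety — and the reason an $O(\e^{(\frac{2N-\mu}{4})\Theta})$ error appears — is that the inequality as just derived involves $(w^+)$, and more importantly that I must be careful about the domain of integration and about passing between $g_\rho^{\e,\sigma}$ and its "model" $u_\e^\sigma$. The bump $g_\rho^{\e,\sigma}$ concentrates near the point $(1-\e)\sigma$ on the sphere, far from the support region where $u_1$ is comparable to its sup; thus the cross terms behave like integrals of $u_1$ against a concentrating bump. I would estimate the cross term $\int\int \frac{(g_\rho^{\e,\sigma})^{2^*_\mu}(x)(g_\rho^{\e,\sigma})^{2^*_\mu-1}(y)\,u_1(y)}{|x-y|^\mu}$ and its companion: bounding $u_1\le \|u_1\|_{L^\infty}$ (Lemma \ref{nhlem34}) and using the $L^1$-type mass estimates on $u_\e^\sigma$ already computed in Lemmas \ref{nhlem8} and \ref{nhlem9}, together with Lemma \ref{nhlem19} to replace $g_\rho^{\e,\sigma}$ by $u_\e^\sigma$ up to $\rho$-small errors. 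These cross terms are themselves $o_\e(1)$ and in fact of polynomial order in $\e$; the truncation error from $\upsilon_\rho$ and from the sign-truncation contributes a term controlled by a power of $\e$. Keeping track of the weakest such power and absorbing a logarithmic or borderline factor by lowering the exponent slightly yields the stated $O(\e^{(\frac{2N-\mu}{4})\Theta})$ for any $\Theta<1$ — the $\Theta<1$ slack is precisely there to swallow a borderline $|\log\e|$ or an endpoint Hardy–Littlewood–Sobolev exponent that would otherwise be delicate.

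The main obstacle is the bookkeeping in the second paragraph: isolating exactly the two cross terms that must survive in \eqref{nh22}, verifying that every other term generated by the double expansion is genuinely nonnegative (here one must be slightly careful that $(A+B)^p$ expansions at $x$ and at $y$, when multiplied, do not generate a term with an unfavorable sign — they do not, because all the monomials $A^i B^{p-i}$ have nonnegative coefficients once $p>2$), and then getting a clean power-of-$\e$ bound for the discarded truncation error uniformly in $\sigma$. The concentration analysis itself is routine given Lemmas \ref{nhlem8}, \ref{nhlem9} and \ref{nhlem19}; the only place real care is needed is checking that the exponent $\frac{2N-\mu}{4}$ is indeed what the worst mixed-truncation integral produces and that the $\Theta<1$ relaxation is enough to cover the borderline case, which depends on comparing $\frac{2N-\mu}{4}$ against the decay rates $\e^{N-2}$, $\e^{N}$, $\e^{\frac{2N-\mu}{2}}$ appearing in the earlier lemmas.
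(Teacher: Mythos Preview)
Your central claim --- that for $p=2^*_\mu>2$ one has a clean pointwise inequality
\[
(A+B)^p \ge A^p + B^p + p\,A^{p-1}B + c_p\,A\,B^{p-1}
\]
with some $c_p>0$ --- is false in the range $2<p\le 3$. Take $A=1$, $B=t\to 0^+$: the left side is $1+pt+O(t^2)$ while the right side is $1+pt+c_pt^{p-1}+t^p$, and since $p-1<2$ the term $c_pt^{p-1}$ dominates $O(t^2)$, so the inequality fails for small $t$. The condition $\mu<4$ gives only $2^*_\mu>2$, not $2^*_\mu>3$; in fact $2^*_\mu\le 3$ whenever $\mu\ge 6-N$, which for $N\ge 6$ is always. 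So your argument covers only the easy case $2^*_\mu>3$ (where the paper also dispatches the lemma in two lines with no error term), and breaks down completely in the generic case $2<2^*_\mu\le 3$.

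This is precisely why the paper splits into two cases and, in Case~2, invokes the Brezis--Nirenberg inequality \cite{brezis1}: for $2<p\le 3$ one has $(A+B)^p\ge A^p+B^p+pA^{p-1}B+pAB^{p-1}$ \emph{minus} a correction $C\min(A,B)\max(A,B)^{p-1}$. The error term $O(\e^{(\frac{2N-\mu}{4})\Theta})$ in the statement is \emph{not} a truncation or sign artifact (both $u_1>0$ and $g_\rho^{\e,\sigma}\ge 0$, so $w^+=w$ and there is nothing to do there, nor is there any need to pass to $u_\e^\sigma$); it comes entirely from integrating these Brezis--Nirenberg corrections. The paper handles them by partitioning $\Om\times\Om$ into four regions according to which of $u_1$, $tg_\rho^{\e,\sigma}$ dominates at $x$ and at $y$, then on each region writes the bad exponent as $r+s$ with $s<2^*_\mu/2$, uses the domination to replace powers of the smaller function by powers of the larger, and applies Hardy--Littlewood--Sobolev together with $u_1\in L^\infty$. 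This produces a bound $C\e^{s(N-2)/2}$; letting $s\nearrow 2^*_\mu/2=\tfrac{2N-\mu}{2(N-2)}$ gives the exponent $\tfrac{2N-\mu}{4}$, but the endpoint is excluded because the resulting integral diverges --- hence the $\Theta<1$.
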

\begin{proof}
	We will divide the proof in two cases:\\
	\textbf{Case 1:} $ 2^{*}_{\mu} >3$.\\
	It is easy to see that  there exists $\widehat{A}>0$ such that 
	\begin{align*}
	(a+b)^p\geq a^p+b^p+pa^{p-1}b+\widehat{A}ab^{p-1} \text{ for all } a,\;b \geq 0 \text{ and } p>3, 
	\end{align*}
	which  implies that 
	\begin{align*}
	b(u_1+t g_\rho^{\e,\sigma})	& \geq b(u_1)+b(t g_\rho^{\e,\sigma}) +\widehat{C}t^{2.2^*_{\mu}-1} \int_{\Om}\int_{ \Om}\frac{(g_\rho^{\e,\sigma}(x))^{2^*_{\mu}}(g_\rho^{\e,\sigma}(y))^{2^*_{\mu}-1}u_1(y)}{|x-y|^{\mu}}~dxdy\\& \quad + 2.2^*_{\mu}t  \int_{\Om}\int_{ \Om}\frac{(u_1(x))^{2^*_{\mu}}(u_1(y))^{2^*_{\mu}-1}g_\rho^{\e,\sigma}(y)}{|x-y|^{\mu}}~dxdy,  \text{ where } \widehat{C}= \min\{\widehat{A}, 2.2^*_\mu\}.
	%\\ 
	%& \quad + 2^*_{\mu}t  \int_{\Om}\int_{ \Om}\frac{(u_1(y))^{2^*_{\mu}}(u_1(x))^{2^*_{\mu}-1}g_\rho^{\e,\sigma}(x)}{|x-y|^{\mu}}~dxdy.
	\end{align*}
	\textbf{Case 2:} $2< 2^{*}_{\mu} \leq 3$.\\
%	In this case we claim that 
%	\begin{align}\label{nh22}
%	b(u_1+t g_\rho^{\e,\sigma})	& 
%	\geq b(u_1)+b(t g_\rho^{\e,\sigma}) +\widehat{C} t^{2.2^*_{\mu}-1} \int_{\Om}\int_{ \Om}\frac{(g_\rho^{\e,\sigma}(x))^{2^*_{\mu}}(g_\rho^{\e,\sigma}(y))^{2^*_{\mu}-1}u_1(y)}{|x-y|^{\mu}}~dxdy\\& \quad + 2.2^*_{\mu} t \int_{\Om}\int_{ \Om}\frac{(u_1(x))^{2^*_{\mu}}(u_1(y))^{2^*_{\mu}-1}g_\rho^{\e,\sigma}(y)}{|x-y|^{\mu}}~dxdy- O(\e^{(\frac{2N-\mu}{4}) \Theta}) \text{ for all } \Theta <1, \nonumber
%	\end{align}
	We recall the inequality from Lemma 4 of \cite{brezis1}: there exist $ C$(depending on $2^*_{\mu}$) such that, for all $a,b\geq 0$, 
	\begin{align}\label{nh21}
	(a+b)^{2^*_{\mu}}\geq   \left\{
	\begin{array}{ll}
	a^{2^*_{\mu}}+b^{2^*_{\mu}}+2^*_{\mu}a^{2^*_{\mu}-1}b+ 2^*_{\mu}ab^{2^*_{\mu}-1} -C ab^{2^*_{\mu}-1} &  \text{ if } a\geq b , \\
	a^{2^*_{\mu}}+b^{2^*_{\mu}}+2^*_{\mu}a^{2^*_{\mu}-1}b+ 2^*_{\mu}ab^{2^*_{\mu}-1} -C a^{2^*_{\mu}-1}b &  \text{ if } a\leq b, \\
	\end{array} 
	\right. \end{align}	
	Consider $\Om \times \Om= O_1 \cup O_2 \cup O_3 \cup O_4$, where
	\begin{align*}
	& O_1= \{(x,y) \in \Om\times \Om  \mid u_1(x)\geq tg_\rho^{\e,\sigma}(x) \text{ and } u_1(y)\geq tg_\rho^{\e,\sigma}(y) \},\\
	& O_2= \{(x,y) \in \Om\times \Om  \mid u_1(x)\geq tg_\rho^{\e,\sigma}(x) \text{ and } u_1(y)< tg_\rho^{\e,\sigma}(y) \},\\
	& O_3= \{(x,y) \in \Om\times \Om  \mid u_1(x)< tg_\rho^{\e,\sigma}(x) \text{ and } u_1(y)\geq tg_\rho^{\e,\sigma}(y) \}, \\
	& O_4= \{(x,y) \in \Om\times \Om  \mid u_1(x)< tg_\rho^{\e,\sigma}(x) \text{ and } u_1(y)< tg_\rho^{\e,\sigma}(y) \}.
	\end{align*} 
	Also, define the $b(u)_{|O_i}= \ds \int\int_{O_i}\frac{(u(x))^{2^{*}_{\mu}}(u(y))^{2^{*}_{\mu}}}{|x-y|^{\mu}}~dxdy $,  for all $u \in H_0^1(\Om)$ and $i=1,2,3,4$.
	\textbf{Subcase 1:} when $(x,y) \in O_1$. \\
	Employing \eqref{nh21}, we have the following inequality:
	\begin{align*}
	b(u_1+t g_\rho^{\e,\sigma})_{|O_1} & \geq  (b(u_1)+b(t g_\rho^{\e,\sigma}))_{|O_1} +2.2^*_{\mu} t^{2.2^*_{\mu}-1} \int\int_{O_1}\frac{(g_\rho^{\e,\sigma}(x))^{2^*_{\mu}}(g_\rho^{\e,\sigma}(y))^{2^*_{\mu}-1}u_1(y)}{|x-y|^{\mu}}dxdy\\& \quad + 2.2^*_{\mu} t \iint_{O_1} \frac{(u_1(x))^{2^*_{\mu}}(u_1(y))^{2^*_{\mu}-1}g_\rho^{\e,\sigma}(y)}{|x-y|^{\mu}}dxdy - A^1_\e,
	\end{align*}
	where $A^1_\e$ is sum of eight non-negative integrals and  each integral  has an  upper bound of the form  $ C \int\int_{O_1}\frac{u_1(x)(tg_\rho^{\e,\sigma}(x))^{2^*_{\mu}-1} (u_1(y))^{2^*_{\mu}}}{|x-y|^{\mu}}~dxdy$ or $ C \int\int_{O_1}\frac{u_1(y)(tg_\rho^{\e,\sigma}(y))^{2^*_{\mu}-1} (u_1(x))^{2^*_{\mu}}}{|x-y|^{\mu}}~dxdy$.
	%	 \begin{align*}
	%	 A^1_\e \leq C \int\int_{O_1}\frac{u_1(x)(g_\rho^{\e,\sigma}(x))^{2^*_{\mu}-1} (u_1(y))^{2^*_{\mu}}}{|x-y|^{\mu}}~dxdy +C \int\int_{O_1}\frac{u_1(x)(g_\rho^{\e,\sigma}(x))^{2^*_{\mu}-1} (g_\rho^{\e,\sigma}(y))^{2^*_{\mu}}}{|x-y|^{\mu}}~dxdy 
	%	 \end{align*}
	Write $(tg_\rho^{\e,\sigma}(x))^{2^*_{\mu}-1}= (tg_\rho^{\e,\sigma}(x))^{r}. (tg_\rho^{\e,\sigma}(x))^{s}$ with $2^*_{\mu}-1= r+s,\; 0<s<\frac{2^*_{\mu}}{2}$. Then 
	utilizing the definition of $O_1,\; u_1\in L^{\infty}(\Om)$ and Hardy-Littlewood-Sobolev inequality,  we have  
	\begin{equation*}
	\begin{aligned}
	\ds \int\int_{O_1} \frac{u_1(x)(tg_\rho^{\e,\sigma}(x))^{2^*_{\mu}-1} (u_1(y))^{2^*_{\mu}}}{|x-y|^{\mu}}~dxdy  & \leq C \int\int_{O_1}\frac{(u_1(x))^{1+r}(tg_\rho^{\e,\sigma}(x))^{s} (u_1(y))^{2^*_{\mu}}}{|x-y|^{\mu}}~dxdy  \\
	&\leq  C \int_{\Om}\int_{\Om}\frac{(tg_\rho^{\e,\sigma}(x))^{s} (u_1(y))^{2^*_{\mu}}}{|x-y|^{\mu}}~dxdy \\
	&\leq  C \int_{\Om}\int_{\Om}\frac{  \e^{\frac{s(N-2)}{2}}   }{|x-y|^{\mu} |x-(1-\e)\sigma|^{s(N-2)}}~dxdy \\
	& \leq C \e^{\frac{s(N-2)}{2}} \left( \ds \int_{\Om}
	\frac{dx}{|x-(1-\e)\sigma|^{\frac{s(2N)(N-2)}{2N-\mu}}}
	\right)^{\frac{2N-\mu}{2N}}  \\
	& \leq C \e^{\frac{s(N-2)}{2}} \left( \ds \int_{\Om}
	\frac{dx}{|x-(1-\e)\sigma|^{\frac{s(2N)(N-2)}{2N-\mu}}}
	\right)^{\frac{2N-\mu}{2N}}.
	\end{aligned}
	\end{equation*}
	By the choice of $s$ we have  $\ds \int_{\Om}
	\frac{dx}{|x-(1-\e)\sigma|^{\frac{s(2N)(N-2)}{2N-\mu}}} < \infty$.As a result, we get
	\begin{align*}
	\ds \int\int_{O_1}\frac{u_1(x)(tg_\rho^{\e,\sigma}(x))^{2^*_{\mu}-1} (u_1(y))^{2^*_{\mu}}}{|x-y|^{\mu}}~dxdy \leq O(\e^{(\frac{2N-\mu}{4}) \Theta}) \text{ for all } \Theta <1.
	\end{align*}	 
	In a similar manner, we have  
	\begin{align*}
	\ds C \int\int_{O_1}\frac{u_1(y)(tg_\rho^{\e,\sigma}(y))^{2^*_{\mu}-1} (u_1(x))^{2^*_{\mu}}}{|x-y|^{\mu}}~dxdy \leq O(\e^{(\frac{2N-\mu}{4}) \Theta}) \text{ for all } \Theta <1.
	\end{align*}	 
	\noi  \textbf{Subcase 2:} when $(x,y) \in O_2$. \\
	Once again using  \eqref{nh21}, we have the following inequality: 
	\begin{align*}
	b(u_1+t g_\rho^{\e,\sigma})_{|O_2} & \geq  [b(u_1)+b(t g_\rho^{\e,\sigma})]|_{O_2} +2.2^*_{\mu} t^{2.2^*_{\mu}-1} \iint_{O_2}\frac{(g_\rho^{\e,\sigma}(x))^{2^*_{\mu}}(g_\rho^{\e,\sigma}(y))^{2^*_{\mu}-1}u_1(y)}{|x-y|^{\mu}}dxdy\\& \quad + 2.2^*_{\mu} t \iint_{O_2} \frac{(u_1(x))^{2^*_{\mu}}(u_1(y))^{2^*_{\mu}-1}g_\rho^{\e,\sigma}(y)}{|x-y|^{\mu}}~dxdy - A^2_\e,
	\end{align*}
	where $A^2_\e$ is sum of eight non-negative integrals and  each integral  has an  upper bound of the form  $ C \int\int_{O_2}\frac{u_1(x)(tg_\rho^{\e,\sigma}(x))^{2^*_{\mu}-1} (g_\rho^{\e,\sigma}(y))^{2^*_{\mu}}}{|x-y|^{\mu}}~dxdy$ or $ C \int\int_{O_2}\frac{(u_1(y))^{2^*_{\mu}-1}(tg_\rho^{\e,\sigma}(y))(u_1(x))^{2^*_{\mu}}}{|x-y|^{\mu}}~dxdy$.
	%	 \begin{align*}
	%	 A^2_\e \leq C \int\int_{O_2}\frac{u_1(x)(g_\rho^{\e,\sigma}(x))^{2^*_{\mu}-1} (u_1(y))^{2^*_{\mu}}}{|x-y|^{\mu}}~dxdy +C \int\int_{O_2}\frac{u_1(x)(g_\rho^{\e,\sigma}(x))^{2^*_{\mu}-1} (g_\rho^{\e,\sigma}(y))^{2^*_{\mu}}}{|x-y|^{\mu}}~dxdy 
	%	 \end{align*}
	By the similar estimates as in Subcase 1 and using the definition of $O_2$, the fact that $tg_\rho^{\e,\sigma}\in H_0^1(\Om)$ and  regularity of $u_1$,  we have  
	\begin{align*}
	\int\int_{O_2}\frac{u_1(x)(tg_\rho^{\e,\sigma}(x))^{2^*_{\mu}-1} (g_\rho^{\e,\sigma}(y))^{2^*_{\mu}}}{|x-y|^{\mu}}~dxdy \leq O(\e^{(\frac{2N-\mu}{4}) \Theta}) \text{ for all } \Theta <1.
	\end{align*}
Write $(u_1(y))^{2^*_{\mu}-1}= (u_1(y))^{r}. (u_1(y))^{s}$ with $2^*_{\mu}-1= r+s,\; 0<1+s<\frac{2^*_{\mu}}{2}$. Then 
		utilizing the definition of $O_2$, $u_1\in L^{\infty}(\Om)$ and Hardy-Littlewood-Sobolev inequality,  we have  
	\begin{equation*}
	\begin{aligned}
	\int\int_{O_2} \frac{(u_1(y))^{2^*_{\mu}-1}(tg_\rho^{\e,\sigma}(y))(u_1(x))^{2^*_{\mu}}}{|x-y|^{\mu}}&~dxdy
 	\leq  \int\int_{O_2}\frac{(u_1(y))^{r}(tg_\rho^{\e,\sigma}(y))^{1+s}(u_1(x))^{2^*_{\mu}}}{|x-y|^{\mu}}~dxdy\\
	&\leq  C \int_{\Om}\int_{\Om}\frac{(tg_\rho^{\e,\sigma}(y))^{1+s}(u_1(x))^{2^*_{\mu}}}{|x-y|^{\mu}}~dxdy\\
	&\leq  C \int_{\Om}\int_{\Om}\frac{  \e^{\frac{(1+s)(N-2)}{2}}   }{|x-y|^{\mu} |y-(1-\e)\sigma|^{(1+s)(N-2)}}~dxdy \\
	&\leq  C  \e^{\frac{(1+s)(N-2)}{2}} \left( \ds \int_{\Om}
	\frac{dy}{|y-(1-\e)\sigma|^{\frac{(1+s)(2N)(N-2)}{2N-\mu}}}
	\right)^{\frac{2N-\mu}{2N}} \\
	& \leq C \e^{\frac{(1+s)(N-2)}{2}} \left( \ds \int_{\Om}
	\frac{dy}{|y-(1-\e)\sigma|^{\frac{(1+s)(2N)(N-2)}{2N-\mu}}}
	\right)^{\frac{2N-\mu}{2N}}.
	\end{aligned}
	\end{equation*}
	By the choice of  $s$ we have  $ \ds \int_{\Om}
	\frac{dx}{|x-(1-\e)\sigma|^{\frac{(1+s)(2N)(N-2)}{2N-\mu}}} < \infty$. Hence we obtain 
	\begin{align*}
	\int\int_{O_2}\frac{(u_1(y))^{2^*_{\mu}-1}(tg_\rho^{\e,\sigma}(y))(u_1(x))^{2^*_{\mu}}}{|x-y|^{\mu}}~dxdy \leq O(\e^{(\frac{2N-\mu}{4}) \Theta}) \text{ for all } \Theta <1.
	\end{align*}	 
	\noi  \textbf{Subcase 3:} when $(x,y) \in O_3$. \\
	Using \eqref{nh21}, we have 
	\begin{align*}
	b(u_1+t g_\rho^{\e,\sigma})_{|O_3} & \geq  (b(u_1)+b(t g_\rho^{\e,\sigma}))|_{O_3} +2.2^*_{\mu} t^{2.2^*_{\mu}-1} \iint_{O_3}\frac{(g_\rho^{\e,\sigma}(x))^{2^*_{\mu}}(g_\rho^{\e,\sigma}(y))^{2^*_{\mu}-1}u_1(y)}{|x-y|^{\mu}}dxdy\\& \quad + 2.2^*_{\mu} t \iint_{O_3} \frac{(u_1(x))^{2^*_{\mu}}(u_1(y))^{2^*_{\mu}-1}g_\rho^{\e,\sigma}(y)}{|x-y|^{\mu}}~dxdy - A^3_\e,
	\end{align*}
	where $A^3_\e$ is sum of eight non-negative integrals and  each integral  has an  upper bound of the form $ C \iint_{O_3}\frac{(u_1(x))^{2^*_{\mu}-1}(tg_\rho^{\e,\sigma}(x)) (u_1(y))^{2^*_{\mu}}}{|x-y|^{\mu}}~dxdy$ or  $ C \iint_{O_3}\frac{u_1(y)(tg_\rho^{\e,\sigma}(y))^{2^*_{\mu}-1} (g_\rho^{\e,\sigma}(x))^{2^*_{\mu}}}{|x-y|^{\mu}}~dxdy$.
	%	 \begin{align*}
	%	 A^3_\e \leq C \int\int_{O_3}\frac{u_1(x)(g_\rho^{\e,\sigma}(x))^{2^*_{\mu}-1} (u_1(y))^{2^*_{\mu}}}{|x-y|^{\mu}}~dxdy +C \int\int_{O_3}\frac{u_1(x)(g_\rho^{\e,\sigma}(x))^{2^*_{\mu}-1} (g_\rho^{\e,\sigma}(y))^{2^*_{\mu}}}{|x-y|^{\mu}}~dxdy 
	%	 \end{align*}
	By the similar estimates as in Subcase 1, Subcase 2 and using the definition of $O_3$ and  regularity of $u_1$,  we get  $A^3_\e \leq O(\e^{(\frac{2N-\mu}{4}) \Theta}) \text{ for all } \Theta <1$. \\
	%\begin{align*}
	%A^3_\e \leq O(\e^{(\frac{2N-\mu}{4}) \Theta}) \text{ for all } \Theta <1.
	%\end{align*}
	%By the similar estimates as in Subcase 2 and using the definition of $O_3$ and  regularity of $u_1$,  we have  
	%\begin{align*}
	%\int\int_{O_3}\frac{(u_1(x))^{2^*_{\mu}-1}(tg_\rho^{\e,\sigma}(x)) (u_1(y))^{2^*_{\mu}}}{|x-y|^{\mu}}~dxdy \leq O(\e^{(\frac{2N-\mu}{4}) \Theta}) \text{ for all } \Theta <1.
	%\end{align*}
	%Also using estimates as in Subcase 1,  we have  
	%\begin{align*}
	%\int\int_{O_3}\frac{u_1(y)(tg_\rho^{\e,\sigma}(y))^{2^*_{\mu}-1} (g_\rho^{\e,\sigma}(x))^{2^*_{\mu}}}{|x-y|^{\mu}}~dxdy  \leq O(\e^{(\frac{2N-\mu}{4}) \Theta}) \text{ for all } \Theta <1.
	%\end{align*}
	\noi  \textbf{Subcase 4:} when $(x,y) \in O_4$. \\
	Using \eqref{nh21}, we have 
	\begin{align*}
	b(u_1+t g_\rho^{\e,\sigma})_{|O_4} & \geq  (b(u_1)+b(t g_\rho^{\e,\sigma}))|_{O_4} +2.2^*_{\mu} t^{2.2^*_{\mu}-1} \iint_{O_4}\frac{(g_\rho^{\e,\sigma}(x))^{2^*_{\mu}}(g_\rho^{\e,\sigma}(y))^{2^*_{\mu}-1}u_1(y)}{|x-y|^{\mu}}dxdy\\& \quad + 2.2^*_{\mu} t \iint_{O_4} \frac{(u_1(x))^{2^*_{\mu}}(u_1(y))^{2^*_{\mu}-1}g_\rho^{\e,\sigma}(y)}{|x-y|^{\mu}}~dxdy - A^4_\e,
	\end{align*}
	where $A^4_\e$ is sum of eight non-negative integrals and  each integral  has an  upper bound of the form $ C \int\int_{O_4}\frac{(u_1(x))^{2^*_{\mu}-1}(tg_\rho^{\e,\sigma}(x)) (tg_\rho^{\e,\sigma}(y))^{2^*_{\mu}}}{|x-y|^{\mu}}~dxdy$ or  $ C \int\int_{O_4}\frac{u_1(y)(tg_\rho^{\e,\sigma}(y))^{2^*_{\mu}-1} (g_\rho^{\e,\sigma}(x))^{2^*_{\mu}}}{|x-y|^{\mu}}~dxdy$.
	%	 \begin{align*}
	%	 A^4_\e \leq C \int\int_{O_4}\frac{u_1(x)(g_\rho^{\e,\sigma}(x))^{2^*_{\mu}-1} (u_1(y))^{2^*_{\mu}}}{|x-y|^{\mu}}~dxdy +C \int\int_{O_4}\frac{u_1(x)(g_\rho^{\e,\sigma}(x))^{2^*_{\mu}-1} (g_\rho^{\e,\sigma}(y))^{2^*_{\mu}}}{|x-y|^{\mu}}~dxdy 
	%	 \end{align*}
	By the similar estimates as in Subcase 2, we have  
	\begin{align*}
	A^4_\e \leq O(\e^{(\frac{2N-\mu}{4}) \Theta}) \text{ for all } \Theta <1.
	\end{align*}
	From all subcases we obtain $ A^i_\e \leq O(\e^{(\frac{2N-\mu}{4}) \Theta}) \text{ for all } \Theta <1 \text{ and } i=1,2,3,4.$
	Combining all sub cases we conclude Case 2. From Case 1 and Case 2 we have the required result. \QED
	\end{proof}

\begin{Proposition}\label{nhprop12}
Let $\mu<\min\{ 4,\; N \}$ then there exists $\e_0>0$ such that for every $0<\e<\e_0$ we have 
\begin{align*}
\sup_{t\geq 0}\mathcal{J}_f(u_1+t g_\rho^{\e,\sigma})< \mathcal{J}_f(u_1)+	\frac{N-\mu+2}{2(2N-\mu)}S_{H,L}^{\frac{2N-\mu}{N-\mu+2}} \text{ uniformly in } \sigma \in \mathbb{S}^{N-1},
\end{align*}
where $u_1$ is the local minimum in Lemma \ref{nhlem10}.
\end{Proposition}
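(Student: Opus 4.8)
The plan is a Brezis--Nirenberg type truncation argument along the ray $t\mapsto u_1+tg_\rho^{\e,\sigma}$; the one delicate point is to organise the expansion of $\mc{J}_f$ so that every term \emph{linear} in $t$ cancels, leaving a $t$-dependent quantity that is concave near its maximum, whose maximum is bounded above via Lemma \ref{nhlem9}, minus a strictly negative correction produced by the interaction between $u_1$ and the concentrating bump $g_\rho^{\e,\sigma}$.

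First I would expand. Write $g=g_\rho^{\e,\sigma}$; since $u_1\ge0$ and $g\ge0$ we have $(u_1+tg)^+=u_1+tg$ for $t\ge0$, so $\mc{J}_f(u_1+tg)=\tfrac12 a(u_1+tg)-\tfrac{1}{2\cdot2^*_{\mu}}b(u_1+tg)-\int_\Om f(u_1+tg)$. Expanding $a(u_1+tg)=a(u_1)+2t\ld u_1,g\rd+t^2 a(g)$, using the equation $(P_f)$ satisfied by $u_1$ (tested against $g$) to write $\ld u_1,g\rd=\int_\Om\int_\Om\frac{u_1(x)^{2^*_{\mu}}u_1(y)^{2^*_{\mu}-1}g(y)}{|x-y|^{\mu}}\,dx\,dy+\int_\Om fg$, and bounding $b(u_1+tg)$ from below by Lemma \ref{nhlem33}, one checks that the two copies of $t\int_\Om fg$ cancel and so do the two copies of $t\int_\Om\int_\Om\frac{u_1(x)^{2^*_{\mu}}u_1(y)^{2^*_{\mu}-1}g(y)}{|x-y|^{\mu}}$ (one from the kinetic term, the other from the $2\cdot2^*_{\mu}t$-summand in Lemma \ref{nhlem33}). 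What survives is, for every $\Theta<1$,
\begin{align*}
\mc{J}_f(u_1+tg)\le \mc{J}_f(u_1)+\frac{t^2}{2}a(g)-\frac{t^{2\cdot2^*_{\mu}}}{2\cdot2^*_{\mu}}b(g)-\frac{\widehat{C}}{2\cdot2^*_{\mu}}\,t^{2\cdot2^*_{\mu}-1}K_\e+O\big(\e^{(\frac{2N-\mu}{4})\Theta}\big),
\end{align*}
where $K_\e:=\int_\Om\int_\Om\frac{g(x)^{2^*_{\mu}}g(y)^{2^*_{\mu}-1}u_1(y)}{|x-y|^{\mu}}\,dx\,dy\ge0$.

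Next I would optimise in $t$. Since $b(u_1+tg)\ge t^{2\cdot2^*_{\mu}}b(g)$ with $b(g)\to S_{H,L}^{\frac{2N-\mu}{N-\mu+2}}>0$, one sees $\mc{J}_f(u_1+tg)<\mc{J}_f(u_1)$ for $t$ beyond a fixed $T$ (uniformly in $\sigma$, for $\e$ small), so the supremum is attained in $[0,T]$, on which the error in Lemma \ref{nhlem33} is uniform. By Lemma \ref{nhlem8}(i), $a(g),b(g)\to S_{H,L}^{\frac{2N-\mu}{N-\mu+2}}$ and $K_\e\to0$, and since $\mu<4$ forces $2^*_{\mu}>2$, the $t$-dependent part has a unique maximiser $t_\e\to1$; discarding the nonnegative $K_\e$-term only at $t=t_\e$ (where $t_\e\ge\tfrac12$) and invoking the elementary identity in Lemma \ref{nhlem11} bounds the supremum by $\mc{J}_f(u_1)+\frac{N-\mu+2}{2(2N-\mu)}\big(a(g)^{2^*_{\mu}}/b(g)\big)^{1/(2^*_{\mu}-1)}-c_1K_\e+O(\e^{(\frac{2N-\mu}{4})\Theta})$. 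Lemma \ref{nhlem9} gives $a(g)\le S_{H,L}^{\frac{2N-\mu}{N-\mu+2}}+O(\e^{N-2})$ and $b(g)\ge S_{H,L}^{\frac{2N-\mu}{N-\mu+2}}-O(\e^{\frac{2N-\mu}{2}})$, and since $\mu<4$ yields $N-2<\tfrac{2N-\mu}{2}$, the $a(g)$-error dominates: $\big(a(g)^{2^*_{\mu}}/b(g)\big)^{1/(2^*_{\mu}-1)}\le S_{H,L}^{\frac{2N-\mu}{N-\mu+2}}+O(\e^{N-2})$.

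It remains --- and this is the heart of the matter --- to bound $K_\e$ below by the sharp power $\e^{(N-2)/2}$, uniformly in $\sigma\in\mathbb{S}^{N-1}$. By Lemma \ref{nhlem34}, $u_1\in C^2(\overline{\Om})$ with $u_1>0$ in $\Om$; as $\mathbb{S}^{N-1}$ is compact and contained in the interior of $\Om$ (because $\rho<\tfrac12$), there is $c_0>0$ with $u_1\ge c_0$ on $\{x:\tfrac12\le|x|\le\tfrac32\}$. For $\e$ small, $B_\e((1-\e)\sigma)$ lies in that set and in $\{\upsilon_\rho\equiv1\}$, so there $g=u^\e_\sigma\asymp\e^{-(N-2)/2}$; restricting $K_\e$ to $B_\e((1-\e)\sigma)\times B_\e((1-\e)\sigma)$, where $|x-y|\le2\e$ and $u_1\ge c_0$, a direct power count ($2N-\mu-\tfrac{2N-\mu}{2}-\tfrac{N-\mu+2}{2}=\tfrac{N-2}{2}$) gives $K_\e\ge c_2\,\e^{(N-2)/2}$ with $c_2$ independent of $\sigma$. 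Collecting the estimates, $\sup_{t\ge0}\mc{J}_f(u_1+tg)\le\mc{J}_f(u_1)+\frac{N-\mu+2}{2(2N-\mu)}S_{H,L}^{\frac{2N-\mu}{N-\mu+2}}+O(\e^{N-2})+O(\e^{(\frac{2N-\mu}{4})\Theta})-c_1c_2\,\e^{(N-2)/2}$. Since $\mu<4$ is exactly the inequality $\tfrac{2N-\mu}{4}>\tfrac{N-2}{2}$, we may fix $\Theta<1$ so close to $1$ that $\tfrac{(2N-\mu)\Theta}{4}>\tfrac{N-2}{2}$; as also $N-2>\tfrac{N-2}{2}$ for $N\ge3$, both positive error exponents exceed $(N-2)/2$, so the negative term dominates for $0<\e<\e_0$, with $\e_0$ depending only on $N,\mu,\rho,u_1$ and not on $\sigma$. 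I expect the principal difficulty to be this uniform lower bound on $K_\e$ together with the bookkeeping of the competing error exponents coming from Lemmas \ref{nhlem9} and \ref{nhlem33} --- precisely where $\mu<\min\{4,N\}$ is used.
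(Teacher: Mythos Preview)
Your proposal is correct and follows essentially the same route as the paper: expand $\mathcal{J}_f(u_1+tg_\rho^{\e,\sigma})$, cancel the linear-in-$t$ terms via the Euler--Lagrange equation for $u_1$ together with the lower bound on $b(u_1+tg)$ from Lemma~\ref{nhlem33}, control $a(g)$ and $b(g)$ by Lemma~\ref{nhlem9}, and beat the remaining errors by the cross term $K_\e\gtrsim \e^{(N-2)/2}$, using $\mu<4$ to ensure $(2N-\mu)/4>(N-2)/2$. The only cosmetic differences are that the paper obtains the lower bound on $K_\e$ by restricting to the fixed annulus $B_{1/(2\rho)}\setminus B_{2\rho}$ and rescaling, whereas you localize to the $\e$-ball $B_\e((1-\e)\sigma)$ and power-count, and your choice of $\Theta<1$ with $(2N-\mu)\Theta/4>(N-2)/2$ is in fact a bit cleaner than the paper's stated choice $\Theta=2/2^*_\mu$.
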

\begin{proof}
%	We claim that 
%	\begin{align*}
%\sup_{t\geq 0}\mathcal{J}(t g_\rho^{\e,\sigma})\leq \Upsilon_0+\ds O(\e^{\min\{\frac{2n-\mu}{2},\; N-2\}}).
%	\end{align*}
%	From Lemma \ref{nhlem11} we have 
%	\begin{align*}
%	\sup_{t\geq 0}\mathcal{J}(t g_\rho^{\e,\sigma})=  \left(\frac{N-\mu+2}{2(2N-\mu)}\right)\left(\frac{ a(g_\rho^{\e,\sigma})}{b(g_\rho^{\e,\sigma})^{\frac{1}{2^*_{\mu}}}}\right)^{\frac{2N-\mu}{N-\mu+2}}
%	\end{align*}
%	which on using Lemma \ref{nhlem9} gives 
%	\begin{align*}
%	\sup_{t\geq 0}\mathcal{J}(t g_\rho^{\e,\sigma})& \leq \left(\frac{N-\mu+2}{2(2N-\mu)}\right)\left(\frac{ S_{H,L}^{\frac{2N-\mu}{N-\mu+2}}+O(\e^{N-2})}{(S_{H,L}^{\frac{2N-\mu}{N-\mu+2}}-O(\e^{\frac{2N-\mu}{2}}))^{\frac{1}{2^*_{\mu}}}}\right)^{\frac{2N-\mu}{N-\mu+2}}\\
%	& \leq \left(\frac{N-\mu+2}{2(2N-\mu)}\right) S_{H,L}^{\frac{2N-\mu}{N-\mu+2}} +\ds O(\e^{\min\{\frac{2n-\mu}{2},\; N-2\}}).
%	\end{align*}
%	This proves the claim.
Observe the fact that  by Lemma \ref{nhlem34} we have $u \in L^{\infty}(\Om)$. Moreover, $u>0$  and $g_\rho^{\e,\sigma}\geq 0$ in $\Om$. This implies $b(u_1+t g_\rho^{\e,\sigma}) = \ds \int_{\Om}\int_{\Om}\frac{(u_1+tg_\rho^{\e,\sigma}(x))^{2^{*}_{\mu}}(u_1+tg_\rho^{\e,\sigma}(y))^{2^{*}_{\mu}}}{|x-y|^{\mu}}~dxdy$\\
\textbf{ Claim 1:} There exists a $R_0>0$ such that 
	\begin{align*}
I= \int_{\Om}\int_{\Om}\frac{(g_\rho^{\e,\sigma}(x))^{2^{*}_{\mu}}(g_\rho^{\e,\sigma}(y))^{2^{*}_{\mu}-1}u_1(y)}{|x-y|^{\mu}}~dxdy\geq \widehat{C}R_0\e^{\frac{N-2}{2}}.
\end{align*} 
Clearly,
\begin{align*}
I& \geq \ds \int_{B_{\frac{1}{2\rho}}\setminus B_{2\rho}}\int_{B_{\frac{1}{2\rho}}\setminus B_{2\rho}}\frac{(g_\rho^{\e,\sigma}(x))^{2^{*}_{\mu}}(g_\rho^{\e,\sigma}(y))^{2^{*}_{\mu}-1}u_1(y)}{|x-y|^{\mu}}~dxdy\\
& \geq C  \ds \int_{B_{\frac{1}{2\rho}}\setminus B_{2\rho}}\int_{B_{\frac{1}{2\rho}}\setminus B_{2\rho}}\frac{(u_{\e}^{\sigma}(x))^{2^{*}_{\mu}}(u_{\e}^{\sigma}(y))^{2^{*}_{\mu}-1}}{|x-y|^{\mu}}~dxdy\\
&\geq  C \ds \int_{B_{\frac{1}{2\rho}}\setminus B_{2\rho}}\int_{B_{\frac{1}{2\rho}}\setminus B_{2\rho}}\frac{\e^{\frac{3N}{2}+1-\mu}~dxdy}{|x-y|^{\mu} (\e^2+|x-(1-\e)\sigma|^2)^{\frac{2N-\mu}{2}}(\e^2+|y-(1-\e)\sigma|^2)^{\frac{N-\mu+2}{2}}}.
\end{align*}
For any $\e < 1-2\rho$ there exists $c>0$ such that $1-\e>c>2\rho$ so we get 
\begin{align*}
I&\geq  C \e^{\frac{3N}{2}+1-\mu}\ds \int_{B_{c}}\int_{B_{c}}\frac{dzdw}{|z-w|^{\mu} (\e^2+|z|^2)^{\frac{2N-\mu}{2}}(\e^2+|w|^2)^{\frac{N-\mu+2}{2}}}\\
&\geq  C \e^{\frac{N-2}{2}}\ds \int_{B_{c}}\int_{B_{c}}\frac{dzdw}{|z-w|^{\mu} (1+|z|^2)^{\frac{2N-\mu}{2}}(1+|w|^2)^{\frac{N-\mu+2}{2}}}= O(\e^{\frac{N-2}{2}}).
\end{align*}
This proves the claim 1. Now using Lemma \ref{nhlem33}, we have
\begin{align*}
\mathcal{J}_f(u_1+t g_\rho^{\e,\sigma})  \leq & \frac{1}{2}a(u_1)+\frac{1}{2}a(t g_\rho^{\e,\sigma}) +t\ld u_1, g_\rho^{\e,\sigma} \rd_{H_0^1(\Om)}-  \frac{1}{2.2^*_{\mu}} b(u_1)-\frac{1}{2.2^*_{\mu}} b(t g_\rho^{\e,\sigma})\\&   - \widehat{C} t^{2.2^*_{\mu}-1} \int_{\Om}\int_{ \Om}\frac{(g_\rho^{\e,\sigma}(x))^{2^*_{\mu}}(g_\rho^{\e,\sigma}(y))^{2^*_{\mu}-1}u_1(y)}{|x-y|^{\mu}}~dxdy - \int_{ \Om}fu_1~dx \\&- t\int_{ \Om}fg_\rho^{\e,\sigma} ~dx
  - t\int_{\Om}\int_{ \Om}\frac{(u_1(x))^{2^*_{\mu}}(u_1(y))^{2^*_{\mu}-1}g_\rho^{\e,\sigma}(y)}{|x-y|^{\mu}}~dxdy+ O(\e^{(\frac{2N-\mu}{4}) \Theta}).
\end{align*}
 for all  $\Theta <1.$ Taking $\Theta = \frac{2}{2^*_{\mu}}$, we have 
\begin{align*}
\mathcal{J}_f(u_1+t g_\rho^{\e,\sigma})  \leq & \frac{1}{2}a(u_1)+\frac{1}{2}a(t g_\rho^{\e,\sigma}) +t\ld u_1, g_\rho^{\e,\sigma} \rd_{H_0^1(\Om)}-  \frac{1}{2.2^*_{\mu}} b(u_1)-\frac{1}{2.2^*_{\mu}} b(t g_\rho^{\e,\sigma})\\&   -  \widehat{C} t^{2.2^*_{\mu}-1} \int_{\Om}\int_{ \Om}\frac{(g_\rho^{\e,\sigma}(x))^{2^*_{\mu}}(g_\rho^{\e,\sigma}(y))^{2^*_{\mu}-1}u_1(y)}{|x-y|^{\mu}}~dxdy - \int_{ \Om}fu_1~dx \\& - t\int_{ \Om}fg_\rho^{\e,\sigma} ~dx
  - t\int_{\Om}\int_{ \Om}\frac{(u_1(x))^{2^*_{\mu}}(u_1(y))^{2^*_{\mu}-1}g_\rho^{\e,\sigma}(y)}{|x-y|^{\mu}}~dxdy+ o(\e^{\frac{N-2}{2} }) .
\end{align*}
This on utilizing Lemma \ref{nhlem9} and  claim 1 gives  
\begin{align*}
\mathcal{J}_f(u_1+t g_\rho^{\e,\sigma}) & \leq \frac{1}{2}a(u_1)+\frac{1}{2}a(t g_\rho^{\e,\sigma}) +t\ld u_1, g_\rho^{\e,\sigma} \rd_{H_0^1(\Om)}-  \frac{1}{2.2^*_{\mu}} b(u_1)-\frac{1}{2.2^*_{\mu}} b(t g_\rho^{\e,\sigma})\\&  \quad -\widehat{C} t^{2.2^*_{\mu}-1} \int_{\Om}\int_{ \Om}\frac{(g_\rho^{\e,\sigma}(x))^{2^*_{\mu}}(g_\rho^{\e,\sigma}(y))^{2^*_{\mu}-1}}{|x-y|^{\mu}}~dxdy - \int_{ \Om}fu_1~dx- t\int_{ \Om}fg_\rho^{\e,\sigma} ~dx
\\&\quad  - t\int_{\Om}\int_{ \Om}\frac{(u_1(x))^{2^*_{\mu}}(u_1(y))^{2^*_{\mu}-1}g_\rho^{\e,\sigma}(y)}{|x-y|^{\mu}}~dxdy + o(\e^{\frac{N-2}{2} }) 
\\
& = \mathcal{J}_f(u_1)+\mathcal{J}(tg_\rho^{\e,\sigma})-\widehat{C} t^{2.2^*_{\mu}-1}  \int_{\Om}\int_{ \Om}\frac{(g_\rho^{\e,\sigma}(x))^{2^*_{\mu}}(g_\rho^{\e,\sigma}(y))^{2^*_{\mu}-1}}{|x-y|^{\mu}}~dxdy + o(\e^{\frac{N-2}{2} }) \\
& \leq  \mathcal{J}_f(u_1)+ \frac{t^2}{2}\left(S_{H,L}^{\frac{2N-\mu}{N-\mu+2}}+O(\e^{N-2})\right)-\frac{t^{2.2^*_{\mu}}}{2.2^*_{\mu}} \left(S_{H,L}^{\frac{2N-\mu}{N-\mu+2}}-O(\e^{\frac{2N-\mu}{2}})\right)\\
& \quad - t^{2.2^*_{\mu}-1} \widehat{C}R_0\e^{\frac{N-2}{2}} + o(\e^{\frac{N-2}{2} }) .
\end{align*}
Now define $K(t):= \ds \frac{t^2}{2}\left(S_{H,L}^{\frac{2N-\mu}{N-\mu+2}}+O(\e^{N-2})\right)-\frac{t^{2.2^*_{\mu}}}{2.2^*_{\mu}} \left(S_{H,L}^{\frac{2N-\mu}{N-\mu+2}}-O(\e^{\frac{2N-\mu}{2}})\right)- t^{2.2^*_{\mu}-1}\widehat{C}R_0\e^{\frac{N-2}{2}}$ 
then $K(t)\ra \infty$ as $t\ra \infty$ and $\lim_{t\ra 0^+}K(t)>0$ so there exists a $t_\e>0$ such that $\ds \sup_{t>0}K(t)$ is attained and $t_\e< \left(\frac{S_{H,L}^{\frac{2N-\mu}{N-\mu+2}}+O(\e^{N-2})}{S_{H,L}^{\frac{2N-\mu}{N-\mu+2}}-O(\e^{\frac{2N-\mu}{2}})}\right)^{\frac{1}{2.2^*_{\mu}-2}}:=S_{H,L}(\e)$. Moreover there exists a $t_1>0$ such that for sufficiently small $\e>0$ we have $t_\e>t_1$. Clearly the function 
\begin{align*}
t \mapsto \ds \frac{t^2}{2}\left(S_{H,L}^{\frac{2N-\mu}{N-\mu+2}}+O(\e^{N-2})\right)-\frac{t^{2.2^*_{\mu}}}{2.2^*_{\mu}} \left(S_{H,L}^{\frac{2N-\mu}{N-\mu+2}}-O(\e^{\frac{2N-\mu}{2}})\right)
\end{align*}
is an increasing function in $[0,S_{H,L}(\e)]$. Therefore, 
\begin{align*}
\sup_{t\geq 0} \mathcal{J}_f(u_1+t g_\rho^{\e,\sigma})\leq & \mathcal{J}_f(u_1)+  	\frac{N-\mu+2}{2(2N-\mu)}S_{H,L}^{\frac{2N-\mu}{N-\mu+2}}+\ds O(\e^{\min\{\frac{2N-\mu}{2},\; N-2\}})\\ & - t_1^{2.2^*_{\mu}-1}\widehat{C}R_0\e^{\frac{N-2}{2}}  + o(\e^{\frac{N-2}{2} }) . 
\end{align*}
Hence there exits a $\e_0>0$ such that for every $0<\e<\e_0$ we have 
\begin{align*}
\sup_{t\geq 0}\mathcal{J}_f(u_1+t g_\rho^{\e,\sigma})< \mathcal{J}_f(u_1)+	\frac{N-\mu+2}{2(2N-\mu)}S_{H,L}^{\frac{2N-\mu}{N-\mu+2}} \text{ uniformly in } \sigma \in \mathbb{S}^{N-1}.
\end{align*}
\QED
\end{proof}	
\begin{Lemma}\label{nhlem13}
	The following holds:
	\begin{itemize}
		\item [(i)] $H_0^1(\Om)\setminus \mathcal{N}_f^-= U_1\cup U_2$,
		where 
		\begin{align*}
		& U_1:= \left \lbrace u \in H_0^1(\Om)\setminus \{0\}\; \middle|\;  u^+ \not \equiv 0,\; \|u\|<t^-\left(\frac{u}{\|u\|}\right)  \right \rbrace \cup \{0\},\\
		& U_2:= \left \lbrace u \in H_0^1(\Om)\setminus \{0\}\; \middle|\;  u^+ \not \equiv 0,\; \|u\|>t^-\left(\frac{u}{\|u\|}\right)  \right \rbrace. 
		\end{align*}
		\item [(ii)] $\mathcal{N}_f^+ \subset U_1$.
		\item [(iii)] For each $0<\e\leq \e_0$, there exists $t_0>1$ and such that $u_1+t_0g_\rho^{\e,\sigma}\in U_2$.
		\item [(iv)] For each $0<\e<\e_0$, there exists $s_0\subset (0,1)$ and such that $u_1+ s_0 t_0g_\rho^{\e,\sigma}\in \mathcal{N}_f^-$.
		\item [(v)] $\Upsilon_f^- < \Upsilon_f +	\frac{N-\mu+2}{2(2N-\mu)}S_{H,L}^{\frac{2N-\mu}{N-\mu+2}}$.
	\end{itemize}
\end{Lemma}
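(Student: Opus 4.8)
The plan is to express everything through the scalar function $t^-(\cdot)$ of Lemma \ref{nhlem3}. Since $t^-(u)\,u\in\mathcal{N}_f^-$ determines $t^-(u)$ uniquely, for $\la>0$ one has $t^-(\la u)=t^-(u)/\la$, hence $t^-(u/\|u\|)=\|u\|\,t^-(u)$; thus for $u$ with $u^+\not\equiv0$ the conditions $t^-(u)>1$, $t^-(u)=1$, $t^-(u)<1$ correspond respectively to $u\in U_1$, $u\in\mathcal{N}_f^-$ (this last being Lemma \ref{nhlem3}(d)), $u\in U_2$, which gives part (i) (adding $0\in U_1$). For part (ii), let $u\in\mathcal{N}_f^+$; then $\phi_u''(1)=a(u)-(2\cdot2^*_{\mu}-1)b(u)>0$ forces $t_{\max}(u)>1$, while Lemma \ref{nhlem3}(a) gives $t^-(u)>t_{\max}(u)$, so $t^-(u)>1$ and $u\in U_1$.

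For part (iii), the heart of the matter, I would first show $\mathcal{J}_f\ge\Upsilon_f$ on $U_1\cup\mathcal{N}_f^-=H_0^1(\Om)\setminus U_2$. On $\mathcal{N}_f^-$ this is simply $\Upsilon_f^-\ge\Upsilon_f$. On $U_1$ it follows from the shape of $\phi_u(t)=\mathcal{J}_f(tu)$: if $\int_\Om fu>0$ then $\phi_u$ decreases on $[0,t^+(u)]$ and increases on $[t^+(u),t^-(u)]$, so, as $1<t^-(u)$, $\mathcal{J}_f(u)=\phi_u(1)\ge\phi_u(t^+(u))=\mathcal{J}_f(t^+(u)u)\ge\Upsilon_f$ (because $t^+(u)u\in\mathcal{N}_f$); if $\int_\Om fu\le0$ then $\phi_u$ is increasing on $[0,t^-(u)]$, whence $\mathcal{J}_f(u)=\phi_u(1)\ge\phi_u(0)=0\ge\Upsilon_f$ (using $\Upsilon_f<0$ from Lemma \ref{nhlem28}). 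On the other hand, since $(u_1+tg_\rho^{\e,\sigma})^+=u_1+tg_\rho^{\e,\sigma}\ge tg_\rho^{\e,\sigma}\ge0$ we have $b(u_1+tg_\rho^{\e,\sigma})\ge t^{2\cdot2^*_{\mu}}b(g_\rho^{\e,\sigma})$, while $a(u_1+tg_\rho^{\e,\sigma})=O(t^2)$ and $\int_\Om f(u_1+tg_\rho^{\e,\sigma})=O(t)$; as $b(g_\rho^{\e,\sigma})>0$ for small $\e$ (Lemma \ref{nhlem8}) and $2\cdot2^*_{\mu}>2$, this gives $\mathcal{J}_f(u_1+tg_\rho^{\e,\sigma})\to-\infty$ as $t\to\infty$. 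Hence some $t_0>1$ has $\mathcal{J}_f(u_1+t_0g_\rho^{\e,\sigma})<\Upsilon_f$, and the lower bound just proved rules out $u_1+t_0g_\rho^{\e,\sigma}\in U_1\cup\mathcal{N}_f^-$, so $u_1+t_0g_\rho^{\e,\sigma}\in U_2$.

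For part (iv), observe $u_1+st_0g_\rho^{\e,\sigma}>0$ for all $s\in[0,1]$, so $s\mapsto t^-(u_1+st_0g_\rho^{\e,\sigma})$ is defined and continuous (Lemma \ref{nhlem3}(c)); it is $>1$ at $s=0$ (as $u_1\in\mathcal{N}_f^+\subset U_1$ by (ii)) and $<1$ at $s=1$ (by (iii)), so the intermediate value theorem yields $s_0\in(0,1)$ with $t^-(u_1+s_0t_0g_\rho^{\e,\sigma})=1$, i.e. $u_1+s_0t_0g_\rho^{\e,\sigma}\in\mathcal{N}_f^-$. Finally, for part (v), $u_1+s_0t_0g_\rho^{\e,\sigma}=u_1+tg_\rho^{\e,\sigma}$ with $t=s_0t_0>0$, so using (iv), Proposition \ref{nhprop12} (with $\e\in(0,\e_0)$), and $\mathcal{J}_f(u_1)=\Upsilon_f$ from Lemma \ref{nhlem10},
\[
\Upsilon_f^-\le\mathcal{J}_f(u_1+s_0t_0g_\rho^{\e,\sigma})\le\sup_{t\ge0}\mathcal{J}_f(u_1+tg_\rho^{\e,\sigma})<\Upsilon_f+\frac{N-\mu+2}{2(2N-\mu)}S_{H,L}^{\frac{2N-\mu}{N-\mu+2}}.
\]
I expect the main obstacle to be part (iii): proving the uniform lower bound $\mathcal{J}_f\ge\Upsilon_f$ off $U_2$ needs a careful discussion of the fibering maps $\phi_u$ for $u\in U_1$, and it must be combined with the blow-down $\mathcal{J}_f(u_1+tg_\rho^{\e,\sigma})\to-\infty$.
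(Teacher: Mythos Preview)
Your argument is correct. Parts (i), (ii), (iv), (v) are essentially the paper's proof: (i) via the characterization in Lemma~\ref{nhlem3}(d) together with the homogeneity $t^-(\la u)=t^-(u)/\la$; (ii) via $t^+(u)=1<t_{\max}(u)<t^-(u)$; (iv) via continuity of $t^-$ along the segment and the intermediate value theorem; (v) by combining (iv) with Proposition~\ref{nhprop12}.

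Part (iii), however, you prove by a genuinely different route. The paper argues directly on the level of $t^-$: it shows that $c:=\sup_{t>0}t^-\!\bigl((u_1+tg_\rho^{\e,\sigma})/\|u_1+tg_\rho^{\e,\sigma}\|\bigr)<\infty$ (by contradiction, since otherwise $\mathcal{J}_f$ evaluated at the corresponding points of $\mathcal{N}_f^-$ would tend to $-\infty$, contradicting the boundedness below of $\mathcal{J}_f$ on $\mathcal{N}_f$), and then takes $t_0$ so large that $\|u_1+t_0g_\rho^{\e,\sigma}\|\ge c$, using $\langle u_1,g_\rho^{\e,\sigma}\rangle_{H_0^1}\ge 0$ (which holds because $u_1$ solves $(P_f)$, $f\ge 0$, and $g_\rho^{\e,\sigma}\ge 0$). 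Your approach instead establishes the structural lower bound $\mathcal{J}_f\ge\Upsilon_f$ on $H_0^1(\Om)\setminus U_2$ by a case analysis of the fibering maps (noting that for $u\in U_1$ with $\int_\Om fu>0$ Lemma~\ref{nhlem3}(b) already gives $\mathcal{J}_f(u)\ge\mathcal{J}_f(t^+(u)u)\ge\Upsilon_f$, and for $\int_\Om fu\le 0$ one has $\mathcal{J}_f(u)\ge 0>\Upsilon_f$), and then concludes via $\mathcal{J}_f(u_1+tg_\rho^{\e,\sigma})\to -\infty$. Your route avoids the explicit choice of $t_0$ and the sign computation for $\langle u_1,g_\rho^{\e,\sigma}\rangle$, at the cost of the fibering discussion; the paper's route is more geometric and yields a concrete $t_0$. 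Both are valid and of comparable length.
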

\begin{proof}
	\begin{itemize}
		\item [(i)] It holds by Lemma \ref{nhlem3} (d).
		% It is enough to prove $\mathcal{N}_f^-=\left \lbrace u \in H_0^1(\Om)\setminus \{0\}\; \middle|\;  u^+ \not \equiv 0,\; \|u\|=t^-\left(\frac{u}{\|u\|}\right)  \right \rbrace  $. For this let $ u \in \mathcal{N}_f^-$ then $v=\frac{u}{\|u\|} $ 
		\item[(ii)] Let $u \in \mathcal{N}_f^+$ then $t^+(u)=1$ and  $1<t^+(u)<t_{max}<t^-(u)= \frac{1}{\|u\|}t^-\left(\frac{u}{\|u\|}\right)$ that is,  $\mathcal{N}_f^+ \subset U_1$.
		\item [(iii)] First, we will show that there exists a constant $c>0$ such that $0<t^-\left(\frac{u_1+tg_\rho^{\e,\sigma}}{\|u_1+tg_\rho^{\e,\sigma}\|}\right)< c $ for all $t>0$. On the contrary,  let there exist a sequence $\{t_n\}$ such that $t_n\ra \infty$ and $t^-\left(\frac{u_1+t_ng_\rho^{\e,\sigma}}{\|u_1+t_ng_\rho^{\e,\sigma}\|}\right)\ra \infty$ as $n\ra \infty$. Define $u_n:= \frac{u_1+t_ng_\rho^{\e,\sigma}}{\|u_1+t_ng_\rho^{\e,\sigma}\|}$ so there exists $t^-(u_n)$ such that  $t^-(u_n)u_n \in \mathcal{N}_f^-$. By dominated convergence theorem,
		\begin{align*}
		b(u_n)= \frac{b(u_1+t_ng_\rho^{\e,\sigma})}{\|u_1+t_ng_\rho^{\e,\sigma}\|^{2.2^*_{\mu}}}= \frac{b(\frac{u_1}{t_n} +g_\rho^{\e,\sigma})}{\|\frac{u_1}{t_n} +g_\rho^{\e,\sigma}\|^{2.2^*_{\mu}}}\ra \frac{b(g_\rho^{\e,\sigma})}{\|g_\rho^{\e,\sigma}\|^{2.2^*_{\mu}}} \text{ as } n\ra \infty.
		\end{align*}
		Hence, $\mathcal{J}_f(t^-(u_n)u_n)\ra -\infty$ as $n\ra \infty$, contradicts the fact that $\mathcal{J}_f$ is bounded below on $\mathcal{N}_f$. Therefore,  there exists $c>0$ such that $0<t^-\left(\frac{u_1+tg_\rho^{\e,\sigma}}{\|u_1+tg_\rho^{\e,\sigma}\|}\right)< c $ for all $t>0$. Let $\ds t_0= \frac{|c^2-\|u_1\|^2|^{\frac{1}{2}}}{\|g_\rho^{\e,\sigma}\|}+1$ then
		\begin{align*}
		\|u_1+t_0g_\rho^{\e,\sigma}\|^2 & =\|u_1\|^2+t_0^2\|g_\rho^{\e,\sigma}\|^2+2t_0\ld u_1, \; g_\rho^{\e,\sigma} \rd \\
		& \geq \|u_1\|^2+ |c^2-\|u_1\|^2| \geq c^2\geq \left(t^-\left(\frac{u_1+tg_\rho^{\e,\sigma}}{\|u_1+tg_\rho^{\e,\sigma}\|}\right)\right)^2.
		\end{align*}
		It implies that $u_1+t_0g_\rho^{\e,\sigma}\in U_2$.
		\item [(iv)] For each $0<\e<\e_0$, define a path $\xi_{\e}(s)= u_1+ s t_0g_\rho^{\e,\sigma}$ for $s\in [0,1]$. Then 
		\begin{align*}
		\xi_{\e}(0)= u_1 \quad \text{and} \quad \xi_{\e}(1)= u_1+ t_0g_\rho^{\e,\sigma} \in U_2.
		\end{align*}
		Since $\frac{1}{\|u\|}t^-\left(\frac{u}{\|u\|}\right)$ is a continuous function and $\xi_{\e}([0,1])$ is connected. So, there exists $s_0 \in [0,1]$ such that $\xi_{\e}(s_0)=u_1+ s_0t_0g_\rho^{\e,\sigma} \in \mathcal{N}_f^- $.
		\item [(v)] Using part (d) and Proposition \ref{nhprop12}.\QED
	\end{itemize}
	\end{proof}
 At this point  we will state Global compactness Lemma for the functional $\mathcal{J}_f$ which is a version of Theorem 4.4 of \cite{choqcoron}.  
\begin{Lemma}\label{global}
	Let  $\{u_n\}\subset H_0^1(\Om)$ be such that $\mathcal{J}_f(u_n)\ra c,\;  \mathcal{J}_f^{\prime}(u_n)\ra 0 $.
	Then passing if necessary to a subsequence, there exists a solution $v_0 \in H_0^1(\Om) $ of 
	\[
	-\Delta u= \left(\int_{\Om}\frac{|u^+(y)|^{2^*_{\mu}}}{|x-y|^{\mu}}dy\right)
	|u^+|^{2^*_{\mu}-1} +f \text{ in } \Om 
	\]
	and (possibly) $k\in \mathbb{N}\cup \{0\}$, non-trivial solutions $\{v_1,v_2,...,v_k\}$ of 
	\[
	-\Delta u= (|x|^{-\mu}*|u^+|^{2^*_{\mu}})|u^+|^{2^*_{\mu}-1} \text{ in } \mathbb{R}^N
	\]
	with $v_i \in D^{1,2}(\mathbb{R}^N) $ and $k$ sequences $\{y_n^i\}_n \subset \Om $ and $\{\la_n^i\}_n \subset \mathbb{R}_+$  $i=1,2,\cdots k$, satisfying 
	\begin{equation*}
	\begin{aligned}
	 \frac{1}{{\la}_n^i} dist &(y_n^i, \partial \Om) \ra \infty ,\; \text{and} \; \|u_n-v_0-\sum_{i=1}^{k}(\la_n^i)^{\frac{2-N}{2}}v_i((.-y_n^i)/\la_n^i)\|_{D^{1,2}(\mathbb{R}^N)} \ra 0 ,\; n \ra \infty,\\	
	& \|u_n\|^2_{D^{1,2}(\mathbb{R}^N)}\ra \sum_{i=0}^{k}\|v_i\|^2_{D^{1,2}(\mathbb{R}^N)},  \text{ as } n\ra \infty,\;\;\;
	\mathcal{J}_f(v_0)+\sum_{i=1}^{k} \mathcal{J}_{\infty}(v_i)=c,
	\end{aligned}
	\end{equation*}
	where $\mathcal{J}_{\infty}(u):= \frac12 \ds \int_{\mathbb{R}^N} |\nabla u|^2~dx- \frac{1}{2.2^*_{\mu}}  \int_{\mathbb{R}^N}\int_{\mathbb{R}^N}\frac{|u^+(x)|^{2^{*}_{\mu}}|u^+(y)|^{2^{*}_{\mu}}}{|x-y|^{\mu}}~dxdy, \quad  u \in  D^{1,2}(\mathbb{R}^N)$.
\end{Lemma}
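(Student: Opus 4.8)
We adapt Struwe's profile decomposition to our nonlocal, inhomogeneous setting, following closely the scheme of Theorem 4.4 of \cite{choqcoron}; the forcing term $f$ affects only the identification of the base solution $v_0$ and is invisible at the level of the rescaled bubbles. The plan is to proceed by induction on the number of concentrating profiles. \textbf{Step 1 (boundedness and the base solution).} From $\mathcal{J}_f(u_n)=c+o(1)$ and $\langle \mathcal{J}_f'(u_n),u_n\rangle=o(\|u_n\|)$ one obtains
\[
\frac{N-\mu+2}{2(2N-\mu)}a(u_n)-\Big(1-\frac{1}{2\cdot 2^*_{\mu}}\Big)\int_{\Om}fu_n\,dx=\mathcal{J}_f(u_n)-\frac{1}{2\cdot 2^*_{\mu}}\langle \mathcal{J}_f'(u_n),u_n\rangle=c+o(1)+o(\|u_n\|),
\]
and since $|\int_{\Om}fu_n|\le \|f\|_{H^{-1}}\|u_n\|$ this forces $\{u_n\}$ to be bounded in $H_0^1(\Om)$. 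Up to a subsequence, $u_n\rp v_0$ in $H_0^1(\Om)$, $u_n\ra v_0$ in $L^2(\Om)$ and a.e. Using the weak continuity of the Choquard term (the nonlocal Brezis--Lieb lemma, Lemma 2.2 of \cite{yang}, and Lemma 4.2 of \cite{choqcoron}) together with the compactness of the term $\int_{\Om}fu_n$, one passes to the limit in $\langle \mathcal{J}_f'(u_n),\varphi\rangle=o(1)$ to deduce $\mathcal{J}_f'(v_0)=0$, so that $v_0$ solves the inhomogeneous equation in the statement.

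\textbf{Step 2 (peeling off a bubble).} Set $w_n:=u_n-v_0$, extended by $0$ outside $\Om$, so that $w_n\rp 0$ in $D^{1,2}(\mathbb{R}^N)$. By the nonlocal Brezis--Lieb lemma and Lemma 4.1 of \cite{choqcoron}, and because $\int_{\Om}fw_n\,dx=o(1)$,
\[
\mathcal{J}_\infty(w_n)=\mathcal{J}_f(u_n)-\mathcal{J}_f(v_0)+o(1)=c-\mathcal{J}_f(v_0)+o(1),\qquad \mathcal{J}_\infty'(w_n)\ra 0 \text{ in } D^{-1,2}(\mathbb{R}^N).
\]
If $w_n\ra 0$ strongly we stop with $k=0$. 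Otherwise $\langle \mathcal{J}_\infty'(w_n),w_n\rangle=o(1)$ yields $a(w_n)\ra \ell$ and $b(w_n)\ra \ell$ with $\ell\ge S_{H,L}^{\frac{2N-\mu}{N-\mu+2}}>0$, so concentration must occur. Via a Lions-type concentration-function argument one selects $\la_n>0$ and $y_n\in\overline{\Om}$ such that the rescaled functions $\widetilde w_n(x):=\la_n^{\frac{N-2}{2}}w_n(\la_n x+y_n)$ satisfy $\widetilde w_n\rp v_1\neq 0$ in $D^{1,2}(\mathbb{R}^N)$, and $\la_n\ra 0$ because $w_n\rp 0$ in $H_0^1(\Om)$. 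Testing $\mathcal{J}_\infty'(w_n)$ against rescaled test functions and passing to the limit shows that $v_1$ solves the limiting Choquard equation on $\mathbb{R}^N$ or on a half-space, according to whether $\mathrm{dist}(y_n,\pa\Om)/\la_n$ remains bounded; since that equation admits no nontrivial solution on a half-space (by a Pohozaev-type identity), we conclude $\mathrm{dist}(y_n,\pa\Om)/\la_n\ra\infty$ and $v_1$ is a nontrivial solution on all of $\mathbb{R}^N$.

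\textbf{Step 3 (iteration and energy quantization).} Put $w_n':=w_n-\la_n^{\frac{2-N}{2}}v_1((\cdot-y_n)/\la_n)$. A change of variables together with the nonlocal Brezis--Lieb lemma in the rescaled variables gives $\|w_n'\|_{D^{1,2}(\mathbb{R}^N)}^2=\|w_n\|_{D^{1,2}(\mathbb{R}^N)}^2-\|v_1\|_{D^{1,2}(\mathbb{R}^N)}^2+o(1)$, $\mathcal{J}_\infty(w_n')\ra c-\mathcal{J}_f(v_0)-\mathcal{J}_\infty(v_1)$ and $\mathcal{J}_\infty'(w_n')\ra 0$. Iterating Step 2 on $w_n'$ produces nontrivial solutions $v_1,v_2,\dots$ of the limiting equation, each of which satisfies $\mathcal{J}_\infty(v_i)\ge \Upsilon_0=\frac{N-\mu+2}{2(2N-\mu)}S_{H,L}^{\frac{2N-\mu}{N-\mu+2}}>0$ (the ground-state level of $\mathcal{J}_\infty$), while at every stage the residual Palais--Smale sequence for $\mathcal{J}_\infty$ has nonnegative limiting energy. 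Hence the quantities $c-\mathcal{J}_f(v_0)-\sum_{i\le j}\mathcal{J}_\infty(v_i)$ decrease by at least $\Upsilon_0$ at each step, so the process must terminate after finitely many steps at some $k\in\mathbb{N}\cup\{0\}$, the residual sequence then converging strongly to $0$ in $D^{1,2}(\mathbb{R}^N)$. Collecting the accumulated identities yields $\|u_n\|_{D^{1,2}(\mathbb{R}^N)}^2\ra\sum_{i=0}^k\|v_i\|_{D^{1,2}(\mathbb{R}^N)}^2$, the decomposition $\|u_n-v_0-\sum_{i=1}^k(\la_n^i)^{\frac{2-N}{2}}v_i((\cdot-y_n^i)/\la_n^i)\|_{D^{1,2}(\mathbb{R}^N)}\ra 0$, and $\mathcal{J}_f(v_0)+\sum_{i=1}^k\mathcal{J}_\infty(v_i)=c$, together with $\frac{1}{\la_n^i}\,\mathrm{dist}(y_n^i,\pa\Om)\ra\infty$.

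The main obstacle is Step 2: adapting the concentration-compactness analysis to the nonlocal nonlinearity — proving that the Choquard energy splits correctly under rescaling and translation (a rescaled nonlocal Brezis--Lieb lemma), that no energy leaks to $\pa\Om$, and ruling out nontrivial solutions of the limiting Choquard equation on half-spaces so that the profiles live on all of $\mathbb{R}^N$. The rest is a bookkeeping adaptation of the local Brezis--Nirenberg/Struwe argument, with $f$ entering only through Step 1.
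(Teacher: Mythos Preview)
The paper does not actually prove this lemma: it merely states it as ``a version of Theorem 4.4 of \cite{choqcoron}'' and moves on. Your sketch is precisely the Struwe-type global compactness argument that underlies that cited theorem, with the inhomogeneous term $f$ handled exactly as it should be (it affects only the identification of $v_0$ and vanishes from the rescaled analysis since $\int_\Om f w_n\to 0$). So your approach is correct and is in essence the same as the paper's (deferred) argument; there is nothing to compare.
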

%\begin{proof}
%	Proof follows from Theorem 4.4 of \cite{choqcoron}. \QED
%	\end{proof}

\begin{Lemma}\label{nhlem18}
	\begin{enumerate}
		\item[(i)] Let $\{u_n\}$ be a $(PS)_{c}$ sequence for $\mathcal{J}_f$ with $c<\Upsilon_f(\Om)+	\frac{N-\mu+2}{2(2N-\mu)}S_{H,L}^{\frac{2N-\mu}{N-\mu+2}}$ then there exists  a subsequence  still denoted by $\{u_n\}$ and a nonzero $u^0 \in H_0^1(\Om)$ such that $u_n\ra u^0$ strongly in $H_0^1(\Om)$ and $\mathcal{J}_f(u^0)=c$. 
		\item[(ii)]  Let $ \{u_n\}\subset \mathcal{N}_f^-$ be a $(PS)_{c}$ sequence for $\mathcal{J}_f$ with
		\begin{align*}
		\Upsilon_f(\Om)+	\frac{N-\mu+2}{2(2N-\mu)}S_{H,L}^{\frac{2N-\mu}{N-\mu+2}}< c< \Upsilon_f^-(\Om)+	\frac{N-\mu+2}{2(2N-\mu)}S_{H,L}^{\frac{2N-\mu}{N-\mu+2}}
		\end{align*}
		then there exists  subsequence  still denoted by $\{u_n\}$ and a nonzero $u^0 \in\mathcal{N}_f^-$ such that $u_n\ra u^0$ strongly in $H_0^1(\Om)$ and $\mathcal{J}_f(u^0)=c$. 
	\end{enumerate}
\end{Lemma}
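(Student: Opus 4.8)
The plan is to derive both parts from the global compactness decomposition of Lemma \ref{global}. Given a $(PS)_c$ sequence $\{u_n\}$, pass to a subsequence and obtain a solution $v_0\in H_0^1(\Om)$ of $(P_f)$, finitely many (say $k\ge 0$) nontrivial solutions $v_1,\dots,v_k\in D^{1,2}(\mathbb{R}^N)$ of the limit equation $-\De u=(|x|^{-\mu}*|u^+|^{2^*_{\mu}})|u^+|^{2^*_{\mu}-1}$ on $\mathbb{R}^N$, concentration data $(y_n^i,\la_n^i)$, and the bookkeeping relations
\[
c=\mathcal{J}_f(v_0)+\sum_{i=1}^{k}\mathcal{J}_{\infty}(v_i),\qquad
\left\|u_n-v_0-\sum_{i=1}^{k}(\la_n^i)^{\frac{2-N}{2}}v_i\big((\cdot-y_n^i)/\la_n^i\big)\right\|_{D^{1,2}(\mathbb{R}^N)}\to 0 .
\]
Two remarks will be used throughout. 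First, since $f\not\equiv 0$, the zero function is not a solution of $(P_f)$, so $v_0\neq 0$; by Lemma \ref{nhlem34}, $v_0>0$, hence $v_0\in\mathcal{N}_f$, and by Lemma \ref{nhlem6} either $v_0=u_1$ (so $\mathcal{J}_f(v_0)=\Upsilon_f(\Om)$) or $v_0\in\mathcal{N}_f^-$ (so $\mathcal{J}_f(v_0)\ge\Upsilon_f^-(\Om)$). Second, each $v_i$ is a nontrivial solution, hence satisfies the Nehari identity for the limit equation, and the definition of $S_{H,L}$ then gives $\mathcal{J}_{\infty}(v_i)\ge\Upsilon_0$, where $\Upsilon_0=\frac{N-\mu+2}{2(2N-\mu)}S_{H,L}^{\frac{2N-\mu}{N-\mu+2}}$; moreover, as the right-hand side of the limit equation is nonnegative, the maximum principle forces $v_i>0$.

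For part (i): since $v_0\in\mathcal{N}_f$ we have $\mathcal{J}_f(v_0)\ge\Upsilon_f(\Om)$, hence $c\ge\Upsilon_f(\Om)+k\,\Upsilon_0$. As $c<\Upsilon_f(\Om)+\Upsilon_0$, this forces $k=0$, and then the second relation above collapses to $\|u_n-v_0\|\to 0$; that is, $u_n\to u^0:=v_0$ strongly in $H_0^1(\Om)$, with $u^0\neq 0$ and $\mathcal{J}_f(u^0)=c$, which is the assertion.

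For part (ii): I would first prove $k=0$ by contradiction, so assume $k\ge 1$. If $v_0\in\mathcal{N}_f^-$, then $c\ge\Upsilon_f^-(\Om)+k\,\Upsilon_0\ge\Upsilon_f^-(\Om)+\Upsilon_0$, contradicting the upper bound on $c$. If instead $v_0=u_1$, then $c=\Upsilon_f(\Om)+\sum_{i=1}^{k}\mathcal{J}_{\infty}(v_i)$; for $k\ge 2$ this yields $c\ge\Upsilon_f(\Om)+2\Upsilon_0$, which by Lemma \ref{nhlem13}(v) is strictly larger than $\Upsilon_f^-(\Om)+\Upsilon_0$, impossible; and for $k=1$ the single profile $v_1$ is a positive $D^{1,2}(\mathbb{R}^N)$-solution of the critical Choquard equation, hence, by the classification of the positive solutions of that equation (cf.\ Lemma \ref{nhlem7}), it attains $S_{H,L}$, so $\mathcal{J}_{\infty}(v_1)=\Upsilon_0$ and $c=\Upsilon_f(\Om)+\Upsilon_0$, contradicting the strict inequality $c>\Upsilon_f(\Om)+\Upsilon_0$. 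Therefore $k=0$, whence $u_n\to v_0$ strongly in $H_0^1(\Om)$ and $\mathcal{J}_f(v_0)=c$; finally, $c>\Upsilon_f(\Om)+\Upsilon_0>\Upsilon_f(\Om)=\mathcal{J}_f(u_1)$ excludes $v_0=u_1$, so $v_0\in\mathcal{N}_f^-$ and the conclusion of (ii) follows.

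The decisive step is the last sub-case of (ii): excluding a weak limit equal to $u_1$ accompanied by exactly one concentrating bubble. All other branches follow from cheap energy bookkeeping together with the gap estimate of Lemma \ref{nhlem13}(v); only here does one genuinely need that a single concentration profile carries exactly the energy $\Upsilon_0$---equivalently, that every positive $D^{1,2}(\mathbb{R}^N)$-solution of the limit equation is an extremal for $S_{H,L}$---together with the strictness of the lower bound $c>\Upsilon_f(\Om)+\Upsilon_0$. A secondary point to verify is that Lemma \ref{global} applies verbatim to $(PS)_c$ sequences lying in $\mathcal{N}_f^-$: such sequences are bounded by coercivity of $\mathcal{J}_f$ on $\mathcal{N}_f$, and the Nehari-type constraint plays no role in the profile analysis, so the decomposition---including the Brezis--Lieb splitting of the nonlocal term, which Lemma \ref{global} already packages---goes through unchanged.
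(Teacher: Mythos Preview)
Your proof is correct and follows essentially the same route as the paper: both parts are deduced from the global compactness decomposition (Lemma \ref{global}) together with energy counting, Lemma \ref{nhlem6}, Lemma \ref{nhlem13}(v), and the fact that a single positive bubble carries exactly the energy $\Upsilon_0$. The only cosmetic differences are that the paper dispatches (i) by citing \cite[Lemma 3.4]{zampyang} rather than writing out the $k=0$ argument, and in (ii) the paper first bounds $k\le 1$ from the inequality $\Upsilon_f^-+\Upsilon_0>c\ge \Upsilon_f+k\Upsilon_0$ (using Lemma \ref{nhlem13}(v)) before splitting into sub-cases, whereas you branch on $v_0\in\mathcal{N}_f^-$ versus $v_0=u_1$ first; the logic is identical, and your explicit observation that $v_0\neq 0$ (since $f\not\equiv 0$) is a point the paper leaves implicit in its appeal to Lemma \ref{nhlem6}.
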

\begin{proof}
	%\begin{enumerate}
	Proof of (i) follows from \cite[Lemma 3.4]{zampyang}.
	To prove (ii), Let  $\{u_n\}$ be a $(PS)_c$ sequence then  by standard arguments $\{u_n\}$ is bounded in $H_0^1(\Om)$ and there exists a subsequence of $\{u_n\}$ still denoted by $\{u_n\}$ and $u^0 \in H_0^1(\Om)$ such that $u_n\rp u^0$ in $H_0^1(\Om)$ and $\mathcal{J}_f^{\prime}(u^0)=0$. Then by Lemma \ref{nhlem6}, we have either $u^0 \in \mathcal{N}_f^-$ or $u^0 =u_1 $. Now using Lemma \ref{global} we obtain 
	\begin{align*}
	\Upsilon_f^-(\Om) +	\frac{N-\mu+2}{2(2N-\mu)}S_{H,L}^{\frac{2N-\mu}{N-\mu+2}}\geq c=   \mathcal{J}_f(u^0)+\sum_{i=1}^{k} \mathcal{J}_{\infty}(v_i)\geq \Upsilon_f(\Om) + k 	\frac{N-\mu+2}{2(2N-\mu)}S_{H,L}^{\frac{2N-\mu}{N-\mu+2}}.
	\end{align*}
	which on using Lemma \ref{nhlem13}(e), gives $k\leq 1$. By corollary 3.3 of \cite{choqcoron}, we get $v_1$ is a constant multiple of Talenti function that is,  $\mathcal{J}_{\infty}(v_1)= 	\frac{N-\mu+2}{2(2N-\mu)}S_{H,L}^{\frac{2N-\mu}{N-\mu+2}}$.
	If $k=0$ then we are done and if $k=1$ and $u^0=u_1$. Therefore,
	\begin{align*}
	c=  \mathcal{J}_f(u^0)+	\frac{N-\mu+2}{2(2N-\mu)}S_{H,L}^{\frac{2N-\mu}{N-\mu+2}}= \Upsilon_f(\Om)+ 	\frac{N-\mu+2}{2(2N-\mu)}S_{H,L}^{\frac{2N-\mu}{N-\mu+2}},
	\end{align*}
	a contradiction. If $k=1$ and  $u^0\in\mathcal{N}_f^-$, we get 
	\begin{align*}
	c =\mathcal{J}_f(u^0)+	\frac{N-\mu+2}{2(2N-\mu)}S_{H,L}^{\frac{2N-\mu}{N-\mu+2}}\geq  \Upsilon_f^-(\Om) + 	\frac{N-\mu+2}{2(2N-\mu)}S_{H,L}^{\frac{2N-\mu}{N-\mu+2}},
	\end{align*}
	which is again a contradiction. Hence $k=0$.\QED
\end{proof}

		\section{Existence of Second and third Solution}
	In this section we will show the existence of second and third solution of problem $(P_f)$. To prove this, we shall show that for a sufficiently small $\de>0$,
		\begin{align*}
		cat\bigg (\bigg \{ u \in \mathcal{N}_f^- \; : \; \mathcal{J}_f \leq \Upsilon_f(\Om)+	\frac{N-\mu+2}{2(2N-\mu)}S_{H,L}^{\frac{2N-\mu}{N-\mu+2}}- \de \bigg \} \bigg)\geq 2, 
		\end{align*}
	where $cat(X)$ is the category  of the  set $X$ is   defined in the Definition \ref{defi1}. And then employing Lemma \ref{nhlem31}, we conclude the existence of second and third solutions. We shall first gather some preliminaries. 	\\
	 For $c\in \mathbb{R}$, we define
		\begin{align*}
		b_c(u)= cb(u),\; \mathcal{I}_c(u)= \frac{1}{2}a(u)-\frac{1}{2.2^*_{\mu}}b_c(u),\; \mathcal{M}_c=\{u \in H_0^1(\Om)\setminus\{0\}| \ld \mathcal{I}_c^{\prime}(u),\; u\rd =0  \}.
		\end{align*}
		\noi	We denote 
		\begin{align}\label{nh20}
		[\mathcal{J}_f\leq c]= \{ u\in \mathcal{N}_f^-| \mathcal{J}_f(u) \leq c \}.
		\end{align}
%		To prove the existence of second and third solution of $(P_f)$, we will attempt to show that for a sufficiently small $\de>0$,
%		\begin{align*}
%		cat\bigg (\bigg [\mathcal{J}_f \leq \Upsilon_f(\Om)+	\frac{N-\mu+2}{2(2N-\mu)}S_{H,L}^{\frac{2N-\mu}{N-\mu+2}}- \de \bigg ] \bigg)\geq 2
%		\end{align*}
		\begin{Definition} \label{defi1}
			\begin{enumerate}
				\item [(i)] For a topological space $X$, we say that a non-empty, closed subset $Y \subset X$ is contractible to a
				point in $X$ if and only if there exists a continuous mapping $ \xi :[0,1]\times Y\ra X$ such that for some $x_0 \in X$ $\xi(0,x)=x$ for all $x \in Y$, and $x_0 \in X$ $\xi(1,x)=x_0$ for all $x \in Y$.
				\item[(ii)] We define 
				\begin{align*}
				cat(X)= \min \lbrace  k & \in \mathbb{N} \; |  \text{ there exists closed subsets } Y_1,Y_2,\cdots Y_k \subset X \text{ such that } \\
				& Y_j \text{ is contractible to a point in } X \text{ for all } j \text{ and } \ds \cup_{j=1}^{k} Y_j =X	 \rbrace
				\end{align*}
			\end{enumerate}
		\end{Definition}
		\begin{Lemma}\label{nhlem31}\cite{ambrosetti}
			Suppose that $X$ is a Hilbert manifold and $G \in C^1(X,\mathbb{R})$ . Assume that there are $c_1 \in \mathbb{R}$ and  $k \in  \mathbb{N}$,
			such that
			\begin{enumerate}
				\item $G$ satisfies the Palais-Smale condition for energy level $c\leq c_1$;
				\item  $cat(\{x\in X\; |\; G(x)\leq c_1 \})\geq k$.
			\end{enumerate}
			Then $G$ has at least $k$ critical points in  $\{x\in X\; |\; G(x)\leq c_1 \}$.\QED
		\end{Lemma}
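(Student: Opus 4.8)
The statement is the abstract Lusternik--Schnirelmann multiplicity principle, and the plan is to prove it by a category-based min--max carried out on the sublevel set $[G\le c_1]:=\{x\in X:\ G(x)\le c_1\}$, invoking the Palais--Smale hypothesis only to run deformation lemmas below level $c_1$. The only topological input needed is the standard calculus of the Lusternik--Schnirelmann category on $X$: monotonicity ($A\subset B\Rightarrow cat_X(A)\le cat_X(B)$), subadditivity ($cat_X(A\cup B)\le cat_X(A)+cat_X(B)$), invariance under homeomorphisms of $X$, and the fact that every compact $K\subset X$ has an open neighbourhood $N$ with $cat_X(\overline N)=cat_X(K)$ --- the last point being where the Hilbert-manifold (hence ANR/locally contractible) structure of $X$ is used. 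I would first fix the min--max levels: for $j=1,\dots,k$ set
\[
\Gamma_j:=\{A\subset[G\le c_1]:\ A\text{ closed in }X,\ cat_X(A)\ge j\},\qquad
d_j:=\inf_{A\in\Gamma_j}\ \sup_{x\in A}G(x).
\]
Hypothesis (2) is exactly the assertion $[G\le c_1]\in\Gamma_k$, so $\Gamma_j\neq\emptyset$ for all $j\le k$; since $\Gamma_1\supset\Gamma_2\supset\cdots\supset\Gamma_k$ and $\sup_A G\le c_1$ whenever $A\in\Gamma_j$, one gets $d_1\le d_2\le\cdots\le d_k\le c_1$, and in the situations where the lemma is applied $G$ is coercive, hence bounded below on $[G\le c_1]$, so the $d_j$ are finite.

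Next I would show that each $d_j$ is a critical value of $G$ attained in $[G\le c_1]$. Suppose $d:=d_j$ is regular. Since $G$ satisfies $(PS)_c$ for all $c\le c_1$ and $d\le c_1$, a pseudo-gradient flow for $G$ on $X$ gives, for some $\varepsilon>0$, a homeomorphism $\eta:X\to X$ with $G(\eta(x))\le G(x)$ for every $x$ and $\eta([G\le d+\varepsilon])\subset[G\le d-\varepsilon]$. Choosing $A\in\Gamma_j$ with $\sup_A G<d+\varepsilon$, the set $\eta(A)$ is a closed subset of $[G\le d-\varepsilon]\subset[G\le c_1]$ with $cat_X(\eta(A))\ge cat_X(A)\ge j$; hence $\eta(A)\in\Gamma_j$ and $d_j\le d-\varepsilon$, a contradiction. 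So every $d_j$ is a critical value, and the corresponding critical point lies in $[G\le c_1]$ because its value is $d_j\le c_1$.

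Then I would handle multiplicity. If $d_1<\cdots<d_k$ we already have $k$ distinct critical values, hence $k$ critical points in $[G\le c_1]$. Otherwise, let $d:=d_j=d_{j+1}=\cdots=d_{j+p}$ with $p\ge1$ and $K_d:=\{x\in[G\le c_1]:\ G(x)=d,\ G'(x)=0\}$, which is compact by $(PS)_d$. I claim $cat_X(K_d)\ge p+1$. If not, pick an open $N\supset K_d$ with $cat_X(\overline N)=cat_X(K_d)\le p$ and use the localized deformation lemma (again via $(PS)_d$) to get a homeomorphism $\eta:X\to X$ with $G(\eta(x))\le G(x)$ and $\eta\big([G\le d+\varepsilon]\setminus N\big)\subset[G\le d-\varepsilon]$ for some $\varepsilon>0$. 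For $A\in\Gamma_{j+p}$ with $\sup_A G<d+\varepsilon$, the set $B:=A\setminus N=A\cap(X\setminus N)$ is closed, avoids $N$, satisfies $A\subset B\cup\overline N$ so that $cat_X(B)\ge cat_X(A)-cat_X(\overline N)\ge(j+p)-p=j$, and $B\subset[G\le d+\varepsilon]\setminus N$; hence $\eta(B)$ is a closed subset of $[G\le d-\varepsilon]\subset[G\le c_1]$ with $cat_X(\eta(B))\ge j$, forcing $d_j\le d-\varepsilon<d$, a contradiction. Thus $cat_X(K_d)\ge p+1$, and since a finite subset of $X$ is covered by its (contractible) singletons, $cat_X(K_d)$ is at most the number of points of $K_d$, so level $d$ carries at least $p+1$ distinct critical points. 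Summing the contribution of each block of equal values among $d_1\le\cdots\le d_k$ gives at least $k$ critical points of $G$ in $[G\le c_1]$.

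The analytic part (construction of a locally Lipschitz pseudo-gradient vector field, integration of its flow, the basic deformation lemma) is standard once one knows $X$ is a $C^1$ Hilbert manifold and $G\in C^1$; I expect the main obstacle to be the third step: producing, from the compact critical set $K_d$, a neighbourhood whose closure has exactly the same category --- precisely where local contractibility of Hilbert manifolds is essential --- and then splicing it with a deformation that pushes $[G\le d+\varepsilon]\setminus N$ below $d$ while keeping all the category bounds intact. A secondary but necessary point is to keep track that every $d_j\le c_1$, so that all the critical points produced genuinely lie in the prescribed sublevel set.
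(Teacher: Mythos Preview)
The paper does not prove this lemma: it is stated with a citation to Ambrosetti \cite{ambrosetti} and closed immediately with \QED, so there is no ``paper's own proof'' to compare against. Your proposal supplies the classical Lusternik--Schnirelmann argument (category-indexed min--max levels, deformation lemma to show each level is critical, and the coincidence-of-levels estimate $cat_X(K_d)\ge p+1$ to recover multiplicity), which is exactly the argument behind the cited reference and is correct.

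One small point worth tightening: as stated, the lemma does not assume $G$ is bounded below, and you fall back on ``in the situations where the lemma is applied $G$ is coercive'' to ensure the $d_j$ are finite. For an abstract proof this is a gap; you should either add bounded-belowness as a standing hypothesis (which is how the result is usually phrased and how it is used here, since $\mathcal{J}_f$ is bounded below on $\mathcal{N}_f^-$), or argue that the $(PS)_c$ condition for \emph{every} $c\le c_1$ together with $\Gamma_j\neq\emptyset$ forces $d_j>-\infty$. The rest of your outline --- pseudo-gradient flow on a $C^1$ Hilbert manifold, the ANR/local-contractibility step giving a neighbourhood $N$ of $K_d$ with $cat_X(\overline N)=cat_X(K_d)$, and the subadditivity bookkeeping --- is the standard machinery and goes through as you describe.
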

		\begin{Lemma}\label{nhlem32}\cite[Theorem 2.5] {adachifour}
			Let $X$ be a topological space. Suppose that there are two continuous maps $\Phi:\mathbb{S}^{N-1} \ra X$ and $ \Psi: X\ra \mathbb{S}^{N-1} $ such that $\Psi o \Phi$ is homotopic to the identity map of $\mathbb{S}^{N-1}$. Then $cat(X)\geq 2$.\QED
		\end{Lemma}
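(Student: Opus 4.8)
The plan is to argue by contradiction and reduce the claim to the elementary topological fact that a sphere is not contractible. First I would observe that, since $\mathbb{S}^{N-1}\neq\emptyset$ and $\Phi$ is a continuous map into $X$, the space $X$ is non-empty, so $cat(X)\geq 1$; it therefore suffices to exclude the case $cat(X)=1$. Assume $cat(X)=1$. By Definition \ref{defi1}, the only covering set is $X$ itself (which is closed in itself), and it must be contractible to a point in $X$: there exist $x_0\in X$ and a continuous map $\xi\colon[0,1]\times X\to X$ with $\xi(0,u)=u$ and $\xi(1,u)=x_0$ for all $u\in X$.

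Next I would transport this contraction to $\mathbb{S}^{N-1}$ through the two given maps. Define $H\colon[0,1]\times\mathbb{S}^{N-1}\to\mathbb{S}^{N-1}$ by $H(t,\omega)=\Psi\big(\xi\big(t,\Phi(\omega)\big)\big)$. As a composition of continuous maps $H$ is continuous, and it satisfies $H(0,\cdot)=\Psi\circ\Phi$ while $H(1,\cdot)\equiv\Psi(x_0)$ is constant. Hence $\Psi\circ\Phi$ is homotopic to a constant map. Since by hypothesis $\Psi\circ\Phi$ is homotopic to $\mathrm{id}_{\mathbb{S}^{N-1}}$, transitivity of the homotopy relation yields that $\mathrm{id}_{\mathbb{S}^{N-1}}$ is homotopic to a constant, i.e. $\mathbb{S}^{N-1}$ is contractible.

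This is impossible: for $N\geq 2$ the sphere $\mathbb{S}^{N-1}$ is not contractible, for instance because it has nontrivial homology in top degree, $H_{N-1}(\mathbb{S}^{N-1};\mathbb{Z})\cong\mathbb{Z}$ (equivalently $\pi_{N-1}(\mathbb{S}^{N-1})\cong\mathbb{Z}$), whereas a contractible space has trivial reduced homology and trivial homotopy groups. This contradiction forces $cat(X)\geq 2$, as claimed.

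There is no genuine analytic difficulty in this argument; the only point that needs a little care is that the notion of category in Definition \ref{defi1} requires the contracting homotopy to take values in $X$ itself (not in some larger ambient space), which is exactly what legitimizes post-composing with $\Psi$. Equivalently one may invoke the standard characterization that $cat(X)=1$ if and only if $X$ is contractible and then conclude as above; writing out the homotopy $H$ explicitly merely keeps the proof self-contained.
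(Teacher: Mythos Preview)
Your argument is correct and is the standard proof of this classical fact. The paper does not give its own proof of this lemma; it simply cites \cite[Theorem 2.5]{adachifour} and closes with a \QED, so there is nothing to compare against beyond noting that your self-contained contradiction argument (contract $X$, push the contraction through $\Psi\circ\xi(\cdot,\Phi(\cdot))$, and invoke non-contractibility of $\mathbb{S}^{N-1}$) is exactly the expected one.
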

	Now we will proof a Lemma which will relate the functional $\mathcal{J}_f$ and $\mathcal{I}_c$. 	Note that for each $u \in H_0^1(\Om)$ there exists a unique $t^- >0  $ and a unique $t^*>0$ such that $t^-u \in \mathcal{N}_f^-$ and $t^* u \in \mathcal{N} $.
	
\begin{Lemma}\label{nhlem15}
	\begin{itemize}
		\item [(i)] For each $ u\in \Sigma:= \{u \in H_0^1(\Om)| \; \; \|u\|=1\}$, there exists a unique $t_c(u)>0$ such that $t_c(u)u\in \mathcal{M}_c$ and 
		\begin{align*}
		\max_{t\geq 0} \mathcal{I}_c(tu)= \mathcal{I}_c(t_c(u)u)= \frac{N-\mu+2}{2(2N-\mu)}\left( b_c(u)\right)^{-\frac{N-2}{N-\mu+2}}.
		\end{align*}
		\item[(ii)]  For each $u \in H_0^1(\Om)$ with $u^+\not \equiv 0$ and 		$0<\omega<1$, we have 
		\begin{align*}
		(1-\omega)\mathcal{I}_{\frac{1}{1-\omega}}(u)- \frac{1}{2\omega}\|f\|_{H^{-1}}^2 \leq \mathcal{J}_f(u)\leq (1+\omega)\mathcal{I}_{\frac{1}{1+\omega}}(u)+ \frac{1}{2\omega}\|f\|_{H^{-1}}^2
		\end{align*}
		\item[(iii)] For each $u \in \Sigma$ and	$0<\omega<1$, we have 
		\[
		(1-\omega)^{\frac{2N-\mu}{N-\mu+2}}\mathcal{J}(t^*u)- \frac{1}{2\omega}\|f\|_{H^{-1}}^2 \leq \mathcal{J}_f(t^- u)\leq (1+\omega)^{\frac{2N-\mu}{N-\mu+2}}\mathcal{J}(t^*u)+ \frac{1}{2\omega}\|f\|_{H^{-1}}^2.\]
	\item [(iv)] There exists $e_{11}>0$ such that if $0<\|f\|_{H^{-1}}< e_{11}$ then $\Upsilon_f^->0$.
	\end{itemize}
\end{Lemma}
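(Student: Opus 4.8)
The plan is to prove the four items in order, deriving (iii) and (iv) from the fibering-map computations in (i) and (ii). For (i) I would repeat the argument of Lemma~\ref{nhlem11} with $\mathcal{I}_c$ in place of $\mathcal{J}$. For $u\in\Sigma$ with $u^+\not\equiv0$ one has $\mathcal{I}_c(tu)=\tfrac{t^2}{2}-\tfrac{t^{2\cdot2^*_\mu}}{2\cdot2^*_\mu}b_c(u)$ (since $a(u)=1$); differentiating in $t$ gives the unique positive critical point $t_c(u)=b_c(u)^{-1/(2\cdot2^*_\mu-2)}$, which is the maximum because $t\mapsto\mathcal{I}_c(tu)$ increases and then decreases. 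Substituting $t_c(u)$ back in and using $2^*_\mu-1=\tfrac{N-\mu+2}{N-2}$ and $2\cdot2^*_\mu=\tfrac{2(2N-\mu)}{N-2}$ yields the asserted value $\tfrac{N-\mu+2}{2(2N-\mu)}b_c(u)^{-\frac{N-2}{N-\mu+2}}$. Since $b_c=cb$ and $\mathcal{I}_1=\mathcal{J}$ on $H_0^1(\Om)$ (so $\mathcal{M}_1=\mathcal{N}$ and $t^*(u)=t_1(u)$ whenever $u^+\not\equiv0$), this computation also records the coupling identity $\mathcal{I}_c\big(t_c(u)u\big)=c^{-\frac{N-2}{N-\mu+2}}\mathcal{J}(t^*u)$, which is what links $\mathcal{I}_c$ to $\mathcal{J}$ in (iii).

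For (ii) I would expand $b_{1/(1\pm\omega)}=\tfrac{1}{1\pm\omega}b$ to obtain the elementary identity $(1\pm\omega)\mathcal{I}_{1/(1\pm\omega)}(u)-\mathcal{J}_f(u)=\tfrac{\omega}{2}a(u)\pm\int_\Om fu$, and then apply Young's inequality $\big|\int_\Om fu\big|\le\|f\|_{H^{-1}}\|u\|\le\tfrac{\omega}{2}\|u\|^2+\tfrac{1}{2\omega}\|f\|_{H^{-1}}^2$; in both sign cases this makes the right-hand side $\ge-\tfrac{1}{2\omega}\|f\|_{H^{-1}}^2$, and rearranging gives the two inequalities of (ii).

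For (iii), the upper bound follows by applying (ii) to $t^-u$, bounding $\mathcal{I}_{1/(1+\omega)}(t^-u)$ by its maximum, and then using the coupling identity together with $1+\tfrac{N-2}{N-\mu+2}=\tfrac{2N-\mu}{N-\mu+2}$:
\begin{align*}
\mathcal{J}_f(t^-u)&\le(1+\omega)\,\mathcal{I}_{1/(1+\omega)}(t^-u)+\tfrac{1}{2\omega}\|f\|_{H^{-1}}^2\le(1+\omega)\,\mathcal{I}_{1/(1+\omega)}\big(t_{1/(1+\omega)}(u)\,u\big)+\tfrac{1}{2\omega}\|f\|_{H^{-1}}^2\\
&=(1+\omega)^{\frac{2N-\mu}{N-\mu+2}}\mathcal{J}(t^*u)+\tfrac{1}{2\omega}\|f\|_{H^{-1}}^2.
\end{align*}
For the lower bound one cannot evaluate $\mathcal{I}_{1/(1-\omega)}$ at $t^-u$, since $t^-u$ is not its maximizer, so I would instead use $\mathcal{J}_f(t^-u)=\max_{t\ge t_{\max}}\mathcal{J}_f(tu)$ from Lemma~\ref{nhlem3}(a). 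Letting $s=t_{1/(1-\omega)}(u)$ be the maximizer of $t\mapsto\mathcal{I}_{1/(1-\omega)}(tu)$, a short computation comparing $s=\big((1-\omega)/b(u)\big)^{1/(2\cdot2^*_\mu-2)}$ with $t_{\max}=\big((2\cdot2^*_\mu-1)b(u)\big)^{-1/(2\cdot2^*_\mu-2)}$ shows $s\ge t_{\max}$ provided $\omega\le\tfrac{2\cdot2^*_\mu-2}{2\cdot2^*_\mu-1}$; then $\mathcal{J}_f(t^-u)\ge\mathcal{J}_f(su)\ge(1-\omega)\mathcal{I}_{1/(1-\omega)}(su)-\tfrac{1}{2\omega}\|f\|_{H^{-1}}^2=(1-\omega)^{\frac{2N-\mu}{N-\mu+2}}\mathcal{J}(t^*u)-\tfrac{1}{2\omega}\|f\|_{H^{-1}}^2$ by (ii) and (i). I expect this lower bound to be the main obstacle: pushing it to the whole range $0<\omega<1$, rather than only $\omega$ bounded away from $1$, is delicate because once $s<t_{\max}$ the maximal characterization of $t^-$ no longer applies; however, $\mu<\min\{4,N\}$ forces $2^*_\mu>2$, so the estimate is valid at $\omega=\tfrac12$, which is all that (iv) needs.

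For (iv) I would take $\omega=\tfrac12$ in (iii) (so $\tfrac1{2\omega}=1$). Given $u\in\mathcal{N}_f^-$, put $v=u/\|u\|\in\Sigma$; Lemma~\ref{nhlem3}(d) gives $t^-(v)=\|u\|$, so $u=t^-(v)v$, and $t^*(v)v\in\mathcal{N}$ yields $\mathcal{J}(t^*(v)v)\ge\Upsilon_0$. Hence $\mathcal{J}_f(u)=\mathcal{J}_f(t^-(v)v)\ge2^{-\frac{2N-\mu}{N-\mu+2}}\Upsilon_0-\|f\|_{H^{-1}}^2$, and taking the infimum over $\mathcal{N}_f^-$ gives $\Upsilon_f^-\ge2^{-\frac{2N-\mu}{N-\mu+2}}\Upsilon_0-\|f\|_{H^{-1}}^2>0$ whenever $0<\|f\|_{H^{-1}}<e_{11}:=\big(2^{-\frac{2N-\mu}{N-\mu+2}}\Upsilon_0\big)^{1/2}$, which is the desired conclusion.
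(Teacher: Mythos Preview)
Your approach coincides with the paper's: parts (i) and (ii) are proved exactly as you describe (fibering computation and Young's inequality), and (iii)--(iv) are deduced from them via the coupling identity $\mathcal I_c(t_c(u)u)=c^{-\frac{N-2}{N-\mu+2}}\mathcal J(t^*u)$. Your extra care about the lower bound in (iii) is in fact more than the paper provides---the paper simply writes down $(1-\omega)\mathcal I_{1/(1-\omega)}(t_{1/(1-\omega)}(u)u)-\frac{1}{2\omega}\|f\|_{H^{-1}}^2\le \mathcal J_f(t^-(u)u)$ as a direct consequence of (ii), tacitly treating $t^-(u)$ as the global maximizer of $t\mapsto\mathcal J_f(tu)$; your observation that this step requires $t_{1/(1-\omega)}(u)\ge t_{\max}$, i.e.\ $\omega\le \frac{2\cdot 2^*_\mu-2}{2\cdot 2^*_\mu-1}$, is correct and suffices for (iv) and for every later use of (iii) in the paper (all of which take $\omega$ small).
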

	\begin{proof}
		\begin{itemize}
			\item [(i)]	For each $ u\in \Sigma$ define $k(t)= \ds \frac{1}{2}t^2- \frac{t^{2.2^*_{\mu}}}{2.2^*_{\mu}}b_c(u)$,  then if 
			\begin{align*}
t_c(u)= \left(\frac{1}{b_c(u)}\right)^{\frac{1}{2(2^*_{\mu}-1)}}
			\end{align*}
 we obtain $k^{\prime}(t_c(u))=0$ and $k^{\prime\prime}(t_c(u))<0$. Therefore, there exists a unique $t_c(u)>0$ such that 
		\begin{align*}
\max_{t\geq 0} \mathcal{I}_c(tu)= \mathcal{I}_c(t_c(u)u)= \frac{N-\mu+2}{2(2N-\mu)} \left(b_c(u)\right)^{-\frac{N-2}{N-\mu+2}}.
		\end{align*}
		\item [(ii)] For  $0<\omega <1$,  we have
		\begin{align*}
		\ds \bigg |\int_{\Om}fu~dx\bigg | \leq \|f\|_{H^{-1}}\|u\| \leq \frac{\omega}{2}\|u\|^2+ 
		\frac{1}{2\omega}\|f\|_{H^{-1}}^2,
		\end{align*}
	and for $u \in H_0^1(\Om)$ with $u^+\not \equiv 0$ by the above inequality  we get 
	\begin{align*}
	&  \frac{1-\omega}{2}\|u\|^2-\frac{1}{2.2^*_{\mu}}b(u) -\frac{1}{2\omega}\|f\|^2_{H^{-1}}
	 \leq \mathcal{J}_f(u) \leq \frac{1+\omega}{2}\|u\|^2-\frac{1}{2.2^*_{\mu}}b(u) +\frac{1}{2\omega}\|f\|^2_{H^{-1}}. 
	\end{align*}
		This implies that 
		\begin{align*}
		(1-\omega)\mathcal{I}_{\frac{1}{1-\omega}}(u)-\frac{1}{2\omega}\|f\|^2_{H^{-1}}  \leq \mathcal{J}_f(u)\leq 	(1+\omega)\mathcal{I}_{\frac{1}{1+\omega}}(u)+\frac{1}{2\omega}\|f\|^2_{H^{-1}}.
		\end{align*}
		
		\item [(iii)]  For each $u \in \Sigma$ and	$0<\omega<1$ using part (ii), we obtain 		
		\begin{align*}
		(1-\omega)\mathcal{I}_{\frac{1}{1-\omega}}(t_{\frac{1}{1-\omega}}(u) u)-\frac{1}{2\omega}\|f\|^2_{H^{-1}}  \leq \mathcal{J}_f(t^-(u)u)\leq 	(1+\omega)\mathcal{I}_{\frac{1}{1+\omega}}(t_{\frac{1}{1+\omega}}(u)u)+\frac{1}{2\omega}\|f\|^2_{H^{-1}}, 
		\end{align*}
		 this on applying part (i) we  get 
		\begin{align*}
		\mathcal{I}_{\frac{1}{1-\omega}}(t_{\frac{1}{1-\omega}}(u) u) 
		& = \frac{N-\mu+2}{2(2N-\mu)} b_{\frac{1}{1-\omega}}(u)^{-\frac{N-2}{N-\mu+2}}\\
		& = (1-\omega)^{\frac{N-2}{N-\mu+2}}\frac{N-\mu+2}{2(2N-\mu)} b(u)^{-\frac{N-2}{N-\mu+2}}= (1-\omega)^{\frac{N-2}{N-\mu+2}}\mathcal{J}(t^*u).
		\end{align*} 
		Therefore, we get 
		\begin{align*}
		(1-\omega)^{\frac{2N-\mu}{N-\mu+2}}\mathcal{J}(t^*u)- \frac{1}{2\omega}\|f\|_{H^{-1}}^2 \leq \mathcal{J}_f(t^- u)\leq (1+\omega)^{\frac{2N-\mu}{N-\mu+2}}\mathcal{J}(t^*u)- \frac{1}{2\omega}\|f\|_{H^{-1}}^2.
		\end{align*}
	\item [(iv)] Combining part (iii) with the fact that $\Upsilon_0= \frac{N-\mu+2}{2(2N-\mu)}S_{H,L}^{\frac{2N-\mu}{N-\mu+2}}>0$  contributes that 
		\begin{align*}
		\Upsilon_f^- (\Om)& > (1-\omega)^{\frac{2N-\mu}{N-\mu+2}}\Upsilon_0-\frac{1}{2\omega}\|f\|_{H^{-1}}^2 \\
		&  = (1-\omega)^{\frac{2N-\mu}{N-\mu+2}}\frac{N-\mu+2}{2(2N-\mu)}S_{H,L}^{\frac{2N-\mu}{N-\mu+2}}-\frac{1}{2\omega}\|f\|_{H^{-1}}^2.  
		\end{align*}
	Thus, there exists $e_{11}>0$ such that $\Upsilon_f^- (\Om) >0$ whenever $\|f\|_{H^{-1}}< e_{11}$
	\QED		
	\end{itemize}	
	\end{proof}	

\begin{Lemma}\label{nhlem16}
	If $\Om$ satisfies condition $(A)$ then there exists a $\de_0 >0$ such that : if $u \in \mathcal{N}$ with $\mathcal{J}(u)\leq 	\frac{N-\mu+2}{2(2N-\mu)}S_{H,L}^{\frac{2N-\mu}{N-\mu+2}} +\de_0 $, then $
	\ds \int_{\mathbb{R}^N} \frac{x}{|x|} |\na u |^2 ~dx  \not = 0$ 
\end{Lemma}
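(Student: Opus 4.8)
The plan is to argue by contradiction via the global compactness decomposition of Lemma \ref{global}. If no such $\de_0$ existed, then for every $n$ there would be $u_n\in\mathcal N$ with $\mathcal J(u_n)\le\Upsilon_0+\frac1n$, where $\Upsilon_0=\frac{N-\mu+2}{2(2N-\mu)}S_{H,L}^{\frac{2N-\mu}{N-\mu+2}}$, and $\int_{\mathbb R^N}\frac{x}{|x|}|\na u_n|^2~dx=0$. Since $u_n\in\mathcal N$ we have $a(u_n)=b(u_n)$, hence $\mathcal J(u_n)=\frac{N-\mu+2}{2(2N-\mu)}a(u_n)$, and together with $\Upsilon_0\le\mathcal J(u_n)\le\Upsilon_0+\frac1n$ this gives $a(u_n)=b(u_n)\to S_{H,L}^{\frac{2N-\mu}{N-\mu+2}}$ and $\mathcal J(u_n)\to\Upsilon_0$. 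By Lemma \ref{nhlem14}, $\{u_n\}$ is then a $(PS)_{\Upsilon_0}$ sequence for $\mathcal J$.

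Next I would apply Lemma \ref{global} to $\{u_n\}$ with $f\equiv0$ and $c=\Upsilon_0$: this yields a solution $v_0\in H_0^1(\Om)$ of $(P_f)$ with $f\equiv0$, bubbles $v_1,\dots,v_k$ of the limit problem in $\mathbb R^N$, and sequences $\la_n^i>0$, $y_n^i\in\Om$ with $\frac1{\la_n^i}\mathrm{dist}(y_n^i,\pa\Om)\to\infty$, such that $\mathcal J(v_0)+\sum_{i=1}^k\mathcal J_\infty(v_i)=\Upsilon_0$ and $u_n-v_0-\sum_{i=1}^k(\la_n^i)^{\frac{2-N}2}v_i((\cdot-y_n^i)/\la_n^i)\to0$ in $D^{1,2}(\mathbb R^N)$. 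By Corollary 3.3 of \cite{choqcoron} (as in the proof of Lemma \ref{nhlem18}) each $v_i$ is a rescaled extremal of $S_{H,L}$, so $\mathcal J_\infty(v_i)=\Upsilon_0$ and therefore $\mathcal J(v_0)=(1-k)\Upsilon_0$. Since $\mathcal J(v_0)=0$ if $v_0=0$ and $\mathcal J(v_0)>\Upsilon_0$ if $v_0\neq0$ (Remark \ref{nhrem1}), this identity forces $v_0=0$ and $k=1$. Writing $w_n(x)=(\la_n)^{\frac{2-N}2}U((x-y_n)/\la_n)$, where $U$ is an extremal of $S_{H,L}$ with $\int_{\mathbb R^N}|\na U|^2~dx=S_{H,L}^{\frac{2N-\mu}{N-\mu+2}}$, we get $\|u_n-w_n\|_{D^{1,2}(\mathbb R^N)}\to0$, $y_n\in\Om$, and $\la_n\to0$ (since $\mathrm{dist}(y_n,\pa\Om)$ is bounded by the inradius of $\Om$ while $\frac1{\la_n}\mathrm{dist}(y_n,\pa\Om)\to\infty$).

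The last step is to pass to the weak$^*$ limit of the gradient densities. From $\|u_n-w_n\|_{D^{1,2}}\to0$ and the Cauchy--Schwarz inequality, $|\na u_n|^2-|\na w_n|^2\to0$ in $L^1(\mathbb R^N)$; and for $\phi\in C_c(\mathbb R^N)$ the change of variables $x=y_n+\la_n z$ gives $\int_{\mathbb R^N}\phi\,|\na w_n|^2~dx=\int_{\mathbb R^N}\phi(y_n+\la_n z)|\na U(z)|^2~dz\to\phi(y_*)\,S_{H,L}^{\frac{2N-\mu}{N-\mu+2}}$, where $y_*=\lim y_n\in\overline\Om$ along a subsequence. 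Hence $|\na u_n|^2~dx\rightharpoonup S_{H,L}^{\frac{2N-\mu}{N-\mu+2}}\,\de_{y_*}$ weakly$^*$ as Radon measures. Here condition (A) is used: since the origin lies in the hole of $\Om$, $0\notin\overline\Om$ and $\mathrm{dist}(0,\overline\Om)=:2r_0>0$, so $|y_*|\ge2r_0$. Choosing $\chi\in C_c^\infty(\mathbb R^N)$ with $\chi\equiv1$ on $\overline\Om$ and $\chi\equiv0$ on $B_{r_0}(0)$, the map $x\mapsto\frac{x}{|x|}\chi(x)$ lies in $C_c(\mathbb R^N)$, and since $\na u_n$ vanishes a.e. outside $\Om$ we obtain $0=\int_{\mathbb R^N}\frac{x}{|x|}|\na u_n|^2~dx=\int_{\mathbb R^N}\frac{x}{|x|}\chi(x)|\na u_n|^2~dx\to\frac{y_*}{|y_*|}\,\chi(y_*)\,S_{H,L}^{\frac{2N-\mu}{N-\mu+2}}=\frac{y_*}{|y_*|}\,S_{H,L}^{\frac{2N-\mu}{N-\mu+2}}\neq0$, a contradiction. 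Thus a uniform $\de_0>0$ exists.

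The main obstacle is the concentration step: one has to show that, after passing to a subsequence, $\{u_n\}$ is --- up to a $D^{1,2}$--negligible remainder --- a single bubble concentrating at one point $y_*\in\overline\Om$, which is where Remark \ref{nhrem1} (absence of a ground state solution on a bounded domain) and the energy bookkeeping in Lemma \ref{global} rule out every other profile. The geometric hypothesis (A) then enters exactly once but decisively, to keep $y_*$ away from the origin --- the unique point at which $x/|x|$ is discontinuous --- so that $\frac{x}{|x|}\chi$ is an admissible test function against the weak$^*$ limit.
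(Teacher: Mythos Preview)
Your argument is correct and follows essentially the same route as the paper: a contradiction via Lemma \ref{nhlem14} to produce a $(PS)_{\Upsilon_0}$ sequence, then global compactness (the paper invokes Theorem 4.4 of \cite{choqcoron} together with Remark \ref{nhrem1}) to reduce to a single concentrating bubble at some $y_*\in\overline\Om$, and finally passing to the limit in $\int\frac{x}{|x|}|\na u_n|^2\,dx$ --- the paper does this last step by a direct change of variables while you phrase it as weak$^*$ convergence of $|\na u_n|^2\,dx$ and introduce a cutoff to make $\frac{x}{|x|}$ a legitimate $C_c$ test function, but the substance is identical. One small caveat: condition (A) as written only says $B_{R_1}\not\subseteq\overline\Om$, not $0\notin\overline\Om$, so your sentence ``the origin lies in the hole of $\Om$'' slightly overreads the hypothesis --- the paper, however, makes exactly the same implicit assumption when it asserts without further comment that $y_n^1/|y_n^1|\to y_0$ is a unit vector.
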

	\begin{proof}
		Let  $\{u_n\}\in \mathcal{N}$ such that $\mathcal{J}(u_n)= 	\frac{N-\mu+2}{2(2N-\mu)}S_{H,L}^{\frac{2N-\mu}{N-\mu+2}}+o(1)$ and $\ds \int_{\mathbb{R}^N} \frac{x}{|x|} |\na u_n |^2 ~dx  = 0$. Since $\{u_n\} \in \mathcal{N}$ therefore by Lemma \ref{nhlem14},  $\{u_n\}$ is a  Palais-Smale sequence of $\mathcal{J} $ at level $\frac{N-\mu+2}{2(2N-\mu)}S_{H,L}^{\frac{2N-\mu}{N-\mu+2}}$. Now using Theorem 4.4 of \cite{choqcoron} and Remark \ref{nhrem1}, we have 
		\begin{align*}
\|u_n- (\la_n^1)^{\frac{2-N}{2}}v_1((.-y_n^1)/\la_n^1)\|_{D^{1,2}(\mathbb{R}^N)} \ra 0,
		\end{align*}
where $v_1$ is a minimizer of $S_{H,L}$, $\la_n^1 \in \mathbb{R}^+,\; y_n^1 \in \overline{\Om}$. Moreover, if $n\ra \infty$ then  $\la_n^1 \ra 0 $,  $ \frac{y_n^1}{|y_n^1|}\ra y_0$  is the unit vector in $\mathbb{R}^N$. 
		Thus   we obtain
		\begin{align*}
		0= \ds \int_{\mathbb{R}^N} \frac{x}{|x|} |\na u_n |^2 ~dx  =& \ds \int_{\mathbb{R}^N} \frac{x}{|x|} \left(|\na u_n |^2- |\na (\la_n^1)^{\frac{2-N}{2}}v_1((.-y_n^1)/\la_n^1) |^2 \right)~dx\\
		&+  \ds \int_{\mathbb{R}^N} \frac{x}{|x|} |\na (\la_n^1)^{\frac{2-N}{2}}v_1((.-y_n^1)/\la_n^1) |^2 ~dx\\
		 = &o_n(1)+ \ds \int_{\mathbb{R}^N} \frac{y_n^1+\la_n^1 z}{|y_n^1+\la_n^1 z|} |\na v_1(z) |^2 ~dz\\
		 = &o_n(1)+ y_0 S_{H,L}^{\frac{2N-\mu}{N-\mu+2}},
		\end{align*}
as	$n\ra \infty$, which is not possible.\QED		
		\end{proof}

For $0<\e\leq \e_0$ (defined in Proposition \ref{nhprop12}), define $H_{\e}: \mathbb{S}^{N-1}\ra H_0^1(\Om)$ as
	\begin{align*}
H_{\e}(\sigma)=u_1+ s_0t_0 g_\rho^{\e,\sigma}, 
	\end{align*} 
	where   the function $u_1+ s_0t_0 g_\rho^{\e,\sigma}$ defined in Lemma \ref{nhlem13}.
	\begin{Lemma}
There exists a $\de_{\e}\in \mathbb{R}^+$ such that 
\begin{align*}
H_{\e}(\mathbb{S}^{N-1}) \subset \left[\mathcal{J}_f \leq \Upsilon_f(\Om)+	\frac{N-\mu+2}{2(2N-\mu)}S_{H,L}^{\frac{2N-\mu}{N-\mu+2}}-\de_{\e}\right].
\end{align*}
	\end{Lemma}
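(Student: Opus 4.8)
The plan is to derive the statement directly from Proposition \ref{nhprop12} and Lemma \ref{nhlem13}(iv); the only real work is to observe that the energy drop produced by Proposition \ref{nhprop12} can be taken uniform in $\sigma\in\mathbb{S}^{N-1}$.

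First I would note that, by Lemma \ref{nhlem10}, $\mathcal{J}_f(u_1)=\Upsilon_f(\Om)$, and that for every $\sigma$ the element $H_\epsilon(\sigma)=u_1+s_0t_0\,g_\rho^{\epsilon,\sigma}$ lies on the half-line $t\mapsto u_1+t\,g_\rho^{\epsilon,\sigma}$ at the parameter value $t=s_0t_0\geq 0$; consequently
\[
\mathcal{J}_f\big(H_\epsilon(\sigma)\big)\ \leq\ \sup_{t\geq 0}\mathcal{J}_f\big(u_1+t\,g_\rho^{\epsilon,\sigma}\big).
\]
Moreover $H_\epsilon(\sigma)\in\mathcal{N}_f^-$ by Lemma \ref{nhlem13}(iv), so, in the notation \eqref{nh20}, it suffices to produce $\delta_\epsilon>0$, independent of $\sigma$, with
\[
\sup_{t\geq 0}\mathcal{J}_f\big(u_1+t\,g_\rho^{\epsilon,\sigma}\big)\ \leq\ \Upsilon_f(\Om)+\frac{N-\mu+2}{2(2N-\mu)}S_{H,L}^{\frac{2N-\mu}{N-\mu+2}}-\delta_\epsilon .
\]

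The key step is to revisit the final estimate in the proof of Proposition \ref{nhprop12}, namely
\[
\sup_{t\geq 0}\mathcal{J}_f\big(u_1+t\,g_\rho^{\epsilon,\sigma}\big)\ \leq\ \mathcal{J}_f(u_1)+\frac{N-\mu+2}{2(2N-\mu)}S_{H,L}^{\frac{2N-\mu}{N-\mu+2}}+O\big(\epsilon^{\min\{\frac{2N-\mu}{2},\,N-2\}}\big)-t_1^{2.2^*_{\mu}-1}\widehat{C}R_0\,\epsilon^{\frac{N-2}{2}}+o\big(\epsilon^{\frac{N-2}{2}}\big),
\]
and to observe that the constants $t_1,\widehat{C},R_0>0$ depend only on $u_1$ and $\rho$, while the $O(\cdot)$ and $o(\cdot)$ terms are uniform in $\sigma$ --- this is precisely the content of the ``uniformly in $\sigma$'' clauses in Lemmas \ref{nhlem8}, \ref{nhlem9}, \ref{nhlem33} and in Claim~1 of Proposition \ref{nhprop12}. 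Since $\mu<\min\{4,N\}$ gives $\frac{2N-\mu}{2}=N-\mu/2>N-2>\frac{N-2}{2}$, the positive error term $O(\epsilon^{\min\{\frac{2N-\mu}{2},\,N-2\}})$ is of higher order than $\epsilon^{(N-2)/2}$ and is swallowed by $-t_1^{2.2^*_{\mu}-1}\widehat{C}R_0\,\epsilon^{\frac{N-2}{2}}+o(\epsilon^{(N-2)/2})$. Hence, after shrinking $\epsilon_0$ if necessary, there is $c_0>0$ (depending on $\rho$ but not on $\sigma$) with
\[
\sup_{t\geq 0}\mathcal{J}_f\big(u_1+t\,g_\rho^{\epsilon,\sigma}\big)\ \leq\ \mathcal{J}_f(u_1)+\frac{N-\mu+2}{2(2N-\mu)}S_{H,L}^{\frac{2N-\mu}{N-\mu+2}}-c_0\,\epsilon^{\frac{N-2}{2}}\qquad\text{for all }\sigma\in\mathbb{S}^{N-1}.
\]

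Taking $\delta_\epsilon:=c_0\,\epsilon^{\frac{N-2}{2}}>0$ and combining the displays above with $\mathcal{J}_f(u_1)=\Upsilon_f(\Om)$ and $H_\epsilon(\sigma)\in\mathcal{N}_f^-$ yields $H_\epsilon(\sigma)\in\big[\mathcal{J}_f\leq\Upsilon_f(\Om)+\frac{N-\mu+2}{2(2N-\mu)}S_{H,L}^{\frac{2N-\mu}{N-\mu+2}}-\delta_\epsilon\big]$ for every $\sigma$, i.e. $H_\epsilon(\mathbb{S}^{N-1})$ is contained in that sublevel set. I expect the only delicate point to be the bookkeeping of the $\sigma$-uniformity of the Section~4 estimates, which is already encoded in the way those lemmas are stated; once that is granted the argument is routine. (Alternatively, one could apply Proposition \ref{nhprop12} pointwise together with the compactness of $\mathbb{S}^{N-1}$, but that route requires first establishing continuity of $\sigma\mapsto H_\epsilon(\sigma)$, which is less immediate.)
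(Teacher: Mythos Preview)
Your proof is correct and follows the same route as the paper: invoke Lemma \ref{nhlem13}(iv) to get $H_\epsilon(\sigma)\in\mathcal{N}_f^-$, bound $\mathcal{J}_f(H_\epsilon(\sigma))$ by $\sup_{t\geq 0}\mathcal{J}_f(u_1+tg_\rho^{\epsilon,\sigma})$, and then apply Proposition \ref{nhprop12} together with $\mathcal{J}_f(u_1)=\Upsilon_f(\Om)$. The paper's own argument is shorter because it simply reads the ``uniformly in $\sigma$'' clause of Proposition \ref{nhprop12} as providing a uniform strict gap and asserts the existence of $\delta_\epsilon$ directly, whereas you re-enter the final estimate of that proposition to extract the explicit value $\delta_\epsilon=c_0\,\epsilon^{(N-2)/2}$; this extra bookkeeping is not needed but does no harm.
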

	\begin{proof}
	Trivially, $H_{\e}(\sigma)=u_1+ s_0t_0g_\rho^{\e,\sigma}\in \mathcal{N}_f^- $. So we only have to prove that $\mathcal{J}_f(u_1+ s_0t_0g_\rho^{\e,\sigma}) \leq \Upsilon_f(\Om)+	\frac{N-\mu+2}{2(2N-\mu)}S_{H,L}^{\frac{2N-\mu}{N-\mu+2}}-\de_{\e}$ for some $\de_{\e}>0$. Since by Proposition \ref{nhprop12},
		\begin{align*}
		\sup_{t\geq 0}\mathcal{J}_f(u_1+t g_\rho^{\e,\sigma})< \mathcal{J}_f(u_1)+	\frac{N-\mu+2}{2(2N-\mu)}S_{H,L}^{\frac{2N-\mu}{N-\mu+2}}  = \Upsilon_f(\Om)+	\frac{N-\mu+2}{2(2N-\mu)}S_{H,L}^{\frac{2N-\mu}{N-\mu+2}}.
		\end{align*}
		Hence there exists a $\de_{\e}>0$ such that 
		\[
		\mathcal{J}_f(u_1+ s_0t_0g_\rho^{\e,\sigma})\leq \sup_{t\geq 0}\mathcal{J}_f(u_1+t g_\rho^{\e,\sigma})\leq   \Upsilon_f(\Om)+	\frac{N-\mu+2}{2(2N-\mu)}S_{H,L}^{\frac{2N-\mu}{N-\mu+2}}-\de_{\e}.\]
		\QED
		\end{proof}
		\begin{Lemma}
			There exists a $e_{22}>0$ such that $\|f\|_{H^{-1}}< e_{22}$ then for  any 
			
			\[u \in \left[\mathcal{J}_f \leq \Upsilon_f(\Om)+	\frac{N-\mu+2}{2(2N-\mu)}S_{H,L}^{\frac{2N-\mu}{N-\mu+2}}\right] \text{ we have } \ds \int_{\mathbb{R}^N} \frac{x}{|x|} |\na u |^2 ~dx  \not = 0.\]
		\end{Lemma}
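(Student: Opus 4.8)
The plan is to transfer the sublevel constraint on $\mathcal{N}_f^-$ to the Nehari manifold $\mathcal{N}$ of the homogeneous functional $\mathcal{J}$, and then apply Lemma \ref{nhlem16}. Abbreviate $\Upsilon_0=\frac{N-\mu+2}{2(2N-\mu)}S_{H,L}^{\frac{2N-\mu}{N-\mu+2}}$. Fix $u\in\mathcal{N}_f^-$ with $\mathcal{J}_f(u)\le\Upsilon_f(\Om)+\Upsilon_0$ and set $v:=u/\|u\|\in\Sigma$. By Lemma \ref{nhlem3}(d) we have $t^-(v)=\|u\|$, so $u=t^-(v)\,v$; let $t^*(v)>0$ be the unique number with $t^*(v)\,v\in\mathcal{N}$ (it exists since $u^+\not\equiv0$ forces $v^+\not\equiv0$). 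Because $|\nabla(tv)|^2=t^2|\nabla v|^2$, the integrals $\int_{\mathbb{R}^N}\frac{x}{|x|}|\nabla u|^2\,dx$ and $\int_{\mathbb{R}^N}\frac{x}{|x|}|\nabla(t^*(v)v)|^2\,dx$ are positive scalar multiples of one another, so it suffices to show the second one is nonzero.

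Next I would apply Lemma \ref{nhlem15}(iii) to $v$: for every $\omega\in(0,1)$,
\[
(1-\omega)^{\frac{2N-\mu}{N-\mu+2}}\,\mathcal{J}(t^*(v)v)\;\le\;\mathcal{J}_f(t^-(v)v)+\frac{1}{2\omega}\|f\|_{H^{-1}}^2\;=\;\mathcal{J}_f(u)+\frac{1}{2\omega}\|f\|_{H^{-1}}^2 .
\]
Since $\Upsilon_f(\Om)<0$ by Lemma \ref{nhlem28}, the bound $\mathcal{J}_f(u)\le\Upsilon_f(\Om)+\Upsilon_0\le\Upsilon_0$ yields
\[
\mathcal{J}(t^*(v)v)\;\le\;(1-\omega)^{-\frac{2N-\mu}{N-\mu+2}}\Big(\Upsilon_0+\tfrac{1}{2\omega}\|f\|_{H^{-1}}^2\Big).
\]

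Finally I would fix the parameters in the right order. The constants $\Upsilon_0>0$ and $\delta_0>0$ (the one supplied by Lemma \ref{nhlem16}) do not depend on $f$, and $(1-\omega)^{-\frac{2N-\mu}{N-\mu+2}}\to1$ as $\omega\to0^+$; hence one may first choose $\omega_0\in(0,1)$ with $(1-\omega_0)^{-\frac{2N-\mu}{N-\mu+2}}\Upsilon_0<\Upsilon_0+\frac{\delta_0}{2}$, and then $e_{22}>0$ (taken small enough also for the earlier constructions, e.g. $e_{22}\le\min\{e_{00},e_{11}\}$) so that $\|f\|_{H^{-1}}<e_{22}$ forces $(1-\omega_0)^{-\frac{2N-\mu}{N-\mu+2}}\frac{1}{2\omega_0}\|f\|_{H^{-1}}^2<\frac{\delta_0}{2}$. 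Taking $\omega=\omega_0$ in the displayed inequality gives $\mathcal{J}(t^*(v)v)<\Upsilon_0+\delta_0$ with $t^*(v)v\in\mathcal{N}$, so Lemma \ref{nhlem16} yields $\int_{\mathbb{R}^N}\frac{x}{|x|}|\nabla(t^*(v)v)|^2\,dx\neq0$, and therefore $\int_{\mathbb{R}^N}\frac{x}{|x|}|\nabla u|^2\,dx\neq0$.

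The argument is essentially a bookkeeping combination of Lemmas \ref{nhlem3}(d), \ref{nhlem15}(iii), \ref{nhlem16} and \ref{nhlem28}; the only point requiring care is the order of quantifiers, namely that $\omega_0$ must be selected using only the $f$-independent quantities $\Upsilon_0$ and $\delta_0$, after which the remainder term $\frac{1}{2\omega_0}\|f\|_{H^{-1}}^2$ can be absorbed by shrinking $e_{22}$.
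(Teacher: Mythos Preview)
Your proof is correct and follows essentially the same route as the paper: normalize $u\in\mathcal{N}_f^-$ to $v=u/\|u\|$, use Lemma~\ref{nhlem15}(iii) together with $\Upsilon_f(\Omega)<0$ (Lemma~\ref{nhlem28}) to push $t^*(v)v\in\mathcal{N}$ into the sublevel $\mathcal{J}\le\Upsilon_0+\delta_0$, and conclude via Lemma~\ref{nhlem16}. The only addition is that you make explicit the scalar-multiple relation between $\int\frac{x}{|x|}|\nabla u|^2$ and $\int\frac{x}{|x|}|\nabla(t^*(v)v)|^2$, which the paper leaves tacit.
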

		\begin{proof}
			Let  $u \in \left[\mathcal{J}_f \leq \Upsilon_f(\Om)+	\frac{N-\mu+2}{2(2N-\mu)}S_{H,L}^{\frac{2N-\mu}{N-\mu+2}}\right] $  then $\mathcal{J}_f(u) \leq \Upsilon_f(\Om)+	\frac{N-\mu+2}{2(2N-\mu)}S_{H,L}^{\frac{2N-\mu}{N-\mu+2}}$ and $u \in \mathcal{N}_f^- $, that is, $\frac{1}{\|u\|} t^-\left(\frac{u}{\|u\|}\right)=1$. Since $ \Upsilon_f(\Om)<0$ we have 
			$\mathcal{J}_f(u) \leq 	\frac{N-\mu+2}{2(2N-\mu)}S_{H,L}^{\frac{2N-\mu}{N-\mu+2}}$. So for $\frac{u}{\|u\|}\in \Sigma$ there exits a $t^*>0$ such that $\frac{t^*u}{\|u\|}\in \mathcal{N}$ which on using Lemma \ref{nhlem15} (iii) implies 
			\begin{align*}
			(1-\omega)^{\frac{2N-\mu}{N-\mu+2}}\mathcal{J}\left(t^*\frac{u}{\|u\|}\right)- \frac{1}{2\omega}\|f\|_{H^{-1}}^2 \leq \mathcal{J}_f\left(t^- \frac{u}{\|u\|}\right)= \mathcal{J}_f(u). 
			\end{align*}
			Now using Lemma \ref{nhlem28}, we have 
			\begin{align*}
			\mathcal{J}\left(t^*\frac{u}{\|u\|}\right)  & \leq (1-\omega)^{- \frac{2N-\mu}{N-\mu+2}} \left(\mathcal{J}_f(u) + \frac{1}{2\omega}\|f\|_{H^{-1}}^2 \right)
	\\&  \leq (1-\omega)^{- \frac{2N-\mu}{N-\mu+2}} \left(	\frac{N-\mu+2}{2(2N-\mu)}S_{H,L}^{\frac{2N-\mu}{N-\mu+2}} + \frac{1}{2\omega}\|f\|_{H^{-1}}^2 \right) \\
	&=  \left( (1-\omega)^{- \frac{2N-\mu}{N-\mu+2}}-1\right)	\frac{N-\mu+2}{2(2N-\mu)}S_{H,L}^{\frac{2N-\mu}{N-\mu+2}} \\
	& \quad +  \left(	\frac{N-\mu+2}{2(2N-\mu)}S_{H,L}^{\frac{2N-\mu}{N-\mu+2}} + \frac{1}{2\omega  (1-\omega)^{ \frac{2N-\mu}{N-\mu+2}}}\|f\|_{H^{-1}}^2 \right).
			\end{align*}
			Choose $\omega_0>0$ such that  for $0<\omega<\omega_0$, we have 
			$\left( (1-\omega)^{- \frac{2N-\mu}{N-\mu+2}}-1\right)	\frac{N-\mu+2}{2(2N-\mu)}S_{H,L}^{\frac{2N-\mu}{N-\mu+2}} < \frac{\de_0}{2}$  where $\de_0$ is defined in Lemma \ref{nhlem16}.  Now for $0<\omega<\omega_0$ choose $ e_{22}$ such that if $\|f\|_{H^{-1}} < e_{22}$ then $ \ds \frac{1}{2\omega  (1-\omega)^{ \frac{2N-\mu}{N-\mu+2}}}\|f\|_{H^{-1}}^2 < \frac{\de_0}{2}$. Therefore, we obtain 
			\begin{align*}
			\mathcal{J}\left(t^*\frac{u}{\|u\|}\right)  & \leq 	\frac{N-\mu+2}{2(2N-\mu)}S_{H,L}^{\frac{2N-\mu}{N-\mu+2}} + \de_0
\end{align*}			
	Using 	 Lemma \ref{nhlem16} we conclude the result. \QED
		%we get
			%\begin{align*}
			%\int_{\mathbb{R}^N} \frac{x}{|x|} |\na u |^2 ~dx  \not = 0. 
		%	\end{align*} 
		%	\QED					
			\end{proof}		
\noi  Define $G: [\mathcal{J}_f \leq \Upsilon_f(\Om)+	\frac{N-\mu+2}{2(2N-\mu)}S_{H,L}^{\frac{2N-\mu}{N-\mu+2}}]\ra \mathbb{S}^{N-1}$ by
	\[G(u )=\frac{\ds \int_{\mathbb{R}^N} \frac{x}{|x|} |\na u |^2 ~dx }{\bigg|\ds \int_{\mathbb{R}^N} \frac{x}{|x|} |\na u |^2 ~dx \bigg|}.\]
Note that from Lemma \ref{nhlem16}, $G$ is well defined.
\begin{Lemma}\label{nhlem17}
	For $0<\e< \e_0$ and $\|f\|_{H^{-1}}< e_{22}$, the map 
	\begin{align*}
	G \; o\; H_{\e}: \mathbb{S}^{N-1} \ra  \mathbb{S}^{N-1}
	\end{align*}
	is homotopic to the identity.
\end{Lemma}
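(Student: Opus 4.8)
The plan is to exhibit an explicit homotopy between $G \circ H_\e$ and the identity map on $\mathbb{S}^{N-1}$, by tracking where the "mass" of the gradient of $H_\e(\sigma) = u_1 + s_0 t_0 g_\rho^{\e,\sigma}$ concentrates. First I would recall that $H_\e(\sigma)$ is a bounded perturbation (by the fixed function $u_1$, and by the cutoff) of the bubble $u^\e_\sigma$, which is centered at $(1-\e)\sigma$ with concentration parameter $\e$. As $\e \to 0$, the gradient measure $|\nabla g_\rho^{\e,\sigma}|^2$ concentrates, up to the uniformly bounded contributions of $u_1$ and of the cutoff error (controlled by Lemma \ref{nhlem8}(i) and Lemma \ref{nhlem19}(i)), at the point $(1-\e)\sigma$, which lies on the ray through $\sigma$. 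Consequently the normalized vector $G(H_\e(\sigma)) = \frac{\int \frac{x}{|x|}|\nabla H_\e(\sigma)|^2\,dx}{|\int \frac{x}{|x|}|\nabla H_\e(\sigma)|^2\,dx|}$ is close to $\sigma$, uniformly in $\sigma \in \mathbb{S}^{N-1}$, once $\e$ is small.

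The key computational step is the estimate
\[
\int_{\R^N} \frac{x}{|x|}\,|\nabla g_\rho^{\e,\sigma}(x)|^2\,dx = S_{H,L}^{\frac{2N-\mu}{N-\mu+2}}\,\sigma + o_\e(1) \quad \text{uniformly in } \sigma,
\]
which I would prove by changing variables $x = (1-\e)\sigma + \e z$, using that $|\nabla u^\e_\sigma|^2\,dx$ rescales to a fixed profile concentrating at $z=0$ (so the factor $\frac{x}{|x|}$ converges uniformly to $\frac{(1-\e)\sigma}{|(1-\e)\sigma|} = \sigma$ on the bulk of the mass), and absorbing the cutoff discrepancy via the $O(\e^{N-2})$ bound from the proof of Lemma \ref{nhlem8}(i). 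Combined with $\|\nabla u_1\|^2$ and the cross term $\langle u_1, g_\rho^{\e,\sigma}\rangle$ being $O(1)$ and $o_\e(1)$ respectively (the latter by Lemma \ref{nhlem8}(iii)), and with $|\int \frac{x}{|x|}|\nabla H_\e(\sigma)|^2| \neq 0$ guaranteed by Lemma \ref{nhlem16} on the sublevel set, I get that $G(H_\e(\sigma))$ depends continuously on $\sigma$ and satisfies $|G(H_\e(\sigma)) - \sigma| < 1$ uniformly for $\e$ small enough.

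Given this uniform proximity, I would define the homotopy $\mathcal{H}: [0,1]\times \mathbb{S}^{N-1} \to \mathbb{S}^{N-1}$ by radial projection of the straight segment,
\[
\mathcal{H}(\tau,\sigma) = \frac{(1-\tau)\,G(H_\e(\sigma)) + \tau\,\sigma}{\bigl|(1-\tau)\,G(H_\e(\sigma)) + \tau\,\sigma\bigr|},
\]
which is well defined precisely because $G(H_\e(\sigma))$ and $\sigma$ are never antipodal (their distance is strictly less than $1 < 2$), hence the convex combination never vanishes. It is continuous in $(\tau,\sigma)$ since $\sigma \mapsto H_\e(\sigma)$ is continuous into $H_0^1(\Om)$ (the parameters $s_0, t_0$ and the construction in Lemma \ref{nhlem13} depend continuously on $\sigma$) and $G$ is continuous on its domain, with $\mathcal{H}(0,\cdot) = G\circ H_\e$ and $\mathcal{H}(1,\cdot) = \mathrm{id}$.

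The main obstacle I anticipate is the \emph{uniformity in $\sigma$} of the concentration estimate — in particular verifying that the cutoff $\upsilon_\rho$, the fixed function $u_1$, and the continuous choice of $t_0(\sigma), s_0(\sigma)$ do not spoil the limit $G(H_\e(\sigma)) \to \sigma$ uniformly over the compact sphere. Here I would lean on the fact that all the error terms in Lemmas \ref{nhlem8}, \ref{nhlem9} and \ref{nhlem19} are already stated to be uniform in $\sigma$, that $\rho$ (equivalently $R_1$) is fixed so the cutoff region is bounded away from the concentration point by \eqref{nh24}, and that continuity of $\sigma \mapsto H_\e(\sigma)$ on a compact set upgrades pointwise smallness to uniform smallness. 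A secondary point to check is that $H_\e(\sigma)$ indeed lands in the sublevel set $[\mathcal{J}_f \le \Upsilon_f(\Om) + \frac{N-\mu+2}{2(2N-\mu)}S_{H,L}^{\frac{2N-\mu}{N-\mu+2}}]$ so that $G$ is defined along the whole image — but this is exactly the content of the preceding lemma together with $\Upsilon_f(\Om) < 0$.
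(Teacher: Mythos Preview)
Your route is genuinely different from the paper's. You aim for a single straight-line radial-projection homotopy, relying on the claim that $G(H_\e(\sigma))$ lies uniformly close to $\sigma$ (hence never antipodal). The paper instead uses a two-stage deformation: first a geodesic arc in $\mathbb{S}^{N-1}$ from $G(H_\e(\sigma))$ to $\overline{G}(g_\rho^{\e,\sigma})$ (with $\overline{G}$ the obvious extension of $G$), then the family $\lambda \mapsto \overline{G}(g_\rho^{2(1-\lambda)\e,\sigma})$ for $\lambda\in[1/2,1)$, which sends the concentration parameter to $0$ and lands on $\sigma$ via exactly the computation you outline. The point of the paper's decomposition is that the second stage involves only the pure cutoff bubble, where your key identity $\int \frac{x}{|x|}|\nabla g_\rho^{\e,\sigma}|^2\,dx = S_{H,L}^{(2N-\mu)/(N-\mu+2)}\sigma + o_\e(1)$ applies cleanly, while the first stage, being just a geodesic between two well-defined points of $\mathbb{S}^{N-1}$, needs no quantitative closeness at all.

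Your argument, as written, has a gap precisely where the paper's splitting pays off. You write that the $u_1$ contribution $\int\frac{x}{|x|}|\nabla u_1|^2\,dx$ is ``$O(1)$'' and then conclude $|G(H_\e(\sigma))-\sigma|<1$. But an $O(1)$ vector added to $(s_0t_0)^2 S_{H,L}^{(2N-\mu)/(N-\mu+2)}\sigma$ does not, by itself, yield that bound: you must show the bubble term \emph{dominates}, i.e.\ that $(s_0t_0)^2$ is bounded below uniformly in $\sigma$ and that $\|u_1\|^2$ is small compared to it. Both are in fact obtainable---$\|u_1\|\le C\|f\|_{H^{-1}}$ from the proof of Lemma~\ref{nhlem5}, and $s_0t_0$ is bounded away from zero because $u_1+s_0t_0 g_\rho^{\e,\sigma}\in\mathcal{N}_f^-$ forces a uniform lower bound on its norm via Lemma~\ref{nhlem3}---but these steps are missing from your proposal and are exactly what the paper's geodesic stage is designed to avoid. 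If you insert them, your direct homotopy goes through; otherwise the non-antipodality claim is unjustified.
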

		\begin{proof}
			Define  $\mathcal{K}:= \bigg\{u \in H_0^1(\Om)\setminus \{0\} \bigg| \ds \int_{\mathbb{R}^N} \frac{x}{|x|} |\na u |^2 ~dx  \not = 0 \bigg\}$ and   $\overline{G}: \mathcal{K} \ra \mathbb{S}^{N-1}$ by
			\begin{align*}
			\overline{G}(u )=\ds \int_{\mathbb{R}^N} \frac{x}{|x|} |\na u |^2 ~dx \bigg/ \bigg|\ds \int_{\mathbb{R}^N} \frac{x}{|x|} |\na u |^2 ~dx \bigg|		
				\end{align*}
			 as an extension of $G$. Lemma \ref{nhlem8}  and Lemma \ref{nhlem16},  gives $ \ds \int_{\mathbb{R}^N} \frac{x}{|x|} |\na g_\rho^{\e,\sigma} |^2 ~dx  \not = 0$ for sufficiently small $\e$. Thus, $\overline{G}(g_\rho^{\e,\sigma})$ is well defined. Now let $\gamma:[s_1,s_2]\ra \mathbb{S}^{N-1}$ be a regular geodesic between $\overline{G}(g_\rho^{\e,\sigma})$ and $\overline{G}(H_{\e}(\sigma))$	such that $\gamma(s_1)= \overline{G}(g_\rho^{\e,\sigma})$ and $\gamma(s_2)=\overline{G}(H_{\e}(\sigma))$. Moreover, by a analogous argument as in Lemma \ref{nhlem8}, for  $\de_0>0$ there exists a $\e_0>0$ such that 
\begin{align*}
\mathcal{J}(g_\rho^{2(1-\la)\e})< \frac{N-\mu+2}{2(2N-\mu)}S_{H,L}^{\frac{2N-\mu}{N-\mu+2}} +\de_0 \text{ for all } 0<\e<\e_0 \text{ and }\sigma \in \mathbb{S}^{N-1}, \la \in [\frac{1}{2},1),
 \end{align*}
 where $\de_0$ is defined in Lemma \ref{nhlem16}. Now define  $ \varsigma_{\e}(\la,\sigma): [0,1]\times \mathbb{S}^{N-1} \ra \mathbb{S}^{N-1}$ by
			\[ 
			\varsigma_{\e}(\la,\sigma)=  \left\{
			\begin{array}{ll}
				\gamma(2\la(s_1-s_2)+s_2)  &  \text{ if }\la \in [0,\frac{1}{2}), \\
		\overline{G}(g_\rho^{2(1-\la)\e}) & \text{ if } \la \in [\frac{1}{2},1), \\
			\sigma  & \text{ if } \la =1 .\\
			\end{array} 
			\right. \]	
		 	Clearly, $\varsigma_{\e}$ is well defined. We claim that $\ds \lim_{\la\ra 1^-}\varsigma_{\e}(\la,\sigma)= \sigma $ and $\ds \lim_{\la\ra \frac{1}{2}^-}\varsigma_{\e}(\la,\sigma)=\overline{G}(g_\rho^{\e,\sigma})$.
\begin{enumerate}		 	
		 	\item[ (i)] $\ds \lim_{\la\ra 1^-}\varsigma_{\e}(\la,\sigma)= \sigma $: Indeed
\begin{align*}
\ds \int_{\mathbb{R}^N} \frac{x}{|x|} |\na g_\rho^{2(1-\la )\e} |^2 ~dx = S_{H,L}^{\frac{2N-\mu}{N-\mu+2}}\sigma +o(1) \text{ as } \la\ra 1^- 
\end{align*}
then  $\ds \lim_{\la\ra 1^-}\varsigma_{\e}(\la,\sigma)= \sigma $. \item[(b)] $\ds \lim_{\la\ra \frac{1}{2}^-}\varsigma_{\e}(\la,\sigma)=\overline{G}(g_\rho^{\e,\sigma})$: Indeed
\begin{align*}
\ds \lim_{\la\ra \frac{1}{2}^-}\varsigma_{\e}(\la,\sigma)=\ds \lim_{\la\ra \frac{1}{2}^-} \gamma(2\la(s_1-s_2)+s_2)= \gamma(s_1)= \overline{G}(g_\rho^{\e,\sigma}).
\end{align*}\end{enumerate}
Hence, $\varsigma_{\e} \in C([0,1]\times \mathbb{S}^{N-1} , \mathbb{S}^{N-1})$ and $\varsigma_{\e}(0,\sigma)= \overline{G}(H_{\e}(\sigma))$  and $\varsigma_{\e}(1,\sigma)=\sigma$ for $\sigma \in \mathbb{S}^{N-1}$ provided $0<\e<\e_0$  and $\|f\|_{H^{-1}}<e_{22}$. Thus the result follows. \QED
			\end{proof}
		\begin{Proposition}\label{nhprop2} Let $e^*:=\min\{e_{00}, e_{11},e_{22}\}$ and let 
			 $\|f\|_{H^{-1}}<e^* $ then $\mathcal{J}_f$  has two critical points in 
			\begin{align*}
			\bigg [\mathcal{J}_f \leq \Upsilon_f(\Om)+	\frac{N-\mu+2}{2(2N-\mu)}S_{H,L}^{\frac{2N-\mu}{N-\mu+2}}\bigg ] .
			\end{align*}
			Equivalently, $(P_f)$ have another two different solutions which are different from $u_1$.
		\end{Proposition}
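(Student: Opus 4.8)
The plan is to apply the Lusternik--Schnirelmann result of Ambrosetti (Lemma~\ref{nhlem31}) to $G:=\mathcal{J}_f\big|_{\mathcal{N}_f^-}$, regarding $\mathcal{N}_f^-$ as a $C^1$ Hilbert manifold (it is smooth since $\mathcal{N}_f^0=\{0\}$ and the defining inequality $\phi_u''(1)<0$ is open), with $k=2$ and level
\[
c_1:=\Upsilon_f(\Om)+\frac{N-\mu+2}{2(2N-\mu)}S_{H,L}^{\frac{2N-\mu}{N-\mu+2}}-\de_\e .
\]
Everything is done under $0<\|f\|_{H^{-1}}<e^*=\min\{e_{00},e_{11},e_{22}\}$, so all three smallness thresholds above are in force. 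First I would fix $\e\in(0,\e_0)$ and $\rho$ small enough that Lemma~\ref{nhlem17} applies, so that $G\circ H_\e\colon\mathbb{S}^{N-1}\to\mathbb{S}^{N-1}$ is homotopic to the identity. Since $H_\e(\mathbb{S}^{N-1})\subset[\mathcal{J}_f\le c_1]$ by the Lemma establishing this inclusion, and $G$ is well defined on $[\mathcal{J}_f\le\Upsilon_f(\Om)+\Upsilon_0]\supset[\mathcal{J}_f\le c_1]$, Lemma~\ref{nhlem32} applied with $\Phi=H_\e$, $\Psi=G$ and ambient space $[\mathcal{J}_f\le c_1]$ yields
\[
cat\big([\mathcal{J}_f\le c_1]\big)\ge 2 ,
\]
which is hypothesis~(2) of Lemma~\ref{nhlem31} (recall that by \eqref{nh20}, $[\mathcal{J}_f\le c]$ denotes precisely the $c$-sublevel set of $\mathcal{J}_f$ inside $\mathcal{N}_f^-$).

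Next I would check hypothesis~(1): $\mathcal{J}_f\big|_{\mathcal{N}_f^-}$ satisfies the Palais--Smale condition at every level $c\le c_1$. By the usual natural-constraint argument --- $\mathcal{N}_f^-$ is relatively open in $\mathcal{N}_f$ and $\mathcal{N}_f^0=\{0\}$, so constrained criticality on $\mathcal{N}_f^-$ forces the Lagrange multiplier to vanish --- every $(PS)_c$ sequence $\{u_n\}\subset\mathcal{N}_f^-$ for $\mathcal{J}_f\big|_{\mathcal{N}_f^-}$ is a $(PS)_c$ sequence for $\mathcal{J}_f$ on $H_0^1(\Om)$. As $c\le c_1<\Upsilon_f(\Om)+\Upsilon_0$, Lemma~\ref{nhlem18}(i) gives a subsequence converging strongly in $H_0^1(\Om)$ to a nonzero $u^0$ with $\mathcal{J}_f(u^0)=c$ and $\mathcal{J}_f'(u^0)=0$. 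Because $\{u_n\}\subset\mathcal{N}_f^-$ forces $c\ge\Upsilon_f^-(\Om)>0$ (Lemma~\ref{nhlem15}(iv), valid since $\|f\|_{H^{-1}}<e_{11}$) while $\mathcal{J}_f(u_1)=\Upsilon_f(\Om)<0$, the limit cannot be $u_1$; Lemma~\ref{nhlem6} then forces $u^0\in\mathcal{N}_f^-$, so the strong convergence occurs within the manifold and $(PS)_c$ holds there.

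With both hypotheses verified, Lemma~\ref{nhlem31} produces at least two critical points of $\mathcal{J}_f\big|_{\mathcal{N}_f^-}$ contained in $[\mathcal{J}_f\le c_1]\subset\big[\mathcal{J}_f\le\Upsilon_f(\Om)+\frac{N-\mu+2}{2(2N-\mu)}S_{H,L}^{\frac{2N-\mu}{N-\mu+2}}\big]$. By the natural-constraint property they are critical points of $\mathcal{J}_f$ on $H_0^1(\Om)$, hence weak solutions of $(P_f)$; by Lemma~\ref{nhlem34} they are positive and of class $C^2(\overline{\Om})$. Since $u_1\in\mathcal{N}_f^+$ (Lemma~\ref{nhlem10}) and $\mathcal{N}_f^+\cap\mathcal{N}_f^-=\emptyset$, both of them differ from $u_1$ (and they are two distinct critical points by Lemma~\ref{nhlem31}), which gives the claimed two further solutions. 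The step I expect to need the most care is the middle one: making precise that a minimax/category critical point produced on the constraint $\mathcal{N}_f^-$ is a genuine free critical point of $\mathcal{J}_f$ and cannot coincide with the local minimizer $u_1$ --- the analytical heart of the matter (the refined Hardy--Littlewood--Sobolev estimates, the cut-off maps $H_\e$, the homotopy $\varsigma_\e$, and the compactness of $\mathcal{J}_f$ strictly below the first critical level) having already been carried out in Lemmas~\ref{nhlem13}, \ref{nhlem17}, and \ref{nhlem18}.
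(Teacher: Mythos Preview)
Your proposal is correct and follows essentially the same route as the paper: apply Lemma~\ref{nhlem32} with $\Phi=H_\e$ and $\Psi=G$ (using Lemma~\ref{nhlem17}) to get $cat([\mathcal{J}_f\le c_1])\ge 2$, then invoke Lemma~\ref{nhlem18}(i) for the Palais--Smale condition and conclude via Lemma~\ref{nhlem31}. You have simply spelled out in more detail what the paper leaves implicit, in particular the natural-constraint argument and the verification that the two critical points land in $\mathcal{N}_f^-$ and hence differ from $u_1$.
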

		\begin{proof} 
			Using Lemma \ref{nhlem17} and Lemma \ref{nhlem32}, we have
			\begin{align*}
cat\bigg (\bigg [\mathcal{J}_f \leq \Upsilon_f(\Om)+	\frac{N-\mu+2}{2(2N-\mu)}S_{H,L}^{\frac{2N-\mu}{N-\mu+2}}- \de_{\e} \bigg ] \bigg)\geq 2.
			\end{align*} 
			Now the proof follows from  Lemma \ref{nhlem18}(i) and Lemma \ref{nhlem31}.\QED  %we have two critical points of  $\mathcal{J}_f$ in 
		%	\begin{align*}
%\bigg [\mathcal{J}_f \leq \Upsilon_f(\Om)+	\frac{N-\mu+2}{2(2N-\mu)}S_{H,L}^{\frac{2N-\mu}{N-\mu+2}}\bigg].
%			\end{align*} \QED
			\end{proof}
\section{Existence of Fourth solution}
In this section we will prove the existence of high energy solution by using Brouwer's degree theory and minmax theorem given by Brezis and Nirenberg \cite{brezis2}.

  Let $\mathcal{V}:= \bigg\{ u \in H_0^1(\Om): \ds \int_{\Om}\int_{\Om} \frac{|u^+(x)|^{2^{*}_{\mu}}|u^+(y)|^{2^{*}_{\mu}}}{|x-y|^{\mu}}~dxdy=1 \bigg\},\; h_\rho^{\e,\sigma}(x)= \ds \frac{g_\rho^{\e,\sigma}(x)}{\|g_\rho^{\e,\sigma}\|_{NL}}$ where $g_\rho^{\e,\sigma}$ is defined in \eqref{nh8} 
 
\begin{Lemma}\label{nhlem20}
	$\|h_\rho^{\e,\sigma}\|^2_{D^{1,2}(\mathbb{R}^N)} \ra S_{H,L}$ as $\e \ra 0$ uniformly in $\sigma \in \mathbb{S}^{N-1}$. 
\end{Lemma}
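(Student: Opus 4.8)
The plan is to compute the Rayleigh-type quotient $\|h_\rho^{\e,\sigma}\|_{D^{1,2}}^2 = a(g_\rho^{\e,\sigma})/\|g_\rho^{\e,\sigma}\|_{NL}^2$ directly, using the asymptotic estimates already established for the numerator and denominator. First I would recall from Lemma \ref{nhlem8}(i) that $a(g_\rho^{\e,\sigma}) = S_{H,L}^{\frac{2N-\mu}{N-\mu+2}} + o_\e(1)$ uniformly in $\sigma$, and that $b(g_\rho^{\e,\sigma}) = \|g_\rho^{\e,\sigma}\|_{NL}^{2\cdot 2^*_\mu} = S_{H,L}^{\frac{2N-\mu}{N-\mu+2}} + o_\e(1)$ uniformly in $\sigma$ as $\e \to 0$. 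Hence $\|g_\rho^{\e,\sigma}\|_{NL}^2 = \left(S_{H,L}^{\frac{2N-\mu}{N-\mu+2}} + o_\e(1)\right)^{\frac{1}{2^*_\mu}}$, and since $\frac{1}{2^*_\mu}\cdot\frac{2N-\mu}{N-\mu+2} = \frac{N-2}{N-\mu+2}\cdot\frac{2N-\mu}{2N-\mu}$... more simply, $\big(S_{H,L}^{\frac{2N-\mu}{N-\mu+2}}\big)^{1/2^*_\mu} = S_{H,L}^{\frac{2N-\mu}{(N-\mu+2)2^*_\mu}} = S_{H,L}^{\frac{N-2}{N-\mu+2}}$ because $2^*_\mu = \frac{2N-\mu}{N-2}$.

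Putting these together,
\begin{align*}
\|h_\rho^{\e,\sigma}\|_{D^{1,2}(\mathbb{R}^N)}^2 = \frac{a(g_\rho^{\e,\sigma})}{\|g_\rho^{\e,\sigma}\|_{NL}^2} = \frac{S_{H,L}^{\frac{2N-\mu}{N-\mu+2}} + o_\e(1)}{S_{H,L}^{\frac{N-2}{N-\mu+2}} + o_\e(1)} \longrightarrow S_{H,L}^{\frac{2N-\mu}{N-\mu+2} - \frac{N-2}{N-\mu+2}} = S_{H,L}
\end{align*}
as $\e \to 0$, where the exponent arithmetic $\frac{2N-\mu}{N-\mu+2} - \frac{N-2}{N-\mu+2} = \frac{N-\mu+2}{N-\mu+2} = 1$ is the key simplification. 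Since every $o_\e(1)$ term above is uniform in $\sigma \in \mathbb{S}^{N-1}$ by Lemma \ref{nhlem8}, the convergence is uniform in $\sigma$ as well. One small point to justify carefully: the denominator $\|g_\rho^{\e,\sigma}\|_{NL}^2$ stays bounded away from $0$ for $\e$ small (uniformly in $\sigma$), which follows since it converges to $S_{H,L}^{\frac{N-2}{N-\mu+2}} > 0$; this legitimizes passing the limit through the quotient.

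The argument involves no real obstacle — it is a bookkeeping exercise combining Lemma \ref{nhlem8}(i) with the definition $h_\rho^{\e,\sigma} = g_\rho^{\e,\sigma}/\|g_\rho^{\e,\sigma}\|_{NL}$ and the identity $S = S_{H,L}(C(N,\mu))^{\frac{N-2}{2N-\mu}}$ / the relation $2^*_\mu = \frac{2N-\mu}{N-2}$. The only place to be slightly attentive is to make sure one uses the $b$-estimate (equivalently the $\|\cdot\|_{NL}$-estimate) rather than the $L^{2^*}$-estimate, and that the uniformity in $\sigma$ is inherited at each step; both are already packaged in Lemma \ref{nhlem8}. Thus the proof is a two-line computation once the exponent identity $\frac{2N-\mu}{N-\mu+2} - \frac{N-2}{N-\mu+2} = 1$ is noted.
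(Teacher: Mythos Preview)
Your proof is correct and is exactly the approach the paper takes: the paper's proof is the single line ``Proof follows from Lemma \ref{nhlem8}\textit{(i)}'', and you have simply spelled out the quotient computation and exponent arithmetic that this line encodes. The uniformity in $\sigma$ and the nonvanishing of the denominator are handled correctly.
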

\begin{proof}
Proof follows from Lemma \ref{nhlem8}\textit{(i)}. \QED
	\end{proof}

\begin{Lemma}\label{nhlem22}
	There exists a $\rho_0>0$ such that for $0<\rho<\rho_0$,
	\begin{align*}
	\ds \sup _{\sigma \in \mathbb{S}^{N-1}, \e\in (0,1]}\| h_\rho^{\e,\sigma}\|^2 < 2^{\frac{N-\mu+2}{2N-\mu}}S_{H,L}.
	\end{align*} 
\end{Lemma}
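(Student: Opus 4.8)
The plan is to prove the sharper fact that $\|h_\rho^{\e,\sigma}\|^2\ra S_{H,L}$ uniformly in $\sigma\in\mathbb{S}^{N-1}$ and $\e\in(0,1]$ as $\rho\ra 0$, and then to use that $2^{\frac{N-\mu+2}{2N-\mu}}>1$ (which holds since $N-\mu+2>0$), so that there is a fixed multiplicative gap above $S_{H,L}$ into which the uniform error can be absorbed once $\rho$ is small enough.

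Concretely, since $h_\rho^{\e,\sigma}=g_\rho^{\e,\sigma}/\|g_\rho^{\e,\sigma}\|_{NL}$ with $g_\rho^{\e,\sigma}\in H_0^1(\Om)$ and $g_\rho^{\e,\sigma}\ge 0$, one has
\[
\|h_\rho^{\e,\sigma}\|^2=\frac{a(g_\rho^{\e,\sigma})}{\|g_\rho^{\e,\sigma}\|_{NL}^2}=\frac{a(g_\rho^{\e,\sigma})}{b(g_\rho^{\e,\sigma})^{1/2^*_{\mu}}},
\]
so it suffices to pin down $a(g_\rho^{\e,\sigma})$ and $b(g_\rho^{\e,\sigma})$ uniformly in $(\sigma,\e)$. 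For this I would invoke Lemma \ref{nhlem19}. By part (i) and the reverse triangle inequality, $\|\na g_\rho^{\e,\sigma}\|_{L^2(\mathbb{R}^N)}=\|\na u_\e^\sigma\|_{L^2(\mathbb{R}^N)}+o_\rho(1)$ uniformly in $\sigma\in\mathbb{S}^{N-1},\ \e\in(0,1]$; by part (ii), $b(g_\rho^{\e,\sigma})=\|g_\rho^{\e,\sigma}\|_{NL}^{2.2^*_{\mu}}=\|u_\e^\sigma\|_{NL}^{2.2^*_{\mu}}+o_\rho(1)$ uniformly. Since $u_\e^\sigma$ is the $\e$-rescaling and translation $x\mapsto \e^{-(N-2)/2}u((x-(1-\e)\sigma)/\e)$ of the extremal $u$ for $S_{H,L}$, a transformation which leaves both $\int_{\mathbb{R}^N}|\na\cdot|^2$ and the Choquard integral invariant, we have $\|\na u_\e^\sigma\|_{L^2(\mathbb{R}^N)}^2=\|u_\e^\sigma\|_{NL}^{2.2^*_{\mu}}=S_{H,L}^{\frac{2N-\mu}{N-\mu+2}}$ for every $\e,\sigma$. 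Hence $a(g_\rho^{\e,\sigma})\ra S_{H,L}^{\frac{2N-\mu}{N-\mu+2}}$ and $b(g_\rho^{\e,\sigma})\ra S_{H,L}^{\frac{2N-\mu}{N-\mu+2}}$, both uniformly in $(\sigma,\e)$ as $\rho\ra 0$.

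Substituting back, $\|h_\rho^{\e,\sigma}\|^2\ra (S_{H,L}^{\frac{2N-\mu}{N-\mu+2}})^{1-1/2^*_{\mu}}$ uniformly, and since $2^*_{\mu}=\frac{2N-\mu}{N-2}$ gives $2^*_{\mu}-1=\frac{N-\mu+2}{N-2}$, the exponent $\frac{2N-\mu}{N-\mu+2}\big(1-\frac{1}{2^*_{\mu}}\big)=\frac{2N-\mu}{N-\mu+2}\cdot\frac{N-\mu+2}{2N-\mu}$ equals $1$; thus $\|h_\rho^{\e,\sigma}\|^2\ra S_{H,L}$ uniformly in $\sigma\in\mathbb{S}^{N-1},\ \e\in(0,1]$. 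Because $S_{H,L}<2^{\frac{N-\mu+2}{2N-\mu}}S_{H,L}$, choosing $\rho_0>0$ so small that this uniform error stays below $(2^{\frac{N-\mu+2}{2N-\mu}}-1)S_{H,L}$ yields $\sup_{\sigma\in\mathbb{S}^{N-1},\,\e\in(0,1]}\|h_\rho^{\e,\sigma}\|^2<2^{\frac{N-\mu+2}{2N-\mu}}S_{H,L}$ for all $0<\rho<\rho_0$.

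I do not expect a genuine obstacle: the statement is essentially a corollary of the uniform-in-$(\sigma,\e)$ asymptotics of Lemma \ref{nhlem19}, which are already established. The only mildly delicate points are passing from $L^2(\mathbb{R}^N)$-convergence of $\na(g_\rho^{\e,\sigma}-u_\e^\sigma)$ to convergence of $a(g_\rho^{\e,\sigma})$, and preserving uniformity under taking a $2^*_{\mu}$-th root and forming the quotient $a(\cdot)/b(\cdot)^{1/2^*_{\mu}}$; both are routine because all quantities involved stay in a fixed compact subinterval of $(0,\infty)$. Since $2^{\frac{N-\mu+2}{2N-\mu}}>1$ strictly, no sharp estimate is needed — only the qualitative convergence above.
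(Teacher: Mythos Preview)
Your proposal is correct and follows essentially the same route as the paper: invoke Lemma~\ref{nhlem19} together with the scaling/translation invariance identities $\|\na u_\e^\sigma\|_{L^2(\mathbb{R}^N)}^2=\|u_\e^\sigma\|_{NL}^{2.2^*_\mu}=S_{H,L}^{\frac{2N-\mu}{N-\mu+2}}$ to conclude $\sup_{\sigma,\e}\|h_\rho^{\e,\sigma}\|^2\to S_{H,L}$ as $\rho\to 0$, and then absorb the uniform error into the strict gap $S_{H,L}<2^{\frac{N-\mu+2}{2N-\mu}}S_{H,L}$. The paper's proof is simply a terser version of exactly this argument.
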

\begin{proof} Since we know that $\|\na u_{\e}^{\sigma}\|^2_{L^2(\mathbb{R}^N)}=\|u_{\e}^{\sigma}\|_{NL}^{2.2^*_{\mu}}= S_{H,L}^{\frac{2N-\mu}{N-\mu+2}} $ and  this on using  Lemma \ref{nhlem19}  we get   $\ds \sup _{\sigma \in \mathbb{S}^{N-1}, \e\in (0,1]}\| h_\rho^{\e,\sigma}\|^2\ra  S_{H,L}$ as $\rho\ra 0$. So there exists a $\rho_0$ such that  $0<\rho<\rho_0$, we obtain $\ds \sup _{\sigma \in \mathbb{S}^{N-1}, \e\in (0,1]}\| h_\rho^{\e,\sigma}\|^2 < 2^{\frac{N-\mu+2}{2N-\mu}}S_{H,L}$. \QED
	\end{proof}
Now for any $u\in H^1_0(\Om)$, by extending it to be zero outside $\Om$,  we define Barycenter mapping  $\ba: \mathcal{V} \ra  \mathbb{R}^N$ as  \[\ba(u)= \ds \int_{\mathbb{R}^N}\int_{\mathbb{R}^N} \frac{x|u^+(x)|^{2^{*}_{\mu}}|u^+(y)|^{2^{*}_{\mu}}}{|x-y|^{\mu}}~dxdy,\]
   and also let \begin{align*}
\mathcal{Q}:=\{u \in \mathcal{V}: \ba(u)=0\}.
\end{align*}

\begin{Lemma}
	There holds $\ds \lim_{\e \ra 0} \ba(h_\rho^{\e,\sigma})= \sigma$.
\end{Lemma}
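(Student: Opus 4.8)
\noi\textbf{Proof sketch.} The plan is to evaluate the limit directly by rescaling about the concentration point. Since $\upsilon_\rho\ge 0$ and $u_\e^\sigma>0$, the function $h_\rho^{\e,\sigma}$ is non-negative, so $(h_\rho^{\e,\sigma})^+=h_\rho^{\e,\sigma}$, and from the definitions of $\|\cdot\|_{NL}$ and of $b(\cdot)$,
\[
\ba(h_\rho^{\e,\sigma})=\frac{1}{b(g_\rho^{\e,\sigma})}\int_{\R^N}\int_{\R^N}\frac{x\,(g_\rho^{\e,\sigma}(x))^{2^*_{\mu}}(g_\rho^{\e,\sigma}(y))^{2^*_{\mu}}}{|x-y|^{\mu}}~dxdy .
\]
By Lemma \ref{nhlem8}$(i)$ the denominator tends to $S_{H,L}^{\frac{2N-\mu}{N-\mu+2}}$ as $\e\to 0$, so it suffices to prove that the numerator tends to $\sigma\,S_{H,L}^{\frac{2N-\mu}{N-\mu+2}}$.

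To this end I would change variables by $x=(1-\e)\sigma+\e z$, $y=(1-\e)\sigma+\e w$. Writing $U$ for the extremal function displayed at the beginning of Section 4, one has $u_\e^\sigma\big((1-\e)\sigma+\e z\big)=\e^{-\frac{N-2}{2}}U(z)$, and since $(N-2)\,2^*_{\mu}=2N-\mu$ the powers of $\e$ coming from $|u_\e^\sigma|^{2^*_{\mu}}$, from $|x-y|^{-\mu}$ and from the two Jacobians cancel exactly. Hence the numerator equals
\[
\int_{\R^N}\int_{\R^N}\frac{\big((1-\e)\sigma+\e z\big)\,\upsilon_\rho\big((1-\e)\sigma+\e z\big)^{2^*_{\mu}}\upsilon_\rho\big((1-\e)\sigma+\e w\big)^{2^*_{\mu}}\,U(z)^{2^*_{\mu}}U(w)^{2^*_{\mu}}}{|z-w|^{\mu}}~dzdw .
\]

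The final step is to pass to the limit $\e\to 0$ in this integral by dominated convergence. For every fixed $z$ we have $(1-\e)\sigma+\e z\to\sigma$, and since $\rho<\tfrac12$ the unit vector $\sigma$ lies in the interior of the set $\{\,2\rho\le|x|\le\tfrac1{2\rho}\,\}$ on which $\upsilon_\rho\equiv 1$; therefore for small $\e$ the two cut-off factors are identically $1$, and the integrand converges pointwise a.e. to $\sigma\,U(z)^{2^*_{\mu}}U(w)^{2^*_{\mu}}|z-w|^{-\mu}$. Because $0\le\upsilon_\rho\le 1$ and $|(1-\e)\sigma+\e z|\le 1+|z|$ for $0<\e\le 1$, the integrand is dominated by $\Phi(z,w):=(1+|z|)\,U(z)^{2^*_{\mu}}U(w)^{2^*_{\mu}}|z-w|^{-\mu}$, and the only point that needs an argument is $\Phi\in L^1(\R^N\times\R^N)$: by the Hardy--Littlewood--Sobolev inequality \eqref{co9} (with both conjugate exponents equal to $\tfrac{2N}{2N-\mu}$) this reduces to $(1+|z|)\,U^{2^*_{\mu}}\in L^{\frac{2N}{2N-\mu}}(\R^N)$, i.e. to $\int_{\R^N}(1+|z|)^{\frac{2N}{2N-\mu}}|U|^{2^*}~dz<\infty$, which holds since $|U(z)|^{2^*}$ decays like $|z|^{-2N}$ at infinity while $\tfrac{2N}{2N-\mu}<N$ (here one uses $\mu<N$ and $N\ge 3$). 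Consequently the numerator converges to $\sigma\int_{\R^N}\int_{\R^N}U(z)^{2^*_{\mu}}U(w)^{2^*_{\mu}}|z-w|^{-\mu}~dzdw=\sigma\,S_{H,L}^{\frac{2N-\mu}{N-\mu+2}}$, the value of this double integral being exactly the normalisation of $U$ recorded in Section 4. Dividing by the denominator gives $\ba(h_\rho^{\e,\sigma})\to\sigma$; moreover, since $\upsilon_\rho$ and $U$ are radial, $\ba(h_\rho^{\e,\sigma})=R\,\ba(h_\rho^{\e,\sigma_0})$ whenever $R\in SO(N)$ maps a fixed $\sigma_0$ to $\sigma$, so the convergence is in fact uniform in $\sigma\in\mathbb{S}^{N-1}$.

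The only mildly delicate point is the $L^1$-domination above, where the linear weight $|x|$ must be absorbed by the decay of the Talenti profile at infinity; everything else is a routine rescaling combined with Lemma \ref{nhlem8}$(i)$. \QED
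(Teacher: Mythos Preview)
Your argument is correct and proceeds along the same rescaling $x=(1-\e)\sigma+\e z$, $y=(1-\e)\sigma+\e w$ that the paper uses; in both cases the barycenter is reduced to $\sigma$ plus an error term after this change of variables. The only difference is in how the passage to the limit is justified. The paper argues by contradiction and bounds the weighted ratio by $\e_n\sup_{z\in supp(\upsilon_\rho)}|z-\sigma|$, a step that is terse (and, read literally in the rescaled variable, that supremum is of order $1/\e_n$, so the line is not self-contained). You instead apply dominated convergence with the explicit majorant $(1+|z|)\,U(z)^{2^*_{\mu}}U(w)^{2^*_{\mu}}|z-w|^{-\mu}$ and check its integrability via the Hardy--Littlewood--Sobolev inequality together with the $|z|^{-2N}$ decay of $|U|^{2^*}$; this makes the limit fully rigorous. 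Your final symmetry remark, using the radiality of $\upsilon_\rho$ and $U$ to upgrade the convergence to uniformity in $\sigma\in\mathbb{S}^{N-1}$, is a clean addition not spelled out in the paper.
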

\begin{proof}
	If there exists  $\eta>0$ and a sequence $\e_n \ra 0^+$ such that $|\ba(h_\rho^{\e_n})- \sigma|\geq \eta$. 	Then 
	\begin{align*}
	&\ba(h_\rho^{\e_n})  = \frac{\ds \int_{\mathbb{R}^N}\int_{\mathbb{R}^N} \frac{x|h_\rho^{\e_n}(x)|^{2^{*}_{\mu}}|h_\rho^{\e_n}(y)|^{2^{*}_{\mu}}}{|x-y|^{\mu}}~dxdy}{\|h_\rho^{\e_n}\|_{NL}^{2.2^*_{\mu}}}\\
	& = \sigma+   \frac{ \e_n \ds \int_{\mathbb{R}^N}\int_{\mathbb{R}^N} \frac{(z-\sigma)|\upsilon_\rho(\e_nz+(1-\e_n)\sigma)|^{2^{*}_{\mu}}|\upsilon_\rho(\e_nw +(1-\e_n)\sigma)|^{2^{*}_{\mu}}}{|z-w|^{\mu} [1+|z|^2]^{\frac{2N-\mu}{2}} [1+|w|^2]^{\frac{2N-\mu}{2}} } ~dzdw} {\ds \int_{\mathbb{R}^N}\int_{\mathbb{R}^N} \frac{|\upsilon_\rho(\e_nz+(1-\e_n)\sigma)|^{2^{*}_{\mu}}|\upsilon_\rho(\e_nw +(1-\e_n)\sigma)|^{2^{*}_{\mu}}}{|z-w|^{\mu} [1+|z|^2]^{\frac{2N-\mu}{2}} [1+|w|^2]^{\frac{2N-\mu}{2}} } ~dzdw}\\
%	& = \sigma+  \ds  \frac{ \e_n \ds \int_{supp(\upsilon_\rho)}\int_{supp(\upsilon_\rho)} \frac{(z-\sigma)|\upsilon_\rho(\e_nz+(1-\e_n)\sigma)|^{2^{*}_{\mu}}|\upsilon_\rho(\e_nw +(1-\e_n)\sigma)|^{2^{*}_{\mu}}}{|z-w|^{\mu} [1+|z|^2]^{\frac{2N-\mu}{2}} [1+|w|^2]^{\frac{2N-\mu}{2}} } ~dzdw} {\ds \int_{supp(\upsilon_\rho)}\int_{supp(\upsilon_\rho)} \frac{|\upsilon_\rho(\e_nz+(1-\e_n)\sigma)|^{2^{*}_{\mu}}|\upsilon_\rho(\e_nw +(1-\e_n)\sigma)|^{2^{*}_{\mu}}}{|z-w|^{\mu} [1+|z|^2]^{\frac{2N-\mu}{2}} [1+|w|^2]^{\frac{2N-\mu}{2}} } ~dzdw}\\
	& \leq \sigma+  \e_n \sup_{z \in supp(\upsilon_\rho)} |z-\sigma| \leq  \sigma +C \e_n, \text{ for some } C>0.
	\end{align*}
	It implies that $0< \eta \leq|\ba(h_\rho^{\e_n})- \sigma|\leq C \e_n \ra 0^+ $ as $\e_n \ra 0^+$, a contradiction.
%	$\ds \lim_{\e \ra 0} \ba(h_\rho^{\e,\sigma})= \sigma$. 
	\QED	
\end{proof}

\begin{Lemma}\label{nhlem23}
	Let $m_0= \ds \inf_{u \in \mathcal{Q}}\|u\|^2$ then $S_{H,L}< m_0$. 
\end{Lemma}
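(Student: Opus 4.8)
The plan is to argue by contradiction. Extending functions by zero outside $\Om$, the very definition of $S_{H,L}$ in \eqref{nh5} gives $\inf_{u\in\mathcal V}\|u\|^2=S_{H,L}$, and since $\mathcal Q\subset\mathcal V$ this yields $m_0\ge S_{H,L}$. Suppose, for contradiction, that $m_0=S_{H,L}$ and pick a minimizing sequence $\{u_n\}\subset\mathcal Q$, so $\|u_n\|^2\to S_{H,L}$, $b(u_n)=1$ and $\ba(u_n)=0$. By Lemma \ref{nhlem11}, putting $t_n=a(u_n)^{\frac{1}{2(2^*_\mu-1)}}=\|u_n\|^{\frac{1}{2^*_\mu-1}}$ we obtain $\tilde u_n:=t_nu_n\in\mathcal N$ with $a(\tilde u_n)=b(\tilde u_n)$ and $\mathcal J(\tilde u_n)=\frac{N-\mu+2}{2(2N-\mu)}\,t_n^2\|u_n\|^2\to\frac{N-\mu+2}{2(2N-\mu)}S_{H,L}^{\frac{2N-\mu}{N-\mu+2}}=\Upsilon_0$. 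Hence, by Lemma \ref{nhlem14}, $\{\tilde u_n\}$ is a $(PS)_{\Upsilon_0}$ sequence for $\mathcal J=\mathcal J_0$.

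I would then apply the global compactness Lemma \ref{global} with $f\equiv 0$ to $\{\tilde u_n\}$. If the limit $v_0$ were nonzero it would be a nontrivial solution of the autonomous problem in $\Om$, whence $\mathcal J(v_0)>\Upsilon_0$ by Remark \ref{nhrem1}; moreover every nontrivial solution $v_i$ of the limit equation in $\mathbb R^N$ satisfies $\mathcal J_\infty(v_i)=\frac{N-\mu+2}{2(2N-\mu)}\|v_i\|^2_{D^{1,2}(\mathbb R^N)}\ge\Upsilon_0$, because testing the equation with $v_i$ gives $\|v_i\|^2=\|v_i\|_{NL}^{2\cdot2^*_\mu}\ge S_{H,L}\|v_i\|_{NL}^2$, forcing $\|v_i\|^2\ge S_{H,L}^{\frac{2N-\mu}{N-\mu+2}}$. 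Inserting these into $\mathcal J(v_0)+\sum_{i=1}^k\mathcal J_\infty(v_i)=\Upsilon_0$ forces $v_0=0$, $k=1$ and $\mathcal J_\infty(v_1)=\Upsilon_0$, i.e. $v_1$ attains $S_{H,L}$; by Lemma \ref{nhlem7} we may assume $v_1$ radially symmetric about the origin, and then $\|v_1\|_{NL}^{2\cdot2^*_\mu}=\|v_1\|^2_{D^{1,2}(\mathbb R^N)}=S_{H,L}^{\frac{2N-\mu}{N-\mu+2}}$. Consequently $\tilde u_n=(\la_n)^{\frac{2-N}{2}}v_1((\cdot-y_n)/\la_n)+o(1)$ in $D^{1,2}(\mathbb R^N)$ with $y_n\in\Om$ and $\mathrm{dist}(y_n,\pa\Om)/\la_n\to\infty$; since $\Om$ is bounded, this forces $\la_n\to0$.

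The decisive step is the computation of the barycenter of the concentrating profile. The integrand defining $\ba$ is scale invariant exactly as the one defining $b$, so the change of variables $x=y_n+\la_n z$, $y=y_n+\la_n w$ gives
\begin{align*}
\ba\big((\la_n)^{\frac{2-N}{2}}v_1((\cdot-y_n)/\la_n)\big)=y_n\int_{\mathbb R^N}\int_{\mathbb R^N}\frac{|v_1(z)|^{2^*_\mu}|v_1(w)|^{2^*_\mu}}{|z-w|^\mu}\,dz\,dw+\la_n\int_{\mathbb R^N}\int_{\mathbb R^N}\frac{z\,|v_1(z)|^{2^*_\mu}|v_1(w)|^{2^*_\mu}}{|z-w|^\mu}\,dz\,dw ,
\end{align*}
where the second double integral vanishes under $z\mapsto-z$, $w\mapsto-w$ since $v_1$ is radial, while the first equals $y_n\,S_{H,L}^{\frac{2N-\mu}{N-\mu+2}}$ by the normalization of $v_1$. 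Since $\tilde u_n$ is supported in the fixed bounded set $\overline\Om$, and the rescaled density $|v_1((\cdot-y_n)/\la_n)|^{2^*}$ concentrates at $y_n$, a Hardy--Littlewood--Sobolev estimate shows $\ba$ passes to the limit along the decomposition, so $\ba(\tilde u_n)=y_n\,S_{H,L}^{\frac{2N-\mu}{N-\mu+2}}+o(1)$. On the other hand $\ba(\tilde u_n)=t_n^{2\cdot2^*_\mu}\ba(u_n)=0$ because $u_n\in\mathcal Q$. Therefore $y_n\to0$, which is impossible since $y_n\in\Om$ while $0\notin\overline\Om$ by the annular structure of $\Om$ (hypothesis $(A)$). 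This contradiction proves $m_0>S_{H,L}$.

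I expect the main obstacle to be the passage to the limit for $\ba$ along the profile decomposition: the weight $x$ in $\ba$ is unbounded on $\mathbb R^N$, so one must carefully use that all the mass of $\tilde u_n$ lies in the fixed bounded set $\overline\Om$ (and that the remainder tends to $0$ in $L^{2^*}$) in order to replace $\ba(\tilde u_n)$ first by $\ba$ of the bubble and then by $y_n\,\|v_1\|_{NL}^{2\cdot2^*_\mu}$; a secondary delicate point is ruling out concentration at the origin, which is where one invokes $0\notin\overline\Om$.
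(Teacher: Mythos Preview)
Your proposal is correct and follows essentially the same route as the paper's proof: argue by contradiction, rescale a minimizing sequence in $\mathcal Q$ to a $(PS)_{\Upsilon_0}$ sequence for $\mathcal J$ via Lemma~\ref{nhlem14}, apply the global compactness lemma to extract a single concentrating bubble at some $y_n\to y_0\in\overline\Om$ with $\la_n\to0$, and then use the barycenter constraint $\ba=0$ to force $y_0=0$, contradicting the domain hypothesis. The only cosmetic differences are that the paper rescales by the fixed constant $S_{H,L}^{\frac{N-2}{2(N-\mu+2)}}$ (the limit of your $t_n$) and does not invoke radial symmetry of $v_1$, since $\la_n\to0$ already kills the $\la_n C_1$ term regardless; your concern about the unbounded weight $x$ in $\ba$ is moot because every function involved is supported in the bounded set $\overline\Om$.
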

\begin{proof}
	Obviously $S_{H,L} \leq m_0$,  so let if possible, $S_{H,L}= \inf_{u \in \mathcal{Q}}\|u\|^2$ then there exists a sequence $\{v_n\} \in H_0^1(\Om)$ such that $\|v_n\|_{NL}=1,\; \ba(v_n)=0,\; \|v_n\|^2 \ra S_{H,L}$ as $n \ra \infty$. Setting $w_n= S_{H,L}^{\frac{N-2}{2(N-\mu+2)}}v_n$ we get  $\|w_n\|_{NL}^{2.2^*_{\mu}} =  S_{H,L}^{\frac{2N-\mu}{N-\mu+2}}$ and $\|w_n\|^2\ra   S_{H,L}^{\frac{2N-\mu}{N-\mu+2}}$. Therefore, $\mathcal{J}(w_n)\ra \frac{N-\mu+2}{2(2N-\mu)}S_{H,L}^{\frac{2N-\mu}{N-\mu+2}} $ and $\mathcal{J}^{\prime}(w_n)(w_n)= o(1)$. Using Lemma \ref{nhlem14}, we obtain $\{w_n\}$ is a Palais-Smale sequence of $\mathcal{J}$ at level $	\frac{N-\mu+2}{2(2N-\mu)}S_{H,L}^{\frac{2N-\mu}{N-\mu+2}}$. Subsequently, by 
	Theorem 4.4 of \cite{choqcoron} and Remark \ref{nhrem1}, there exist sequences $y_n \in \Om, \; \la_n \in \mathbb{R}^+$ such that $y_n \ra y_0\in \overline{\Om}$ and $\la_n\ra 0$, for the functions 
	\begin{align*}
	v_n = S_{H,L}^{- \frac{N-2}{2(N-\mu+2)}} w_{n}, \text{ where }  w_{n}= C \left(\frac{\la_n}{\la_n^2+|x-y_n|^2}\right)^{\frac{N-2}{2}} \text{ for some } C>0.
	\end{align*} 
	Thus if   
	\begin{align*}
	& C_1=  C\int_{\mathbb{R}^N}\int_{\mathbb{R}^N} \frac{z}{|z-w|^{\mu} [1+|z|^2]^{\frac{2N-\mu}{2}} [1+|w|^2]^{\frac{2N-\mu}{2}} } ~dzdw \quad \text{ and }\\ & C_2=  C \int_{\mathbb{R}^N}\int_{\mathbb{R}^N} \frac{1}{|z-w|^{\mu} [1+|z|^2]^{\frac{2N-\mu}{2}} [1+|w|^2]^{\frac{2N-\mu}{2}} } ~dzdw, 
	\end{align*}
then 	 \begin{align*}
	 0= \ba(v_n)= C\int_{\mathbb{R}^N}\int_{\mathbb{R}^N} \frac{x|v_n(x)|^{2^{*}_{\mu}}|v_n(y)|^{2^{*}_{\mu}}}{|x-y|^{\mu}}~dxdy = \la_n C_1 +y_n C_2 \ra C_2y_0. 
	 \end{align*}
	 This is a contradiction. Hence $S_{H,L}< m_0$. \QED
	\end{proof}
\begin{Lemma}\label{nhlem21}
	There exists $\e_0>0$ such that for $0<\e<\e_0$ and $|\sigma|=1$ we have 
	\begin{align*}
	S_{H,L}< \|h_\rho^{\e,\sigma}\|^2_{D^{1,2}(\mathbb{R}^N)} < \frac{m_0+ S_{H,L}}{2}.
	\end{align*}
\end{Lemma}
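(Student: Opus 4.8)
The plan is to read off both inequalities from results already in place: the upper bound from the convergence of Lemma \ref{nhlem20} combined with the strict gap $S_{H,L}<m_0$ of Lemma \ref{nhlem23}, and the lower bound from the fact that $S_{H,L}$ is never attained by a compactly supported function.

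First I would establish the lower bound, which holds for every $\e\in(0,1]$ and every $\sigma$ with $|\sigma|=1$. Note that $g_\rho^{\e,\sigma}=\upsilon_\rho u_{\e}^{\sigma}\in C_c^\infty(\mathbb{R}^N)\setminus\{0\}$ (since $u_{\e}^{\sigma}>0$ everywhere and $\upsilon_\rho\not\equiv0$), so $h_\rho^{\e,\sigma}=g_\rho^{\e,\sigma}/\|g_\rho^{\e,\sigma}\|_{NL}$ is a well-defined element of $C_c^\infty(\mathbb{R}^N)\setminus\{0\}$, and by homogeneity of $\|\cdot\|_{NL}$ we have $\|h_\rho^{\e,\sigma}\|_{NL}=1$, i.e. $\int_{\mathbb{R}^N}\int_{\mathbb{R}^N}\frac{|h_\rho^{\e,\sigma}(x)|^{2^*_{\mu}}|h_\rho^{\e,\sigma}(y)|^{2^*_{\mu}}}{|x-y|^{\mu}}\,dxdy=1$. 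Thus $h_\rho^{\e,\sigma}$ is admissible in the variational problem \eqref{nh5} defining $S_{H,L}$, whence $\|h_\rho^{\e,\sigma}\|^2_{D^{1,2}(\mathbb{R}^N)}\geq S_{H,L}$. If equality held, $h_\rho^{\e,\sigma}$ would be a minimizer of $S_{H,L}$; but by Lemma \ref{nhlem7} every minimizer equals, up to a positive constant, $C\big(b/(b^2+|x-a|^2)\big)^{\frac{N-2}{2}}$, which is strictly positive on all of $\mathbb{R}^N$ and hence not compactly supported, contradicting $h_\rho^{\e,\sigma}\in C_c^\infty(\mathbb{R}^N)$. (Alternatively, one may quote Remark \ref{nhrem1}: $S_{H,L}$ is never achieved on a bounded domain.) Therefore $S_{H,L}<\|h_\rho^{\e,\sigma}\|^2_{D^{1,2}(\mathbb{R}^N)}$.

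Next I would handle the upper bound. By Lemma \ref{nhlem23}, $S_{H,L}<m_0$, so $\frac{m_0+S_{H,L}}{2}>S_{H,L}$. By Lemma \ref{nhlem20}, $\|h_\rho^{\e,\sigma}\|^2_{D^{1,2}(\mathbb{R}^N)}\to S_{H,L}$ as $\e\to0$ uniformly in $\sigma\in\mathbb{S}^{N-1}$; hence there exists $\e_0>0$ such that $\|h_\rho^{\e,\sigma}\|^2_{D^{1,2}(\mathbb{R}^N)}<\frac{m_0+S_{H,L}}{2}$ for all $0<\e<\e_0$ and all $\sigma\in\mathbb{S}^{N-1}$. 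Combining this with the lower bound from the previous paragraph gives the asserted double inequality.

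There is no substantial obstacle here; the only delicate point is the strictness of the lower bound, which is precisely where one uses that the extremals of $S_{H,L}$ are the Talenti-type bubbles of Lemma \ref{nhlem7} — these have full support and so cannot coincide with the compactly supported truncation $h_\rho^{\e,\sigma}$.
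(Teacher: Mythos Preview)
Your proof is correct and follows essentially the same approach as the paper: the lower bound comes from the non-attainability of $S_{H,L}$ by compactly supported functions (the paper phrases this as ``$S_{H,L}$ is not attained on a bounded domain''), and the upper bound follows from Lemma~\ref{nhlem20} together with the strict gap $S_{H,L}<m_0$ of Lemma~\ref{nhlem23}. Your write-up is in fact slightly more explicit than the paper's in justifying the strict lower inequality.
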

\begin{proof} Apparently $	S_{H,L}\leq  \|h_\rho^{\e,\sigma}\|^2_{D^{1,2}(\mathbb{R}^N)}$ and we know that $S_{H,L}$ is not attained on a bounded domain. Thus, $S_{H,L}< \|h_\rho^{\e,\sigma}\|^2_{D^{1,2}(\mathbb{R}^N)}$. Since $S_{H,L}< m_0$,  there exists $\de_0$ such that $\frac{S_{H,L}}{2}+ \de_0<\frac{m_0}{2}$ and from Lemma \ref{nhlem20} we know that $\|h_\rho^{\e,\sigma}\|^2_{D^{1,2}(\mathbb{R}^N)}\ra S_{H,L}$ as $\e \ra 0$. Therefore for $\de_0>0$ there exists a $\e_0>0$ such that  $\|h_\rho^{\e,\sigma}\|^2_{D^{1,2}(\mathbb{R}^N)}< S_{H,L}+\de_0 $ whenever $0<\e<\e_0$. Hence we have the desired result. \QED
	\end{proof}
 Now we will state the minimax lemma given by Brezis and Nirenberg \cite{brezis2}. 
\begin{Lemma}\label{nhlem25}
	Let $Y$ be a  Banach space and $\phi \in C^1(Y, \mathbb{R})$. Let $A$ be a compact metric space, $A_0 \subset A$ be a closed set and $\gamma \in C(A_0, \; Y)$. Define
	\begin{align*}
	\Gamma= \{g \in C(A, \; Y): g(s)= \gamma(s) \text{ if } s \in A_0\},\; \;  \overline{c}= \ds \inf_{g \in \Gamma}\sup_{s\in A} \phi(g(s)),\; \; \hat{c}= \ds \sup_{\gamma(A_0)} \phi.
	\end{align*}
 \noi If  $\overline{c}> \hat{c}$ then there exists a sequence $\{u_n\} \in Y$ satisfying 
	$
	\phi(u_n) \ra  \overline{c} \text{ and }\phi^{\prime}(u_n)\ra 0.
$. Further, if  $\phi$ satisfies $(PS)_{\overline{c}}$ condition then  there exists $u_0\in Y$ such that $\phi(u_0)= \overline{c}$ and $\phi^\prime(u_0)=0$.\QED
\end{Lemma}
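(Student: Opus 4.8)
The plan is to realise $\overline{c}$ as a minimax value over a complete metric space of admissible maps and then run a deformation argument. First I would record that $\Gamma\neq\emptyset$: since $A$ is a compact metric space, $A_0\subset A$ is closed and $\gamma\colon A_0\to Y$ is continuous into a normed space, Dugundji's extension theorem produces $\bar\gamma\in C(A,Y)$ with $\bar\gamma|_{A_0}=\gamma$, so $\bar\gamma\in\Gamma$ and, by compactness of $A$, $\overline{c}\leq\sup_{s\in A}\phi(\bar\gamma(s))<\infty$. I would equip $\Gamma$ with the uniform distance $d(g,h)=\sup_{s\in A}\|g(s)-h(s)\|_Y$; then $(\Gamma,d)$ is complete (a closed subspace of $C(A,Y)$), the functional $\Psi(g):=\max_{s\in A}\phi(g(s))$ is well defined (the maximum is attained since $A$ is compact), continuous on $\Gamma$ — indeed locally Lipschitz, which follows from $\phi\in C^1$ together with the mean value inequality along segments joining $g(s)$ and nearby $h(s)$ inside a small neighbourhood of the compact set $g(A)$ — and $\inf_{\Gamma}\Psi=\overline{c}$.

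Next I would produce the Palais--Smale sequence by contradiction. Suppose there is no sequence $\{u_n\}\subset Y$ with $\phi(u_n)\to\overline{c}$ and $\phi'(u_n)\to0$. Then there exist $\varepsilon_0,\delta_0>0$ with $\|\phi'(u)\|\geq\delta_0$ whenever $|\phi(u)-\overline{c}|\leq\varepsilon_0$, and, shrinking $\varepsilon_0$ if necessary, we may also assume $\varepsilon_0<\overline{c}-\hat{c}$ (here the hypothesis $\overline{c}>\hat{c}$ enters). Put $\varepsilon:=\varepsilon_0/2$. Since $\phi$ is $C^1$ and has no almost critical points in the band $\phi^{-1}([\overline{c}-2\varepsilon,\overline{c}+2\varepsilon])$, the quantitative deformation lemma (a locally Lipschitz pseudo-gradient field for $\phi$ on that band, together with the flow it generates) provides $\eta\in C([0,1]\times Y,Y)$ such that $\eta(0,\cdot)=\mathrm{id}$; $\eta(t,u)=u$ whenever $\phi(u)\notin[\overline{c}-2\varepsilon,\overline{c}+2\varepsilon]$; and $\phi(\eta(1,u))\leq\overline{c}-\varepsilon$ whenever $\phi(u)\leq\overline{c}+\varepsilon$. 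Now choose $g\in\Gamma$ with $\Psi(g)<\overline{c}+\varepsilon$ and set $h(s):=\eta(1,g(s))$, so $h\in C(A,Y)$. For $s\in A_0$ we have $\phi(g(s))=\phi(\gamma(s))\leq\hat{c}<\overline{c}-\varepsilon_0=\overline{c}-2\varepsilon$, so $\eta$ fixes $g(s)$ and $h(s)=\gamma(s)$; hence $h\in\Gamma$. On the other hand $\phi(g(s))\leq\Psi(g)<\overline{c}+\varepsilon$ for every $s\in A$, so $\phi(h(s))\leq\overline{c}-\varepsilon$ and therefore $\Psi(h)\leq\overline{c}-\varepsilon<\overline{c}=\inf_{\Gamma}\Psi$, a contradiction. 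Thus the claimed $(PS)_{\overline{c}}$ sequence exists. For the last assertion, if $\phi$ satisfies the $(PS)_{\overline{c}}$ condition then that sequence has a subsequence $u_{n_k}\to u_0$ in $Y$, and continuity of $\phi$ and of $\phi'$ yields $\phi(u_0)=\overline{c}$ and $\phi'(u_0)=0$.

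\textbf{Main obstacle.} All the analytic content is packaged inside the quantitative deformation lemma: building a locally Lipschitz pseudo-gradient vector field for $\phi$ on the band and integrating its flow, using only $\phi\in C^1$ (no compactness is needed at that stage). Given that tool, the single delicate point above is the calibration of $\varepsilon$ against $\overline{c}-\hat{c}$, which is exactly what forces the deformed path $h$ to still coincide with $\gamma$ on $A_0$, hence to remain admissible. An alternative that sidesteps a direct appeal to the deformation lemma is to apply Ekeland's variational principle to $\Psi$ on $(\Gamma,d)$ to obtain a near-minimiser $g_n$, and then to perturb $g_n$ along a pseudo-gradient flow cut off to a small neighbourhood of its peak set (the set of $s$ where $\phi(g_n(s))$ is close to $\Psi(g_n)$, which is disjoint from $A_0$ for the same reason) in order to exhibit a nearly critical point $u_n=g_n(s_n)$; this arrives at the same conclusion but relies on the same pseudo-gradient machinery.
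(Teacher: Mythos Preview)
Your argument is correct and follows the standard route to the Brezis--Nirenberg minimax principle: nonemptiness of $\Gamma$ via Dugundji, then a contradiction through the quantitative deformation lemma, with the crucial calibration $\varepsilon_0<\overline{c}-\hat{c}$ ensuring the deformed path remains admissible on $A_0$. The only minor point worth tightening is that the deformation lemma in infinite-dimensional Banach spaces requires a pseudo-gradient construction (paracompactness and partitions of unity), which you acknowledge but do not carry out; this is entirely standard and can be quoted.

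As for comparison: the paper does not prove this lemma at all. It is stated as Lemma~\ref{nhlem25} with an immediate \QED\ and attributed to Brezis and Nirenberg \cite{brezis2}; the authors simply invoke it as a black box in Proposition~\ref{nhprop3}. So your proposal supplies a proof where the paper offers none, and the approach you take is precisely the one Brezis and Nirenberg use in the cited reference.
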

Let $r_0= 1-\e_0$ and $\overline{B}_{r_0}= \{(1-\e)\sigma \in \mathbb{R}^N: |(1-\e)\sigma|\leq r_0,\;  \sigma \in \mathbb{S}^{N-1},\;  0< \e\leq 1 \}$, where $\e_0$ is defined in Lemma \ref{nhlem21}. Then we set 
%\begin{align*}
$\ds F= \{q \in C(\overline{B}_{r_0}, \mathcal{V}); \; q_{| \pa \overline{B}_{r_0}} = h_\rho^{\e,\sigma} \}$
%\end{align*}
and 
\begin{align*}
 \overline{c}= \ds \inf_{q \in F}\sup_{(1-\e)\sigma  \in \overline{B}_{r_0}}\|q((1-\e)\sigma)\|^2, \qquad \hat{c}= \ds \sup_{\pa \overline{B}_{r_0}} \|h_\rho^{\e,\sigma}\|^2
\end{align*}
\begin{Lemma}
For each $q \in F$, we have $q(\overline{B}_{r_0})\cap \mathcal{Q} \not = \emptyset$. 
\end{Lemma}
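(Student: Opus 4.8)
The plan is to reduce the claim to the existence of a zero of the finite-dimensional map $T:=\ba\circ q:\overline B_{r_0}\to\R^N$ and to produce such a zero by a Brouwer degree computation. First I would note that $T$ is well defined and continuous: $q$ is continuous into $\mc V$, and $\ba$ is continuous on $\mc V$ since the weight $x$ is bounded on $\Om$ (so $|\ba(u)|\le C\,b(u)$) while the nonlocal quadratic form depends continuously on $u^+\in L^{2^*}$ through the Hardy--Littlewood--Sobolev inequality. A point $z\in\overline B_{r_0}$ with $q(z)\in\mc Q$ is exactly a point with $T(z)=0$; hence it suffices to show $0\in T(\overline B_{r_0})$.

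Next I would study $T$ on the boundary sphere. Every boundary point has the form $r_0\sigma=(1-\e_0)\sigma$ with $\sigma\in\mathbb{S}^{N-1}$, so the constraint $q|_{\partial\overline B_{r_0}}=h_\rho^{\e,\sigma}$ forces $T(r_0\sigma)=\ba(h_\rho^{\e_0,\sigma})$. From the proof of the lemma establishing $\lim_{\e\ra0}\ba(h_\rho^{\e,\sigma})=\sigma$ one extracts the uniform estimate $\sup_{\sigma\in\mathbb{S}^{N-1}}|\ba(h_\rho^{\e,\sigma})-\sigma|=O(\e)$; shrinking $\e_0$ (the constant in Lemma \ref{nhlem21}, which leaves that lemma valid) so that $|\ba(h_\rho^{\e_0,\sigma})-\sigma|<\tfrac12$ for all $\sigma$, we obtain $\ld \ba(h_\rho^{\e_0,\sigma}),\sigma\rd\ge 1-|\ba(h_\rho^{\e_0,\sigma})-\sigma|>\tfrac12>0$. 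In particular $0\notin T(\partial\overline B_{r_0})$, so $\deg(T,B_{r_0},0)$ is defined.

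Finally I would compute this degree through the affine homotopy $H(t,\cdot)=(1-t)T+t\,\mathrm{id}$ on $\overline B_{r_0}$. On the boundary, $H(t,r_0\sigma)=(1-t)\,\ba(h_\rho^{\e_0,\sigma})+t\,r_0\sigma$, and since both $\ba(h_\rho^{\e_0,\sigma})$ and $r_0\sigma$ lie in the open convex half-space $\{y\in\R^N:\ld y,\sigma\rd>0\}$, which avoids the origin, $H(t,r_0\sigma)\neq0$ for every $t\in[0,1]$ and $\sigma\in\mathbb{S}^{N-1}$. Thus homotopy invariance of the Brouwer degree gives $\deg(T,B_{r_0},0)=\deg(\mathrm{id},B_{r_0},0)=1\neq0$, and the solution property of the degree produces $z\in B_{r_0}$ with $T(z)=\ba(q(z))=0$, i.e.\ $q(z)\in\mc Q$; hence $q(\overline B_{r_0})\cap\mc Q\neq\emptyset$. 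The only genuinely delicate point is this last tuning of $\e_0$, which rests on the uniform-in-$\sigma$ rate $\ba(h_\rho^{\e,\sigma})=\sigma+O(\e)$ taken from the cited lemma; the remaining degree-theoretic steps are entirely standard.
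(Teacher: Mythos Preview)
Your proof is correct and follows essentially the same route as the paper: define $T=\ba\circ q$, use the linear homotopy $H(t,\cdot)=(1-t)T+t\,\mathrm{id}$, verify that $H$ does not vanish on $\partial\overline{B}_{r_0}$ using $\ba(h_\rho^{\e_0,\sigma})\approx\sigma$, and conclude via Brouwer degree that $T$ has a zero. Your half-space argument for admissibility of the homotopy and your explicit remark on the continuity of $\ba$ are cleaner than the paper's treatment, but the underlying strategy is identical.
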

\begin{proof}
	It is enough to show there exist $\tilde{e}>0$ and $\tilde{\sigma}\in \mathbb{S}^{N-1}$ such that $\ba(q((1-\tilde{e})\tilde{\sigma}))=0$. Define $\psi:\overline{B}_{r_0}\ra  \mathbb{R}^N $ by $\psi((1-\e)\sigma)= \ba (q((1-\e)\sigma))$.  We claim that 
	\begin{align*}
d(\psi,\overline{B}_{r_0} , 0 )= d(I,\overline{B}_{r_0} , 0 )\not = 0, \text{ where } d \text{  is Brouwer's  topological degree}.
	\end{align*}
If $(1-\e)\sigma \in \pa \overline{B}_{r_0} $ then $q((1-\e)\sigma)= h_\rho^{\e,\sigma}$ which implies
\begin{align*}
\psi((1-\e)\sigma)= \ba (q((1-\e)\sigma))= \ba(h_\rho^{\e,\sigma})=\sigma+o(1) \text{ as } \e \ra 0 .
\end{align*}  
Now define the homotopy $\mathcal{H}:[0,1]\times 	\overline{B}_{r_0} \ra  \mathbb{R}^N $ by \begin{align*}
\mathcal{H}(t, (1-\e)\sigma)= (1-t)\psi((1-\e)\sigma)+ tI((1-\e)\sigma)
\end{align*}
then for $(1-\e)\sigma \in \pa \overline{B}_{r_0}$ and $t\in[0,1] $ we have
\begin{align*} 
\mathcal{H}(t, (1-\e)\sigma) &= (1-t)\sigma+o(1)+ t(1-\e_0)\sigma \\
& = o(1)+(1-\e_0t)\sigma \not = 0, \qquad  \text{ as } \e \ra 0. 
\end{align*}
So by Brouwer's degree theory, claim holds. It implies there exist $\tilde{e}>0$ and $\tilde{\sigma}\in \mathbb{S}^{N-1}$ such that $\psi((1-\tilde{e})\tilde{\sigma})=0$ that is,  $\ba(q((1-\tilde{e})\tilde{\sigma}))=0$. \QED	
	\end{proof}

\noi Using above Lemma we have $m_0\leq 
\ds\sup_{(1-\e)\sigma  \in \overline{B}_{r_0}}\|q((1-\e)\sigma)\|^2 $ for all $q \in F$. Hence \begin{align*}
m_0\leq \ds \inf_{q \in F} \sup_{(1-\e)\sigma  \in \overline{B}_{r_0}}\|q((1-\e)\sigma)\|^2  = \overline{c}. 
\end{align*} 
Also, by the definition of $\overline{c}$, and Lemma \ref{nhlem22},  we have $\overline{c} < 2^{\frac{N-\mu+2}{2N-\mu}}S_{H,L}$ for $0<\rho<\rho_0$. Combining all these and using Lemma \ref{nhlem23} we have 
\begin{align}\label{nh9}
S_{H,L}< m_0\leq \overline{c}< 2^{\frac{N-\mu+2}{2N-\mu}}S_{H,L} \text{ for } \rho \text{ sufficiently small }.
\end{align}
In addition, from Lemma  \ref{nhlem21},  we get  \[\hat{c}=  \ds \sup_{\pa \overline{B}_{r_0}} \|h_\rho^{\e,\sigma}\|^2 < \frac{m_0+ S_{H,L}}{2} < m_0\leq \overline{c}. \]
Now we define 
\begin{align*}
& \breve{\mathcal{J}}_f(u)= \max_{t>0} \mathcal{J}_f(tu):\mathcal{V}\ra \mathbb{R}^N \quad \text{and } \quad \breve{\mathcal{J}}(u)= \max_{t>0}\mathcal{J}(tu):\mathcal{V}\ra \mathbb{R}^N ,\\
& \gamma_f= \ds \inf_{q \in F}\sup_{(1-\e)\sigma  \in \overline{B}_{r_0}} \breve{\mathcal{J}}_f(q((1-\e)\sigma))\quad \text{and } \quad  \gamma_0= \ds \inf_{q \in F}\sup_{(1-\e)\sigma  \in \overline{B}_{r_0}} \breve{\mathcal{J}}(q((1-\e)\sigma)) . 
\end{align*}
We remark that the conclusion of Lemma \ref{nhlem15} (iii) holds true for $\breve{\mathcal{J}}_f$. Moreover, $\breve{\mathcal{J}}_f(u)=\ds  \max_{t>0} \mathcal{J}_f(tu)=\mathcal{J}_f( t^-(u)u) $, where $t^-(u)$ is defined in Lemma \ref{nhlem3}.

 \begin{Lemma}\label{nhlem35}
	The following holds:
	\begin{enumerate}
		\item [(i)] $\breve{\mathcal{J}}_f \in C^1(\mathcal{V},\mathbb{R})$ and $\ld \breve{\mathcal{J}}_f^{\prime}(u),h\rd = t^-(u)\ld \mathcal{J}_f^{\prime}( t^-(u)u),h\rd$ for all $ h \in T_u(\mathcal{V})$ where \\ $T_u(\mathcal{V}):=\bigg\{ h \in H_0^1(\Om) \bigg| \ds \int_{\Om}\int_{\Om} \frac{|u^+(x)|^{2^*_{\mu}}|u^+(y)|^{2^*_{\mu}-1}h(y)}{|x-y|^{\mu}}=0 \bigg\}$.
		\item [(ii)] If $u \in \mathcal{V}$ is a critical point of $\breve{\mathcal{J}}_f$ then $t^-(u)u \in \mathcal{N}_f^-$ is a critical point of $\mathcal{J}_f$.
		\item[(iii)] If $\{u_n\}_{n\in \mathbb{N}}$  is a $(PS)_c$ sequence of $\breve{\mathcal{J}}_f$ then $\{t^-(u_n)u_n\}_{n\in \mathbb{N}} \in \mathcal{N}_f^-$ is a $(PS)_c$ sequence for $\mathcal{J}_f$.
		\end{enumerate}
		\end{Lemma}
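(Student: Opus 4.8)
The plan is to establish all three parts by exploiting the fact that $\breve{\mathcal{J}}_f(u) = \mathcal{J}_f(t^-(u)u)$ together with the smoothness of the map $u \mapsto t^-(u)$ on $\mathcal{V}$. First I would record the basic structure: for $u \in \mathcal{V}$, the fibering map $\phi_u(t) = \mathcal{J}_f(tu)$ has, by Lemma \ref{nhlem3}(a), a unique maximizer $t^-(u) > t_{\max}$ over $[t_{\max}, \infty)$, and since $b(u) = 1$ on $\mathcal{V}$ this maximum is the global maximum of $\phi_u$ on $\mathbb{R}^+$; thus $\breve{\mathcal{J}}_f(u) = \phi_u(t^-(u)) = \mathcal{J}_f(t^-(u)u)$. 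The key analytic point is that $t^-$ is of class $C^1$ on $\mathcal{V}$: this follows from the implicit function theorem applied to $\Psi(t,u) := \phi_u'(t) = t\,a(u) - t^{2\cdot 2^*_\mu - 1} b(u) - \int_\Om f u\,dx$, using that $\partial_t \Psi(t^-(u), u) = \phi_u''(t^-(u)) < 0$ strictly (because $t^-(u)u \in \mathcal{N}_f^-$), so the simple zero $t^-(u)$ depends $C^1$-smoothly on $u$; this also reproves continuity claimed in Lemma \ref{nhlem3}(c).

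For part (i), I would compute $\ld \breve{\mathcal{J}}_f'(u), h\rd$ for $h \in T_u(\mathcal{V})$ directly by the chain rule. Writing $w(u) = t^-(u)u$, one has
\[
\ld \breve{\mathcal{J}}_f'(u), h\rd = \ld \mathcal{J}_f'(w(u)),\, t^-(u)h + (\ld (t^-)'(u), h\rd)\, u \rd.
\]
Now since $w(u) = t^-(u)u \in \mathcal{N}_f$, we have $\ld \mathcal{J}_f'(w(u)), w(u)\rd = 0$, i.e. $\ld \mathcal{J}_f'(w(u)), u\rd = 0$ (dividing by $t^-(u) > 0$), so the second term with the derivative of $t^-$ drops out entirely, leaving $\ld \breve{\mathcal{J}}_f'(u), h\rd = t^-(u)\ld \mathcal{J}_f'(t^-(u)u), h\rd$. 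The compatibility with the constraint: on $T_u(\mathcal{V})$ the pairing $\ld \breve{\mathcal{J}}_f'(u), h\rd$ is exactly the Lagrange-style directional derivative, and I should note $\breve{\mathcal{J}}_f \in C^1(\mathcal{V},\mathbb{R})$ because it is the composition of the $C^1$ map $u \mapsto (t^-(u), u)$ with the $C^1$ functional $(t,u) \mapsto \mathcal{J}_f(tu)$.

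For part (ii): if $u \in \mathcal{V}$ is a critical point of $\breve{\mathcal{J}}_f$ restricted to $\mathcal{V}$, then by the Lagrange multiplier rule there is $\lambda$ with $\breve{\mathcal{J}}_f'(u) = \lambda\, G'(u)$ where $G(u) = b(u) - 1$; testing against $h \in T_u(\mathcal{V}) = \ker G'(u)$ gives, via (i), that $t^-(u)\ld \mathcal{J}_f'(t^-(u)u), h\rd = 0$ for all such $h$, hence $\mathcal{J}_f'(t^-(u)u) \in \mathrm{span}\{G'(u)\} = \mathrm{span}\{b'(u)\}$, so $\mathcal{J}_f'(t^-(u)u) = \theta\, b'(u)$ for some $\theta \in \mathbb{R}$. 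Pairing with $t^-(u)u$ and using $\ld \mathcal{J}_f'(t^-(u)u), t^-(u)u\rd = 0$ (since $t^-(u)u \in \mathcal{N}_f$) together with $\ld b'(u), u\rd = 2\cdot 2^*_\mu\, b(u) = 2\cdot 2^*_\mu \neq 0$ forces $\theta = 0$, so $\mathcal{J}_f'(t^-(u)u) = 0$ and $t^-(u)u \in \mathcal{N}_f^-$ by Lemma \ref{nhlem3}(a). For part (iii): given a $(PS)_c$ sequence $\{u_n\}$ for $\breve{\mathcal{J}}_f$ on $\mathcal{V}$, set $v_n = t^-(u_n)u_n \in \mathcal{N}_f^-$. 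Then $\mathcal{J}_f(v_n) = \breve{\mathcal{J}}_f(u_n) \to c$. For the derivative, any $\phi \in H^1_0(\Om)$ decomposes as $\phi = h + s\,u_n$ with $h \in T_{u_n}(\mathcal{V})$, and $\ld \mathcal{J}_f'(v_n), u_n\rd = 0$; using part (i), boundedness of $t^-(u_n)$ away from $0$ and $\infty$ (which I verify from Lemma \ref{nhlem3}(a) and boundedness of the sequence in $\mathcal{V}$), and $\|\breve{\mathcal{J}}_f'(u_n)\|_{T^*_{u_n}(\mathcal{V})} \to 0$, one concludes $\|\mathcal{J}_f'(v_n)\|_{H^{-1}} \to 0$. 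The main obstacle is the careful bookkeeping needed for the uniform two-sided bound $0 < c_1 \le t^-(u_n) \le c_2$ and the control of the $u_n$-direction in the $(PS)$ estimate; once those are in place, the rest is the chain rule and the Lagrange multiplier argument above.
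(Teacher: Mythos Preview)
Your proposal is correct and follows essentially the same route as the paper: implicit function theorem for $C^1$-smoothness of $t^-$, the chain rule computation with the $u$-direction dropping out because $\langle \mathcal{J}_f'(t^-(u)u), u\rangle = 0$, a Lagrange-multiplier argument for (ii), and the tangent/normal decomposition for (iii). One minor remark: in part (iii) you only need the \emph{lower} bound $t^-(u_n) > t_{\max}(u_n) \geq \bigl(S_{H,L}/(2\cdot 2^*_\mu -1)\bigr)^{1/(2\cdot 2^*_\mu -2)}$ (from Lemma~\ref{nhlem3}(a) and $u_n \in \mathcal{V}$) to pass from $\|\breve{\mathcal{J}}_f'(u_n)\|_{T^*_{u_n}(\mathcal{V})} \to 0$ to $\|\mathcal{J}_f'(t^-(u_n)u_n)\|_{H^{-1}} \to 0$; the upper bound is not required for the derivative estimate, though it would follow anyway from coercivity of $\mathcal{J}_f$ on $\mathcal{N}_f$ together with $\mathcal{J}_f(t^-(u_n)u_n) \to c$.
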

\begin{proof}
	(a) As for every $u \in H_0^1(\Om),\; t^-(u)u \in \mathcal{N}_f^-$ that is, $\ld \mathcal{J}_f^{\prime}( t^-(u)u),u\rd=0$  and 
	$\frac{d^2}{dt^2}\bigg |_{t=t^-(u) } \mathcal{J}_f(tu)<0$ . Therefore, by implicit function theorem, we get $t^-(u) \in C^1(\mathcal{V}, (0,\infty))$. As a result, $\breve{\mathcal{J}}_f(u)=\mathcal{J}_f( t^-(u)u) \in C^1(\mathcal{V},\mathbb{R})$ and for all $ h \in T_u(\mathcal{V})$,  we have 
	\begin{align*}
	\ld \breve{\mathcal{J}}_f^{\prime}(u),h\rd = t^-(u)\ld \mathcal{J}_f^{\prime}( t^-(u)u),h\rd+ \ld \mathcal{J}_f^{\prime}( t^-(u)u),u\rd \ld( t^-(u))^{\prime},h \rd = t^-(u)\ld \mathcal{J}_f^{\prime}( t^-(u)u),h\rd.
	\end{align*}
	(b) Combining the fact that  $u \in \mathcal{V}$ is a critical point of $\breve{\mathcal{J}}_f$  and $\ld \mathcal{J}_f^{\prime}( t^-(u)u),u\rd=0$, we get the desired result.\\
	(c) Let $\{u_n\}_{n\in \mathbb{N}}$  is a $(PS)_c$ sequence of $\breve{\mathcal{J}}_f$, that is, $u_n \in \mathcal{V},$
	\begin{align*}
	\breve{\mathcal{J}}_f(u_n) \ra c \text{ and }  \|\breve{\mathcal{J}}_f^{\prime}(u)\|_{T^*_{u_n}(\mathcal{V})}=\sup \{ |\ld \breve{\mathcal{J}}_f^{\prime}(u_n),h\rd| : h \in T_{u_n}(\mathcal{V}), \|h\|=1 \} \ra 0 \text{ as  } n\ra \infty.
	\end{align*}
	By Lemma \ref{nhlem3} we have $t^-(u_n)> \left(\frac{\|u\|^2}{2.2^*_{\mu}-1}\right)^{\frac{1}{2.2^*_{\mu}-2}}> \left(\frac{S_{H,L}}{2.2^*_{\mu}-1}\right)^{\frac{1}{2.2^*_{\mu}-2}}>C$ for some $C>0$. Since $H_0^1(\Om)= R_{u_n}\oplus T_{u_n}(\mathcal{V})$ so $\ld \mathcal{J}_f^{\prime}(u_n),v \rd = \ld \mathcal{J}_f^{\prime}(u_n),h_v \rd$,  where $h_v$ is the projection of $v$ in $T_{u_n}(\mathcal{V})$. Hence,
	\begin{align*}
	\|\mathcal{J}_f^{\prime}(t^-(u_n)u_n)\| 
	& = \sup_{v \in H_0^1(\Om), \|v\|=1}|\ld \mathcal{J}_f^{\prime}(t^-(u_n)u_n),v \rd| \\
	& = \sup_{v \in H_0^1(\Om), \|v\|=1}|\ld \mathcal{J}_f^{\prime}(t^-(u_n)u_n),h_v \rd| \\
	& = \sup_{v \in H_0^1(\Om), \|v\|=1} \frac{1}{t^-(u_n)}|\ld \breve{\mathcal{J}}_f^{\prime}(u_n),h_v \rd| \leq \frac{1}{C} \|\breve{\mathcal{J}}_f^{\prime}(u)\|_{T^*_{u_n}(\mathcal{V})} \ra 0.
	\end{align*}
Clearly, $\mathcal{J}_f( t^-(u_n)u_n) \ra c$. Therefore, $\{t^-(u_n)u_n\}_{n\in \mathbb{N}} \in \mathcal{N}_f^-$ is a $(PS)_c$ sequence for $\mathcal{J}_f$.
\QED
	\end{proof}

\begin{Lemma}\label{nhlem24}
	If $0<\rho<\rho_0$, then
%	\begin{align*}
	$\ds	\frac{N-\mu+2}{2(2N-\mu)}S_{H,L}^{\frac{2N-\mu}{N-\mu+2}} < \gamma_0< 	\frac{N-\mu+2}{2N-\mu}S_{H,L}^{\frac{2N-\mu}{N-\mu+2}}.$ 
%	\end{align*}
\end{Lemma}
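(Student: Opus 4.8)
The plan is to establish the two inequalities separately, using the minimax characterization $\gamma_0 = \inf_{q\in F}\sup_{(1-\e)\sigma\in\overline{B}_{r_0}}\breve{\mathcal{J}}(q((1-\e)\sigma))$ together with the relation $\breve{\mathcal{J}}(u) = \frac{N-\mu+2}{2(2N-\mu)}\|u\|^{\frac{2(2N-\mu)}{N-\mu+2}}$ for $u\in\mathcal{V}$ coming from Lemma \ref{nhlem15}(i) (applied with $c=1$, so $b_c(u)=b(u)=1$ on $\mathcal V$, and $t^*(u)=\|u\|^{-\frac{2^*_\mu}{2^*_\mu-1}}$). Under this identity, the chain of inequalities \eqref{nh9}, namely $S_{H,L}<m_0\leq\overline{c}<2^{\frac{N-\mu+2}{2N-\mu}}S_{H,L}$, transfers directly: since $\breve{\mathcal J}$ is a strictly increasing function of $\|u\|^2$, one has $\gamma_0 = \frac{N-\mu+2}{2(2N-\mu)}\,\overline{c}^{\,\frac{2N-\mu}{N-\mu+2}}$, because the minimizing quantity $\sup\|q((1-\e)\sigma)\|^2$ and the minimizing quantity $\sup\breve{\mathcal J}(q((1-\e)\sigma))$ are attained by (essentially) the same optimal paths $q$ — more carefully, monotonicity of $t\mapsto \frac{N-\mu+2}{2(2N-\mu)}t^{\frac{2N-\mu}{N-\mu+2}}$ lets one interchange the monotone transformation with $\inf$ and $\sup$.

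First I would record the explicit formula $\breve{\mathcal J}(u)=\frac{N-\mu+2}{2(2N-\mu)}\|u\|^{\frac{2(2N-\mu)}{N-\mu+2}}$ for $u\in\mathcal V$, obtained by maximizing $t\mapsto\frac{t^2}{2}a(u)-\frac{t^{2\cdot 2^*_\mu}}{2\cdot 2^*_\mu}$ over $t>0$ (here $b(u)=1$), exactly as in Lemma \ref{nhlem11}/\ref{nhlem15}(i). Then, since $\phi(t):=\frac{N-\mu+2}{2(2N-\mu)}t^{\frac{2N-\mu}{N-\mu+2}}$ is continuous and strictly increasing on $[0,\infty)$, for any fixed $q\in F$ we have $\sup_{(1-\e)\sigma}\breve{\mathcal J}(q((1-\e)\sigma))=\phi\big(\sup_{(1-\e)\sigma}\|q((1-\e)\sigma)\|^2\big)$, and then taking the infimum over $q\in F$ and using that $\phi$ is increasing gives $\gamma_0=\phi(\overline{c})$. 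Plugging $\overline{c}>m_0>S_{H,L}$ into $\phi$ yields $\gamma_0>\phi(S_{H,L})=\frac{N-\mu+2}{2(2N-\mu)}S_{H,L}^{\frac{2N-\mu}{N-\mu+2}}$, the lower bound. For the upper bound, $\overline{c}<2^{\frac{N-\mu+2}{2N-\mu}}S_{H,L}$ gives
\[
\gamma_0=\phi(\overline{c})<\frac{N-\mu+2}{2(2N-\mu)}\Big(2^{\frac{N-\mu+2}{2N-\mu}}S_{H,L}\Big)^{\frac{2N-\mu}{N-\mu+2}}=\frac{N-\mu+2}{2(2N-\mu)}\cdot 2\cdot S_{H,L}^{\frac{2N-\mu}{N-\mu+2}}=\frac{N-\mu+2}{2N-\mu}S_{H,L}^{\frac{2N-\mu}{N-\mu+2}},
\]
which is precisely the claimed upper bound.

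The step I expect to be the main (though minor) obstacle is justifying the interchange $\gamma_0=\phi(\overline c)$ rigorously: one must check that the optimal paths for the $\|\cdot\|^2$-minimax problem and for the $\breve{\mathcal J}$-minimax problem can be taken to be the same, which follows from strict monotonicity of $\phi$ but deserves a careful line — namely, for each $q$, $\phi(\sup_q\|\cdot\|^2)=\sup_q \phi(\|\cdot\|^2)=\sup_q\breve{\mathcal J}$ by continuity/monotonicity of $\phi$ and compactness of $\overline B_{r_0}$, and then $\inf_q\phi(\sup_q\|\cdot\|^2)=\phi(\inf_q\sup_q\|\cdot\|^2)=\phi(\overline c)$ again by monotonicity and continuity of $\phi$. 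Once this bookkeeping is in place, the lemma is immediate from \eqref{nh9}.
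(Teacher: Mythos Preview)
Your proposal is correct and follows the same approach as the paper: compute $\breve{\mathcal J}(u)=\frac{N-\mu+2}{2(2N-\mu)}\|u\|^{\frac{2(2N-\mu)}{N-\mu+2}}$ for $u\in\mathcal V$, use monotonicity of $\phi$ to conclude $\gamma_0=\phi(\overline c)$, and then apply \eqref{nh9}. If anything, your argument is slightly more careful than the paper's, which passes from $\inf_q\sup\phi(\|q\|^2)$ to $\phi(\overline c)$ without explicitly invoking the monotonicity/compactness justification you spell out.
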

\begin{proof}
For $ u \in \mathcal{V}$, solving $\mathcal{J}^{\prime}(tu)= t\; a(u)- t^{2.2^*_{\mu}-1}=0$ we get $t=0$ and $t= (a(u))^{\frac{1}{2.2^*_{\mu}-2}}$. Therefore, 
	\begin{align*}
	\breve{\mathcal{J}}(u)= \max_{t>0} \mathcal{J}(tu)= \frac{N-\mu+2}{2(2N-\mu)}  \|u\|^{\frac{2(2N-\mu)}{N-\mu+2}}. 
	\end{align*}
	From the definition of $\overline{c}$, we obtain
	\begin{align*}
\gamma_0= \ds  \frac{N-\mu+2}{2(2N-\mu)}  \inf_{q \in F}\sup_{(1-\e)\sigma  \in \overline{B}_{r_0}}  \|q((1-\e)\sigma)\|^{\frac{2(2N-\mu)}{N-\mu+2}}=\frac{N-\mu+2}{2(2N-\mu)} \overline{c}^{\frac{2N-\mu}{N-\mu+2}}
	\end{align*} 
which on using \eqref{nh9} yields the desired result. \QED
	\end{proof}

\begin{Lemma}\label{nhlem26}
	$\breve{\mathcal{J}}_f(h_\rho^{\e,\sigma})= \frac{N-\mu+2}{2(2N-\mu)} S_{H,L}^{\frac{2N-\mu}{N-\mu+2}}+o(1)$ as $\e\ra 0$.
\end{Lemma}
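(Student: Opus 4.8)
The plan is to reduce the computation of $\breve{\mathcal J}_f(h_\rho^{\e,\sigma})=\max_{t>0}\mathcal J_f(t h_\rho^{\e,\sigma})$ to elementary one–variable calculus. Writing $u_\e:=h_\rho^{\e,\sigma}$, I first observe that $u_\e\geq 0$ and $\|u_\e\|_{NL}=1$, so $b(u_\e)=1$ and, for $t\geq 0$,
\[
\mathcal J_f(t u_\e)=\frac{t^2}{2}A_\e-\frac{t^{2.2^*_{\mu}}}{2.2^*_{\mu}}-t\,c_\e,\qquad A_\e:=a(u_\e),\quad c_\e:=\int_{\Om}fu_\e~dx .
\]
The two inputs I would record are: $A_\e\to S_{H,L}$ as $\e\to 0$ (this is exactly Lemma \ref{nhlem20}, uniform in $\sigma$); and $c_\e\geq 0$ (since $f\geq 0$, $u_\e\geq 0$) together with $c_\e\to 0$, because $u_\e\rp 0$ weakly in $H_0^1(\Om)$ by Lemma \ref{nhlem8}\textit{(iii)} and $f\in H^{-1}$. (Uniformity in $\sigma$ of $c_\e\to 0$, if needed, follows by bounding $c_\e\leq\|f\|_{L^\infty}\|u_\e\|_{L^1(\Om)}$ and using the pointwise decay estimate already carried out in the proof of Lemma \ref{nhlem8}\textit{(iii)}.)

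Next I would analyze the unperturbed function $\Phi_\e(t):=\frac{t^2}{2}A_\e-\frac{t^{2.2^*_{\mu}}}{2.2^*_{\mu}}$, so that $\mathcal J_f(t u_\e)=\Phi_\e(t)-t c_\e$. A direct computation shows that $\Phi_\e$ attains its maximum over $[0,\infty)$ at $\bar t_\e:=A_\e^{\frac{N-2}{2(N-\mu+2)}}$, with
\[
M_\e:=\max_{t\geq 0}\Phi_\e(t)=\left(\frac12-\frac{1}{2.2^*_{\mu}}\right)A_\e^{\frac{2N-\mu}{N-\mu+2}}=\frac{N-\mu+2}{2(2N-\mu)}A_\e^{\frac{2N-\mu}{N-\mu+2}},
\]
and, since $A_\e\to S_{H,L}>0$, we have $M_\e\to \frac{N-\mu+2}{2(2N-\mu)}S_{H,L}^{\frac{2N-\mu}{N-\mu+2}}=:\Upsilon_0$ while $\bar t_\e$ stays in a fixed compact subset of $(0,\infty)$ for $\e$ small. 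For the upper bound, $c_\e\geq 0$ gives $\mathcal J_f(t u_\e)=\Phi_\e(t)-tc_\e\leq\Phi_\e(t)\leq M_\e$ for every $t\geq 0$, hence $\breve{\mathcal J}_f(u_\e)\leq M_\e$. For the lower bound I would simply test the maximum at $t=\bar t_\e$: $\breve{\mathcal J}_f(u_\e)\geq\mathcal J_f(\bar t_\e u_\e)=M_\e-\bar t_\e c_\e\geq M_\e-Cc_\e$, with $C>0$ independent of $\e$. Combining the two estimates with $M_\e\to\Upsilon_0$ and $c_\e\to 0$ yields $\breve{\mathcal J}_f(h_\rho^{\e,\sigma})=\Upsilon_0+o(1)$, which is the asserted identity.

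I do not expect any genuine obstacle: the statement is a soft consequence of the concentration estimates already established. The only point deserving a line of care is the claim $c_\e\to 0$ (and, if wanted, its uniformity in $\sigma$), but this is merely the estimate of Lemma \ref{nhlem8}\textit{(iii)} for $g_\rho^{\e,\sigma}$, divided by the normalizing factor $\|g_\rho^{\e,\sigma}\|_{NL}$, which converges to the positive constant $S_{H,L}^{\frac{N-2}{2(N-\mu+2)}}$ and therefore does not affect the conclusion.
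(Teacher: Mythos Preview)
Your argument is correct and follows essentially the same route as the paper: both proofs write $\mathcal{J}_f(th_\rho^{\e,\sigma})=\frac{t^2}{2}A_\e-\frac{t^{2.2^*_\mu}}{2.2^*_\mu}-t c_\e$ and conclude from $A_\e\to S_{H,L}$ (Lemma~\ref{nhlem20}) and $c_\e\to 0$ (weak convergence, Lemma~\ref{nhlem8}\textit{(iii)}). The only organizational difference is that the paper locates the perturbed critical point $t_f$ asymptotically and evaluates there, while you sandwich using the unperturbed maximizer $\bar t_\e$ together with the sign condition $c_\e\geq 0$; both are one--variable calculus and yield the result.
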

\begin{proof}
	Since by Lemma \ref{nhlem8} it holds that  $h_\rho^{\e,\sigma}\rp 0$ in $H_0^1(\Om)$  as $\e\ra 0$.  On solving 
	\[
	\mathcal{J}_f^{\prime}(th_\rho^{\e,\sigma})
	= t ~a(h_\rho^{\e,\sigma})- t^{2.2^*_{\mu}-1}- \int_{\Om}fh_\rho^{\e,\sigma}~dx  =0,
	\]
	we conclude  $t_f= \|g_\rho^{\e,\sigma}\|_{NL}+o(1)$. Hence again from the Lemma \ref{nhlem8} we obtain\\
	$\ds
	\breve{\mathcal{J}}_f(h_\rho^{\e,\sigma})= \max_{t>0}\mathcal{J}_f(t h_\rho^{\e,\sigma})= \mathcal{J}_f(t_f h_\rho^{\e,\sigma})= \mathcal{J}_f( g_\rho^{\e,\sigma})= \frac{N-\mu+2}{2(2N-\mu)} S_{H,L}^{\frac{2N-\mu}{N-\mu+2}}+o(1) \text{ as } \e\ra 0.
	$
	\QED	
	\end{proof}
\begin{Lemma}\label{nhlem29}
	There exists $e_0^*>0$ such that if  $0< \|f\|_{H^{-1}}<e_0^*$, 
	\begin{align*}
	\Upsilon_f(\Om)+	\frac{N-\mu+2}{2(2N-\mu)}S_{H,L}^{\frac{2N-\mu}{N-\mu+2}}< \gamma_f< \Upsilon_f^-(\Om)+	\frac{N-\mu+2}{2(2N-\mu)}S_{H,L}^{\frac{2N-\mu}{N-\mu+2}}. 
	\end{align*}
%	Furthermore, there exists a positive solution $u_4$ of problem $(P_f)$ such that $\mathcal{J}_f(u_4)=\gamma_f $. 
\end{Lemma}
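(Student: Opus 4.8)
The plan is to sandwich $\gamma_f$ between the two target levels by comparing it with the "homogeneous" minimax value $\gamma_0$ of Lemma~\ref{nhlem24}, using the two-sided estimates of Lemma~\ref{nhlem15}(iii) (which the excerpt explicitly says also hold for $\breve{\mathcal J}_f$), together with the boundedness $S_{H,L}<\gamma_0<\frac{N-\mu+2}{2N-\mu}S_{H,L}^{\frac{2N-\mu}{N-\mu+2}}$ and an upper bound for the "boundary" value $\hat c$ coming from Lemma~\ref{nhlem26}.

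\emph{Upper bound.} First I would bound $\gamma_f$ from above. Recall that every $q\in F$ agrees with $h_\rho^{\e,\sigma}$ on $\partial\overline B_{r_0}$, so $\sup_{(1-\e)\sigma\in\overline B_{r_0}}\breve{\mathcal J}_f(q((1-\e)\sigma))\ge \sup_{\partial\overline B_{r_0}}\breve{\mathcal J}_f(h_\rho^{\e,\sigma})$; but this is a sup, not an inf, so the useful direction is to use Lemma~\ref{nhlem15}(iii)/(ii) applied to $\breve{\mathcal J}_f$ to write, for every $u\in\mathcal V$ and $0<\omega<1$,
\[
\breve{\mathcal J}_f(u)\le (1+\omega)^{\frac{2N-\mu}{N-\mu+2}}\breve{\mathcal J}(u)+\frac{1}{2\omega}\|f\|_{H^{-1}}^2,
\]
whence $\gamma_f\le (1+\omega)^{\frac{2N-\mu}{N-\mu+2}}\gamma_0+\frac{1}{2\omega}\|f\|_{H^{-1}}^2$. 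Since by Lemma~\ref{nhlem24} $\gamma_0<\frac{N-\mu+2}{2N-\mu}S_{H,L}^{\frac{2N-\mu}{N-\mu+2}}=\Upsilon_0+\frac{N-\mu+2}{2(2N-\mu)}S_{H,L}^{\frac{2N-\mu}{N-\mu+2}}$ and $\Upsilon_f^-(\Om)\to\Upsilon_0$ as $\|f\|_{H^{-1}}\to 0$ (Lemma~\ref{nhlem15}(iii)--(iv)), I would first fix $\omega$ small enough that $(1+\omega)^{\frac{2N-\mu}{N-\mu+2}}\gamma_0$ still lies strictly below $\Upsilon_0+\frac{N-\mu+2}{2(2N-\mu)}S_{H,L}^{\frac{2N-\mu}{N-\mu+2}}$, and then pick $\|f\|_{H^{-1}}$ small so that both the $\frac{1}{2\omega}\|f\|_{H^{-1}}^2$ error and the gap $\Upsilon_0-\Upsilon_f^-(\Om)$ are absorbed; this gives $\gamma_f<\Upsilon_f^-(\Om)+\frac{N-\mu+2}{2(2N-\mu)}S_{H,L}^{\frac{2N-\mu}{N-\mu+2}}$.

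\emph{Lower bound.} For the lower bound I would use the reverse inequality from Lemma~\ref{nhlem15}(ii)/(iii) for $\breve{\mathcal J}_f$, namely $\breve{\mathcal J}_f(u)\ge (1-\omega)^{\frac{2N-\mu}{N-\mu+2}}\breve{\mathcal J}(u)-\frac{1}{2\omega}\|f\|_{H^{-1}}^2$, to obtain $\gamma_f\ge (1-\omega)^{\frac{2N-\mu}{N-\mu+2}}\gamma_0-\frac{1}{2\omega}\|f\|_{H^{-1}}^2$. By Lemma~\ref{nhlem24}, $\gamma_0>\Upsilon_0=\frac{N-\mu+2}{2(2N-\mu)}S_{H,L}^{\frac{2N-\mu}{N-\mu+2}}$, so there is a fixed gap $\gamma_0-\Upsilon_0>0$; choosing $\omega$ small shrinks $(1-\omega)^{\frac{2N-\mu}{N-\mu+2}}\gamma_0$ by an arbitrarily small amount, and then choosing $\|f\|_{H^{-1}}$ small makes $\frac{1}{2\omega}\|f\|_{H^{-1}}^2$ arbitrarily small, so $\gamma_f>\Upsilon_0+ (\text{something})$. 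To finish I must compare with $\Upsilon_f(\Om)+\frac{N-\mu+2}{2(2N-\mu)}S_{H,L}^{\frac{2N-\mu}{N-\mu+2}}$: since $\Upsilon_f(\Om)<0$ (Lemma~\ref{nhlem28}) and $\Upsilon_f(\Om)\to 0^-$ is \emph{not} guaranteed, I would instead note $\Upsilon_f(\Om)+\frac{N-\mu+2}{2(2N-\mu)}S_{H,L}^{\frac{2N-\mu}{N-\mu+2}}<\frac{N-\mu+2}{2(2N-\mu)}S_{H,L}^{\frac{2N-\mu}{N-\mu+2}}=\Upsilon_0<\gamma_0$, and that the loss $(1-\omega)^{\frac{2N-\mu}{N-\mu+2}}\gamma_0-\frac{1}{2\omega}\|f\|_{H^{-1}}^2$ can be kept above $\Upsilon_0$, hence above $\Upsilon_f(\Om)+\frac{N-\mu+2}{2(2N-\mu)}S_{H,L}^{\frac{2N-\mu}{N-\mu+2}}$, for $\|f\|_{H^{-1}}$ small.

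\emph{Main obstacle.} The delicate point is the order of quantifiers: $\omega$ must be chosen once (depending only on $N,\mu$ and the fixed numbers $\gamma_0,\Upsilon_0$, hence ultimately on $\rho$, which is already fixed small) so that both $(1\pm\omega)^{\frac{2N-\mu}{N-\mu+2}}\gamma_0$ stay strictly on the correct side of the two critical levels; only afterwards may $e_0^*$ be chosen, small enough to swallow the $\|f\|_{H^{-1}}^2$-terms and the $\|f\|_{H^{-1}}$-dependence of $\Upsilon_f^-(\Om)$ and $\Upsilon_f(\Om)$ simultaneously. I expect the bookkeeping of these dependences — making sure $\gamma_0$ (equivalently $\overline c$, hence $\rho$) is frozen before $\omega$ and $e_0^*$ are chosen, and that Lemma~\ref{nhlem15}(iii) genuinely transfers to $\breve{\mathcal J}_f$ with the stated inequality direction — to be the only real content; the rest is the strict inequalities $S_{H,L}<m_0\le\overline c<2^{\frac{N-\mu+2}{2N-\mu}}S_{H,L}$ already recorded in \eqref{nh9} and Lemma~\ref{nhlem24}. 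Finally one sets $e_0^*=\min$ of this new threshold with the earlier $e^*$ so that $u_1$ and the two solutions of Proposition~\ref{nhprop2} coexist. \QED
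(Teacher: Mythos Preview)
Your proposal is correct and follows essentially the same route as the paper: both arguments use the two-sided comparison $\breve{\mathcal J}_f(u)\lessgtr (1\pm\omega)^{\frac{2N-\mu}{N-\mu+2}}\breve{\mathcal J}(u)\pm\frac{1}{2\omega}\|f\|_{H^{-1}}^2$ (the extension of Lemma~\ref{nhlem15}(iii) to $\breve{\mathcal J}_f$) to squeeze $\gamma_f$ near $\gamma_0$, then invoke the strict inequalities of Lemma~\ref{nhlem24} together with $\Upsilon_f(\Om)<0$ and $\Upsilon_f^-(\Om)\to\Upsilon_0$ as $\|f\|_{H^{-1}}\to 0$. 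The paper simply repackages your ``first choose $\omega$, then $e_0^*$'' bookkeeping into a single $\delta$-parameter, but the content is identical.
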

\begin{proof}
	Analogous to the proof of Lemma  \ref{nhlem15}(iii) we can have 
		\begin{align*}
	(1-\omega)^{\frac{2N-\mu}{N-\mu+2}}\mathcal{J}(t^*u)- \frac{1}{2\omega}\|f\|_{H^{-1}}^2 \leq \mathcal{J}_f(t^- u)\leq (1+\omega)^{\frac{2N-\mu}{N-\mu+2}}\mathcal{J}(t^*u)+ \frac{1}{2\omega}\|f\|_{H^{-1}}^2
	\end{align*}
	Using the above inequality  with the definition of $\breve{\mathcal{J}}$ and $\breve{\mathcal{J}}_f$, we get 
	\begin{align*}
	(1-\omega)^{\frac{2N-\mu}{N-\mu+2}}\breve{\mathcal{J}}(u)- \frac{1}{2\omega}\|f\|_{H^{-1}}^2 \leq \breve{\mathcal{J}}_f( u)\leq (1+\omega)^{\frac{2N-\mu}{N-\mu+2}}\breve{\mathcal{J}}(u)+ \frac{1}{2\omega}\|f\|_{H^{-1}}^2. 
	\end{align*}
	For $\de>0$ there exists $e_1(\de)$ such that  if  $\|f\|_{H^{-1}}<e_1(\de)$ then 
	\begin{align}\label{nh10}
	\gamma_0- \de< \gamma_f< \gamma_0+ \de.
	\end{align}
	\noi Now from Lemma \ref{nhlem15}(iii) for each  $0<\omega<1$, we have 
	\begin{align*}
	(1-\omega)& ^{\frac{2N-\mu}{N-\mu+2}}	\frac{N-\mu+2}{2(2N-\mu)}S_{H,L}^{\frac{2N-\mu}{N-\mu+2}}- \frac{1}{2\omega}\|f\|_{H^{-1}}^2\\&  \leq \Upsilon_f^-(\Om)\leq (1+\omega)^{\frac{2N-\mu}{N-\mu+2}}	\frac{N-\mu+2}{2(2N-\mu)}S_{H,L}^{\frac{2N-\mu}{N-\mu+2}}+ \frac{1}{2\omega}\|f\|_{H^{-1}}^2. 
	\end{align*}
	So for $\de>0$ there exists $e_2(\de)>0$ such that whenever $\|f\|_{H^{-1}}<e_2(\de)$ then 
	\begin{align*}
		\frac{N-\mu+2}{2(2N-\mu)}S_{H,L}^{\frac{2N-\mu}{N-\mu+2}}- \de \leq \Upsilon_f^-(\Om)\leq 	\frac{N-\mu+2}{2(2N-\mu)}S_{H,L}^{\frac{2N-\mu}{N-\mu+2}}+ \de . 
	\end{align*}
	It implies 
	\begin{align}\label{nh11}
	\frac{N-\mu+2}{2N-\mu}S_{H,L}^{\frac{2N-\mu}{N-\mu+2}} - \de \leq \Upsilon_f^-(\Om)+	\frac{N-\mu+2}{2(2N-\mu)}S_{H,L}^{\frac{2N-\mu}{N-\mu+2}}\leq \frac{N-\mu+2}{2N-\mu}S_{H,L}^{\frac{2N-\mu}{N-\mu+2}} + \de. 
	\end{align}
	Moreover, from Lemma \ref{nhlem24} \begin{align*}
		\frac{N-\mu+2}{2(2N-\mu)}S_{H,L}^{\frac{2N-\mu}{N-\mu+2}} < \gamma_0< 	\frac{N-\mu+2}{2N-\mu}S_{H,L}^{\frac{2N-\mu}{N-\mu+2}} . 
	\end{align*}
	Hence for fix small $0<\e<\min \bigg \{\frac{	\frac{N-\mu+2}{2N-\mu}S_{H,L}^{\frac{2N-\mu}{N-\mu+2}} -\gamma_0}{2}, \gamma_0- 	\frac{N-\mu+2}{2(2N-\mu)}S_{H,L}^{\frac{2N-\mu}{N-\mu+2}}\bigg\} $ such that if  $\|f\|_{H^{-1}}< e_0^*=  \min\{e_2(\e),\; e_2(\e) \}$ then using \eqref{nh10} and \eqref{nh11}, we obtain 
\begin{align*}
& 	\Upsilon_f(\Om)+	\frac{N-\mu+2}{2(2N-\mu)}S_{H,L}^{\frac{2N-\mu}{N-\mu+2}}< 	\frac{N-\mu+2}{2(2N-\mu)}S_{H,L}^{\frac{2N-\mu}{N-\mu+2}}< \gamma_0-\e \leq  \gamma_f \quad \text{ and }\\
& \gamma_f< \gamma_0+ 2\e-\e < 	\frac{N-\mu+2}{2N-\mu}S_{H,L}^{\frac{2N-\mu}{N-\mu+2}} -\e \leq \Upsilon_f^-(\Om)+	\frac{N-\mu+2}{2(2N-\mu)}S_{H,L}^{\frac{2N-\mu}{N-\mu+2}}.	
\end{align*}
That is,  $	\Upsilon_f(\Om)+	\frac{N-\mu+2}{2(2N-\mu)}S_{H,L}^{\frac{2N-\mu}{N-\mu+2}}<   \gamma_f < \Upsilon_f^-(\Om)+	\frac{N-\mu+2}{2(2N-\mu)}S_{H,L}^{\frac{2N-\mu}{N-\mu+2}}.$
%In addition  Thus on applying Lemma \ref{nhlem25} and Lemma \ref{nhlem18} we have $\gamma_f$ is a critical value of $\breve{\mathcal{J}}_f$ that is,  $u_4$ is  a positive solution of problem $(P_f)$.
\QED	
\end{proof}
\begin{Proposition} \label{nhprop3}
	If  $0<\rho<\rho_0$,  $0< \|f\|_{H^{-1}}<e_0^*$ (defined in Lemma \ref{nhlem29}) then there exists a critical point $u_4 \in \mathcal{N}_f^-$ of $\mathcal{J}_f$ with $\mathcal{J}_f(u_4)=\gamma_f$. 
\end{Proposition}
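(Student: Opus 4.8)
The plan is to produce $u_4$ as the critical point delivered by the minimax Lemma \ref{nhlem25}, applied to the constrained functional $\breve{\mathcal{J}}_f$. By Lemma \ref{nhlem35}(i) we have $\breve{\mathcal{J}}_f\in C^1(\mathcal{V},\mathbb{R})$, and since $\mu<\min\{4,N\}$ the set $\mathcal{V}$ is a genuine $C^1$ submanifold of $H_0^1(\Om)$ (zero being a regular value of $u\mapsto b(u)-1$), so Lemma \ref{nhlem25} may be used with $Y=\mathcal{V}$ --- or, if one prefers a flat Banach space, by extending $\breve{\mathcal{J}}_f$ $C^1$-smoothly to a neighbourhood of $\mathcal{V}$. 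I will take $A=\overline{B}_{r_0}$ (a compact metric space), $A_0=\pa\overline{B}_{r_0}$ (closed in $A$), the boundary datum $\gamma((1-\e_0)\sigma)=h_\rho^{\e_0,\sigma}$, and the admissible class $\Gamma=F$ (which is nonempty: e.g.\ $(1-\e)\sigma\mapsto h_\rho^{\e,\sigma}$ lies in it). With these choices, in the notation of Lemma \ref{nhlem25} the minimax value is precisely $\gamma_f$ and the reference level is $\hat\gamma_f:=\ds\sup_{\sigma\in\mathbb{S}^{N-1}}\breve{\mathcal{J}}_f(h_\rho^{\e_0,\sigma})$.

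The heart of the proof is the strict separation $\gamma_f>\hat\gamma_f$, and here I combine the geometric input of this section. Lemma \ref{nhlem21} gives $\|h_\rho^{\e_0,\sigma}\|^2<\tfrac{m_0+S_{H,L}}{2}$, so, using $\breve{\mathcal{J}}(u)=\tfrac{N-\mu+2}{2(2N-\mu)}\|u\|^{\frac{2(2N-\mu)}{N-\mu+2}}$ and $\gamma_0=\tfrac{N-\mu+2}{2(2N-\mu)}\overline{c}^{\frac{2N-\mu}{N-\mu+2}}\ge\tfrac{N-\mu+2}{2(2N-\mu)}m_0^{\frac{2N-\mu}{N-\mu+2}}$ (the last step because every $q\in F$ meets $\mathcal{Q}$, whence $\overline{c}\ge m_0$), the value $\sup_\sigma\breve{\mathcal{J}}(h_\rho^{\e_0,\sigma})$ stays below $\gamma_0$ by a fixed margin $3\de>0$. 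On the other hand, the comparison between $\breve{\mathcal{J}}_f$ and $\breve{\mathcal{J}}$ established in the proof of Lemma \ref{nhlem29} yields $\breve{\mathcal{J}}_f(h_\rho^{\e_0,\sigma})\le(1+\omega)^{\frac{2N-\mu}{N-\mu+2}}\breve{\mathcal{J}}(h_\rho^{\e_0,\sigma})+\tfrac1{2\omega}\|f\|_{H^{-1}}^2$ and $\gamma_f\ge(1-\omega)^{\frac{2N-\mu}{N-\mu+2}}\gamma_0-\tfrac1{2\omega}\|f\|_{H^{-1}}^2$ for every $\omega\in(0,1)$; choosing $\omega$ small and then $e_0^*$ small, both right-hand sides are squeezed so that $\hat\gamma_f<\gamma_0-\de<\gamma_f$, which is the required inequality. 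This is the same bookkeeping as in Lemma \ref{nhlem29}, so no new estimate is needed.

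Once $\gamma_f>\hat\gamma_f$ is in hand, Lemma \ref{nhlem25} produces a sequence $\{u_n\}\subset\mathcal{V}$ with $\breve{\mathcal{J}}_f(u_n)\ra\gamma_f$ and $\breve{\mathcal{J}}_f^{\prime}(u_n)\ra0$. By Lemma \ref{nhlem35}(iii), the sequence $w_n:=t^-(u_n)u_n$ lies in $\mathcal{N}_f^-$ and is a $(PS)_{\gamma_f}$ sequence for $\mathcal{J}_f$. By Lemma \ref{nhlem29},
\[
\Upsilon_f(\Om)+\frac{N-\mu+2}{2(2N-\mu)}S_{H,L}^{\frac{2N-\mu}{N-\mu+2}}<\gamma_f<\Upsilon_f^-(\Om)+\frac{N-\mu+2}{2(2N-\mu)}S_{H,L}^{\frac{2N-\mu}{N-\mu+2}},
\]
so $\gamma_f$ lies in the compactness window of Lemma \ref{nhlem18}(ii). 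Applying that lemma to $\{w_n\}$, a subsequence converges strongly in $H_0^1(\Om)$ to some $u_4\ne0$ with $u_4\in\mathcal{N}_f^-$, $\mathcal{J}_f(u_4)=\gamma_f$, and --- by the weak-limit step inside that lemma --- $\mathcal{J}_f^{\prime}(u_4)=0$. Hence $u_4$ is the desired critical point; by Lemma \ref{nhlem34} it is in fact a positive classical solution of $(P_f)$, distinct from $u_1,u_2,u_3$ since its energy $\gamma_f$ exceeds $\Upsilon_f(\Om)+\frac{N-\mu+2}{2(2N-\mu)}S_{H,L}^{\frac{2N-\mu}{N-\mu+2}}$, the level below which those three live.

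The main obstacle is exactly the minimax separation $\gamma_f>\hat\gamma_f$. It rests entirely on the topological/geometric obstruction $S_{H,L}<m_0$ supplied by the barycentre map together with the non-attainment of $S_{H,L}$ on bounded domains (Lemmas \ref{nhlem21}--\ref{nhlem23}), which then has to survive the nonhomogeneous perturbation through the estimates of Lemma \ref{nhlem29}; the delicate point is to choose $\rho$, $\e_0$ and $e_0^*$ so that the boundary level $\hat\gamma_f$, the mountain-pass level $\gamma_f$, and the first and second critical levels of $\mathcal{J}_f$ fall in the correct order simultaneously. Everything after that is the now-routine dictionary between $\breve{\mathcal{J}}_f$ on $\mathcal{V}$ and $\mathcal{J}_f$ on $\mathcal{N}_f^-$ (Lemma \ref{nhlem35}) and the local compactness of $\mathcal{J}_f$ below the second critical level (Lemma \ref{nhlem18}(ii)).
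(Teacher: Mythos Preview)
Your proof is correct and follows essentially the same strategy as the paper: apply the minimax Lemma \ref{nhlem25} to $\breve{\mathcal{J}}_f$ on $\mathcal{V}$ with $A=\overline{B}_{r_0}$, $A_0=\partial\overline{B}_{r_0}$ and $\Gamma=F$, then transfer the resulting $(PS)_{\gamma_f}$ sequence to $\mathcal{N}_f^-$ via Lemma \ref{nhlem35}(iii) and conclude by the compactness window of Lemma \ref{nhlem18}(ii), which is available thanks to Lemma \ref{nhlem29}. The only minor deviation is in how you verify the mountain-pass separation $\gamma_f>\hat\gamma_f$: the paper invokes Lemma \ref{nhlem26} (the asymptotic $\breve{\mathcal{J}}_f(h_\rho^{\e,\sigma})\to\tfrac{N-\mu+2}{2(2N-\mu)}S_{H,L}^{\frac{2N-\mu}{N-\mu+2}}$) together with the lower bound on $\gamma_f$ from Lemma \ref{nhlem29}, whereas you obtain the gap directly from Lemma \ref{nhlem21} (i.e.\ $\hat c<m_0\le\overline c$) and then push it through the $\breve{\mathcal{J}}\leftrightarrow\breve{\mathcal{J}}_f$ comparison of Lemma \ref{nhlem29}; both routes rest on the same barycentre obstruction $S_{H,L}<m_0$, and your version has the slight advantage of making the $f$-independent geometric margin explicit before perturbing.
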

\begin{proof}
	  Let  $c \in \left(\Upsilon_f(\Om)+	\frac{N-\mu+2}{2(2N-\mu)}S_{H,L}^{\frac{2N-\mu}{N-\mu+2}}, \; \Upsilon_f^-(\Om)+	\frac{N-\mu+2}{2(2N-\mu)}S_{H,L}^{\frac{2N-\mu}{N-\mu+2}}\right)$
	   and $\{u_n\}_{n\in \mathbb{N}}$  is a $(PS)_c$ sequence of $\breve{\mathcal{J}}_f$. Then by Lemma \ref{nhlem35}, $\{t^-(u_n)u_n\}_{n\in \mathbb{N}} \in \mathcal{N}_f^-$ is a $(PS)_c$ sequence for $\mathcal{J}_f$ which on using Lemma \ref{nhlem18} gives that $\{u_n\}_{n\in \mathbb{N}}$  is compact. Moreover, from Lemma \ref{nhlem26}, $\gamma_f> \breve{\mathcal{J}}_f(h_\rho^{\e,\sigma})= 	\frac{N-\mu+2}{2(2N-\mu)}S_{H,L}^{\frac{2N-\mu}{N-\mu+2}}+o(1)$ as $\e$ sufficiently small. Using Lemma \ref{nhlem25} we have $\gamma_f$ is a critical value of $\breve{\mathcal{J}}_f$. Therefore, there exists $v_4 \in \mathcal{V}$ such that $\breve{\mathcal{J}}_f(v_4)=\gamma_f$ and $\breve{\mathcal{J}}_f^{\prime}(v_4)=0$. Thus by Lemma \ref{nhlem35}, $ u_4:= t^-(v_4) v_4  \in \mathcal{N}_f^- $ is a critical point of  $\mathcal{J}_f$ and $\mathcal{J}_f(u_4)=\gamma_f$. \QED
	\end{proof}

\noi \textbf{Proof of Theorem \ref{nhthm1} :} First note that by Lemma \ref{nhlem34}, we have all solutions of $(P_f)$ are positive in  $\Om$ and from Lemma  \ref{nhlem27}, we have $u_1 \in \mathcal{N}_f^+ \subset H_0^1(\Om) $ such that $\mathcal{J}_f(u_1)=\Upsilon_f$ whenever $0< \|f\|_{H^{-1}}<e_{00}$. By Proposition \ref{nhprop2} we have two more  critical point $u_2,u_3 \in \mathcal{N}_f^-$ of  $\mathcal{J}_f$ such that  in $\mathcal{J}_f(u_2),\mathcal{J}_f(u_3) < \Upsilon_f(\Om)+	\frac{N-\mu+2}{2(2N-\mu)}S_{H,L}^{\frac{2N-\mu}{N-\mu+2}} $.  Therefore we get three positive solutions of $(P_f)$ whenever $0< \|f\|_{H^{-1}}<e^*$. Let $e^{**}= \min\{ e^*,e_0^*\}$ then by  Proposition \ref{nhprop3},  we get $u_4\in \mathcal{N}_f^-  $  $\mathcal{J}_f(u_4)=\gamma_f$. 
% By using Lemmas \ref{nhlem27}, \ref{nhlem29} and proposition \ref{nhprop2}, we have four critical points of the functional $\mathcal{J}_f$, that is, we have four solutions 
\QED

\end{document}